            \newtheorem{theorem}{Theorem}[section]                                          
           \theoremstyle{plain}
            \newtheorem{lemma}[theorem]{Lemma}
            \newtheorem{corollary}[theorem]{Corollary}                                          
            \newtheorem{proposition}[theorem]{Proposition}                                      
            \newtheorem{claim}{Claim}[theorem]
            \newtheorem{question}[theorem]{Question}
            \theoremstyle{definition}
            \newtheorem{definition}[theorem]{Definition}
            \newtheorem{example}[theorem]{Example}
            \newtheorem{remark}[theorem]{Remark}
            \newtheorem{notation}[theorem]{Notation}
 \newtheorem{theo}{Theorem}
\renewcommand{\ll }{\langle\hspace{-.7mm}\langle }
\newcommand{\rr }{\rangle\hspace{-.7mm}\rangle }
\newcommand{\m}{\ll\mathcal{N}\rr}
\newcommand{\bg}{\overline{G}}
\newcommand{\bhl}{\overline{H}_{\lambda}}
\newcommand{\pl}{\prod_{\lambda\in\Lambda}}
\newcommand{\ihg}{\mathrm{Ind}^{\overline{G}}_{\overline{H}_{\lambda}}}
\newcommand{\chg}{\mathrm{CoInd}^{\overline{G}}_{\overline{H}_{\lambda}}}
\newcommand{\cm}{A}
\newcommand{\bsl}{\bigoplus_{\lambda\in\Lambda}}
\newcommand{\bh}{\overline{H}}
\newcommand{\R}{\mathbb R}
\newcommand{\Z}{\mathbb Z}
\newcommand{\h}{\mathcal H}
\newcommand{\M}{\overline{M}}
\newcommand{\oh}{\overline{\mathcal H}}
\DeclareMathOperator{\card}{card}
\DeclareMathOperator{\cd}{cd}
\DeclareMathOperator{\diam}{diam}
\DeclareMathOperator{\lab}{\textbf{Lab}}
\renewcommand{\subset}{\subseteq}
\DeclareMathOperator{\tor}{Tor}
\DeclareMathOperator{\ext}{Ext}
\DeclareMathOperator{\hd}{hd}
\begin{document}

\title[Cohomology of group theoretic Dehn fillings II]{Cohomology of group theoretic Dehn fillings II}

\author{Nansen Petrosyan}
\address{School of Mathematics, University of Southampton, Southampton SO17~1BJ, UK}
\email{n.petrosyan@soton.ac.uk}
\thanks{}
\author{Bin Sun}
\address{Department of Mathematics, Michigan State University, East Lansing, MI, 48824, USA}
\email{sunbin1@msu.edu}

\begin{abstract}
We study the cohomology of group theoretic Dehn fillings.  Applying the Cohen-Lyndon property for  sufficiently deep Dehn fillings of hyperbolically embedded subgroups $H\hookrightarrow_h G$, obtained by the second named author in \cite{sun2018cohomologyi}, we derive a spectral sequence that computes the cohomology of the corresponding Dehn filling quotients $\bg$. As an application, we establish an isomorphism between the relative cohomology of the group pair $(G, H)$ and its sufficiently deep Dehn filling quotient pair $(\bg, \bh)$. This allows us to generalise the results of Fujiwara and Manning on simplicial volume of Dehn fillings of hyperbolic manifolds to Dehn fillings of Poincar\'e duality pairs. 

We also strengthen the results of Olshanskii \cite{Olsh}, Dahmani-Guirardel-Osin  \cite{dahmani2017hyperbolically} and Hull \cite{hull2013small} on SQ-universality and common quotients of acylindrically hyperbolic groups by adding cohomological finiteness conditions. We apply these results to obtain hyperbolic and acylindrically hyperbolic quotients with special properties.

\end{abstract}

\keywords{}
\subjclass[2010]{20F67, 20F10, 20E06}

\maketitle

\section{Introduction}
\subsection{Dehn surgery in $3$-manifolds.} 
In the late 1970's, Thurston dramatically changed the study of $3$-manifolds by introducing his Geometrization Conjecture. As supporting evidence, Thurston proved that many non-Haken $3$-manifolds satisfy the conjecture \cite{thurston1983three}, using the notion of a \textit{Dehn surgery}, which is a two-step procedure of modifying a $3$-manifold by first cutting off a solid torus and then gluing the torus back in a different way. Another motivation of Dehn surgery comes from the Lickorish-Wallace theorem, which states that every closed connected orientable $3$-manifold can be constructed from the $3$-sphere by using finitely many Dehn surgeries.


The second step of the surgery, called \textit{Dehn filling}, starts with a $3$-manifold $M$ with toral boundary and constructs a new manifold by gluing a solid torus to $M$ by identifying their boundaries. Topologically distinct ways of gluing a solid torus are parametrized by free homotopy classes of essential simple closed curves on $\partial M$ (the image of the meridian circle of the solid torus under the identification), called \textit{slopes}. For a slope $s$, the new manifold constructed by the corresponding Dehn filling is denoted by $M_s$. A celebrated result of Thurston asserts that most Dehn fillings preserve hyperbolicity. 
\begin{theorem}[{Thurston \cite{thurston1983three}}]\label{thm. thurson hyperbolic Dehn filling}
Let $M$ be a compact orientable $3$-manifold with boundary a torus, and with interior admitting a complete finite volume hyperbolic structure. Then for all but finitely many slopes $s$ on $\partial M$, $M_s$ admits a hyperbolic structure. 
\end{theorem}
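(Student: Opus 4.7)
The plan is to deform the complete hyperbolic structure on $\mathrm{int}(M)$ through a family of incomplete hyperbolic structures and then show that for all but finitely many slopes the metric completion yields a closed hyperbolic manifold homeomorphic to $M_s$. First I would fix an ideal triangulation $\mathcal{T}$ of $\mathrm{int}(M)$ into finitely many ideal tetrahedra; the shape of each ideal tetrahedron is determined by a complex cross-ratio $z \in \mathbb{C} \setminus \{0,1\}$ with $\mathrm{Im}(z) > 0$ encoding positive orientation. The requirement that the tetrahedra glue into a (possibly incomplete) hyperbolic structure on $\mathrm{int}(M)$ is encoded by Thurston's edge gluing equations, a system of algebraic equations in the shape parameters $z_1,\dots,z_n$. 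The complete structure is a distinguished solution $z^{(0)}$ at which the holonomy of the cusp torus is parabolic.

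Next, I would analyze the deformation variety of solutions to the gluing equations near $z^{(0)}$. A direct application of the implicit function theorem (carried out in detail by Neumann--Zagier) shows that this variety is locally a smooth complex curve, parametrized by a single complex parameter $u$ equal to the logarithmic translation length of the meridional holonomy; the fact that $M$ has one cusp is what cuts the expected dimension down to one. If $v = v(u)$ denotes the corresponding logarithmic translation length of the longitudinal holonomy, one shows that $v$ is a holomorphic function of $u$ with $v(0)=0$ and $v'(0)=\tau$, the cusp shape, which is non-real.

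For each pair of coprime integers $(p,q)$ with $s=p\mu+q\lambda$, the condition that the holonomy along $s$ be a rotation of angle exactly $2\pi$ reads
\[
p\,u + q\,v(u) = 2\pi i.
\]
For $p^2+q^2$ sufficiently large the right-hand side is distributed over a linear form with coefficients of large modulus, so this equation has a unique small solution $u_{p,q}$, and by continuity the shape parameters $z_j(u_{p,q})$ remain in the positive orientation locus, giving a genuine incomplete hyperbolic structure on $\mathrm{int}(M)$. The key point is that because the rotation angle of the slope $s$ is precisely $2\pi$, the metric completion smoothly adjoins a single closed geodesic, the core of the attached solid torus, producing a closed hyperbolic $3$-manifold diffeomorphic to $M_s$ rather than a cone-manifold. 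The main obstacle is the analytic control of the shapes $z_j(u)$ along the deformation: one must ensure that no tetrahedron degenerates ($z_j \to 0,1,\infty$) or crosses into $\mathrm{Im}(z_j)\le 0$, together with an argument that the completion globally produces a metric manifold; the finitely many exceptional slopes are precisely those $(p,q)$ whose corresponding $u_{p,q}$ falls outside the positive-orientation region in the deformation variety.
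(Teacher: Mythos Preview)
Your outline is essentially Thurston's original argument and is correct in spirit, though one should note that the existence of a genuine positively-oriented ideal triangulation of a given cusped hyperbolic $3$-manifold is not known in general; Thurston's own account works with the representation variety directly, and the Neumann--Zagier analysis can be phrased without assuming such a triangulation.

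However, the paper does not give a proof of this theorem at all: it is quoted as a classical result of Thurston and used only as motivation. What the paper does do, immediately after stating the group-theoretic Dehn filling theorem of Dahmani--Guirardel--Osin, is observe that Thurston's theorem can be \emph{recovered} by a completely different route: since $\pi_1(\partial M)\hookrightarrow_h \pi_1(M)$, the group-theoretic Dehn filling theorem shows $\pi_1(M_s)$ is word-hyperbolic for all but finitely many slopes, and then Perelman's proof of Geometrization forces $M_s$ to be hyperbolic. So your approach is Thurston's direct geometric deformation argument, while the paper's sketched route is purely algebraic on the fundamental-group side and then invokes Geometrization as a black box. Your argument is far more elementary and self-contained (and gives quantitative information such as volume convergence), whereas the paper's route illustrates the philosophy that the group-theoretic statement genuinely generalizes the manifold one --- at the cost of using the full strength of Perelman's work.
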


\subsection{Group theoretic Dehn fillings.} There is an analogous construction in group theory, called \textit{(group theoretic) Dehn filling}, which can be formalized as follows. Given a group $G$ with a subgroup $H$ and a normal subgroup $N$ of $H$, the Dehn filling associated with the triple $(G,H,N)$ is the quotient $G/\ll N \rr$, where $\ll N \rr$ is the normal closure of $N$ in $G$.

The relation between these two versions of Dehn fillings can be seen as follows: under the assumptions of Theorem \ref{thm. thurson hyperbolic Dehn filling}, the natural homomorphism $\pi_1(\partial M)\rightarrow \pi_1(M)$ is injective and thus $\pi_1(\partial M)$ can be thought of as a subgroup of $\pi_1(M)$. Let $G=\pi_1(M)$ and $H=\pi_1(\partial M)$. Every slope $s$ on $\partial M$ generates a normal subgroup $N_s\lhd H$. As $s$ is the image of the meridian circle of the solid torus, which bounds a disc, we have $\pi_1(M_s)=G/\ll N_s \rr$.

Dehn filling is a fundamental tool in group theory. It appears, for instance, in the solution of the Virtual Haken Conjecture \cite{agol2013virtual}, the study of the Farrell-Jones Conjecture and the isomorphism problem of relatively hyperbolic groups \cite{antolin2017farrell,dahmani2018recognizing}, and the construction of purely pseudo-Anosov normal subgroups of mapping class groups \cite{dahmani2017hyperbolically}. Other applications of Dehn fillings can be found for example in \cite{agol2016alternate,groves2016boundaries}.

Algebraic analogs of Theorem \ref{thm. thurson hyperbolic Dehn filling} can be proved for groups satisfying certain negative curvature conditions. The first result of this kind was for relatively hyperbolic groups by Osin \cite{osin2007peripheral} and independently, by Groves-Manning \cite{groves2008dehn}. Later, Dahmani-Guirardel-Osin \cite{dahmani2017hyperbolically} introduced a generalization of relative hyperbolicity based on the notion of a hyperbolically embedded subgroup and proved a generalization of the main results of \cite{osin2007peripheral,groves2008dehn}. We postpone the definition and motivation of hyperbolically embedded subgroups until Section \ref{sec. he} and only discuss several examples for the moment. The reader is referred to the survey \cite{osin2017groups} for other examples. We use $H\hookrightarrow_h G$ to indicate that $H$ is a hyperbolically embedded subgroup of $G$.

\begin{example}\label{eg. 1}
If a group $G$ is hyperbolic relative to its subgroup $H$, then $H\hookrightarrow_h G$ \cite[Proposition 2.4]{dahmani2017hyperbolically}. In particular, if $M$ is a compact orientable manifold with one boundary component and $M\smallsetminus\partial M$ admits a complete finite volume hyperbolic structure,
then $\pi_1(\partial M)\hookrightarrow_h \pi_1(M)$ \cite{bowditch2012relatively,farb1998relatively}. 
\end{example}

\begin{example}\label{eg. 2}
Another typical example arises if a group $G$ acts on a Gromov hyperbolic space $S$ acylindrically by isometries and $g\in G$ is a loxodromic element. Then there exists a maximal virtually-cyclic subgroup $E(g)\leqslant G$ containing $g$ such that $E(g)\hookrightarrow_h G$ \cite[Corollary 2.9]{dahmani2017hyperbolically}. In particular, if $G$ is a word-hyperbolic group (resp. mapping class group of a finite type surface \cite[Theorem 2.19]{dahmani2017hyperbolically}, outer automorphism group of a finite rank free group \cite[Theorem 2.20]{dahmani2017hyperbolically}) and $g$ is an infinite order (resp. a pseudo-Anosov, a fully irreducible) element, then $E(g)\hookrightarrow_h G$.
\end{example}

The following is a group theoretic analog of Thurston's Theorem \ref{thm. thurson hyperbolic Dehn filling} due to Dahmani-Guirardel-Osin.

\begin{theorem}[{Dahmani-Guirardel-Osin \cite{dahmani2017hyperbolically}}]\label{thm. simple Dehn filling}
Let $G$ be a group with a subgroup $H\hookrightarrow_h G$. Then there exists a finite set $\mathcal{F}\subset H\smallsetminus\{1\}$ such that if $N\lhd H$ and $N\cap \mathcal{F}=\emptyset$, then the natural homomorphism $H/N\rightarrow G/\ll N \rr $ maps $H/N$ injectively onto a hyperbolically embedded subgroup of $G/\ll N \rr$.
\end{theorem}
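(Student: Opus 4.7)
The plan is to exploit the Cayley-graph characterization of hyperbolic embedding: $H\hookrightarrow_h G$ is equivalent to the existence of a symmetric relative generating set $X\subset G$ such that $\Gamma=\Gamma(G,X\sqcup H)$ is Gromov hyperbolic and the relative metric $\hat{d}$ on $H$ (defined by infimizing the $X$-word length over all paths in $\Gamma$ from $h_1$ to $h_2$ that avoid every edge labeled by a non-trivial element of $H$) is locally finite. I would then take
\[
\mathcal{F}=\{h\in H\smallsetminus\{1\}\mid\hat{d}(1,h)\leqslant C\}
\]
for a constant $C$ to be calibrated against the hyperbolicity constant of $\Gamma$; this set is finite by local finiteness of $\hat{d}$.

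Given $N\lhd H$ with $N\cap\mathcal{F}=\emptyset$, I would build an auxiliary hyperbolic space $\mathcal{C}$ from $\Gamma$ by coning off every left coset $gH$, attaching an apex vertex $v_{gH}$ joined to each element of $gH$ by a short edge. Standard thin-triangle arguments show that $\mathcal{C}$ remains hyperbolic with constants controlled by those of $\Gamma$. The collection of conjugates $\mathcal{R}=\{gNg^{-1}:gH\in G/H\}$ acts on $\mathcal{C}$ so that $gNg^{-1}$ fixes $v_{gH}$ and permutes the edges incident to it. The assumption $N\cap\mathcal{F}=\emptyset$, together with $G$-equivariance of the construction, translates into the statement that every non-trivial element of every member of $\mathcal{R}$ moves each point in a small neighborhood of the corresponding apex by a distance exceeding any prescribed threshold. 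This is precisely the \emph{very rotating family} condition of Dahmani--Guirardel--Osin.

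Invoking their general quotient theorem for very rotating families, I would conclude that $\mathcal{C}/\ll N \rr$ is Gromov hyperbolic, that $\bg=G/\ll N \rr$ acts on it, and that the stabilizer of the image of $v_H$ is exactly the image of $H$ in $\bg$. Two consequences fall out at once: (i) if $h\in H\cap\ll N \rr$, then $h$ fixes the image apex and the stabilizer description forces $h\in N$, so the natural map $H/N\to\bg$ is injective; and (ii) $\Gamma(\bg,\overline{X}\sqcup(H/N))$ is quasi-isometric to the quotient space (after un-coning), and hence hyperbolic, while the induced relative metric on $H/N$ is locally finite, because any short relative $\bg$-word between $\overline{h}_1,\overline{h}_2\in H/N$ lifts, via a Greendlinger-type argument for rotating families, to a short relative word in $\Gamma$ between representatives, which by $N\cap\mathcal{F}=\emptyset$ forces $\overline{h}_1=\overline{h}_2$.

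The principal obstacle is the quantitative calibration of $C$: one must ensure that the rotation induced by any non-trivial element of $N$ near $v_H$ exceeds the geometric threshold required by the quotient theorem, and simultaneously that the Greendlinger-type lifting lemma applies to all short words in $\bg$. Both requirements reduce to thin-triangle estimates in $\mathcal{C}$ correlating $\hat{d}$-largeness with rotation angle, balanced against the hyperbolicity and acylindricity constants of $\Gamma$. Once this quantitative setup is secured, the injectivity of $H/N\to\bg$ and its hyperbolic embedding follow formally from the rotating family machinery.
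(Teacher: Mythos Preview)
The paper does not contain a proof of this statement. Theorem~\ref{thm. simple Dehn filling} is quoted as a result of Dahmani--Guirardel--Osin \cite{dahmani2017hyperbolically}, and the paper uses it (and its multi-peripheral version, Theorem~\ref{thm. group theoretic Dehn filling}) as a black box. So there is nothing in this paper to compare your proposal against.

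That said, your sketch is a reasonable outline of the argument actually given in \cite{dahmani2017hyperbolically}: one passes to a coned-off hyperbolic space, verifies that sufficiently deep $N$ yields a very rotating family at the apices, and then invokes the general structure theorem for quotients by very rotating families to obtain both injectivity of $H/N\to\bg$ and the hyperbolic embedding $H/N\hookrightarrow_h\bg$. A couple of points to tighten. First, the relative metric $\hat d$ is defined via lengths of $H$-admissible paths in $\Gamma(G,X\sqcup H)$, not via ``$X$-word length'' as you wrote; this matters for the calibration you mention. Second, the statement that $\Gamma(\bg,\overline{X}\sqcup(H/N))$ is ``quasi-isometric to the quotient space (after un-coning)'' is not quite how the argument runs in \cite{dahmani2017hyperbolically}; the hyperbolic embedding of $H/N$ is established there via a direct lifting argument for paths and the windmill structure of $\ll N\rr$, not by un-coning. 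Your Greendlinger-type lifting heuristic is in the right spirit but would need to be made precise along those lines.
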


In the setting of Theorem \ref{thm. thurson hyperbolic Dehn filling}, we have $\pi_1(\partial M)\hookrightarrow_h M$ by Example \ref{eg. 1}. Theorem \ref{thm. simple Dehn filling} then implies that for all but finitely many slopes $s$ on $\partial M$, we have $\pi_1(\partial M)/N_s\hookrightarrow_h \pi_1(M_s)$. A deeper investigation, using more precise versions of Theorem \ref{thm. simple Dehn filling} (e.g., \cite[Theorem 1.1]{osin2007peripheral}, \cite[Theorem 7.2]{groves2008dehn} and \cite[Theorem 2.27]{dahmani2017hyperbolically}), shows that $\pi_1(M_s)$ is word-hyperboic \cite[Corollary 1.2]{osin2007peripheral} and one-ended \cite[Corollary 1.11]{groves2018dehn}. The Geometrization Conjecture, proved by Perelman, then implies that $M_s$ admits a hyperbolic structure.

\subsection{Motivation:  cohomology of  Dehn fillings.} Theorem \ref{thm. thurson hyperbolic Dehn filling} asserts that $M_s$ is often hyperbolic and thus its universal cover is $\mathbb{H}^3$. It follows that  the cohomology of $\pi_1(M_s)$ can be understood by studying the cohomology of $M_s$  and the  action of $\pi_1(M_s)$ on  $\mathbb{H}^3$. It is therefore natural to investigate the cohomology of $G/\ll N \rr$ in the more general setting of Theorem \ref{thm. simple Dehn filling} where one may look for a similar geometric footing.

\begin{question}\label{question. main question}
For a group $G$ with a subgroup $H\hookrightarrow_h G$ and a normal subgroup $N\lhd H$, what can be said about the cohomology of $G/\ll N \rr$?
\end{question}
The main goal of this series of two papers is to address this question and to illustrate the implications of the results in this direction. 

We should point out that even though  we consider the general case of hyperbolically embedded subgroups $H\hookrightarrow_h G$, all of our results are new in the special case when $G$ is hyperbolic relative to $H$. 


\subsection*{Acknowledgements} Most of this work was done when the second named author was a graduate student at Vanderbilt University. He would like to thank his supervisor, Denis Osin, for the valuable discussions. This paper would not have been written without the help of Osin. The second named author would also like to thank Anna Marie Bohmann for the helpful comments and thank Ian Leary for answering his question and the suggestion of references. The second named author received funding from the European Research Council (ERC) under the European Union’s Horizon 2020 research and innovation programme (Grant agreement No. 850930) and an AMS--Simons Travel Grant (Grant agreement No. IP00672308). The first named 
author would like to thank Clara L\"{o}h and Kevin Li for helpful comments and suggestions. Finally, the authors are thankful to the referees for the careful reading of the paper and many detailed comments which have improved both its  content and exposition.

\section{Statements of main results}

\subsection{Cohomological properties of Dehn fillings.} To simplify the statement, we introduce the following terminology and notation.

\begin{definition}\label{intro: def. sufficiently deep} Let $G$ be a group and $H$ a subgroup of $G$. We say that a property $\mathcal P$ \textit{holds for sufficiently deep normal subgroups} if there is a finite set $\mathcal{F}\subset H\smallsetminus\{1\}$ such that $\mathcal P$ holds whenever $N$ is a normal subgroup of $H$ and $N\cap \mathcal{F}=\emptyset$.

\noindent Given a normal subgroup $N$ of $H$, let $\bg = G/\ll N \rr$ and $\bh = H/N$.
\end{definition}

Theorem \ref{thm. simple Dehn filling} can now be restated as: let $G$ be a group with a subgroup $H\hookrightarrow_h G$. Then for sufficiently deep $N\lhd H$, the natural homomorphism $\bh\rightarrow\bg$ maps $\bh$ injectively onto a hyperbolically embedded subgroup of $\bg$.

The following is a summary of our main results on cohomological properties of Dehn fillings.

\begin{theo}\label{thm. main}
Let $G$ be a group with a subgroup $H\hookrightarrow_h G$. Then the following hold for all sufficiently deep $N\lhd H$ and all $\bg$-modules $A$.
\begin{enumerate}
    \item[(i)] There is a spectral sequence
    \[
    E_2^{p,q}(A)=
    \begin{cases}
     H^p(\bh; H^q(N;A))&\text{for } q>0\\
     H^p(\bg;A)&\text{for } q=0
    \end{cases}
    \Rightarrow  H^{p+q}(G;A),
    \]
    where the action of $G$ on $A$ factors through $\bg$. In particular, the action of $N$ on $A$ fixes $A$ pointwise.\\

    \item[(ii)] (Algebraic Excision) For all $n\geqslant 0$, there is a natural isomorphism induced by the quotient maps $G\rightarrow \bg$ and $H\rightarrow\bh$,
    \[ H^n(\bg,\bh;A)\cong  H^n(G,H;A).\]
    \end{enumerate}
\end{theo}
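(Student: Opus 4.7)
The plan is to exploit the Cohen--Lyndon property established in \cite{sun2018cohomologyi}: for sufficiently deep $N\lhd H$ there is a left transversal $T$ of $H\ll N \rr$ in $G$ such that $K:=\ll N \rr$ decomposes as the free product $\ast_{t\in T}tNt^{-1}$. Applying the Lyndon--Hochschild--Serre spectral sequence to $1\to K\to G\to\bg\to 1$, and using that $K$ acts trivially on the $\bg$-module $A$, yields
\[ E_2^{p,q}=H^p(\bg;H^q(K;A))\Rightarrow H^{p+q}(G;A). \]
The row $q=0$ is already $H^p(\bg;A)$. For $q\geq 1$, the classifying space of an infinite free product is the infinite wedge of the factor classifying spaces, so $H^q(K;A)\cong\prod_{t\in T}H^q(tNt^{-1};A)$. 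Because the image of $T$ in $\bg$ is a set of $\bh$-coset representatives, this product should identify $\bg$-equivariantly with the coinduced module $\mathrm{CoInd}_{\bh}^{\bg}H^q(N;A)$, and Shapiro's lemma then yields $E_2^{p,q}=H^p(\bh;H^q(N;A))$ for $q\geq 1$, proving (i).

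For (ii), I would compare the spectral sequence of (i) with the ordinary Lyndon--Hochschild--Serre spectral sequence of $1\to N\to H\to\bh\to 1$. The two agree on rows $q\geq 1$ and differ only on the edge row $q=0$, where one reads $H^p(\bg;A)$ and the other $H^p(\bh;A)$. At the cochain level this should imply that the mapping cones of the comparison maps $C^*(\bg;A)\to C^*(\bh;A)$ and $C^*(G;A)\to C^*(H;A)$ are quasi-isomorphic, giving the isomorphism $H^n(\bg,\bh;A)\cong H^n(G,H;A)$ of relative cohomologies.

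Parts (iii)--(v) should then follow. For (iii), combine (ii) with the long exact sequences of the pairs $(G,H)$ and $(\bg,\bh)$: for $n\geq\cd(H)+2$ the terms $H^{n-1}(H;A)$ and $H^n(H;A)$ vanish, so $H^n(G,H;A)\cong H^n(G;A)$, and substituting via (ii) into the long exact sequence of $(\bg,\bh)$ produces a short exact sequence $0\to H^n(G;A)\to H^n(\bg;A)\to H^n(\bh;A)\to 0$ whose splitting should arise from the inflation $H^n(\bg;A)\to H^n(G;A)$ factoring through the edge of the spectral sequence of (i). Part (iv) is then immediate: for $n>\max\{\cd(G),\cd(H)+1,\cd(\bh)\}$ all three groups in the decomposition of (iii) vanish. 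For (v), I would invoke the Bieri--Eckmann criterion to detect type $FP_n$ via commutation of $H^k(-;-)$ with direct products of coefficient modules, and then propagate this property among $G$, $\bh$, and $\bg$ using the spectral sequence of (i).

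The main obstacle will be the $\bg$-equivariant identification in (i) of $\prod_{t\in T}H^q(tNt^{-1};A)$ with $\mathrm{CoInd}_{\bh}^{\bg}H^q(N;A)$: the conjugation action of $G$ permutes the conjugates $tNt^{-1}$, and descending this coherently to an $\bh$-action modulo $K$ requires careful use of the transversal structure. A secondary difficulty is producing the splitting in (iii), since the long exact sequence alone only yields a short exact sequence; and a third is (v), where type $FP_n$ is a statement about free resolutions while the spectral sequence of (i) controls only cohomology.
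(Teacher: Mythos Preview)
Your overall strategy coincides with the paper's: Cohen--Lyndon gives the free product decomposition of $\ll N\rr$; the coinduction identification together with Shapiro's lemma collapses the $q\geq 1$ rows of the Lyndon--Hochschild--Serre spectral sequence to $H^p(\bh;H^q(N;A))$; and (iii)--(iv) follow from the long exact sequences of the pairs together with (ii), the splitting in (iii) being produced exactly by the inflation $H^n(\bg;A)\to H^n(G;A)$ as you say. For (ii) the paper carries out your mapping-cone picture concretely, by realizing the comparison of spectral sequences as a surjection of double complexes and identifying its kernel with a complex that computes $H^{*+1}(G,H;A)$; this is essentially equivalent to your sketch.

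There is, however, a genuine gap in your plan for (v). The rows $q\geq 1$ of the spectral sequence read $H^p(\bh;H^q(N;A))$, and you have no finiteness control over $N$, so you cannot force these terms to commute with the relevant limit (or product) of coefficient modules; consequently the spectral sequence does not let you isolate the bottom row $H^p(\bg;A)$. The paper sidesteps this by working instead with the commutative ladder of long exact sequences of the pairs $(G,H)$ and $(\bg,\bh)$ already built for (ii). In the $(G,H)$ row one uses that $G$ is $FP_n$ \emph{and} that $H$ is $FP_n$---the latter being an external input, namely that a hyperbolically embedded subgroup inherits type $FP_n$ from the ambient group---to conclude that the relative term $H^k(G,H;-)$ commutes with the limit; the $(\bg,\bh)$ row then sandwiches $H^k(\bg;-)$ between this relative term and $H^k(\bh;-)$. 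The converse direction likewise requires the further external fact that $\bh\hookrightarrow_h\bg$ for sufficiently deep $N$, which is what transfers $FP_n$ back from $\bg$ to $\bh$. Neither of these ingredients appears in your sketch, and the first cannot be recovered from the spectral sequence of (i) alone.
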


The spectral sequence together with the algebraic excision have the following application. 
    
\begin{corollary}\label{intro_cor:finiteness}
    Let $G$ be a group with a subgroup $H\hookrightarrow_h G$. Then for all sufficiently deep $N\lhd H$ and all $\bg$-modules $A$, we have
    \begin{enumerate}
   \item[(i)]  For all $n\geqslant \cd(H)+2$,
   \[ H^n(\bg;A)\cong  H^n(G;A)\oplus  H^n(\bh;A).\]
    
 \item[(ii)] $\cd(\bg)\leqslant \max\{\cd(G),\cd(H)+1,\cd(\bh)\}.$\\
    
  \item[(iii)] If $G$ is of type $FP_n$ for some $n\in\mathbb{N}^+\cup\{\infty\}$ (resp. $FP$), then $\bg$ is of type $FP_n$ (resp. $FP$) if and only if $\bh$ is of type $FP_n$ (resp. $FP$).
\end{enumerate}
\end{corollary}

\noindent Here,  $\cd(G)$ stands for the \textit{cohomological dimension} of a group $G$, and $\mathbb{N}^+$ stands for the set of positive integers. This notion, \textit{property} $FP_n$, \textit{property} $FP$, and \textit{relative cohomology of group pairs}  are reviewed in Section \ref{sec. relative group cohomology}.

Analogous homological statements to Theorem \ref{thm. main} and Corollary \ref{intro_cor:finiteness} also hold. The spectral sequence in Theorem \ref{thm. main} (i) is a refinement of the classical Lyndon-Hochschild-Serre spectral sequence \cite{hochschild1953cohomology,lyndon1948cohomology} in the setting of Dehn fillings.

Let $M$ and $M_s$ be as in Theorem \ref{thm. thurson hyperbolic Dehn filling} and let $G=\pi_1(M),H=\pi_1(\partial M), N=\langle s \rangle$. Then Theorem \ref{thm. main} (ii) is an immediate consequence of excision. Therefore, Theorem \ref{thm. main} (ii) can be thought of as an algebraic analog of excision and computes $H^n(\bg,\bh;A)$ from $H^n(G,H;A)$. In the special case where $A=\Z \bg$, a spectral sequence to compute $H^\ast(\bg,\bh;\Z \bg)$ with $H^\ast(G,H;\Z G)$ has been developed by \cite{wang2018spectral}. We remark that the spectral sequence in \cite{wang2018spectral} is different from ours.

Instead of proving Theorem \ref{thm. main}, we will prove more general results (see Section \ref{sec. proof of main results}), which cover the case of \textit{a hyperbolically embedded family of subgroups} and will be useful in the proof of Theorems \ref{intro:thm:pd}, \ref{intro:thm:norm}, and \ref{thm. common quotient of acylindrically hyperbolic groups} below, and also cover the case of \textit{weakly hyperbolically embedded subgroups} and can be applied to graph of groups (see Example \ref{eg. product hyperbolically embedded} (e)).

Corollary \ref{intro_cor:finiteness} was recently used by Arenas \cite{Arenas}, who gave a variation of the Rips Construction that produces cubulated hyperbolic groups  of cohomological dimension bounded above by the cohomological dimension of an associated compact special cube complex.




\subsection{Poincar\'{e} duality and simplicial volume} Simplicial volume is a homotopy invariant of oriented closed connected manifolds  \cite{frigerio2017,  loh2011}. It was introduced by Gromov in his seminal paper \cite{gromov1982}. 

In \cite{fujmann2010}, Fujiwara and Manning generalize Gromov-Thurston's  $2\pi$-theorem  \cite{Bleiler_Hodgson} on Dehn fillings of $3$-manifolds  to higher dimensional finite volume hyperbolic manifolds $M^n$ with toral cusps. The resulting $2\pi$-fillings are  pseudomanifolds and are manifolds if and only if all the filling cores have dimension exactly $n-2$. They prove that every $2\pi$-filling admits a complete locally CAT(0) metric. In \cite{fujmann2011},  they show that the simplicial volume of every $2\pi$-filling is positive and bounded above by the relative simplicial volume $||\M, \partial \M||$ of $M$. 

In certain cases, simplicial volume can also be defined in more abstract setting of groups and topological spaces. The following  two theorems can be seen as natural generalizations and group theoretic analogs of the results of Fujiwara and Manning. 

\begin{theo}\label{intro:thm:pd} Let $G$ be a group and $\h=\{H_i\}_{i=1}^m$ a collection of subgroups such that $\h \hookrightarrow_h G$. Suppose, for some integer $2\leqslant n$,  $(G, \h )$ is a PD$(n)$-pair and that there are sufficiently deep  $\{\Z\cong N_i\lhd H_i\}_{i=1}^m$,  such that every $\bh_i$ is a PD$(n-2)$-group. Then $\bg$ is a PD$(n)$-group.
\end{theo}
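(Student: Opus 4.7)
The plan is to verify the three defining properties of an orientable PD$(n)$-group for $\bg$: type $FP$ over $\mathbb{Z}$, vanishing of $H^i(\bg;\mathbb{Z}\bg)$ for $i\neq n$, and $H^n(\bg;\mathbb{Z}\bg)\cong\mathbb{Z}$. First I would record the finiteness and dimension consequences. Since $(G,\h)$ is a PD-pair, $G$ is of type $FP$; each $\bh_i$ is PD$(n-2)$ hence of type $FP$; so Theorem~A(v) yields $\bg$ of type $FP$. Likewise $\cd(G)\leq n$, $\cd(\bh_i)=n-2$, and $\cd(H_i)\leq n-1$ (as $H_i$ is a $\mathbb{Z}$-by-$\bh_i$ extension), so Theorem~A(iv) gives $\cd(\bg)\leq n$, and hence $H^i(\bg;\mathbb{Z}\bg)=0$ for $i>n$.

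The computation of the remaining cohomology runs through the long exact sequence in cohomology of the pair $(\bg,\bh)$ with coefficients $\mathbb{Z}\bg$. For the relative term, Poincar\'e--Lefschetz duality for the PD-pair $(G,\h)$ together with Shapiro's lemma applied to $\mathbb{Z}\bg\cong \mathrm{Ind}^G_{\ll N\rr}\mathbb{Z}$ gives
\[
H^p(G,\h;\mathbb{Z}\bg)\cong H_{n-p}(G;\mathbb{Z}\bg)\cong H_{n-p}(\ll N\rr;\mathbb{Z}).
\]
By the Cohen--Lyndon property underlying Theorem~A, $\ll N\rr$ is a free product of conjugates of the $N_i\cong\mathbb{Z}$ indexed by transversals of $\bh_i\backslash\bg$, hence a free group with $\ll N\rr^{\mathrm{ab}}\cong \bigoplus_i\mathbb{Z}[\bh_i\backslash\bg]$. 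Thus $H^p(G,\h;\mathbb{Z}\bg)$ vanishes outside $\{n-1,n\}$, equals $\ll N\rr^{\mathrm{ab}}$ at $p=n-1$, and equals $\mathbb{Z}$ at $p=n$; Theorem~A(ii) transports this to $H^p(\bg,\bh;\mathbb{Z}\bg)$. For the peripheral term, since each $\bh_i$ is PD$(n-2)$ and of type $FP$, Shapiro's lemma yields $H^p(\bh_i;\mathbb{Z}\bg)\cong \bigoplus_{\bh_i\backslash\bg}H^p(\bh_i;\mathbb{Z}\bh_i)$, which is $\mathbb{Z}[\bh_i\backslash\bg]$ at $p=n-2$ and zero otherwise.

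Substituting these values into the long exact sequence of $(\bg,\bh)$ immediately gives $H^i(\bg;\mathbb{Z}\bg)=0$ for $i\leq n-3$ and $H^n(\bg;\mathbb{Z}\bg)\cong\mathbb{Z}$, while the remaining range collapses to the exact sequence
\[
0\to H^{n-2}(\bg;\mathbb{Z}\bg)\to \bigoplus_i\mathbb{Z}[\bh_i\backslash\bg]\xrightarrow{\delta}\ll N\rr^{\mathrm{ab}}\to H^{n-1}(\bg;\mathbb{Z}\bg)\to 0.
\]

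The main obstacle is to show that the connecting homomorphism $\delta$ is an isomorphism. Both source and target are canonically $\bigoplus_i\mathbb{Z}[\bh_i\backslash\bg]$, indexed by the same Cohen--Lyndon transversals, so the expectation is that $\delta$ is the identity under these identifications. I would verify this by tracing the fundamental class $[G,\h]\in H_n(G,\h;\mathbb{Z})$ through the excision isomorphism of Theorem~A(ii) and the Cohen--Lyndon splitting: the generator of $H^{n-2}(\bh_i;\mathbb{Z}\bg)$ indexed by a coset $\bh_ig$ is the PD fundamental cohomology class of the conjugate $g\bh_ig^{-1}$, and its coboundary should correspond to the dual of the free generator $gN_ig^{-1}$ of $\ll N\rr^{\mathrm{ab}}$. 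Once $\delta$ is known to be an isomorphism, both $H^{n-2}(\bg;\mathbb{Z}\bg)$ and $H^{n-1}(\bg;\mathbb{Z}\bg)$ vanish, completing the orientable case. The non-orientable case runs in parallel, replacing $\mathbb{Z}$ by the dualizing module of $(G,\h)$ throughout and checking compatibility with the orientation characters of the $\bh_i$.
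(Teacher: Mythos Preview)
Your setup is correct and overlaps with the paper's: type $FP$ for $\bg$, the bound $\cd(\bg)\le n$, the computation of $H^*(\bg,\bh;\mathbb{Z}\bg)$ via excision and duality, and the computation of $H^*(\bh_i;\mathbb{Z}\bg)$ are all fine. The long exact sequence of the pair then does reduce everything to whether your connecting map $\delta$ is an isomorphism.

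The gap is precisely where you flag it. You do not prove $\delta$ is an isomorphism; ``tracing the fundamental class'' is not an argument, and the two identifications of source and target with $\bigoplus_i\mathbb{Z}[\bh_i\backslash\bg]$ go through genuinely different dualities (Poincar\'e--Lefschetz for the pair $(G,\h)$ composed with the Cohen--Lyndon decomposition on one side, Poincar\'e duality for each $\bh_i$ plus Shapiro on the other). There is no a priori reason $\delta$ becomes the identity under these identifications, and making this explicit is exactly the hard part.

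The paper sidesteps this entirely. Instead of the long exact sequence of the pair $(\bg,\bh)$, it uses the spectral sequence of Theorem~A(i): since each $N_i\cong\mathbb{Z}$, only the rows $q=0,1$ are nonzero, so the spectral sequence collapses to a Gysin long exact sequence relating $H^*(\bg;\mathbb{Z}\bg)$, $H^*(G;\mathbb{Z}\bg)$ and $\prod_i H^*(\bh_i;\mathbb{Z}\bg)$. From this one reads off $H^{n-2}(\bg;\mathbb{Z}\bg)\cong H^{n-2}(G;\mathbb{Z}\bg)$, which by duality and then excision equals $H_2(\bg,\oh;\mathbb{Z}\bg)$; the latter vanishes because $H_k(\Gamma;\mathbb{Z}\Gamma)=0$ for $k>0$ for any group $\Gamma$. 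For $H^{n-1}(\bg;\mathbb{Z}\bg)$ the Gysin sequence and the morphism of Lyndon--Hochschild--Serre spectral sequences identify it with the kernel of the restriction $r^{n-1}:H^{n-1}(G;\mathbb{Z}\bg)\to\prod_i H^{n-1}(H_i;\mathbb{Z}\bg)$. Dualizing, injectivity of $r^{n-1}$ is injectivity of $\partial:H_1(G,\h;\mathbb{Z}\bg)\to\bigoplus_i H_0(H_i;\mathbb{Z}\bg)$, and the naturality square with $\overline{\partial}$ for $(\bg,\oh)$ gives $\ker\partial\cong\ker\overline{\partial}=\mathrm{im}\,H_1(\bg;\mathbb{Z}\bg)=0$.

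So the move you are missing is to use the Gysin sequence coming from Theorem~A(i), which trades your intractable connecting map $\delta$ for maps whose kernels are controlled by the trivial vanishing $H_{>0}(\bg;\mathbb{Z}\bg)=0$. (Also, the non-orientable paragraph is unnecessary: the paper's convention is that Poincar\'e duality means orientable throughout.)
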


The hypothesis on subgroups in Theorem \ref{intro:thm:pd} are for example  satisfied when each $H_i$ is a torsion-free nilpotent group with center of rank at least $2$ (see e.g.~ Lemma \ref{lem:nil}).

\begin{theo}\label{intro:thm:norm}
Let $G$ be a group and $\h=\{H_i\}_{i=1}^m$ a collection of subgroups such that $\h \hookrightarrow_h G$. Suppose, for some integer $n\geq 2$,  $(G, \h )$ is a PD$(n)$-pair and for a sufficiently deep  $\{N_i\lhd H_i\}_{i=1}^m$,  $\mathrm{cd}(\bh_i)\leq n-2$ for each $1\leq i\leq m$.  Then, $\cd(\bg)=n$, $H_n(\bg; \Z)=\Z$. In addition,
\begin{itemize}
\item[(i)]  if the group $\bh_i$ is amenable  for each $1\leq i\leq m$, then $||\bg ||\leq ||G, \h ||$, where $||\cdot||$ denotes the simplicial volume (see Definition \ref{def:sv});
\item[(ii)] if $G$ is hyperbolic relative to $\h$, then $||\bg|| >0$.
\end{itemize}
\end{theo}

We should point out that conjecturally by Kropholler, amenable groups of finite cohomological dimension such as $\bh_i$ are virtually solvable \cite[Question, p.~2]{deg2016}. Also, by Poincar\'{e} duality group or pair we will always mean orientable ones.

Theorem \ref{intro:thm:norm} can for example be applied when $G$ is the fundamental group of a Riemannian manifold with a complete pinched negative sectional curvature and finite volume. We give one such application which generalises Theorem 1.5 of \cite{fujmann2011}. 

\begin{corollary}[Corollary \ref{cor:pinched}]\label{intro:cor:pinched}
Let $\M$ be a compact oriented $n$-manifold with nilmanifold boundary components  such that the centre of the fundamental group of each boundary component is of rank at least $2$. Suppose the interior of $\M$ admits a Riemannian metric with a complete pinched negative sectional curvature and finite volume.   If $M_T$ is a sufficiently deep Dehn filling manifold of $\M$, then $M_T$ is a closed oriented aspherical $n$-manifold with
$$0<||M_T||\leq ||\M, \partial \M||.$$ 
\end{corollary}

For the definition of $M_T$ we refer to Definition \ref{sufficient_deep_mfld}. It is worth mentioning that the asphericity of $M_T$ above is not immediate and follows from the Cohen-Lyndon property. 



\subsection{Quotients of acylindrically hyperbolic groups.} The notion of an acylindrically hyperbolic group was introduced by Osin \cite{osin2016acylindrically} as a generalization of non-elementary hyperbolic and non-elementary relatively hyperbolic groups. Examples of acylindrically hyperbolic groups can be found in many classes of groups that attracted group theorists for years, e.g., mapping class groups of surfaces \cite{masur1999geometry,bowditch2008tight}, outer automorphism groups of free groups \cite{bestvina2009hyperbolic}, small cancellation groups \cite{Gruber2018infinitely}, convergence groups \cite{sun2017dynamical}, the Cremona group (see \cite{dahmani2017hyperbolically} and references therein; the main contribution towards showing the acylindrical hyperbolicity of the Cremona group is due to \cite{Cantat2013normal}), and tame automorphism groups of $3$-dimensional affine spaces \cite{lamy2019acylindrical}. We refer to \cite{osin2017groups} for details and other examples.

Every acylindrically hyperbolic group $G$ contains hyperbolically embedded subgroups \cite[Theorem 6.14]{dahmani2017hyperbolically} and Dehn fillings can often be applied to construct useful quotients of $G$. We use Theorem \ref{thm. main} to study homological properties of those quotients.


Recall that every acylindrically hyperbolic group $G$ has a unique maximal finite normal subgroup denoted by $K(G)$ \cite[Theorem 6.14]{dahmani2017hyperbolically}.

\begin{theo}\label{cd SQ-universality}
Let $G$ be an acylindrically hyperbolic group, and let $C$ be any countable group. Then $C$ embeds into a quotient $\bg$ of $G/K(G)$ (in particular, $\bg$ is a quotient of $G$) such that 
\begin{enumerate}
\item[(i)] $\bg$ is acylindrically hyperbolic;\\
\item[(ii)] if $C$ is finitely generated, then $C \hookrightarrow_h \bg$;\\
\item[(iii)] if $G$ and $C$ are torsion-free, then so is $\bg$;\\
\item[(iv)] for all $n\geqslant 3$ and every $\bg$-module $A$, we have
\[ H^n(\bg;A)\cong  H^n(G/K(G);A)\oplus  H^n(C;A),\]
where the action of $G/K(G)$ (resp.~$C$) on $A$ is induced by the quotient map $G/K(G)\rightarrow\bg$ (resp.~the embedding $C\hookrightarrow \bg$);\\
\item[(v)] $\cd(\bg)\leqslant \max\{\cd(G),\cd(C)\}$;\\
\item[(vi)] if $C$ is finitely generated and $G$ is of type $FP_n$ for some $n\in\mathbb{N}^+\cup\{\infty\}$ (resp.~$FP$), then $\bg$ is of type $FP_n$ (resp.~$FP$) if and only if $C$ is of type $FP_n$ (resp.~$FP$).
\end{enumerate}
\end{theo}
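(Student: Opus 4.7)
The plan is to combine the SQ-universality construction of Dahmani-Guirardel-Osin \cite{dahmani2017hyperbolically} and Hull \cite{hull2013small} with the cohomological machinery of Theorem \ref{thm. main}. We realize $C$ as the peripheral filling quotient $F/N$ of a Dehn filling of $G/K(G)$ along a hyperbolically embedded free subgroup $F$, and then read off all the cohomological conclusions directly from Theorem \ref{thm. main}.

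\textbf{Construction of $\bg$.} Since $K(G)$ is finite, $G/K(G)$ is acylindrically hyperbolic with trivial finite radical and has the same cohomological finiteness behavior as $G$ in the ranges that concern us; we therefore pass to $G/K(G)$ without loss. By \cite[Theorem 6.14]{dahmani2017hyperbolically}, $G/K(G)$ contains hyperbolically embedded free subgroups of any prescribed finite rank. When $C$ is finitely generated by $k$ elements, choose a free $F\leqslant G/K(G)$ of rank $k$ (possibly enlarged) with $F\hookrightarrow_h G/K(G)$; when $C$ is only countable, use instead a hyperbolically embedded family of infinite cyclic subgroups indexed by a countable set, invoking the family version of Theorem \ref{thm. main} promised in Remark (g). Let $F\twoheadrightarrow C$ be a surjection with kernel $N\lhd F$. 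Following Hull's technique, by enlarging $F$ and suitably modifying the presentation, we can arrange $N\cap\mathcal F=\emptyset$ for the finite exceptional set $\mathcal F$ of Theorem \ref{thm. simple Dehn filling}, i.e., $N$ is sufficiently deep. Set $\bg:=(G/K(G))/\ll N\rr$.

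\textbf{Properties (i)-(iii).} Theorem \ref{thm. simple Dehn filling} immediately yields (ii): $C\cong F/N$ injects into $\bg$ and the image is hyperbolically embedded when $C$ is finitely generated. Property (i) follows because $\bg$ contains a proper hyperbolically embedded subgroup and is not virtually cyclic. For (iii) we invoke the Cohen-Lyndon property from Paper I \cite{sun2018cohomologyi}: $\ll N\rr$ is a free product of $G$-conjugates of $N$, and every torsion element of $\bg$ is conjugate into $\bh=C$. If $G$ is torsion-free then $K(G)=1$ and $G/K(G)=G$, so together with $C$ being torsion-free, $\bg$ is torsion-free.

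\textbf{Properties (iv)-(vi) and main obstacle.} Since $F$ is free, $\cd(F)=1$, so Theorem \ref{thm. main}(iii) gives the splitting in (iv) in the range $n\geqslant\cd(F)+2=3$; Theorem \ref{thm. main}(iv) yields $\cd(\bg)\leqslant\max\{\cd(G/K(G)),2,\cd(C)\}$, from which (v) follows, with the corner cases $\max\{\cd(G),\cd(C)\}\leqslant 1$ handled directly by observing that in such cases $C$ already embeds into $G$ as a hyperbolically embedded subgroup and we may simply take $\bg=G$; and Theorem \ref{thm. main}(v) gives (vi), using that $G$ and $G/K(G)$ share the same $FP_n$ (and $FP$) properties. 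The main obstacle is the engineering step above: choosing $F$ and the presentation $F\twoheadrightarrow C$ so that $N$ is sufficiently deep while $F/N\cong C$. This is precisely the technical heart of Hull's SQ-universality argument \cite{hull2013small}; once it is in place, all of (i)-(vi) follow formally from Theorem \ref{thm. main} and the Cohen-Lyndon property of Paper I.
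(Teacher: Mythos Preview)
Your overall strategy is right in spirit, but there is a genuine gap at the construction step. You claim that one can choose a surjection $F\twoheadrightarrow C$ whose kernel $N$ avoids the exceptional finite set $\mathcal F$. This is the crux, and it is not true in general if you insist on $F/N\cong C$. For instance, if $C$ has an element of order $m$, then \emph{every} surjection from \emph{any} free $F$ onto $C$ has kernel containing $m$-th powers; if the relative metric is such that $\mathcal F$ happens to contain an $m$-th power, you are stuck regardless of how you enlarge $F$ or modify the presentation. Hull's technique does not give what you attribute to it here. Your fallback for infinitely generated $C$, namely an infinite hyperbolically embedded family of cyclics, is also not available: the finiteness results you need (e.g.\ Theorem \ref{property FP}) require a \emph{finite} family.

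The paper sidesteps exactly this obstacle by \emph{not} arranging $F/N\cong C$. Instead, Lemma \ref{free group situation} builds (for any countable $C$) an auxiliary group $R=\mathbb{Z}\ast R_0$ via explicit $C'(1/6)$ small-cancellation relators over $F_4\ast C$, with $C\hookrightarrow R_0$. The point of the small-cancellation construction is that the defining relators can be taken as long as one likes, so the kernel $N=\ker(F_4\twoheadrightarrow R)$ misses any prescribed finite $\mathcal F$ by Greendlinger's lemma. One then applies Theorem \ref{thm. main} to $N\lhd F_4\hookrightarrow_h G_0$ with $\bh=R$ rather than $C$; the lemma also shows that $R$ has the same cohomology as $C$ in degrees $\geq 3$, that $\cd(R)\leq\cd(C)$, that $R$ is torsion-free when $C$ is, and that $R_0$ is hyperbolic relative to $C$ when $C$ is finitely generated. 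This last point, combined with Proposition \ref{prop. iterative hyperbolically embedded}, is what yields $C\hookrightarrow_h\bg$ in (ii). The torsion-freeness argument (iii) in the paper is also more delicate than your sketch: it uses the elliptic/loxodromic dichotomy from \cite[Theorem 7.15]{dahmani2017hyperbolically} together with almost malnormality, not just the Cohen-Lyndon structure.
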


As an application, we strengthen SQ-universality of hyperbolic groups given by Olshanskii \cite{Olsh} and independently by Delzant \cite{Del} by adding cohomological conditions. 

\begin{corollary}[Corollary \ref{cor. hyperbolic}]\label{intro:cor. hyperbolic}
Let $G$ be a non-elementary hyperbolic group and $C$ any hyperbolic group. Then there is a hyperbolic quotient $\bg$ of $G/K(G)$ (in particular, $\bg$ is a quotient of $G$) such that $C$ embeds into $\bg$ and the following hold.
\begin{enumerate}
\item[(i)] For all $n\geqslant 3$ and every $\bg$-module $A$, we have
\[ H^n(\bg;A)\cong  H^n(G/K(G);A)\oplus  H^n(C;A),\]
where the action of $G/K(G)$ (resp. $C$) on $A$ is induced by the quotient map $G/K(G)\rightarrow\bg$ (resp. the embedding $C\hookrightarrow \bg$).\\
\item[(ii)] $\cd(\bg)\leqslant \max\{\cd(G),\cd(C)\}$.\\
\end{enumerate}
\end{corollary}

\begin{theo}\label{thm. common quotient of acylindrically hyperbolic groups}
Let $G_1$ and $G_2$ be finitely generated acylindrically hyperbolic groups. Then there exists a common quotient $G$ of $G_1/K(G_1)$ and $G_2/K(G_2)$ (in particular, $G$ is a common quotient of $G_1$ and $G_2$) such that
\begin{enumerate}
    \item[(i)] $G$ is acylindrically hyperbolic;\\
    \item[(ii)] for all $n\geqslant 3$ and every $G$-module $A$, we have
    \[ H^n(G;A)\cong  H^n(G_1/K(G_1);A)\oplus  H^n(G_2/K(G_2);A),\]
    where the actions of $G_1/K(G_1)$ and $G_2/K(G_2)$ on $A$ factor through $G$;\\
    \item[(iii)] $\cd(G)\leqslant \max\{\cd(G_1),\cd(G_2)\};$\\
    \item[(iv)] if $G_1$ and $G_2$ are of type $FP_n$ for some $n\in\{2,3,...,\infty\}$ (resp. $FP$), then so is $G$.
\end{enumerate}
\end{theo}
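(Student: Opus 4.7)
Set $G_i' := G_i/K(G_i)$ and fix finite generating sets $S$ of $G_1'$ and $T$ of $G_2'$. The plan is to realize $G$ as a Dehn filling of the free product $P := G_1' \ast G_2'$ along a carefully chosen free hyperbolically embedded subgroup $H \hookrightarrow_h P$, and then to deduce the cohomological conclusions by combining Theorem \ref{thm. main} with the Mayer--Vietoris isomorphism
\[
H^k(P;A) \cong H^k(G_1';A) \oplus H^k(G_2';A) \qquad (k \geq 2)
\]
for cohomology of free products. Since each $G_i'$ is acylindrically hyperbolic with trivial finite radical (treating degenerate cases separately), so is $P$, and $K(P) = 1$.

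Following the common-quotient construction of Hull \cite{hull2013small}, I will produce a finitely generated free hyperbolically embedded subgroup $H \hookrightarrow_h P$ together with a sufficiently deep normal subgroup $N \lhd H$ such that the normal closure $\ll N \rr$ in $P$ contains elements of the form $t \cdot w_t(S)^{-1}$ (for every $t \in T$) and $s \cdot v_s(T)^{-1}$ (for every $s \in S$), where the $w_t, v_s$ are suitably chosen words; moreover $\bh := H/N$ will be taken free of finite positive rank, so that $\cd(\bh) \leq 1$ and $\bh$ is of type $FP_\infty$. Setting $G := P/\ll N \rr$, these relations force both compositions $G_i' \hookrightarrow P \twoheadrightarrow G$ to be surjective, so $G$ is a common quotient of $G_1'$ and $G_2'$; property (i) is then an application of Theorem \ref{thm. simple Dehn filling}.

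For (ii), Theorem \ref{thm. main} (iii) applied to the filling $H \hookrightarrow_h P$ gives, for $n \geq \cd(H) + 2 = 3$,
\[
H^n(G;A) \cong H^n(P;A) \oplus H^n(\bh;A),
\]
whose second summand vanishes because $\cd(\bh) \leq 1 < n$, and whose first summand splits by the free-product Mayer--Vietoris decomposition above. For (iii), Theorem \ref{thm. main} (iv) yields
\[
\cd(G) \leq \max\{\cd(P), \cd(H)+1, \cd(\bh)\} \leq \max\{\cd(P), 2\} \leq \max\{\cd(G_1), \cd(G_2)\},
\]
using the free-product formula for cohomological dimension together with the fact that finite $\cd(G_i)$ forces $K(G_i) = 1$, so $G_i = G_i'$; the bound is meaningful in the regime $\max\{\cd(G_i)\} \geq 2$, which is the interesting case. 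Finally (iv) follows from Theorem \ref{thm. main} (v): since $\bh$ is of type $FP_\infty$, $G$ is of type $FP_n$ (resp.\ $FP$) exactly when $P$ is, and $P = G_1' \ast G_2'$ inherits this property from its two free factors.

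The principal obstacle is the construction of $H \hookrightarrow_h P$ whose Dehn fillings simultaneously realize all the identifications $t = w_t(S)$ and $s = v_s(T)$: one must choose the words $w_t, v_s$ carefully so that the resulting elements (after perhaps passing to suitable conjugates or powers) form part of a free hyperbolically embedded subgroup of $P$. This is essentially Hull's common-quotient argument, to be refined here so that the filling subgroup $H$ is explicitly available for the application of Theorem \ref{thm. main}, and so that the quotient $\bh$ may be arranged to have small cohomological dimension.
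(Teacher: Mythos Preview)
Your overall strategy matches the paper's exactly: work in the free product $P=G_1'\ast G_2'$, build a free hyperbolically embedded subgroup (the paper uses a pair $\{H_1,H_2\}\hookrightarrow_h P$ rather than a single $H$, but this is cosmetic), perform a sufficiently deep Dehn filling whose kernel is normally generated by elements of the shape ``generator of $G_2'$ times word in $G_1'$'' and vice versa, arrange the quotients $H_i/N_i$ to be finite-rank free, and then read off (i)--(iv) from Theorem~\ref{thm. group theoretic Dehn filling} and Theorem~\ref{thm. main} combined with Mayer--Vietoris for the free product. Your deductions of (ii)--(iv) from Theorem~\ref{thm. main} are correct, as is the handling of the degenerate case.

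The gap is precisely the part you flag as ``the principal obstacle'' and then delegate to Hull. Hull's argument in \cite{hull2013small} does \emph{not} proceed via Dehn filling on an explicit hyperbolically embedded subgroup; it uses small cancellation over acylindrically hyperbolic groups directly, so it does not hand you an $H\hookrightarrow_h P$ on which to run Theorem~\ref{thm. main}. Producing such an $H$ whose generators have the required mixed form (a generator $b_i$ of $G_2'$ sandwiched between large powers of elements of a free subgroup $F_1\hookrightarrow_h G_1'$, and symmetrically) is the bulk of the paper's work: this is Proposition~\ref{prop. showing h.e. general}, established in Section~\ref{sec. construct h.e. subgroups} via isolated-component estimates (Proposition~\ref{lem. total length of i.c. in g.p.} and Lemma~\ref{lem. consecutive components}) together with the criterion of Lemma~\ref{lem. criterion for he}. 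Concretely, one takes $F_1\hookrightarrow_h G_1'$ and $F_2\hookrightarrow_h G_2'$ free of suitable rank, pushes them into $P$, and shows that for large $\ell$ the elements $f_{1,i}^\ell b_i g_{1,i}^\ell$ and $f_{2,i}^\ell a_i g_{2,i}^\ell$ freely generate subgroups $H_1,H_2$ with $\{H_1,H_2\}\hookrightarrow_h P$; the normal subgroups $N_i$ are then chosen via $C'(1/6)$ words in the last two generators so that the Tietze-reduced quotients $H_i/N_i$ are free of rank~$2$. Without this construction (or an equivalent), your proposal is a correct outline but not yet a proof.
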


 Homological analogs of Theorem \ref{cd SQ-universality} (iv), (v) and Theorem \ref{thm. common quotient of acylindrically hyperbolic groups} (ii), (iii) also hold (see Remarks \ref{rm. homological analog of D (v)} and \ref{rm. hd of common quotient}).
  Theorem \ref{cd SQ-universality} (i), (ii) are proved in \cite[Theorem 2.33]{dahmani2017hyperbolically} and Theorem \ref{thm. common quotient of acylindrically hyperbolic groups} (i) is proved in \cite[Corollary 7.4]{hull2013small}. The benefit of Theorems \ref{cd SQ-universality} and \ref{thm. common quotient of acylindrically hyperbolic groups} is that they allow one to control the cohomology of the resulting  acylindrically hyperbolic quotients. As we illustrate in Section \ref{sec. applications}, this facilitates the constructions of various acylindrically hyperbolic groups satisfying certain cohomological properties. We list some of them below.
 
  \begin{corollary}[Corollary \ref{cor. true F}]\label{intro:F}
   Every torsion-free acylindrically hyperbolic group $G$ of type $FP_{\infty}$ has a torsion-free acylindrically hyperbolic quotient $\bg$ of type $FP_{\infty}$ which contains the Thompson group $F$. 
\end{corollary}

The existence of a pair $H<G$, where $G$ is hyperbolic and $H$ is of type $F$ but not hyperbolic, was a well-known open problem, raised in particular by Bestvina \cite[Question 2.1]{bestvina?questions}, Brady \cite[Question 7.2]{brady1999branched}, Bridson \cite[Question 4.1]{bridson?problems}, and Jankiewicz-Norin-Wise \cite[Section 7]{jankiewicz2021virtually}. The first example of such a pair was constructed recently by Italiano-Martelli-Migliorini \cite[Corollary 2]{italiano2021hyperbolic}. By Corollary \ref{intro:cor. hyperbolic}, we obtain:

\begin{corollary}[Corollary \ref{cor. non-hyperbolic}]
Let $n\geq 5$ be an integer.
Every non-elementary hyperbolic group $G$ with $\cd(G)\leq n$ has a hyperbolic  quotient $\bg$ with $\cd(\bg)=n$ such that $\bg$ contains the Italiano-Martelli-Migliorini group. In particular, there is a type $F$ non-hyperbolic subgroup $H<\bg$.
\end{corollary}

The next two corollaries strengthen a result of \cite[Corllary 1.7]{hull2013small} stating that every acylindrically hyperbolic group has an acylindrically hyperbolic quotient with Kazhdan's Property (T).

\begin{corollary}[Corollary \ref{cor:T}]\label{intro:cor:T}
Every acylindrically hyperbolic group $G$ of type $FP_n$ for some $n\in\{2,3,...,\infty\}$ (resp.~$FP$) has an acylindrically hyperbolic quotient $\bg$  of type $FP_n$ (resp.~$FP$) with Kazhdan's Property (T) such that $\cd(\bg)\leq\max\{\cd(G), 2\}$.
\end{corollary}

\begin{corollary}[Corollary \ref{cor. fp_n but not fp_n+1}]\label{intro:cor: fp_n but not fp_n+1}
Let $G$ be any acylindrically hyperbolic group of type $FP_{\infty}$. Then $G$ has a family of acylindrically hyperbolic quotients $\{G_k\}^{\infty}_{k=2}$ such that for each $k$, $G_k$ has Kazhdan's Property (T), is of type $FP_{k-1}$ but not of type $FP_{k}$.
\end{corollary}

In particular, since mapping class groups of surfaces of finite type, outer automorphism groups of free groups of finite rank and most $3$-manifold groups are acylindrically hyperbolic and of type $FP$, they all exhibit such quotients.

\subsection{A few words on the proofs of the main results.} The first step of the proof of Theorem \ref{thm. main} is to establish, under the assumptions of the theorem, the  isomorphism
\begin{equation}\label{eq. induced module}
     H^n(\ll N \rr;A)\cong \mathrm{CoInd}^{\bg}_{\bh} H^n(N;A)
\end{equation}
for all $n>0$. Here,  $\mathrm{CoInd}^{\bg}_{\bh}$ stands for the  co-induction from $\mathbb{Z}\bh$ to $\mathbb{Z}\bg$. The Lyndon-Hochschild-Serre spectral sequence associated to the triple $(G,\ll N \rr,A)$ takes the form
\begin{equation*}\label{eq. LHS}
    E^{p,q}_2(A)= H^p(\bg; H^q(\ll N \rr;A))\Rightarrow  H^{p+q}(G;A).
\end{equation*}
Shapiro's lemma together with \eqref{eq. induced module}  yields Theorem \ref{thm. main} (i). In fact, we will establish a more precise result which involves a morphism between the Lyndon-Hochschild-Serre spectral sequences associated to $(G,\ll N \rr,A)$ and $(H,N,A)$. Part (ii) of Theorem \ref{thm. main} will be proved by an inspection of this morphism.

To  prove Theorem \ref{intro:thm:pd}, we analyse the spectral sequence of Theorem \ref{thm. main} (i) and apply part (ii) and Corollary \ref{intro_cor:finiteness} (iii). To show that $\bg$ is  a Poincar\'{e} duality group, we use Johnson-Wall characterisation \cite{johnwall72}; namely, a group $\Gamma$ is a Poincar\'{e} duality group of dimension $n$ if and only if $\Gamma$ is of type $FP$, $H^i(\Gamma, \Z\Gamma)=0$ for $i\ne n$ and $H^n(\Gamma, \Z\Gamma)=\Z$. 

The algebraic excision Theorem \ref{thm. main} (ii) is again key in proving Theorem \ref{intro:thm:norm}. Since all $\bh_i$ are amenable, the natural map in bounded cohomology $H^n_b(\bg, \overline{\h} ; \R)\to H^n_b(\bg ; \R)$ is an isometric  isomorphism. The duality pairing between bounded cohomology and ordinary homology leads to the inequality $||\bg ||\leq ||G, \h ||$. When $G$ is hyperbolic relative to $\h$, then a sufficiently deep Dehn filling quotient $\bg$ is hyperbolic relative to $\oh$ \cite[Theorem 1.1]{osin2007peripheral}. This allows us to show that $||\bg ||>0$.

The proof of Theorem \ref{cd SQ-universality} is a modification of the proof of \cite[Theorem 2.33]{dahmani2017hyperbolically}. Given any acylindrically hyperbolic group $G$, one can find a non-cyclic free group $F\hookrightarrow_h G_0=G/K(G)$. For any countable group $C$ and any finite subset $\mathcal{F}\subset F\smallsetminus\{1\}$, we will use small cancellation theory to construct a normal subgroup $N\lhd F$ such that $N\cap\mathcal{F}=\emptyset$, $C$ embeds into $F/N$, and $F/N$ has the desired cohomological properties. Theorem \ref{thm. simple Dehn filling} then implies that $C$ embeds into $G_0/\ll N \rr$ and Theorem \ref{thm. main} and Corollary \ref{intro_cor:finiteness} applied to $N\lhd F\hookrightarrow_h G_0$ yields the desired cohomological results.

The proof of \cite[Corollary 7.4]{hull2013small} uses small cancellation theory instead of Dehn filling. In order to apply our main result, we carry out an alternative approach. Given finitely generated acylindrically hyperbolic groups $G_1$ and $G_2$, we construct subgroups $H_1,H_2<\widetilde{G}=G'_1\ast G'_2$, where $G'_1=G_1/K(G_1)$ and $G'_2=G_2/K(G_2)$, such that the family $\{H_1,H_2\}$ \textit{hyperbolically embeds} into $\widetilde{G}$. For any finite sets $\{\mathcal{F}_i\subset H_i\smallsetminus\{1\}\}_{i=1,2}$, we will use small cancellation theory to construct normal subgroups $\{N_i\lhd H_i\}_{i=1,2}$ such that $N_i\cap\mathcal{F}_i=\emptyset$ and $N_1$ (resp. $N_2$) identifies a finite set of generators of $G'_2$ (resp. $G'_1$) with certain elements of $G'_1$ (resp. $G'_2$). The quotient $G=\widetilde{G}/\ll N_1\cup N_2 \rr$ is thus a common quotient of $G'_1$ and $G'_2$. Theorem \ref{thm. common quotient of acylindrically hyperbolic groups} is then proved by applying general versions of Theorems \ref{thm. simple Dehn filling} and \ref{thm. main}. The main difficulty of this argument is the construction of $H_1$ and $H_2$, which is presented in Section \ref{sec. construct h.e. subgroups}, using a technical tool provided by \cite{dahmani2017hyperbolically} (see also \cite{osin2007peripheral}) called \textit{isolated components}.

\subsection{Organization of the paper.} We will start with preliminaries in Section \ref{sec.convention}, recalling basic definitions of group cohomology, the notion of (weakly) hyperbolically embedded subgroups, isolated components, acylindrically hyperbolic groups, and the structural result of \cite{sun2018cohomologyi} called the \textit{Cohen-Lyndon property}. The proof of (the general version of) Theorem \ref{thm. main} and Corollary \ref{intro_cor:finiteness} is given in Section \ref{sec. proof of main results}. Theorems  \ref{intro:thm:pd} and \ref{intro:thm:norm} are proved in Sections \ref{sec:duality} and \ref{sec:sv}, respectively. Theorems \ref{cd SQ-universality} and \ref{thm. common quotient of acylindrically hyperbolic groups} are proved in Section \ref{quotients} with applications given in Section \ref{sec. applications}.

\section{Preliminaries}\label{sec.convention}

\subsection{Direct sum and product of spectral sequences}

We briefly recall a property concerning convergence of the direct sum and direct product of a (possibly infinite) family of spectral sequences of ring modules. For detailed introduction to spectral sequences we refer to \cite{Weibel}, \cite{brown1982cohomology} and \cite{Cartan_Eilenberg}.

Let $R$ be a unital ring, and let $E^{2,\lambda}_{p,q}\Rightarrow H^{\lambda}_{p+q},\lambda\in\Lambda$ be a family of convergent homological spectral sequences of $R$-modules such that $E^{2,\lambda}_{p,q}=0$ if either $p$ or $q$ is less than $0$.

\begin{lemma}\label{lem. sum and product}
    We have
    \[\bigoplus_{\lambda\in\Lambda}E^{2,\lambda}_{p,q}\Rightarrow \bigoplus_{\lambda\in\Lambda}H^{\lambda}_{p+q}, \;\;\; \prod_{\lambda\in\Lambda}E^{2,\lambda}_{p,q}\Rightarrow \prod_{\lambda\in\Lambda}H^{\lambda}_{p+q}.\]
    Moreover, the same statement holds for cohomological spectral sequences as well.
\end{lemma}

\begin{proof}
    For simplicity, denote $\bigoplus_{\lambda\in\Lambda}E^{2,\lambda}_{p,q}$ as $E^2_{p,q}$. First note the the $E^{\infty}$-terms satsify
    \[E^{\infty}_{p,q}=\bigoplus_{\lambda\in\Lambda}E^{\infty,\lambda}_{p,q}.\]
    Indeed, fix $p,q$ for the moment. We have
    \[E^{\infty}_{p,q}=E^{p+q+1}_{p,q}=\bigoplus_{\lambda\in\Lambda}E^{p+q+1,\lambda}_{p,q}=\bigoplus_{\lambda\in\Lambda}E^{\infty,\lambda}_{p,q}.\]

    Next, let us recall that the convergence $E^{2,\lambda}_{p,q}\Rightarrow H^{\lambda}_{p+q}$ means that for each $\lambda$ and each $H^{\lambda}_{p+q}$, there is a filtration
    \[0=F_{-1}H^{\lambda}_{p+q}<F_0H^{\lambda}_{p+q}<...<F_{p+q}H^{\lambda}_{p+q}=H^{\lambda}_{p+q}\]
    such that for all $p,q$ we have a short exact sequence
    \[0\rightarrow F_{p-1}H^{\lambda}_{p+q}\rightarrow F_pH^{\lambda}_{p+q}\rightarrow E^{\infty,\lambda}_{p,q}\rightarrow 0.\]

    Consider the following filtration of $\bigoplus_{\lambda\in\Lambda}H^{\lambda}_{p+q}$:
    \[0=\bigoplus_{\lambda\in\Lambda}F_{-1}H^{\lambda}_{p+q}<\bigoplus_{\lambda\in\Lambda}F_0H^{\lambda}_{p+q}<...<\bigoplus_{\lambda\in\Lambda}F_{p+q}H^{\lambda}_{p+q}=\bigoplus_{\lambda\in\Lambda}H^{\lambda}_{p+q}.\]
    Then we have short exact sequences
    \[0\rightarrow \bigoplus_{\lambda\in\Lambda}F_{p-1}H^{\lambda}_{p+q}\rightarrow \bigoplus_{\lambda\in\Lambda}F_pH^{\lambda}_{p+q}\rightarrow \bigoplus_{\lambda\in\Lambda}E^{\infty,\lambda}_{p,q}\rightarrow 0\]
    by taking the direct sum of the above short exact sequences, as the direct sum functor is exact \cite[Theorem 2.6.15]{Weibel}.

    The proof for direct product is similar. We only point out that the direct product functor is exact for $R$-modules, as the category of $R$-modules satisfies Axiom (AB4$^\ast$) of \cite[Section 1.5]{grothendieck1957sur}. Also, the proof for cohomological spectral sequences is almost identical.
\end{proof}

\subsection{Cohomology of groups}\label{sec. relative group cohomology}
Let $G$ be a group. Recall that the \textit{homological} and \textit{cohomological dimension} of $G$ can be defined by
\begin{align*}
    &\hd(G)=\sup\{n\in\mathbb{N}\mid  H_{n}(G,A)\neq 0\text{ for some }\mathbb Z G\text{-module }A\},\\
    &\cd(G)=\sup\{n\in\mathbb{N}\mid  H^{n}(G,A)\neq 0\text{ for some }\mathbb Z G\text{-module }A\},
\end{align*}
respectively.

$G$ is of \textit{type} $FP_n$ for some $n\in\mathbb{N}^+\cup\{\infty\}$ if there is a projective resolution \[\cdot\cdot\cdot\rightarrow P_2\rightarrow P_1\rightarrow P_0\rightarrow \mathbb{Z}\]
over $\mathbb{Z}G$ such that $P_k$ are finitely generated $G$-modules for all $k\leqslant n$. $G$ is of type $FP$ if $\cd(G)<\infty$ and $G$ is of type $FP_{\infty}$.

Property $FP_n$ can be characterized by the cohomology functor. The following will be useful in the proof of Theorem \ref{property FP}.

\begin{theorem}[{Bieri \cite[Theorem 1.3]{bieri1981homological}, Brown \cite[Theorem 2]{brown1975homological}}]\label{homological criterion}
For a group $G$, the following are equivalent.
\begin{enumerate}
    \item[(a)] $G$ is of type $FP_n$ for some $n\in\mathbb{N}^+\cup\{\infty\}$.
    \item[(b)] For every $k\leqslant n$ and every direct system $\{A_i\}_{i\in I}$ of $G$-modules such that $\varinjlim A_i=0$, we have $\varinjlim  H^k(G;A_i)=0$.
\end{enumerate}
\end{theorem}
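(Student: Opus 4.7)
The plan is to prove the two implications separately, with (b) implying (a) being the substantive direction; the case $n = \infty$ follows formally from all finite cases, so I restrict to finite $n$.

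For (a) $\Rightarrow$ (b), I would fix a projective resolution $P_\bullet \to \mathbb{Z}$ over $\mathbb{Z}G$ with $P_k$ finitely generated for every $k \leq n$. The decisive functorial property is that $\Hom_G(P, -)$ commutes with filtered colimits whenever $P$ is finitely generated projective; combined with exactness of filtered colimits of abelian groups, this yields commutation of $H^k(G; -)$ with filtered colimits for every $k < n$, since the three relevant cochain terms involve $P_{k-1}, P_k, P_{k+1}$, all finitely generated. For the boundary case $k = n$, where $P_{n+1}$ need not be finitely generated, I would instead use the dimension-shifted identification
\[
H^n(G; A) \;\cong\; \coker\bigl(\Hom_G(P_{n-1}, A) \to \Hom_G(K, A)\bigr),
\]
where $K = \ker(\partial_{n-1}\colon P_{n-1} \to P_{n-2})$ is the relevant syzygy; since $K$ is a quotient of the finitely generated $P_n$, it too is finitely generated, so both $\Hom$-terms and hence their cokernel commute with filtered colimits. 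Applied to any direct system $\{A_i\}$ with $\varinjlim A_i = 0$, this gives $\varinjlim H^k(G; A_i) \cong H^k(G; 0) = 0$ for all $k \leq n$.

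For (b) $\Rightarrow$ (a), I would proceed by induction on $n$; the base case $n = 1$ (finite generation of $G$) is handled by the same device as the inductive step. Assuming inductively that $G$ is $FP_{n-1}$, fix a partial resolution $P_{n-1} \to \cdots \to P_0 \to \mathbb{Z} \to 0$ with each $P_k$ finitely generated projective, and set $K = \ker(\partial_{n-1})$; it suffices to show that $K$ is finitely generated. Introduce the directed system $\{K_\alpha\}$ of all finitely generated $G$-submodules of $K$, so $\varinjlim K_\alpha = K$ and consequently $\varinjlim (K/K_\alpha) = 0$. Hypothesis (b) at level $k = n$ forces $\varinjlim H^n(G; K/K_\alpha) = 0$. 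Under the dimension-shifted identification above, the canonical projections $\pi_\alpha \colon K \to K/K_\alpha$ yield compatible classes in $H^n(G; K/K_\alpha)$; their eventual vanishing means that for some index $\beta$, $\pi_\beta$ extends to a $G$-linear map $\widetilde{\pi}_\beta \colon P_{n-1} \to K/K_\beta$. Projectivity of $P_{n-1}$ then lifts $\widetilde{\pi}_\beta$ along $K \to K/K_\beta$ to a $G$-map $\phi \colon P_{n-1} \to K$ whose restriction to $K$ agrees with $\mathrm{id}_K$ modulo $K_\beta$. Hence $K = \phi(P_{n-1}) + K_\beta$, a sum of two finitely generated submodules, so $K$ itself is finitely generated.

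The main obstacle is the reverse direction, passing from the purely cohomological vanishing hypothesis to finite generation of the syzygy. The crux is to select the witness class given by the identity-quotient map $\pi_\alpha$; its eventual triviality, forced by (b) at level $k = n$, combined with projectivity of $P_{n-1}$ at the lifting step, then yields the required finite generation of $K$.
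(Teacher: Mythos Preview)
The paper does not prove this theorem; it is quoted as a known result with citations to Bieri's lecture notes and Brown's paper, and is used only as a tool (in the proof of Theorem~\ref{property FP} and Lemma~\ref{free group situation}). Your write-up is therefore not being compared against any argument in the paper itself, but it does follow the standard proof found in the cited references and is essentially correct.

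One small imprecision in the forward direction deserves comment. In the case $k=n$ you assert that $\Hom_G(K,-)$ commutes with filtered colimits because $K$ is finitely generated. Strictly speaking that requires $K$ to be finitely \emph{presented}, and from $FP_n$ alone you only know $K=\operatorname{im}(P_n\to P_{n-1})$ is finitely generated (the kernel of $P_n\twoheadrightarrow K$ need not be). However, your actual conclusion needs only the weaker fact that if $K$ is finitely generated and $\varinjlim A_i=0$ then $\varinjlim\Hom_G(K,A_i)=0$: given $f\colon K\to A_j$, the images of a finite generating set of $K$ all eventually vanish in some $A_{j'}$, so $f$ eventually becomes zero. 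With that adjustment the argument for $(a)\Rightarrow(b)$ is complete.

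The reverse direction is fine as written; the key idea of tracking the compatible classes $[\pi_\alpha]\in H^n(G;K/K_\alpha)$ represented by the quotient maps, forcing one of them to vanish, and then lifting via projectivity of $P_{n-1}$ to exhibit $K=\phi(P_{n-1})+K_\beta$ is exactly the standard argument in Bieri and Brown.
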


Given a family $\{H_{\lambda}\}_{\lambda\in\Lambda}$ of subgroups of $G$, \cite{bieri1978relative} defined the relative (co)homology of the group pair $(G,\{H_{\lambda}\}_{\lambda\in\Lambda})$. We briefly recall the definition. Let $\Delta$ be the kernel of the augmentation $\bsl\mathbb{Z}[G/H_{\lambda}]\twoheadrightarrow\mathbb{Z}$ which sends every left $H_{\lambda}$-coset to $1$. By definition,
\begin{equation}\label{eq. relative group cohomology}
     H_n(G,\{H_{\lambda}\}_{\lambda\in\Lambda};A)=\tor^G_{n-1}(\Delta,A),~~ H^n(G,\{H_{\lambda}\}_{\lambda\in\Lambda};A)=\ext^{n-1}_G(\Delta,A)
\end{equation}
for any $G$-module $A$. The dimension shift in the above definition ensures a long exact sequence between the absolute and relative (co)homology (see \cite[Proposition 1.1]{bieri1978relative}).


\subsection{(Weakly) hyperbolically embedded subgroups}\label{sec. he}
The notion of (weakly) hyperbolically embedded subgroups was introduced by \cite{dahmani2017hyperbolically}, which is our main reference for Sections \ref{sec. he}, \ref{sec.ic}, and \ref{sec.ah}. We first recall the definition and present some examples. The motivation will be discussed afterwards.

Let $G$ be a group, $\{H_\lambda\}_{\lambda\in\Lambda}$ a family of subgroups of $G$, $X$ a subset of $G$ such that $G$ is generated by $X$ together with the union of all $H_{\lambda}$ (in which case $X$ is called a \textit{relative generating set} of $G$ with respect to $\{H_{\lambda}\}_{\lambda\in\Lambda}$), and $\mathcal{H}=\bigsqcup_{\lambda\in\Lambda}H_{\lambda}$. Consider the Cayley graph $\Gamma(G,X\sqcup \mathcal{H})$. Note that, for $\lambda\in\Lambda$ there is a natural embedding $\Gamma(H_{\lambda},H_{\lambda})\hookrightarrow \Gamma(G,X\sqcup\mathcal{H})$ whose image is the subgraph of $\Gamma(G,X\sqcup\mathcal{H})$ with vertices and edges labeled by elements of $H_{\lambda}$.

\begin{remark}
We do allow $X\cap H_{\lambda}\neq \emptyset$ and $H_{\lambda}\cap H_{\mu}\neq\{1\}$ for distinct $\lambda,\mu\in\Lambda$, in which case there will be multiple edges between some pairs of vertices of $\Gamma(G,X\sqcup \mathcal{H})$.
\end{remark}

For $\lambda\in\Lambda$, an edge path in $\Gamma(G,X\sqcup \mathcal{H})$ between vertices of $\Gamma(H_{\lambda},H_{\lambda})$ is called $H_{\lambda}$\textit{-admissible} if it does not contain any edge of $\Gamma(H_{\lambda},H_{\lambda})$. Note that an $H_{\lambda}$-admissible path is allowed to pass through vertices of $\Gamma(H_{\lambda},H_{\lambda})$. 

For example, consider the simple case where $\{H_{\lambda}\}_{\lambda\in\Lambda}=\{H\}$ consists of only one subgroup $H\leqslant G$. The Cayley graph $\Gamma(G,X\sqcup H)$ is displayed in Figure \ref{fig. hyperbolically embedded subgroup}. The blue path is admissible. The red path is an edge from $1$ to $h$ labeled by $h\in H$, and thus is inadmissible. If $h$ happens to be an element of $X$, i.e., there exists $x\in X$ with $x=h$, and the red path were labeled by $x$ instead of $h$, then the red path would be admissible.

\begin{definition}\label{def. relative metric}
For every pair of elements $h,k\in H_{\lambda}$, let $\widehat{d}_{\lambda}(h,k)\in[0,\infty]$ be the length of a shortest $H_{\lambda}$-admissible path connecting the vertices labeled by $h$ and $k$. If no such path exists, set $\widehat{d}_{\lambda}(h,k)=\infty$. The laws of summation on $[0,\infty)$ extend naturally to $[0,\infty]$ and it is easy to verify that $\widehat{d}_{\lambda}:H_{\lambda}\times H_{\lambda}\rightarrow [0,+\infty]$ defines a metric on $H_{\lambda}$, which is called the \textit{relative metric on} $H_{\lambda}$ \textit{with respect to} $X$.
\end{definition}

\begin{definition}\label{hyperbolically embedded}
We say that the family $\{H_\lambda\}_{\lambda\in\Lambda}$ \textit{weakly hyperbolically embeds into} $(G,X)$ (denoted by $\{H_\lambda\}_{\lambda\in\Lambda}\hookrightarrow_{wh}(G,X)$) if $G$ is generated by the set $X$ together with union of all $H_{\lambda},\lambda\in\Lambda$, and the Cayley graph $\Gamma(G,X\sqcup \mathcal{H})$ is a Gromov hyperbolic space.

If $\{H_\lambda\}_{\lambda\in\Lambda}\hookrightarrow_{wh}(G,X)$ and for each $\lambda\in\Lambda$, the metric space $(H_{\lambda},\widehat{d}_{\lambda})$ is proper, i.e., every ball of finite radius contains only finitely many elements, then $\{H_\lambda\}_{\lambda\in\Lambda}$ \textit{hyperbolically embeds into} $(G,X)$ (denoted by $\{H_\lambda\}_{\lambda\in\Lambda}\hookrightarrow_h (G,X)$). If in addition, $X$ and $\Lambda$ are finite, then we say that $G$ is \textit{hyperbolic relative} to $\{H_\lambda\}_{\lambda\in\Lambda}$. 

Further, we say that the family $\{H_\lambda\}_{\lambda\in\Lambda}$ \textit{hyperbolically embeds into} $G$, denoted by $\{H_\lambda\}_{\lambda\in\Lambda}\hookrightarrow_h G$, if there exists some subset $X\subset G$ such that $\{H_\lambda\}_{\lambda\in\Lambda}\hookrightarrow_h(G,X)$.
\end{definition}

\begin{notation}
In case $\{H_\lambda\}_{\lambda\in\Lambda}=\{H\}$ is a singleton, we will drop the braces and write $H\hookrightarrow_{wh} (G,X)$ and $H\hookrightarrow_h G$.
\end{notation}

We refer to Figure \ref{fig. hyperbolically embedded subgroup} for an illustration of the situation $H\hookrightarrow_h G$. The grey discs represent cosets of $H$ in $G$. The black edges are labeled by elements of $X$. The edges and discs appear in a tree-like pattern as $\Gamma(G,X\sqcup H)$ is Gromov hyperbolic.


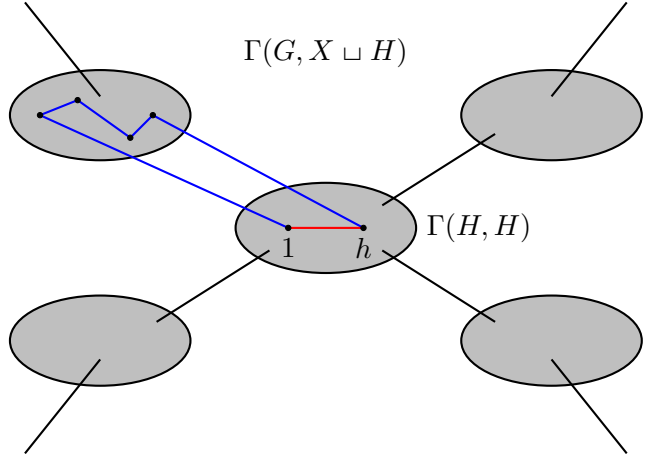
\begin{figure}[ht!]
    \centering
    \begin{tikzpicture}
        \filldraw[color=black!100, fill=gray!50, thick] (-3,1.5) ellipse (1.2 and 0.6);
        \filldraw[color=black!100, fill=gray!50, thick] (3,1.5) ellipse (1.2 and 0.6);
        \filldraw[color=black!100, fill=gray!50, thick] (-3,-1.5) ellipse (1.2 and 0.6);
        \filldraw[color=black!100, fill=gray!50, thick] (3,-1.5) ellipse (1.2 and 0.6);
        \filldraw[color=black!100, fill=gray!50, thick] (0,0) ellipse (1.2 and 0.6);
        \draw[thick] (1.2,0) circle (0pt) node[anchor=west] {$\Gamma(H,H)$};
        \draw[blue, thick] (-0.5,0.0) -- (-3.8,1.5);
        \draw[blue, thick] (0.5,0.0) -- (-2.3,1.5);
        \draw[blue, thick] (-3.8,1.5) -- (-3.3,1.7);
         \draw[blue, thick] (-3.3,1.7) -- (-2.6,1.2);
          \draw[blue, thick] (-2.6,1.2) -- (-2.3,1.5);
         \draw[red, thick] (-0.5,0.0) -- (0.5,0.0);
        \draw[thick] (-3,1.75) -- (-4, 3);
        \draw[thick] (3,1.75) -- (4, 3);
        \draw[thick] (-3,-1.75) -- (-4, -3);
        \draw[thick] (3,-1.75) -- (4, -3);
        \draw[thick] (-0.75,-0.3) -- (-2.25,-1.25);
        \draw[thick] (0.75,0.3) -- (2.25,1.25);
        \draw[thick] (0.75,-0.3) -- (2.25,-1.25);
        \filldraw[black] (-3.8,1.5) circle (1pt);
        \filldraw[black] (-3.3,1.7) circle (1pt);
        \filldraw[black] (-2.6,1.2) circle (1pt);
        \filldraw[black] (-2.3,1.5) circle (1pt);
        \filldraw[black] (-0.5,0.0) circle (1pt) node[anchor=north] {$1$};
        \filldraw[black] (0.5,0.0) circle (1pt) node[anchor=north] {$h$};
        \draw[thick] (0,2) circle (0pt) node[anchor=south] {$\Gamma(G,X\sqcup H)$};
    \end{tikzpicture}
    \caption{Illustration of $H\hookrightarrow_h G$}
    \label{fig. hyperbolically embedded subgroup}
\end{figure}

\begin{remark}\label{rem. relative hyperbolic}
Notice that the above definition of relative hyperbolicity is not commonly used in literature. One of the most commonly used definitions for relative hyperbolicity which we  will use later is the following: a group $G$ is \textit{hyperbolic relative} to a family $\{H_{\lambda}\}_{\lambda\in\Lambda}$ of its subgroups if $G$ has a finite relative presentation with respect to $\{H_{\lambda}\}_{\lambda\in\Lambda}$ with a linear relative isoperimetric function (see \cite[Definition 3.6]{dahmani2017hyperbolically}). The equivalence of these two definitions is proved in \cite[Remark 4.41 and Theorem 4.42]{dahmani2017hyperbolically}.
\end{remark}

\begin{example}\label{eg. product hyperbolically embedded}
~
\begin{enumerate}
    \item[(a)] $H\hookrightarrow_{wh}(G,G)$ for every subgroup $H\leqslant G$.
    \item[(b)] $H\hookrightarrow_h(G,G)$ for every finite subgroup $H\leqslant G$.
    \item[(c)] $G\hookrightarrow_h(G,\emptyset)$.
    \item[(d)] If $G$ can be decomposed as a free product of its subgroups $\{G_{\lambda}\}_{\lambda\in\Lambda}$ (denoted by $G=\Asterisk_{\lambda\in\Lambda}G_{\lambda}$), then $\{G_{\lambda}\}_{\lambda\in\Lambda}\hookrightarrow_h (G,\emptyset)$ \cite[Example 4.12]{dahmani2017hyperbolically}.
    \item[(e)] More generally, suppose that $G=\pi_1(\mathcal{G})$, where $\mathcal{G}$ is a graph of groups. Let $\{G_v\}_{v\in V\mathcal{G}}$ be the collection of vertex subgroups and $\{G_e\}_{e\in E\mathcal{G}}$ the collection of edge subgroups. By \cite[Example 4.12]{dahmani2017hyperbolically}, $\{G_v\}_{v\in V\mathcal{G}}\hookrightarrow_{wh}(\pi_1(\mathcal{G}),X)$, where the subset $X\subset G$ consists of stable letters (i.e., there is a spanning tree $T\mathcal{G}$ of $\mathcal{G}$ such that $X$ consists of generators corresponding to the edges of $\mathcal{G}\smallsetminus T\mathcal{G}$).
\end{enumerate}
\end{example}

Recall that a group is \textit{word-hyperbolic} if it is finitely generated and for some (equivalently, any) finite generating set the corresponding Cayley graph is Gromov hyperbolic. The notion of weak hyperbolic embedding is thus an attempt to study possibly non-word-hyperbolic groups via Gromov hyperbolic spaces. Example \ref{eg. product hyperbolically embedded} (a) illustrates a triviality of this notion, in the sense that the weak hyperbolic embedding does not provide any information about the group $G$. Notice that in that example, the corresponding relative metric is bounded. One might therefore refine the notion by imposing additional conditions on the relative metric, for example, requiring local finiteness of the relative metric and obtaining the notion of hyperbolic embedding. We note that Examples \ref{eg. product hyperbolically embedded} (b) and (c) exhibit two kinds of trivialities of hyperbolic embedding, and a further refinement is given by the notion of an \textit{acylindrically hyperbolic group} (see Section \ref{sec.ah}).

The next lemma tells us how to find hyperbolically embedded subgroups.

\begin{lemma}[{Dahmani-Guirardel-Osin \cite[Lemma 4.21]{dahmani2017hyperbolically}}]\label{lem. criterion for he}
Suppose that $\card(\Lambda)<\infty$, $G$ acts on a Gromov hyperbolic space $(S,d)$ by isometries, and the following three conditions are satisfied, then $\{H_{\lambda}\}_{\lambda\in\Lambda}\hookrightarrow_h G$.
\begin{enumerate}
    \item[(C$_1$)] Every $H_{\lambda}$ acts on $S$ properly.
    \item[(C$_2$)] There exists $s\in S$ such that for every $\lambda\in\Lambda$, the $H_{\lambda}$-orbit $H_{\lambda}(s)$ of $s$ is quasi-convex in $S$.
    \item[(C$_3$)] For every $\epsilon>0$ and some $s\in S$, there exists $R>0$ such that the following holds. Suppose that for some $g\in G$ and $\lambda,\mu\in\Lambda$, we have
    $$\diam\left(H_{\mu}(s)\cap \left(gH_{\lambda}(s)\right)^{+\epsilon}\right)\geqslant R,$$
    then $\lambda=\mu$ and $g\in H_{\lambda}$, where $\left(g H_{\lambda}(s)\right)^{+\epsilon}$ denotes the $\epsilon$-neighborhood of $gH_{\lambda}(s)$ in $S$.
\end{enumerate}
\end{lemma}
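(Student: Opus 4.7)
The plan is to exhibit a relative generating set $X\subset G$ for which the Cayley graph $\Gamma(G,X\sqcup\mathcal{H})$ is Gromov hyperbolic and each relative metric $(H_\lambda,\widehat{d}_\lambda)$ is proper. Fix a basepoint $s_0\in S$ as in (C$_2$) and (C$_3$), choose a constant $D$ larger than the quasi-convexity constants of the orbits $H_\lambda\cdot s_0$, and set $X=\{g\in G:d(s_0,gs_0)\leq D\}$, enlarged by any fixed generating set if $G\cdot s_0$ is not coarsely dense in $S$; we only need $X\cup\bigcup_{\lambda\in\Lambda}H_\lambda$ to generate $G$. The orbit map $g\mapsto gs_0$ extends to a coarsely $G$-equivariant map $\Phi\colon\Gamma(G,X\sqcup\mathcal{H})\to S$ sending every $X$-edge to a geodesic of length at most $D$ and every edge labeled by $h\in H_\mu$ incident to a vertex $g$ to a geodesic inside the quasi-convex translate $gH_\mu\cdot s_0$.

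To deduce hyperbolicity, I would cone off the orbit translates. Form $\widehat{S}$ by attaching a bounded-diameter combinatorial cone over each coset orbit $gH_\lambda\cdot s_0$, or equivalently, collapsing each such orbit to bounded diameter. By (C$_2$) these subsets are uniformly quasi-convex, while (C$_3$) is precisely the bounded-projection condition making the family almost malnormal. A standard Bowditch-type criterion for coning off an almost malnormal family of uniformly quasi-convex subsets of a Gromov hyperbolic space then yields that $\widehat{S}$ is itself Gromov hyperbolic. Composing $\Phi$ with the quotient $S\to\widehat{S}$ sends each $H_\lambda$-edge through a single cone point and each $X$-edge to a path of length at most $D$, defining an equivariant coarsely Lipschitz map $\widehat{\Phi}\colon\Gamma(G,X\sqcup\mathcal{H})\to\widehat{S}$. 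A retraction argument along geodesics in $\widehat{S}$, using the quasi-convex orbits as targets, upgrades $\widehat{\Phi}$ to a quasi-isometric embedding on the orbit of $1$, so hyperbolicity transfers from $\widehat{S}$ to $\Gamma(G,X\sqcup\mathcal{H})$.

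The main obstacle is proving each metric space $(H_\lambda,\widehat{d}_\lambda)$ is proper. Given $N\in\mathbb{N}$, I wish to bound the set $\{h\in H_\lambda:\widehat{d}_\lambda(1,h)\leq N\}$. For each such $h$, pick an $H_\lambda$-admissible path $p$ of combinatorial length at most $N$ from $1$ to $h$ and consider its image under $\Phi$. Each $X$-edge contributes $S$-displacement at most $D$, while each non-$X$ edge belongs to some translate $gH_\mu\cdot s_0$; admissibility ensures that when $\mu=\lambda$ one has $g\notin H_\lambda$. Group the non-$X$ edges into maximal runs inside single translates, so $\Phi(p)$ becomes a path in $S$ from $s_0$ to $hs_0$ that alternates between at most $N$ short $X$-segments and excursions, each excursion lying inside one translate distinct from $H_\lambda\cdot s_0$ and thus meeting $H_\lambda\cdot s_0$ in a controlled way. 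With $\epsilon$ chosen to absorb both $D$ and the quasi-convexity constants, condition (C$_3$) forces each such excursion to move along $H_\lambda\cdot s_0$ by at most $R=R(\epsilon)$, giving a uniform bound $d(s_0,hs_0)\leq C(N)$ for some constant $C(N)$ depending only on $N$, $D$, and $R$. Condition (C$_1$) then finishes the count: only finitely many $h\in H_\lambda$ satisfy $d(s_0,hs_0)\leq C(N)$. I expect the principal technical difficulty to be the excursion bookkeeping and choosing $\epsilon$ uniformly so that (C$_3$) applies at every entry into a distinct translate along every admissible path; once that scale is pinned down, the rest follows from the proper action hypothesis.
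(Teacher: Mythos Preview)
The paper does not prove this lemma; it merely quotes it from \cite[Lemma~4.21]{dahmani2017hyperbolically} and uses it as a black box. So there is no ``paper's own proof'' to compare your proposal against.

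That said, your sketch follows the same architecture as the original argument in Dahmani--Guirardel--Osin: define $X$ via bounded orbit displacement, relate $\Gamma(G,X\sqcup\mathcal{H})$ to a coned-off version of $S$ to extract hyperbolicity, and then use (C$_3$) together with (C$_1$) to bound relative balls. The main soft spots in your write-up are (i) the claim that $\widehat{\Phi}$ is a quasi-isometric embedding, which you defer to ``a retraction argument'' without saying what the retraction is or why it is coarsely Lipschitz, and (ii) the excursion bound in the properness step: you assert that an excursion through a translate $gH_\mu\cdot s_0$ with $g\notin H_\lambda$ moves along $H_\lambda\cdot s_0$ by at most $R$, but (C$_3$) controls the diameter of the intersection of an $\epsilon$-neighborhood of one translate with another, not the displacement of endpoints of an arbitrary path segment that happens to lie in the translate. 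You need an intermediate step showing the excursion stays $\epsilon$-close to $H_\lambda\cdot s_0$, which is where quasi-convexity and hyperbolicity of $S$ enter, and this is exactly the ``bookkeeping'' you flag but do not carry out. These are genuine gaps in a full proof, though the strategy is sound.
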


The following proposition, which roughly says that being a hyperbolically embedded subgroup is a transitive property, will be used later.



\begin{proposition}[{Dahmani-Guirardel-Osin \cite[Proposition 4.35]{dahmani2017hyperbolically}}]\label{prop. iterative hyperbolically embedded}
Let $G$ be a group, let $\{H_{\lambda}\}_{\lambda\in\Lambda}$ be a finite family of subgroups of $G$, let $X\subset G$, and let $Y_{\lambda}\subset H_{\lambda}$ for every $\lambda\in\Lambda$. Suppose that $\{H_{\lambda}\}_{\lambda\in\Lambda}\hookrightarrow_h (G,X)$ and for every $\lambda\in\Lambda$, there is a family of subgroups $\{K_{\lambda,\mu}\}_{\mu\in M_{\lambda}}\hookrightarrow_h (H_{\lambda},Y_{\lambda})$. Then
$$\bigcup_{\lambda\in\Lambda}\{K_{\lambda,\mu}\}_{\mu\in M_{\lambda}}\hookrightarrow_h \left(G,X\cup\left(\bigcup_{\lambda\in\Lambda}Y_{\lambda}\right)\right).$$
\end{proposition}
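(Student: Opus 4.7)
The plan is to verify the three conditions of Definition \ref{hyperbolically embedded} for the family $\{K_{\lambda,\mu}\}_{\lambda\in\Lambda,\mu\in M_\lambda}$ with respect to the candidate relative generating set $X':=X\cup\bigcup_{\lambda\in\Lambda}Y_\lambda$. Set $\mathcal{H}=\bigsqcup_{\lambda\in\Lambda}H_\lambda$, $\mathcal{K}_\lambda=\bigsqcup_{\mu\in M_\lambda}K_{\lambda,\mu}$, and $\mathcal{K}=\bigsqcup_{\lambda\in\Lambda}\mathcal{K}_\lambda$. That $X'$ is a relative generating set for $G$ with respect to the new family is immediate from the hypotheses $G=\langle X\cup\mathcal{H}\rangle$ and $H_\lambda=\langle Y_\lambda\cup\mathcal{K}_\lambda\rangle$. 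The two remaining tasks are (A) showing that $\Gamma':=\Gamma(G,X'\sqcup\mathcal{K})$ is Gromov hyperbolic, and (B) showing that the relative metric $\widehat{d}_{\lambda,\mu}$ on each $K_{\lambda,\mu}$ induced by $X'$ in $\Gamma'$ is proper.

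For (A) the plan is to pass through the enlarged Cayley graph $\Gamma^{*}:=\Gamma(G,X'\sqcup\mathcal{K}\sqcup\mathcal{H})$. Every letter of $Y_\lambda$ or of a $K_{\lambda,\mu}$ already lies in $H_\lambda$, so the word metric on the vertex set of $\Gamma^{*}$ coincides with that of $\Gamma(G,X\sqcup\mathcal{H})$, and hence $\Gamma^{*}$ is Gromov hyperbolic by hypothesis. To transfer hyperbolicity from $\Gamma^{*}$ to $\Gamma'$, I would run a triangle-replacement argument: given a geodesic triangle in $\Gamma^{*}$, replace each traversed $\mathcal{H}$-edge $g\to gh$ (with $h\in H_\lambda$) by a $\Gamma(H_\lambda,Y_\lambda\sqcup\mathcal{K}_\lambda)$-geodesic from $1$ to $h$, realized on the coset $gH_\lambda$ inside $\Gamma'$; then combine the slim-triangles bound of $\Gamma^{*}$ with that of each local graph $\Gamma(H_\lambda,Y_\lambda\sqcup\mathcal{K}_\lambda)$ using the quasiconvex embedding of each $H_\lambda$-coset in $\Gamma^{*}$. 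I expect this step to be the main obstacle: the edge-removal $\Gamma^{*}\leadsto\Gamma'$ is very far from being a quasi-isometry, since a single $\mathcal{H}$-edge in $\Gamma^{*}$ can correspond to an arbitrarily long path in $\Gamma'$, so hyperbolicity does not transfer automatically and one really needs the internal hyperbolic geometry of each coset to keep the constants uniform.

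For (B), fix $\lambda_0\in\Lambda$, $\mu_0\in M_{\lambda_0}$, and $R>0$. Let $k\in K_{\lambda_0,\mu_0}$ satisfy $\widehat{d}_{\lambda_0,\mu_0}(1,k)\le R$ via a $K_{\lambda_0,\mu_0}$-admissible path $p$ of length $\le R$ in $\Gamma'$. I would decompose $p$ according to the sequence of $H_{\lambda_0}$-cosets visited by its vertices: maximal subpaths that stay inside a single coset, alternating with transition portions that leave that coset. Each transition portion, after relabelling every $Y_\lambda$- and $K_{\lambda,\mu}$-edge by the corresponding element of $H_\lambda$, projects to an $H_{\lambda_0}$-admissible path of no greater length in $\Gamma(G,X\sqcup\mathcal{H})$, so by properness of the relative metric of $H_{\lambda_0}$ in $(G,X)$ its endpoint pair lies in a finite subset of $H_{\lambda_0}\times H_{\lambda_0}$. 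Each in-coset subpath, supported inside the local copy of $\Gamma(H_{\lambda_0},Y_{\lambda_0}\sqcup\mathcal{K}_{\lambda_0})$ and avoiding $K_{\lambda_0,\mu_0}$-edges, has length $\le R$ there, so by the assumed properness of the relative metric of $K_{\lambda_0,\mu_0}$ in $(H_{\lambda_0},Y_{\lambda_0})$ its endpoint pair also lies in a finite set. Since $|p|\le R$ bounds the number and length of all pieces, only finitely many $k$ can arise.
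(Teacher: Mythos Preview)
The paper does not give a proof of this proposition; it is quoted verbatim from \cite[Proposition~4.35]{dahmani2017hyperbolically} and used as a black box, so there is no in-paper argument to compare against. That said, let me comment on your sketch on its own merits.

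Your overall plan---check directly that $\Gamma'=\Gamma(G,X'\sqcup\mathcal{K})$ is hyperbolic and that the relative metrics on the $K_{\lambda,\mu}$ are proper---is exactly the shape of the original argument in \cite{dahmani2017hyperbolically}, and part (B) is essentially right. The decomposition of a $K_{\lambda_0,\mu_0}$-admissible path into excursions outside the coset $H_{\lambda_0}$ and in-coset segments works, and the projection of an excursion to $\Gamma(G,X\sqcup\mathcal{H})$ is indeed $H_{\lambda_0}$-admissible (edges of $\Gamma(H_{\lambda_0},H_{\lambda_0})$ live only in the trivial coset, which an excursion avoids by definition). One detail you glossed over: an in-coset segment may use $X$-edges, or $Y_{\lambda'}$- or $K_{\lambda',\mu'}$-edges for $\lambda'\neq\lambda_0$, whose labels happen to represent elements of $H_{\lambda_0}$. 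Each such edge, however, is itself an $H_{\lambda_0}$-admissible path of length~$1$ in $\Gamma(G,X\sqcup\mathcal{H})$, so by properness of $\widehat d_{\lambda_0}$ its label lies in a fixed finite subset of $H_{\lambda_0}$; enlarging $Y_{\lambda_0}$ by this finite set (which does not affect the hypothesis $\{K_{\lambda_0,\mu}\}\hookrightarrow_h(H_{\lambda_0},Y_{\lambda_0})$) lets you treat the whole in-coset segment as a path in $\Gamma(H_{\lambda_0},Y_{\lambda_0}\sqcup\mathcal{K}_{\lambda_0})$ as you intended.

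The genuine gap is in (A). You correctly identify that $\Gamma^*\to\Gamma'$ is not a quasi-isometry and that one must use the hyperbolicity of the local graphs $\Gamma(H_\lambda,Y_\lambda\sqcup\mathcal{K}_\lambda)$, but ``combine the slim-triangles bound of $\Gamma^*$ with that of each local graph'' is not an argument, and this is precisely where all the content of the proposition lies. What is actually needed is a control on how geodesics in $\Gamma'$ relate to geodesics in $\Gamma^*$: one shows that a $\Gamma'$-geodesic, when each maximal $H_\lambda$-coset subsegment is collapsed to a single $H_\lambda$-edge, becomes a uniform quasi-geodesic in $\Gamma^*$, and that distinct coset subsegments of the original geodesic penetrate the same coset with uniformly bounded overlap. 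Both of these facts require Proposition~\ref{lem. total length of i.c. in g.p.} (the bound on isolated components) applied inside $\Gamma^*$, together with the properness of the $\widehat d_\lambda$; neither follows from slim triangles alone. Without this quantitative link you cannot prevent, for instance, a geodesic triangle in $\Gamma'$ whose three sides each spend unbounded time in three different translates of the same $H_\lambda$, producing arbitrarily fat triangles. You should either invoke the isolated-components machinery explicitly or cite the original proof.
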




The cohomological finiteness properties of a group $G$ are inherited by its hyperbolically embedded subgroups:

\begin{lemma}[{Dahmani-Guirardel-Osin \cite[Remark 4.26 and Corollary 4.32]{dahmani2017hyperbolically}}]\label{lem. he finiteness}
    Let $G$ be a group with a hyperbolically embedded family of subgroups $\{H_{\lambda}\}_{\lambda\in\Lambda}$. Suppose that $G$ is of type $FP_n$ for some $n\in \{2,3,...,\infty\}$. Then for every $\lambda$, the group $H_{\lambda}$ is of type $FP_n$.
\end{lemma}

\subsection{Isolated components}\label{sec.ic}
In the proof of Theorem \ref{thm. common quotient of acylindrically hyperbolic groups}, we need to construct specific hyperbolically embedded subgroups. A tool to do this is the notion of an isolated component, which was introduced by \cite{osin2007peripheral} for relatively hyperbolic groups and generalized to hyperbolically embedded subgroups by \cite{dahmani2017hyperbolically}. In this section, we recall the definition and collect several results.
 
We start with conventions. Let $G$ be a group and $X$ a generating set of $G$. Consider the Cayley graph $\Gamma(G,X)$. Let $p$ be a path in $\Gamma(G,X)$. The \textit{label} of $p$, denoted $\lab(p)$, is obtained by concatenating all labels of the edges in $p$ and is a word over $X$. The length $p$ is denoted by $\ell_X(p)$, and the initial (resp. terminal) vertex of $p$ is denoted by $p^-$ (resp. $p^+$).

Now suppose that $\{H_\lambda\}_{\lambda\in\Lambda}$ is a family of subgroups of $G$. Let $\mathcal{H}=\bigsqcup_{\lambda\in\Lambda}H_{\lambda}$, and let $X$ be a relative generating set of $G$ with respect to $\{H_\lambda\}_{\lambda\in\Lambda}$. For $\lambda\in\Lambda$, let $\widehat{d}_{\lambda}$ be the relative metric on $H_{\lambda}$ with respect to $X$. The following terminology goes back to \cite{osin2006relatively}.

\begin{definition}\label{connect}
Let $p$ be a path in $\Gamma(G,X\sqcup \mathcal{H})$. For every $\lambda\in\Lambda$, an $H_{\lambda}$\textit{-subpath} $q$ of $p$ is a nontrivial subpath of $p$ such that $\lab(q)$ is a word over the alphabet $H_{\lambda}$ (if $p$ is a cycle, we allow $q$ to be a subpath of some cyclic shift of $p$). An $H_{\lambda}$-subpath $q$ of $p$ is an $H_{\lambda}$-\textit{component} if $q$ is not properly contained in any other $H_{\lambda}$-subpath. Two $H_{\lambda}$-components $q_1$ and $q_2$ of $p$ are \textit{connected} if there exists a path $t$ in $\Gamma(G,X\sqcup \mathcal{H})$ such that $t$ connects a vertex of $q_1$ to a vertex of $q_2$, and that $\lab(t)$ is a letter from $H_{\lambda}$. An $H_{\lambda}$-component $q$ of $p$ is \textit{isolated} if it is not connected to any other $H_{\lambda}$-component of $p$.
\end{definition}

\begin{remark}
    The definition of connectedness in \cite{dahmani2017hyperbolically} is seemingly different from the version above: instead of requiring $\lab(t)$ to be a letter from $H_{\lambda}$, \cite{dahmani2017hyperbolically} only requires $\lab(t)$ to be a word over $H_{\lambda}$ for some $\lambda\in\Lambda$. However, as every element of $H_{\lambda}$ belongs to the generating set $X\sqcup \mathcal{H}$, if there is a path $t$ connecting a vertex of $q_1$ to a vertex of $q_2$ with $\lab(t)$ being a word over $H_{\lambda}$, then there is another path $t'$ connecting a vertex of $q_1$ to a vertex of $q_2$ with $\lab(t')$ being a single letter of $H_{\lambda}$ ($\lab(t')$ is the element of $H_{\lambda}$ represented by $\lab(t)$).
\end{remark}

Suppose that $q$ is an $H_{\lambda}$-component of a path $p\subset \Gamma(G,X\sqcup\mathcal{H})$. Then $q^-$ (resp. $q^+$) is labeled by an element $g\in G$ (resp $h\in G$) and we have $g^{-1}h\in H_{\lambda}$. In this case, let
$$\widehat{\ell}_{\lambda}(q)=\widehat{d}_{\lambda}(1,g^{-1}h).$$

A nice property of isolated components is that in a geodesic polygon $p$, the total $\widehat{\ell}$-length of isolated components of $p$ is bounded linearly by the number of sides of $p$. More precisely:

\begin{proposition}[{Dahmani-Guirardel-Osin \cite[Proposition 4.14]{dahmani2017hyperbolically} (see also \cite[Proposition 3.2]{osin2007peripheral})}]\label{lem. total length of i.c. in g.p.}
If $\{H_\lambda\}_{\lambda\in\Lambda}\hookrightarrow_{wh}(G,X)$, then there exists a number $D>0$ satisfying the following property: Let $p$ be an $n$-gon in $\Gamma(G,X\sqcup \mathcal{H})$ with geodesic sides $p_1,...,p_n$ and let $I$ be a subset of the set of sides of $p$ such that every side $p_i\in I$ is an isolated $H_{\lambda_i}$-component of $p$ for some $\lambda_i\in\Lambda$. Then
$$\sum_{p_i\in I}\widehat{\ell}_{\lambda_i}(p_i)\leqslant Dn.$$
\end{proposition}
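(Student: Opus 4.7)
The plan is to combine the isolation hypothesis with the $\delta$-hyperbolicity of $\Gamma(G,X\sqcup \mathcal{H})$. Fix once and for all a hyperbolicity constant $\delta>0$ for this Cayley graph; the target constant $D$ will ultimately depend only on $\delta$ (and on the family $\{H_\lambda\}_{\lambda\in\Lambda}$).

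The first observation is that for any isolated $H_{\lambda_i}$-component $p_i\in I$, the reverse of the concatenation $p_{i+1}\cdots p_n p_1\cdots p_{i-1}$ is already an $H_{\lambda_i}$-admissible path from $p_i^-$ to $p_i^+$. Indeed, if this complementary path contained an edge labeled by an element of $H_{\lambda_i}$ with both endpoints in the coset $p_i^- H_{\lambda_i}$, that edge would be an $H_{\lambda_i}$-component of $p$ connected to $p_i$, contradicting isolation. This already gives the naive estimate $\widehat{\ell}_{\lambda_i}(p_i)\le \sum_{j\ne i}\ell_{X\sqcup \mathcal{H}}(p_j)$, but summing over $i\in I$ produces a quadratic (not linear) dependence on $n$.

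To sharpen to a linear bound, the idea is to use thinness of geodesic polygons in a hyperbolic space to replace the full complementary path by a short admissible detour that uses only a controlled portion of the polygon near $p_i$. More precisely, for each $p_i\in I$ one locates, by $\delta$-hyperbolicity, vertices $u_i^\pm$ lying on sides of $p$ different from $p_i$ whose $\Gamma(G,X\sqcup \mathcal{H})$-distance from $p_i^\pm$ is $O(\delta)$; then concatenating a short geodesic from $p_i^-$ to $u_i^-$, the corresponding sub-arc of $p$ from $u_i^-$ to $u_i^+$, and a short geodesic from $u_i^+$ to $p_i^+$, one obtains a candidate admissible path whose length is bounded by $O(\delta)$ plus a ``local share'' of adjacent sides of $p$. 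A counting argument shows that in such a construction each edge of $p$ is charged a bounded number of times across $i\in I$, yielding the target inequality $\sum_{p_i\in I}\widehat{\ell}_{\lambda_i}(p_i)\le D n$ with $D$ depending only on $\delta$.

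The main obstacle is ensuring the short bridges $p_i^\pm\to u_i^\pm$, together with the inserted sub-arcs, remain $H_{\lambda_i}$-admissible, i.e.\ traverse no edge with both endpoints in $p_i^- H_{\lambda_i}$. Isolation of $p_i$ as stated rules this out only for edges already appearing on the sides of $p$; for the newly introduced bridging edges supplied by hyperbolic thinness one has to re-invoke isolation at the level of the enlarged configuration, showing that any failure of admissibility could be used to shorten the polygon and iterate, driving the sum down without sacrificing the $n$-gon structure. Carrying this shortening through cleanly is the technical heart of the argument, and follows the template of Osin's original proof \cite[Proposition 3.2]{osin2007peripheral} in the relatively hyperbolic case, adapted to the more general weakly hyperbolically embedded setting of \cite{dahmani2017hyperbolically}.
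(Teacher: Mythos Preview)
The paper does not prove this proposition at all; it is quoted verbatim from \cite[Proposition 4.14]{dahmani2017hyperbolically} (with Osin's earlier \cite[Proposition 3.2]{osin2007peripheral} as antecedent) and used as a black box. So there is no ``paper's own proof'' to compare against, and in the end your sketch does exactly what the paper does: defer to those references.

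As a standalone argument, however, your sketch has a genuine gap. Your first observation is correct (after translating by $(p_i^-)^{-1}$ so that the endpoints lie in $H_{\lambda_i}$): isolation of $p_i$ forces the complementary path to be $H_{\lambda_i}$-admissible, and this does give the naive quadratic bound. The problem is in your second step. You claim that by $\delta$-thinness one builds a short admissible detour for each $p_i$, and that ``a counting argument shows that in such a construction each edge of $p$ is charged a bounded number of times across $i\in I$, yielding $\sum_{p_i\in I}\widehat{\ell}_{\lambda_i}(p_i)\le Dn$.'' But charging each edge of $p$ a bounded number of times yields a bound proportional to the \emph{total combinatorial length} of $p$, not to the number of sides $n$. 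The sides $p_j\notin I$ are arbitrary geodesics in $\Gamma(G,X\sqcup\mathcal{H})$ and can be arbitrarily long, so this does not give $Dn$. The same issue recurs if one tries to use $n$-gon thinness directly: an $n$-gon is only $(n{-}2)\delta$-slim, so a per-side estimate of $O(n\delta)$ summed over $|I|\le n$ sides is again quadratic.

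The actual proofs in \cite{osin2007peripheral,dahmani2017hyperbolically} avoid this by a different mechanism: Osin works with relative van Kampen diagrams and the linear relative isoperimetric inequality, while Dahmani--Guirardel--Osin run an inductive/splitting argument on the polygon that keeps the dependence linear in $n$ rather than in the perimeter. Your outline correctly identifies admissibility as the crux but does not supply the combinatorial device that produces the \emph{linear} bound; the ``shortening'' you gesture at in the final paragraph is precisely where the real work lies, and it is not the obstacle you name (admissibility of bridges) but rather the bookkeeping that makes the total come out proportional to $n$.
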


In fact, the above proposition is the reason why certain properties (for example, Theorems \ref{thm. group theoretic Dehn filling} and \ref{thm. CL-property} below) hold for \textit{sufficiently deep} Dehn fillings (see Definition \ref{def. sufficiently deep for wh}).

The technical lemma below will be used in Section \ref{sec. construct h.e. subgroups} along with Lemma \ref{lem. criterion for he} to construct hyperbolically embedded subgroups.

\begin{lemma}[{Dahmani-Guirardel-Osin \cite[Lemma 4.21]{dahmani2017hyperbolically}}]\label{lem. consecutive components}
Suppose $\{H_{\lambda}\}_{\lambda\in\Lambda}\hookrightarrow_{wh}(G,X)$. Let $W$ be the set consisting of all words $w$ over $X\sqcup\mathcal{H}$ such that
\begin{enumerate}
    \item[(W1)] $w$ contains no subwords of type $xy$, where $x,y\in X$;
    \item[(W2)] if $w$ contains a letter $h\in H_{\lambda}$ for some $\lambda\in \Lambda$, then $\widehat{d}_{\lambda}(1,h)>50D$, where $D$ is given by Proposition \ref{lem. total length of i.c. in g.p.};
    \item[(W3)] if $h_1xh_2$ (resp. $h_1h_2$) is a subword of $w$, where $x\in X,h_1\in H_{\lambda},h_2\in H_{\mu}$, then either $\lambda\neq \mu$ or the element represented by $x$ in $G$ does not belong to $H_{\lambda}$ (resp. $\lambda\neq \mu$).
\end{enumerate}
Then the following hold.
\begin{enumerate}
    \item[(a)] Every path in the Cayley graph $\Gamma(G,X\sqcup \mathcal{H})$ labeled by a word from $W$ is a $(4,1)$-quasi-geodesic.
    \item[(b)] If $p$ is a path in $\Gamma(G,X\sqcup \mathcal{H})$ labeled by a word from $W$, then for every $\lambda\in\Lambda$, every $H_{\lambda}$-component of $p$ is isolated.
    \item[(c)] For every $\epsilon>0$, there exists $R>0$ satisfying the following condition. Let $p,q$ be two paths in $\Gamma(G,X\sqcup \mathcal{H})$ such that 
    $$\ell_{X\sqcup \mathcal{H}}(p)\geqslant R,~~~~\lab(p),\lab(q)\in W,$$
    and $p,q$ are oriented $\epsilon$-close, i.e.,
    $$\max\{d_{X\sqcup \mathcal{H}}(p^-,q^-),d_{X\sqcup \mathcal{H}}(p^+,q^+)\}\leqslant \epsilon,$$
    where $d_{X\sqcup \mathcal{H}}$ is the combinatorial metric of $\Gamma(G,X\sqcup \mathcal{H})$.
    Then there exist five consecutive components of $p$ which are respectively connected to five consecutive components of $q$. In other words,
    $$p=ra_1x_1a_2x_2a_3x_3a_4x_4a_5s,~~~~q=tb_1y_1b_2y_2b_3y_3b_4y_4b_5u,$$
    such that the following hold.
    \begin{enumerate}
        \item[(i)] $r$ (resp. $t$) is a subpath of $p$ (resp. $q$) whose label does not end with a letter from $\mathcal{H}$.
        \item[(ii)] $s$ (resp. $u$) is a subpaht of $p$ (resp. $q$) whose label does not start with a letter from $\mathcal{H}$.
        \item[(iii)] For $i=1,...,4$, $x_i$ and $y_i$ are either trivial subpaths or subpaths labeled by a letter over $X$; 
        \item[(iv)] For $i=1,...,5$, $a_i$ and $b_i$ are connected $H_{\lambda_i}$-components.
    \end{enumerate}
\end{enumerate}
\end{lemma}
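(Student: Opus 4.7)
My plan is to derive all three conclusions from the polygon bound of Proposition~\ref{lem. total length of i.c. in g.p.}, using condition~(W2) as the main pressure point: every $\mathcal{H}$-letter of a word in $W$ contributes strictly more than $50D$ to the total $\widehat{\ell}$-length of isolated components, while Proposition~\ref{lem. total length of i.c. in g.p.} caps that sum by $D$ times the number of sides of the polygon. For part~(b), I would suppose that two $H_\lambda$-components $a,b$ of $p$ are connected by a path $t$ labeled by a single letter of $H_\lambda$; then the closed loop formed by $t$ together with the subpath of $p$ between $a$ and $b$ produces a geodesic polygon whose sides include $a$, $b$, $t$, and the edges of $p$ between $a$ and $b$. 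Conditions~(W1) and~(W3) guarantee that no two consecutive letters of $w$ can ``shortcut'' via an element of the same $H_\lambda$, so $a$ and $b$ themselves remain isolated in this polygon; Proposition~\ref{lem. total length of i.c. in g.p.} then contradicts (W2). For part~(a), if $p$ with label $w\in W$ failed to be $(4,1)$-quasi-geodesic, there would exist vertices $u,v$ on $p$ with $\ell_{X\sqcup\mathcal{H}}(p_{uv})>4\,d_{X\sqcup\mathcal{H}}(u,v)+1$; closing up with a geodesic from $u$ to $v$ produces a bigon to which~(b), together with~(W3), applies to keep all but boundedly many $\mathcal{H}$-components of $p_{uv}$ isolated, again forcing a contradiction via the polygon bound.

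For part~(c), I would first use~(a) to conclude that both $p$ and $q$ are $(4,1)$-quasi-geodesics, and then invoke stability of quasi-geodesics in the Gromov hyperbolic space $\Gamma(G,X\sqcup\mathcal{H})$ to deduce that $p$ and $q$ lie within uniform Hausdorff distance depending only on $\epsilon$ and the hyperbolicity constant. Closing up the endpoints by two geodesics of length $\leq\epsilon$ produces a quadrilateral $P$. Refining $P$ into a geodesic polygon whose sides are the two short arcs together with the $\mathcal{H}$-components and $X$-edges of $p$ and $q$, I would argue via Proposition~\ref{lem. total length of i.c. in g.p.} that all but a bounded number of $\mathcal{H}$-components of $p$ must be connected to some component of $q$; choosing $R$ large enough that $p$ carries sufficiently many components then yields a block of five consecutive matched pairs.

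The main obstacle is~(c): promoting the assertion ``most components of $p$ are matched to some component of $q$'' into a run of \emph{five consecutive} matched components. This requires a careful ordering argument, ensuring that once one component of $p$ is paired with a component of $q$ the neighbouring components on either side also pair correctly with respect to the natural linear order on $q$ (so the matching arcs do not cross, which would create additional sides and blow up the polygon count). The precise constant five is then chosen large enough to feed into the criterion Lemma~\ref{lem. criterion for he} used to produce the hyperbolically embedded subgroups needed in Section~\ref{sec. construct h.e. subgroups}.
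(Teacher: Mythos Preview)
The paper does not supply its own proof of this lemma: it is quoted verbatim from Dahmani--Guirardel--Osin \cite[Lemma~4.21]{dahmani2017hyperbolically}, and the only additional content is the remark following the statement, which points out that conclusion~(b) is not stated as such in \cite{dahmani2017hyperbolically} but is established in the second paragraph of their proof. There is therefore no in-paper argument to compare your proposal against.

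That said, your sketch is broadly aligned with the strategy in \cite{dahmani2017hyperbolically}: everything is ultimately driven by the tension between~(W2) and the polygon bound of Proposition~\ref{lem. total length of i.c. in g.p.}. A few of your steps, however, gloss over points that require real work. For~(b), you need an innermost-pair (minimality) argument: if you just pick two connected components $a,b$ and form the loop, the intermediate components of $p$ between them may be connected to one another or to $a,b$ inside that loop, so you cannot immediately declare them isolated; one takes a connected pair with no connected pair strictly between them. For~(a), invoking~(b) only tells you components of $p_{uv}$ are isolated \emph{in $p$}, not in the bigon obtained by closing up with a geodesic; one must control how many components of $p_{uv}$ can connect to components of the geodesic side, which is where the length comparison actually enters. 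For~(c) you yourself flag the obstacle: upgrading ``most components are matched'' to a run of five \emph{consecutive} matched pairs with the correct order on $q$ is the substantive combinatorial step, and your proposal does not carry it out. None of this is fatal to the overall plan, but as written the proposal is an outline rather than a proof.
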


\begin{remark}
Conclusion (b) of Lemma \ref{lem. consecutive components} is not stated in \cite[Lemma 4.21]{dahmani2017hyperbolically}, but it is proved in the second paragraph of the proof of \cite[Lemma 4.21]{dahmani2017hyperbolically}.
\end{remark}

\subsection{Acylindrical hyperbolicity}\label{sec.ah}

We notice that Examples \ref{eg. product hyperbolically embedded} (b) and (c) are two occasions where having a hyperbolically embedded subgroup does not provide any information about the group $G$. We also notice that in those two cases, the hyperbolically embedded subgroup is either finite or improper. It is therefore natural to look at the groups $G$ with a proper infinite hyperbolically embedded subgroup. By \cite[Theorem 7.19]{dahmani2017hyperbolically} and \cite[Theorem 1.2]{osin2016acylindrically}, this is equivalent to saying that $G$ is acylindrically hyperbolic.

\begin{definition}\label{def. ah}
A group $G$ is \textit{acylindrically hyperbolic} if $G$ admits a non-elementary acylindrical action on some Gromov hyperbolic space by isometries.
\end{definition}

For the definition an acylindrical action, the reader is referred to \cite{osin2017groups}. Intuitively, one can think of acylindricity as an analog of properness. An acylindrical action of a group $G$ is \textit{non-elementary} if its orbits are unbounded and $G$ is not virtually-cyclic \cite[Theorem 1.1]{osin2016acylindrically}.

Techniques of hyperbolically embedded subgroups are mainly applied to acylindrically hyperbolic groups, because every acylindrically hyperbolic group contains infinitely many hyperbolically embedded virtually free subgroups.

\begin{theorem}[{Dahmani-Guirardel-Osin \cite[Theorem 6.14]{dahmani2017hyperbolically}}]\label{thm. hyperbolically embedded virtually free subgroup}
Let $G$ be an acylindrically hyperbolic group. Then $G$ has a unique maximal finite normal subgroup, denoted by $K(G)$. Moreover, for every $n\in\mathbb{N}^+$, there exists a free group $F$ of rank $n$ such that $F\times K(G)\hookrightarrow_h G$.
\end{theorem}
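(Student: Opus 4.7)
The plan is to first extract $K(G)$ and then to realize $F\times K(G)$ as a hyperbolically embedded subgroup via a ping-pong construction. Fix a non-elementary acylindrical action $G\curvearrowright S$ on a Gromov hyperbolic space and, by Example~\ref{eg. 2}, pick a loxodromic element $g\in G$ together with its maximal virtually cyclic stabilizer $E(g)\hookrightarrow_h G$.

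For the existence of $K(G)$, the key observation is that every finite normal subgroup $N\lhd G$ lies in $E(h)$ for every loxodromic $h\in G$. Indeed, for $n\in N$ and a basepoint $o\in S$, the orbit $N\cdot o$ is finite, so $nh$ has the same translation length as $h$ up to a bounded additive error; hence $nh$ is loxodromic with the same pair of fixed points on $\partial S$ as $h$, which forces $nh\in E(h)$ and therefore $n\in E(h)$. Choosing two independent loxodromics $h_1,h_2$ (available because the action is non-elementary), $E(h_1)\cap E(h_2)$ is torsion and hence finite: any infinite-order element of the virtually cyclic group $E(h_i)$ is loxodromic with fixed-point set $\{h_i^+,h_i^-\}$, and these two pairs are disjoint. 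Consequently every finite normal subgroup of $G$ lies in the finite set $E(h_1)\cap E(h_2)$, and the union of all such subgroups is itself a finite normal subgroup, which we declare to be $K(G)$.

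For the second part I would argue in three steps. First, select $n$ pairwise independent loxodromic elements $g_1,\ldots,g_n\in G$; such elements exist in abundance because an acylindrical non-elementary action produces infinitely many independent loxodromics. Second, by the previous paragraph $K(G)\leq E(g_i)$ for each $i$; since the centralizer of the finite group $K(G)$ in the virtually cyclic group $E(g_i)$ has finite index, after replacing each $g_i$ by a sufficiently large power we may assume that every $g_i$ centralizes $K(G)$ and that the quasi-axes of the $g_i$ are long enough in the senses required by Lemma~\ref{lem. consecutive components}. Third, I would show that the subgroup $H=\langle g_1,\ldots,g_n,K(G)\rangle$ is isomorphic to $F_n\times K(G)$ and hyperbolically embeds in $G$. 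The identification $H\cong F_n\times K(G)$ comes from a standard ping-pong argument on the action of $\langle g_1,\ldots,g_n\rangle$ on $S$, using that pairwise independent loxodromics have quasi-axes with bounded pairwise overlap, together with $F_n\cap K(G)=1$ because $F_n$ is torsion free. The hyperbolic embedding is then verified using Lemma~\ref{lem. criterion for he}: properness (C$_1$) follows from acylindricity and loxodromicity of the $g_i$; quasi-convexity of the $H$-orbit (C$_2$) from Schottky-type dynamics; and the translation-intersection condition (C$_3$) from Proposition~\ref{lem. total length of i.c. in g.p.} together with Lemma~\ref{lem. consecutive components}(c) applied to words that encode the $F_n\times K(G)$ structure.

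The principal obstacle will be verifying condition (C$_3$) for $H=F_n\times K(G)$: one must prove a strong quasi-malnormality, namely that for any $g\notin H$ the $H$-orbit and its $g$-translate in $S$ have bounded overlap. This requires a careful bookkeeping of isolated components in geodesic polygons whose sides alternate between $H$-components and $X$-paths, and relies on choosing the powers of the $g_i$ large enough to make the relative lengths of the corresponding $H$-components exceed the threshold $50D$ appearing in condition (W2) of Lemma~\ref{lem. consecutive components}, so that the five-consecutive-components conclusion of that lemma becomes available.
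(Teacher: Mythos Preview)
The paper does not prove this theorem; it is quoted from \cite{dahmani2017hyperbolically} as a black box, so there is no in-paper argument to compare against. I will therefore assess your proposal on its own terms.

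Your treatment of $K(G)$ is essentially correct. The step ``$nh$ is loxodromic with the same pair of fixed points as $h$'' deserves one more line of justification: since $N$ is normal, $(nh)^k\in Nh^k$ for all $k$, so the $\langle nh\rangle$-orbit and the $\langle h\rangle$-orbit of $o$ stay at bounded Hausdorff distance, which forces the endpoints to coincide. After that the argument goes through.

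The second half has a structural gap. You propose to verify condition~(C$_3$) for $H=F_n\times K(G)$ by invoking Proposition~\ref{lem. total length of i.c. in g.p.} and Lemma~\ref{lem. consecutive components}. Both of those statements live in a relative Cayley graph $\Gamma(G,X\sqcup\mathcal{H})$ attached to a family $\{H_\lambda\}$ that is \emph{already} weakly hyperbolically embedded; the constant $D$ and the whole notion of ``component'' make no sense for a bare acylindrical action on $S$. To apply them you would first need to know that some family such as $\{E(g_i)\}_{i=1}^n$ hyperbolically embeds in $G$. Example~\ref{eg. 2} only gives each $E(g_i)$ individually; promoting this to the family statement is itself a theorem of comparable depth (this is roughly \cite[Theorem~6.8]{dahmani2017hyperbolically}) and is exactly where the real work in DGO happens. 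So your plan either hides a circularity or skips the hard step. The honest route is to verify (C$_1$)--(C$_3$) directly for the action on $S$, using acylindricity to bound the diameter of coarse intersections of translates of quasi-axes; once that is done for the family $\{E(g_i)\}$, one can pass to the relative Cayley graph and run the isolated-components machinery as you suggest, but not before.
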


\subsection{Sufficient deepness and Cohen-Lyndon triples}

One consequence of Proposition \ref{lem. total length of i.c. in g.p.} is that acylindrical hyperbolicity is preserved by Dehn fillings, provided that the Dehn fillings are \textit{sufficiently deep} and done on hyperbolically embedded subgroups (see Theorem \ref{thm. group theoretic Dehn filling}).

\begin{definition}\label{def. sufficiently deep for wh}
Let $G$ be a group with a family of subgroups $\{H_{\lambda}\}_{\lambda\in\Lambda}\hookrightarrow_{wh}(G,X)$ for some subset $X\subset G$. For every $\lambda\in\Lambda$, let $\widehat{d}_{\lambda}$ be the relative metric on $H_{\lambda}$ with respect to $X$. A property $\mathcal{P}$ \textit{holds for all sufficiently deep normal subgroups} if there exists a constant $C>0$ such that $\mathcal{P}$ holds for every family of normal subgroups $\{N_{\lambda}\lhd H_{\lambda}\}_{\lambda\in\Lambda}$ with $\widehat{d}_{\lambda}(1,n)>C$ for all $n\in N_{\lambda}\smallsetminus\{1\}$.
\end{definition}

\begin{example}
If $G=H_1\ast_A H_2$ is a free product with amalgamation, then $\{H_1,H_2\}\hookrightarrow_{wh}(G,\emptyset)$ by Example \ref{eg. product hyperbolically embedded} (e). For normal subgroups $\{N_i\lhd H_i\}_{i=1,2}$, consider the property

\begin{enumerate}
    \item[$\mathcal{P}$:] The quotient $G/\ll N_1\cup N_2\rr$ splits as an amalgamated free product, where $\ll N_1\cup N_2\rr$ denotes the normal closure of $N_1\cup N_2$ in $G$.
\end{enumerate}

Then $\mathcal{P}$ holds for all sufficiently deep normal subgroup $\{N_i\lhd H_i\}_{i=1,2}$, because $\mathcal{P}$ holds whenever $N_i\cap A=\{1\}$, which amounts to saying that $\widehat{d}_i(n)>1$ for all $n\in N_i\smallsetminus\{1\}$, where $\widehat{d}_i$ is the relative metric on $H_i$ corresponding to the weak hyperbolic embedding $\{H_1,H_2\}\hookrightarrow_{wh}(G,\emptyset)$.
\end{example}

\begin{remark} \label{rem. sufficiently deep for he}
In Definition \ref{def. sufficiently deep for wh}, if $\{H_{\lambda}\}_{\lambda\in\Lambda}\hookrightarrow_h(G,X)$, then the relative metrics $\widehat{d}_{\lambda}$ are locally finite. Thus,
$$\card\left(\{h\in H_{\lambda}\mid \widehat{d}_{\lambda}(1,h)\leqslant C\}\right)<\infty$$
for all $C>0$. Therefore:

Let $G$ be a group with a family of subgroups $\{H_{\lambda}\}_{\lambda\in\Lambda}\hookrightarrow_h G$. Suppose that a property $\mathcal{P}$ holds for all sufficiently deep normal subgroups $\{N_{\lambda}\lhd H_{\lambda}\}_{\lambda\in\Lambda}$. Then there exist finite sets $\{\mathcal{F}_{\lambda}\subset H_{\lambda}\smallsetminus\{1\}\}_{\lambda\in\Lambda}$ such that $\mathcal{P}$ holds whenever $N_{\lambda}\cap \mathcal{F}_{\lambda}=\emptyset$ for all $\lambda\in\Lambda$.

If in addition, $|\Lambda|<\infty$, then a property $\mathcal{P}$ holds for all sufficiently deep normal subgroups $\{N_{\lambda}\lhd H_{\lambda}\}_{\lambda\in\Lambda}$ if and only if there exists a finite set $\mathcal{F}\subset \left(\bigcup_{\lambda\in \Lambda}H_{\lambda}\right)\smallsetminus\{1\}$ such that $\mathcal{P}$ holds whenever $N_{\lambda}\cap \mathcal{F}=\emptyset$. In particular, if $G$ is a group with a hyperbolically embedded subgroup $H\hookrightarrow_h G$, then Definition \ref{intro: def. sufficiently deep} is a special case of Definition \ref{def. sufficiently deep for wh}.
\end{remark}

To simplify statements, we use the following notation.

\begin{notation}\label{definition of natural maps}
Let $G$ be a group and $S$ a subset of $G$. Then $\ll S \rr$ denotes the normal closure of $S$ in $G$. Suppose that $\{H_{\lambda}\}_{\lambda\in\Lambda}$ is a family of subgroups of $G$ and $\{N_{\lambda}\lhd H_{\lambda}\}_{\lambda\in\Lambda}$ is a family of normal subgroups. We call $(G,\{H_{\lambda}\}_{\lambda\in\Lambda},\{N_{\lambda}\}_{\lambda\in\Lambda})$ a \textit{group triple}. We also let
\[\mathcal{N}=\bigcup_{\lambda\in\Lambda}N_{\lambda},~~\bg=G/\m,~~\bhl=H_{\lambda}/N_{\lambda}.\]
\end{notation}

\begin{theorem}[{Dahmani-Guirardel-Osin \cite[Theorem 7.19]{dahmani2017hyperbolically}, Osin \cite[Theorem 1.2]{osin2016acylindrically}}]\label{thm. group theoretic Dehn filling}
Let $G$ be a group with a family of subgroups $\{H_{\lambda}\}_{\lambda\in\Lambda}\hookrightarrow_h G$. Then for all sufficiently deep normal subgroups $\{N_{\lambda}\lhd H_{\lambda}\}_{\lambda\in\Lambda}$, the natural homomorphism $\bhl\rightarrow\bg$ is injective for $\lambda\in\Lambda$ and we have $\{\bhl\}_{\lambda\in\Lambda}\hookrightarrow_h \bg$. Moreover, if for some $\lambda\in\Lambda$, $\card(\bhl)=\infty$ and $\bhl$ is a proper subgroup of $\bg$, then $\bg$ is acylindrically hyperbolic.
\end{theorem}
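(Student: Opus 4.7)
The plan is to choose a depth threshold depending on the constant $D$ from Proposition~\ref{lem. total length of i.c. in g.p.} and then establish, in order: (a) injectivity of $\bhl\to\bg$ for each $\lambda\in\Lambda$; (b) the hyperbolic embedding $\{\bhl\}_{\lambda\in\Lambda}\hookrightarrow_h\bg$; and (c) acylindrical hyperbolicity of $\bg$ under the extra hypothesis. Fix a relative generating set $X\subset G$ with $\{H_\lambda\}_{\lambda\in\Lambda}\hookrightarrow_h(G,X)$ and let $\widehat{d}_\lambda$ denote the corresponding relative metric on $H_\lambda$. \emph{Sufficiently deep} will mean $\widehat{d}_\lambda(1,n)>C$ for every $n\in N_\lambda\smallsetminus\{1\}$ and every $\lambda\in\Lambda$, where $C$ is a large multiple of $D$ chosen to make the polygon estimates below yield contradictions.

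For (a), I would show $H_\lambda\cap\m=N_\lambda$, the inclusion $\supseteq$ being obvious. Given $h\in H_\lambda\cap\m$, take a shortest expression $h=\prod_{i=1}^k f_in_if_i^{-1}$ with $n_i\in N_{\lambda_i}\smallsetminus\{1\}$, and realize the resulting relation as a closed loop in $\Gamma(G,X\sqcup\mathcal{H})$ built from geodesic representatives of the $f_i$, edges labelled by the $n_i$, and a side representing $h$ in $H_\lambda$. Each $n_i$ is an $H_{\lambda_i}$-component of $\widehat{\ell}$-length exceeding $C$; the minimality of $k$ together with the connection criterion of Definition~\ref{connect} (so that any two connected components could be amalgamated and the expression shortened) allows one to reduce to the case where the $n_i$-components are isolated. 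Proposition~\ref{lem. total length of i.c. in g.p.} then bounds the sum of their $\widehat{\ell}$-lengths linearly in the number of sides of the polygon, which for $C$ chosen sufficiently large forces the polygon to be degenerate and $h\in N_\lambda$.

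For (b), let $\bar{X}$ denote the image of $X$ in $\bg$, so that $\bar{X}$ is a relative generating set of $\bg$ with respect to $\{\bhl\}_{\lambda\in\Lambda}$ and the quotient map $G\twoheadrightarrow\bg$ induces a label-preserving simplicial surjection $\Gamma(G,X\sqcup\mathcal{H})\twoheadrightarrow\Gamma(\bg,\bar{X}\sqcup\oh)$. The core of the argument is that the family of conjugates of the $N_\lambda$ in $G$ acts as a (very) rotating family on an appropriate cone-off of $\Gamma(G,X\sqcup\mathcal{H})$ when the fillings are sufficiently deep; the rotating family machinery of \cite{dahmani2017hyperbolically} then yields hyperbolicity of $\Gamma(\bg,\bar{X}\sqcup\oh)$ and controls the stabilizers. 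Equivalently, one analyses geodesic triangles in $\bg$ by lifting their sides to controlled paths in $G$ and applies Proposition~\ref{lem. total length of i.c. in g.p.} iteratively: any failure of thinness in $\bg$ would produce a polygon in $\Gamma(G,X\sqcup\mathcal{H})$ carrying an isolated $H_\lambda$-component of $\widehat{\ell}_\lambda$-length greater than $C$, contradicting the bound $Dn$. Local finiteness of the induced relative metrics $\widehat{d}_\lambda^{\bg}$ on $\bhl$ follows by lifting any $\bhl$-admissible path witnessing $\widehat{d}_\lambda^{\bg}(1,\bar{h})\leqslant R$ back to $G$, using injectivity from part (a), and applying the same isolated-components bound to deduce only finitely many such $\bar{h}$.

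For (c), once $\{\bhl\}_{\lambda\in\Lambda}\hookrightarrow_h\bg$ is in place, the hypothesis that some $\bhl$ is both infinite and a proper subgroup of $\bg$ exhibits $\bhl$ as a proper infinite hyperbolically embedded subgroup of $\bg$; by Osin \cite[Theorem 1.2]{osin2016acylindrically}, this is equivalent to $\bg$ being acylindrically hyperbolic. The main technical obstacle in the entire proof is part (b), specifically verifying the hyperbolicity of $\Gamma(\bg,\bar{X}\sqcup\oh)$ and the attendant stabilizer statements: this is the step that requires the full power of the rotating families / van Kampen diagram machinery adapted to the hyperbolically embedded setting, whereas parts (a) and (c) are comparatively direct consequences.
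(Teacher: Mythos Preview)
The paper does not prove this theorem; it is stated as a result imported from the literature, with attribution to \cite[Theorem~7.19]{dahmani2017hyperbolically} and \cite[Theorem~1.2]{osin2016acylindrically}, and no argument is given beyond the citation. There is therefore no ``paper's own proof'' to compare your proposal against.

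That said, your outline is a reasonable high-level summary of the strategy in \cite{dahmani2017hyperbolically}: part~(a) is essentially the statement that $H_\lambda\cap\m=N_\lambda$ (which the present paper also records as \cite[Lemma~6.4]{sun2018cohomologyi}), part~(c) is exactly the cited equivalence from \cite{osin2016acylindrically}, and part~(b) is where the real work lies. Your description of~(b) is honest about this --- you correctly identify the rotating-families / windmill machinery as the load-bearing step --- but what you have written is a description of the architecture rather than a proof. In particular, the sentence ``any failure of thinness in $\bg$ would produce a polygon in $\Gamma(G,X\sqcup\mathcal{H})$ carrying an isolated $H_\lambda$-component of $\widehat{\ell}_\lambda$-length greater than $C$'' hides the entire difficulty: lifting geodesics from $\bg$ to controlled paths in $G$ and tracking how components interact under the quotient is precisely the content of the lengthy technical Sections~5--7 of \cite{dahmani2017hyperbolically}, and Proposition~\ref{lem. total length of i.c. in g.p.} alone is not enough to carry it. So your proposal is not wrong, but it is a roadmap rather than a proof, and since the present paper simply invokes the result, a citation is the appropriate level of detail here.
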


The normal subgroup $\m$ in the above theorem can be described more precisely: it has a particular free product structure.

\begin{definition}\label{CLp}
A group triple $(G,\{H_{\lambda}\}_{\lambda\in\Lambda},\{N_{\lambda}\}_{\lambda\in\Lambda})$ is called a \textit{Cohen-Lyndon triple} if there exist left transversals $T_{\lambda}$ of $H_{\lambda}\m$ in $G$ such that
$$\m =\Asterisk_{\lambda\in\Lambda,t\in T_{\lambda}} tN_{\lambda}t^{-1}.$$
\end{definition}

The above free product structure was first proved by Cohen-Lyndon \cite{cohen1963free} for free groups, hence the name. The following theorem was partially proved by \cite[Theorem 7.19]{dahmani2017hyperbolically}, which was later improved by \cite{sun2018cohomologyi}.

\begin{theorem}[{Sun \cite[Theorem 5.1]{sun2018cohomologyi}}]\label{thm. CL-property}
Let $G$ be a group with a family of subgroups $\{H_{\lambda}\}_{\lambda\in\Lambda}\hookrightarrow_{wh}(G,X)$ for some $X\subset G$. Then for all sufficiently deep normal subgroups $\{N_{\lambda}\lhd H_{\lambda}\}_{\lambda\in\Lambda}$, $(G,\{H_{\lambda}\}_{\lambda\in\Lambda},\{N_{\lambda}\}_{\lambda\in\Lambda})$ is a Cohen-Lyndon triple.
\end{theorem}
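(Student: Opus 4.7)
My plan is to take left transversals $T_\lambda$ of $H_\lambda \m$ in $G$ (with $1 \in T_\lambda$) and prove
\[
\m \;=\; \Asterisk_{\lambda \in \Lambda,\, t \in T_\lambda} t N_\lambda t^{-1}
\]
via the geometry of $\Gamma(G, X \sqcup \mathcal H)$. The first step is to verify that the conjugates $\{tN_\lambda t^{-1}\}$ generate $\m$ as a subgroup; this is a Reidemeister--Schreier-style manipulation in which one takes any generator $gng^{-1}$ of the normal closure, decomposes $g = t \cdot u$ with $t \in T_\lambda$ and $u \in H_\lambda \m$, and rewrites it as a word in the proposed conjugate generators by inducting on a word-length measure on $\m$ and exploiting normality of $N_\lambda$ inside $H_\lambda$.

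The core of the argument is freeness. I would argue by contradiction: suppose $w = (t_1 n_1 t_1^{-1})(t_2 n_2 t_2^{-1}) \cdots (t_k n_k t_k^{-1}) = 1$ in $G$ is a reduced nontrivial relation, i.e.\ each $n_i \in N_{\lambda_i} \smallsetminus \{1\}$ and $(t_i, \lambda_i) \neq (t_{i+1}, \lambda_{i+1})$. I would represent each $t_i$ and $t_i^{-1}$ by words over $X \sqcup \mathcal H$ chosen to satisfy conditions (W1)--(W3) of Lemma \ref{lem. consecutive components}, so that $w = 1$ traces a closed loop $p$ in $\Gamma(G, X \sqcup \mathcal H)$ built from $k$ ``loops'', each consisting of an outgoing arm spelling $t_i$, a single $H_{\lambda_i}$-edge labelled $n_i$, and a return arm spelling $t_i^{-1}$. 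After cyclic reduction, $p$ becomes a geodesic polygon with $O(k)$ sides in which each $n_i$-edge persists as an $H_{\lambda_i}$-component. Sufficient deepness yields $\widehat d_{\lambda_i}(1, n_i) > C$ for any preassigned $C$, while Proposition \ref{lem. total length of i.c. in g.p.} bounds the total $\widehat\ell$-length of isolated components of $p$ by $D \cdot O(k)$. Once each $n_i$-edge is shown to be isolated, this yields $kC \le D'k$, contradicting the choice $C > D'$.

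The principal obstacle is precisely verifying that every $n_i$-edge is an \emph{isolated} $H_{\lambda_i}$-component of $p$. Two edges at positions $i \ne j$ with $\lambda_i = \lambda_j = \lambda$ are connected only if their attaching vertices lie in the same left coset of $H_\lambda$; projecting into $\bg$, these attaching vertices represent the cosets $t_i H_\lambda \m$ and $t_j H_\lambda \m$, which coincide only when $t_i = t_j$ in the transversal $T_\lambda$ for $H_\lambda \m$ in $G$. Combined with the reducedness condition $(t_i, \lambda_i) \ne (t_{i+1}, \lambda_{i+1})$, this rules out all such connections, and is precisely where the choice of transversals for $H_\lambda \m$ (rather than for $H_\lambda$) is essential. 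Connections between an $n_i$-edge and an $H_\mu$-component hidden inside one of the $t_j$-arms are ruled out by Lemma \ref{lem. consecutive components}(b), since conditions (W1)--(W3) ensure that within each arm the $H_\mu$-components are already isolated. Packaging these geometric checks within a single geodesic polygon to which Proposition \ref{lem. total length of i.c. in g.p.} applies cleanly is where the main care is needed and constitutes the technical heart of the argument.
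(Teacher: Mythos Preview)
This theorem is not proved in the present paper---it is quoted from the companion article \cite{sun2018cohomologyi}---so there is no in-paper proof to compare against. Your outline has the right general shape (build a polygon in $\Gamma(G,X\sqcup\mathcal H)$ from a putative relation and invoke Proposition~\ref{lem. total length of i.c. in g.p.}), but the isolation step contains a genuine gap.

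Your projection argument correctly shows that if the $n_i$- and $n_j$-edges are connected (with $\lambda_i=\lambda_j$) then $t_i=t_j$: the attaching vertices project in $\bg$ to $\bar t_i$ and $\bar t_j$, and the transversal condition forces equality. But this does \emph{not} yield a contradiction. Reducedness only says $(t_i,\lambda_i)\neq(t_{i+1},\lambda_{i+1})$ for \emph{consecutive} indices; for $|i-j|\ge 2$ the equality $(t_i,\lambda_i)=(t_j,\lambda_j)$ is perfectly allowed in a reduced free-product word, so non-adjacent $n$-edges may well be connected and your inequality $kC\le D'k$ never gets off the ground. Closing this requires either a minimality/shortening argument---which in turn needs $H_\lambda\cap\m=N_\lambda$, itself part of what is being established---or the considerably more delicate analysis actually carried out in \cite{sun2018cohomologyi}. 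Two smaller issues compound this: first, an arbitrary transversal element $t_i$ need not be representable by a word satisfying (W1)--(W3), since (W2) forces every $\mathcal H$-letter to have $\widehat d$-length exceeding $50D$; second, Lemma~\ref{lem. consecutive components}(b) only asserts isolation of components \emph{within} a single (W1)--(W3) path, so it does not by itself rule out a connection between an $H_\mu$-component of some $t_j$-arm and an $n_i$-edge sitting elsewhere in the polygon.
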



\section{Cohomology of Dehn fillings}\label{sec. proof of main results}

In this section, we prove Theorem \ref{thm. main}. To simplify the notation, we use \ref{definition of natural maps}. 
Recall, given a group $G$, a subgroup $H\leq G$ and a $\Z H$-module $M$, the {\it induced module} is 
$$\mathrm{Ind}_H^G M:= \Z G\otimes_{\Z H} M$$
and the {\it co-induced module} is 
$$\mathrm{CoInd}_H^G M:= \mathrm{Hom}_{\Z H}(\Z G, M).$$
The left action of $G$ on $\Z G$ induces a $\Z G$-module structure on both the induced and the co-induced modules \cite[\S III.5]{brown1982cohomology}. 

Throughout, we adopt the standard convention that given $\Z G$-modules $M$ and $N$, then $M\otimes N := M\otimes_{\Z} N$ and $M\otimes_{G} N := M\otimes_{\Z G} N$. We repeatedly make use of the following tenor identities which are a consequence of the associativity of the tensor products \cite[\S III.3 and III.5]{brown1982cohomology}.
\begin{lemma}[Tensor product identities]\label{tensor_identities} Let $G$ be a group, let $H$ be a subgroup of $G$, and let $K$ be a normal subgroup of $G$. For any $G$-modules $M$, $N$ and $H$-module $L$, we have
\begin{enumerate}
\item[(i)] $M\otimes \Z[G/H] \cong \mathrm{Ind}_H^G \mathrm{Res}_H^G M$ as $G$-modules,\\
\item[(ii)] $M\otimes_G N \cong (M\otimes_K N)\otimes_{G/K} \Z$,\\
\item[(iii)] $(L\otimes_H \Z G)\otimes_G N\cong L\otimes_H N$.
\end{enumerate}
\end{lemma}

\begin{proposition}\label{prop. induce module}
Let $(G,\{H_{\lambda}\}_{\lambda\in\Lambda},\{N_{\lambda}\}_{\lambda\in\Lambda})$ be a Cohen-Lyndon triple. Then for any $G$-module $A$ and $q>1$ (also for any $\overline{G}$-module $A$ and $q>0$) there are isomorphisms
\begin{align*}
    & H_q(\m;A)\cong \bsl \ihg  H_q(N_{\lambda};A),\\
    & H^q(\m;A)\cong \pl\chg H^q(N_{\lambda};A)
\end{align*}
induced by the inclusions $N_{\lambda}\hookrightarrow \m$.
\end{proposition}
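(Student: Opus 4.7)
My plan is to exploit the Cohen-Lyndon decomposition $\m = \Asterisk_{\lambda \in \Lambda,\, t \in T_\lambda} t N_\lambda t^{-1}$ and combine it with the standard Bass-Serre computation for the cohomology of a free product. The Bass-Serre tree associated to this decomposition is a star with a central vertex of trivial stabilizer and leaves of stabilizers $t N_\lambda t^{-1}$. Since this tree is contractible, its augmented cellular chain complex yields a short exact sequence of $\mathbb{Z}\m$-modules
\[
0 \longrightarrow \bigoplus_{\lambda, t} \mathbb{Z}\m \longrightarrow \mathbb{Z}\m \oplus \bigoplus_{\lambda,t} \mathbb{Z}\bigl[\m/(tN_\lambda t^{-1})\bigr] \longrightarrow \mathbb{Z} \longrightarrow 0.
\]
Applying $\ext^{*}_{\m}(-, A)$ and invoking Shapiro's lemma in the form $\ext^n_{\m}(\mathbb{Z}[\m/K], A) \cong H^n(K; A)$ together with the vanishing of positive-degree Ext on free summands, the associated long exact sequence collapses in degrees $q \geq 2$ to give
\[
H^q(\m; A) \cong \prod_{\lambda, t} H^q(tN_\lambda t^{-1}; A).
\]
For $q = 1$, the remaining connecting term vanishes precisely when each $tN_\lambda t^{-1}$ fixes $A$, which is automatic whenever $A$ is a $\bg$-module. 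The homological statement follows by the dual argument with $\tor^{*}_{\m}(-,A)$.

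Next I would re-index the product as a coinduced module. By Theorem \ref{thm. group theoretic Dehn filling}, for sufficiently deep $N$ the natural map $\bhl \to \bg$ is injective, so the image of the left transversal $T_\lambda$ of $H_\lambda\m$ in $G$ becomes a left transversal of $\bhl$ in $\bg$. The conjugation maps $n \mapsto tnt^{-1}$ then induce isomorphisms $H^q(N_\lambda; A) \cong H^q(tN_\lambda t^{-1}; A)$ which assemble into
\[
\prod_{t \in T_\lambda} H^q(tN_\lambda t^{-1}; A) \cong \chg H^q(N_\lambda; A),
\]
with the analogous identification via $\ihg$ on the homological side. Taking the product (respectively direct sum) over $\lambda \in \Lambda$ gives the right-hand side of the proposition.

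To promote this to a $\bg$-equivariant isomorphism induced by the inclusions, I would take an intrinsic viewpoint: the restriction $H^q(\m; A) \to H^q(N_\lambda; A)$ is $\bhl$-equivariant because $H_\lambda$ normalises $N_\lambda$ and inner automorphisms act trivially on cohomology, so the adjunction $\mathrm{Res} \dashv \chg$ extends it canonically to a $\bg$-equivariant map into $\chg H^q(N_\lambda; A)$; comparison with the free-product computation above shows this map is the asserted isomorphism. I expect the main technical obstacle to be tracking equivariance carefully: $G$-conjugation permutes the free factors $t N_\lambda t^{-1}$ only up to an inner twist by $\m$, and one must confirm that after passing to $\bg$ this ambiguous inner twist acts trivially, so that the $\bg$-action on $\prod_{\lambda, t} H^q(tN_\lambda t^{-1}; A)$ matches the one on $\prod_\lambda \chg H^q(N_\lambda; A)$.
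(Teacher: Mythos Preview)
Your approach is essentially the same as the paper's: both use the Bass--Serre tree for the Cohen--Lyndon free-product decomposition to obtain the Mayer--Vietoris isomorphism $H^q(\m;A)\cong \prod_{\lambda,t} H^q(tN_\lambda t^{-1};A)$ in the stated range, and then identify the result as a (co)induced module. The paper carries out the homological version and verifies the $\bg$-module structure by directly tracking how conjugation by $t\in T_\lambda$ and by $tH_\lambda t^{-1}$ acts on the summands; your use of the $\mathrm{Res}\dashv\chg$ adjunction to manufacture the $\bg$-map and then check bijectivity against the abelian-group computation is a clean repackaging of the same verification.

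Two small corrections. First, the Bass--Serre tree of a free product is not itself a star; the \emph{quotient graph of groups} is a star, and the tree is the infinite object whose augmented chain complex gives your short exact sequence---your sequence is correct, only the description preceding it is imprecise. Second, you should not invoke Theorem~\ref{thm. group theoretic Dehn filling} for the injectivity of $\bhl\to\bg$: that theorem concerns hyperbolically embedded subgroups and sufficiently deep fillings, whereas the proposition is stated for an arbitrary Cohen--Lyndon triple. Injectivity is in fact a direct consequence of the Cohen--Lyndon property (this is the remark immediately following the proposition; see \cite[Lemma~6.4]{sun2018cohomologyi}), so cite that instead.
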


We remark that an easy consequence of $(G,\{H_{\lambda}\}_{\lambda\in\Lambda},\{N_{\lambda}\}_{\lambda\in\Lambda})$ being a Cohen-Lyndon triple is that the natural maps $\bhl\rightarrow \bg$ are injective \cite[Lemma 6.4]{sun2018cohomologyi}, and thus it makes sense to consider the (co)inductions $\ihg$ and $\chg$.

\begin{proof}
We will prove the homological version. The proof of the cohomological version will be analogous.

Let $\{T_{\lambda}\}_{\lambda\in\Lambda}$ be the family of left transversals associated to the Cohen-Lyndon triple $(G,\{H_{\lambda}\}_{\lambda\in\Lambda},\{N_{\lambda}\}_{\lambda\in\Lambda})$, given by Definition \ref{CLp}. Then $\m=\Asterisk_{\lambda\in\Lambda,t\in T_{\lambda}}tN_{\lambda}t^{-1}$. Consider the Bass-Serre tree $X$ associated to this free product decomposition with vertex set $V$ and edge set $E$. Viewing $X$ as a $1$-dimensional CW-complex, we get a short exact sequence of $\m$-modules
\[0\rightarrow\mathbb{Z}[E]\xrightarrow{\partial_1}\mathbb{Z}[V]\xrightarrow{\partial_0}\mathbb{Z}\rightarrow 0,\]
where $C_1(X)= {\Z}[E]$,  $C_0(X)= {\Z}[V]$, and $\partial_*$ is the usual boundary map. 

Since the terms in the above exact sequence are $\Z$-free modules, tensoring it with $A$ preserves exactness \cite[\S V.6, p.~278-279]{Bredon} and we obtain the short exact sequence
\begin{equation}\label{eq. short exact inter}
    0\rightarrow  \Z[E]\otimes A\rightarrow \Z[V]\otimes A\rightarrow A \rightarrow 0.
\end{equation}
\noindent Let $F_i\twoheadrightarrow\mathbb{Z}$ be a free $\mathbb{Z}G$-resolution of $\mathbb{Z}$. Tensoring the above sequence by $F_i\otimes_{\m}-$, we obtain a short exact sequence
\begin{equation}\label{eq. short exact (2)}
    0\rightarrow F_i\otimes_{\m}(\mathbb{Z}[E]\otimes A)\rightarrow F_i\otimes_{\m}(\mathbb{Z}[V]\otimes A)\rightarrow F_i\otimes_{\m}A \rightarrow 0.
\end{equation}

Note that there is an isomorphism of $\m$-modules
\[\mathbb{Z}[V]\cong \bigoplus_{\lambda\in\Lambda,t\in T_{\lambda}} \mathbb{Z}[\m/tN_{\lambda}t^{-1}].\]

The short exact sequence \eqref{eq. short exact (2)} then transforms to a short exact sequence of chain complexes
\begin{equation}\label{eq. short exact (4)}
    0\rightarrow F_i\otimes_{\m}(\mathbb{Z}[E] \otimes A)\rightarrow \bigoplus_{\lambda\in\Lambda,t\in T_{\lambda}}F_i\otimes_{tN_{\lambda}t^{-1}}A \xrightarrow[]{\theta} F_i\otimes_{\m}A\rightarrow 0.
\end{equation}

The long exact sequence corresponding to \eqref{eq. short exact (4)} yields
\begin{align*}
    &\cdots \to  H_q({\m}; \mathbb{Z}[E]\otimes A)\to  \bigoplus_{\lambda\in\Lambda, t\in T_{\lambda}}  H_q({tN_{\lambda}t^{-1}} ; A)\xrightarrow{\theta_{\ast}} H_q(\m; A)\to \\
    &\cdots \to \mathbb{Z}\otimes_{\m} (\mathbb{Z}[E]\otimes A)\xrightarrow[]{f} \mathbb{Z}\otimes_{\m} (\mathbb{Z}[V]\otimes A)\to \mathbb{Z}\otimes_{\m} A\to 0.
\end{align*}

Since $\mathbb{Z}[E]$ is a free $\m$-module, $ H_q(\m;\mathbb{Z}[E]\otimes A)=0$ for $q>0$ and $f$ is injective if $\m$ acts trivially on $A$. This yields an isomorphism
\begin{equation}\label{eq. short exact (5)}
    \theta_{\ast}:\bigoplus_{\lambda\in\Lambda,t\in T_{\lambda}} H_q(tN_{\lambda}t^{-1};A)\rightarrow H_q(\m;A)
\end{equation}
for $q>1$ and also for $q=1$ if $\m$ acts trivially on $A$. The action of $\bg$ on $ H_q(\m;A)$ and the isomorphism $\theta_{\ast}$ endow $\bigoplus_{\lambda\in\Lambda,t\in T_{\lambda}} H_q(tN_{\lambda}t^{-1};A)$ with a $\bg$-action. Next, we will show that this action is a direct sum of permutation actions.

The $\bg$-action on $ H_q(\m;A)$ comes from the $G$-action on $A$ and conjugation of $G$ on $\m$ which is induced by the diagonal $G$-action on $F_{\ast}\otimes_{\m}A$. More explicitly, each $g\in G$ acts as
\[\tau_g:F_i\otimes_{\m} A\rightarrow F_i\otimes_{\m}A,~ [x,a]\mapsto [gx,ga].\]

For each $\lambda\in \Lambda$ and $t\in T_{\lambda}$, the group $tH_{\lambda}t^{-1}$ acts on $F_i\otimes_{tN_{\lambda}t^{-1}}A$ by
\[[x,a]\mapsto[gx,ga],~\forall g\in tH_{\lambda}t^{-1}.\]
This gives us a $tH_{\lambda}t^{-1}$-equivariant restriction of $\theta$
\[\theta:F_i\otimes_{tN_{\lambda}t^{-1}}A\rightarrow F_i\otimes_{\m}A,\]
which in turn induces the $tH_{\lambda}t^{-1}$-equivariant inclusion
\[\theta_{\ast}:  H_q(tN_{\lambda}t^{-1};A)\hookrightarrow  H_q(\m;A).\]

Now, for each $\lambda\in\Lambda$ and $t\in T_{\lambda}$, the action of $t$ on $ H_q(\m;A)$ induces a map between two summands on the left-hand side of the isomorphism \eqref{eq. short exact (5)}. To see this, note that the map defined by
\[\sigma_t:F_i\otimes_{N_{\lambda}}A\rightarrow F_i\otimes_{tN_{\lambda}t^{-1}}A,\]
\[[x,a]\mapsto [tx,ta],\]
satisfies $\theta\circ\sigma_t=\tau_t\circ \theta$. In homology, we then have $(\sigma_t)_{\ast}=\theta^{-1}_{\ast}\circ (\tau_t)_{\ast}\circ \theta_{\ast}$ as claimed.

We have shown that for each $\lambda\in\Lambda$, the $G$-action on $\bigoplus_{t\in T_{\lambda}} H_q(tN_{\lambda}t^{-1};A)$ restricts to the $tH_{\lambda}t^{-1}$-action on the summand $ H_q(tN_{\lambda}t^{-1};A)$ and that it permutes $ H_q(N_{\lambda};A)$ and $ H_q(tN_{\lambda}t^{-1};A)$. It is not difficult to show now, see for example \cite[Proposition III.5.3]{brown1982cohomology}, that
\[\bigoplus_{t\in T_{\lambda}} H_q(tN_{\lambda}t^{-1};A)\cong \ihg  H_q(N_{\lambda};A).\]
This finishes the prove of the homology isomorphism of the lemma. 

Next, we outline the proof of the cohomology isomorphism which is similar.

First, the analog of (\ref{eq. short exact inter}) is the exact sequence
\begin{equation*}
    0\rightarrow  A\rightarrow \mathrm{Hom}(\Z[V], A)\rightarrow \mathrm{Hom}(\Z[E], A) \rightarrow 0.
\end{equation*}
which can be identified with the cochain complex $C^*(X, A)$ of the tree $X$ with the coefficients in $A$.  Applying  $\mathrm{Hom}_{\m}(F_i,-)$ to the above sequence, we obtain a short exact sequence
\begin{equation*}
    0\rightarrow \mathrm{Hom}_{\m}(F_i, A)\rightarrow \mathrm{Hom}_{\m}(F_i, \mathrm{Hom}(\Z[V], A))\rightarrow \mathrm{Hom}_{\m}(F_i, \mathrm{Hom}(\Z[E], A))  \rightarrow 0.
\end{equation*}
Using the tensor-hom adjunction, we obtain
\begin{align*}
\mathrm{Hom}_{\m}(F_i, \mathrm{Hom}(\Z[V], A)) &\cong \mathrm{Hom}_{\m}(F_i\otimes \Z[V], A)\\
& \cong  \prod_{\lambda\in\Lambda} \mathrm{Hom}_{tN_{\lambda}t^{-1}}(F_i, A).
\end{align*}
where second isomorphism follows from Lemma \ref{tensor_identities} and the adjointness of restriction and extension of scalars functors \cite[\S III.3, eq.~(3.3)]{brown1982cohomology}. Substituting in the long exact sequence above and taking the cohomology, gives the long exact sequence 
\begin{align*}
 0 &\rightarrow \mathrm{Hom}_{\m}(\Z, A)\rightarrow \mathrm{Hom}_{\m}(\Z[V], A)\xrightarrow{f} \mathrm{Hom}_{\m}(\Z[E], A)  \rightarrow\\
    \cdots &\to  H^q({\m};  A)\xrightarrow{\theta^{\ast}}   \prod_{\lambda\in\Lambda, t\in T_{\lambda}}  H^q({tN_{\lambda}t^{-1}} ; A)\to H^q(\m; \mathrm{Hom}(\Z[E], A))\to \cdots
\end{align*}
If $\m$ acts trivially on $A$, then the first three terms in the above sequence compute the cohomology of the quotient graph $X/{\m}$ which is contractible. Therefore $f$ is surjective in this case. It follows that $\theta^*$ is an isomorphism for $q>1$ and also for $q=1$ if $\m$ acts trivially on $A$. By an analogous argument to the homological case but now using   \cite[Proposition III.5.8]{brown1982cohomology}, it follows that 
\[\prod_{t\in T_{\lambda}} H^q(tN_{\lambda}t^{-1};A)\cong \chg  H^q(N_{\lambda};A).\]
 \qedhere
\end{proof}

Let $(G,\{H_{\lambda}\}_{\lambda\in\Lambda},\{N_{\lambda}\}_{\lambda\in\Lambda})$ be a Cohen-Lyndon triple and $A$ any $G$-module. There are Lyndon-Hoschild-Serre spectral sequences
\[E^{\lambda,2}_{p,q}= H_p(\bhl; H_q(N_{\lambda};A))\Rightarrow  H_{p+q}(H_{\lambda};A),\]
\[F^2_{p,q}= H_p(\bg; H_q(\m;A))\Rightarrow  H_{p+q}(G;A)\]
associated with the triples $(H_{\lambda},N_{\lambda},A)$ and $(G,\m,A)$, respectively (see for example \cite[Chapter VII]{brown1982cohomology}). Let $E_{p,q}=\bsl E^{\lambda}_{p,q}$. Then
\[E^2_{p,q}=\bsl  H_p(\bhl; H_q(N_{\lambda};A))\Rightarrow\bsl  H_{p+q}(H_{\lambda};A)\]
by Lemma \ref{lem. sum and product}.

The inclusions $\bhl\hookrightarrow\bg$ and $N_{\lambda}\hookrightarrow\m$ induce a morphism $\phi: E_{p,q}\rightarrow F_{p,q}$. By Proposition \ref{prop. induce module} and Shapiro's lemma, the restriction $\phi:E^2_{p,q}\rightarrow F^2_{p,q}$ is an isomorphism for all $q>1$ and also for $q=1$ if $\m$ acts trivially on $A$. In short:

\begin{theorem}\label{thm. CL imply spectral sequence}
Let $(G,\{H_{\lambda}\}_{\lambda\in\Lambda},\{N_{\lambda}\}_{\lambda\in\Lambda})$ be a Cohen-Lyndon triple. Then for every $G$-module $A$, there is a natural morphism of homological spectral sequences $\phi: E_{p,q}\rightarrow F_{p,q}$ such that
    \begin{align*}
        &E^2_{p,q}=\bsl  H_p(\bhl; H_q(N_{\lambda};A))\Rightarrow\bsl  H_{p+q}(H_{\lambda};A),\\
        &F^2_{p,q}= H_p(\bg; H_q(\m;A))\Rightarrow  H_{p+q}(G;A),
    \end{align*}
$\phi$ is induced by the inclusions $\bhl\hookrightarrow\bg,N_{\lambda}\hookrightarrow\m$, and $\phi$ restricts to an isomorphism $\phi:E^2_{p,q}\xrightarrow[]{\cong} F^2_{p,q}$ for all $q>1$ and also for $q=1$ if $\m$ acts trivially on $A$.
    
Moreover, the analogous statement holds for cohomology as well.
\end{theorem}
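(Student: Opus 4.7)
The plan is to apply the Lyndon-Hochschild-Serre construction to each of the short exact sequences
$$1 \to N_\lambda \to H_\lambda \to \bhl \to 1 \quad \text{and} \quad 1 \to \m \to G \to \bg \to 1,$$
with coefficients in $A$, and then use the naturality of that construction together with Proposition \ref{prop. induce module} to identify the morphism on the $E^2$-page. Since $(G,\{H_{\lambda}\}_{\lambda\in\Lambda},\{N_{\lambda}\}_{\lambda\in\Lambda})$ is a Cohen-Lyndon triple, the natural maps $\bhl \to \bg$ are injective, so the inclusions $N_\lambda \hookrightarrow \m$ and $H_\lambda \hookrightarrow G$ assemble into a morphism of group extensions. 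Functoriality of the LHS spectral sequence (from a Cartan-Eilenberg resolution, or equivalently from the filtered bar resolution) then produces for each $\lambda$ a morphism of homological spectral sequences $\phi^\lambda\colon E^\lambda \to F$, and taking the direct sum over $\lambda$ yields the required morphism $\phi\colon E \to F$ converging to the direct sum of the maps $\bsl H_{p+q}(H_\lambda; A) \to H_{p+q}(G; A)$ induced by inclusion.

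Next, I would identify $\phi$ on $E^2$ with an explicit composition. Standard functoriality identifies $\phi^\lambda$ at $E^2$ with the map $H_p(\bhl; H_q(N_\lambda; A)) \to H_p(\bg; H_q(\m; A))$ built from the $\bhl$-equivariant inclusion $H_q(N_\lambda; A) \hookrightarrow H_q(\m; A)$ followed by corestriction along $\bhl \hookrightarrow \bg$. This corestriction factors through Shapiro's lemma as the composition
$$H_p(\bhl; H_q(N_\lambda; A)) \xrightarrow{\cong} H_p(\bg; \ihg H_q(N_\lambda; A)) \longrightarrow H_p(\bg; H_q(\m; A)),$$
where the second arrow is induced by the $\bg$-homomorphism $\ihg H_q(N_\lambda; A) \to H_q(\m; A)$ adjoint to the inclusion $H_q(N_\lambda; A) \hookrightarrow H_q(\m; A)$. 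Summing over $\lambda$ and invoking Proposition \ref{prop. induce module}---which states precisely that $\bsl \ihg H_q(N_\lambda; A) \to H_q(\m; A)$ is an isomorphism for $q > 1$, and also for $q = 1$ when $\m$ acts trivially on $A$---yields the desired isomorphism $E^2_{p,q} \xrightarrow{\cong} F^2_{p,q}$ in the stated range.

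The main obstacle will be verifying that the abstract morphism $\phi$ supplied by functoriality agrees at $E^2$ with the explicit composition displayed above; this is a compatibility check between the diagonal $G$-action on $F_* \otimes_{\m} A$ used in the proof of Proposition \ref{prop. induce module} and the $\bg$-module structure that the LHS construction places on $H_q(\m; A)$. This check is essentially carried out in the computation of the transversal action, where the action of $t \in T_\lambda$ on the summands $H_q(tN_\lambda t^{-1}; A)$ is shown to match $\theta_{\ast}^{-1}\circ(\tau_t)_{\ast}\circ\theta_{\ast}$; tracing through the definitions then shows that the Shapiro-plus-Proposition \ref{prop. induce module} composition is exactly the corestriction arising from the morphism of extensions, as required. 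The cohomological analog is proved by an entirely parallel argument, using the cohomological LHS spectral sequences, the restriction (rather than corestriction) map, and the coinduction half of Proposition \ref{prop. induce module}.
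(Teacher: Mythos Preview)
Your proposal is correct and follows essentially the same route as the paper: set up the Lyndon--Hochschild--Serre spectral sequences for the extensions $N_\lambda\lhd H_\lambda$ and $\m\lhd G$, use naturality with respect to the morphism of extensions to obtain $\phi$, and then invoke Proposition~\ref{prop. induce module} together with Shapiro's lemma to get the isomorphism on $E^2_{p,q}$ for $q>1$ (and $q=1$ when $A$ is a $\bg$-module). The only difference is that you spell out the compatibility between the functorial $\phi$ and the Shapiro-plus-Proposition~\ref{prop. induce module} identification, which the paper leaves implicit.
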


We further investigate the morphism $\phi$ of the above theorem. Let $P_i\twoheadrightarrow\mathbb{Z}$ be a free resolution of $\mathbb{Z}$ over $\mathbb{Z}\bg$ and $S_i\twoheadrightarrow\mathbb{Z}$ a free resolution of $\mathbb{Z}$ over $\mathbb{Z}G$. The spectral sequence $E_{p,q}$ is induced by the double complex $C_{p,q}=\bsl(P_p\otimes S_q)\otimes_{H_{\lambda}}A$ and $F_{p,q}$ is induced by $D_{p,q}=(P_p\otimes S_q)\otimes_G A$. The surjections 
$$(P_p\otimes S_q)\otimes_{H_{\lambda}}A\twoheadrightarrow (P_p\otimes S_q)\otimes_G A$$
 induce a surjection $C_{p,q}\twoheadrightarrow D_{p,q}$, which in turn induces the morphism $\phi$. Let $R_{p,q}$ be the kernel of the surjection $C_{p,q}\twoheadrightarrow D_{p,q}$ and $E_{p,q}(R)$ the spectral sequence associated with $R_{p,q}$. It turns out that $E_{p,q}(R)\Rightarrow H_{p+q+1}(G,\{H_{\lambda}\}_{\lambda\in\Lambda};A)$ and we have the following commutative diagram of long exact sequences.


\begin{proposition}\label{prop. long exact relative}
Let $(G,\{H_{\lambda}\}_{\lambda\in\Lambda},\{N_{\lambda}\}_{\lambda\in\Lambda})$ be a Cohen-Lyndon triple and $A$ a $\bg$-module. Then there is a commutative diagram of exact sequences
\begin{adjustwidth}{-20pt}{-100pt}
\begin{small}
\begin{tikzcd}
    \cdots\arrow[r] &\bsl H_i(H_{\lambda};A)\arrow[r,"\iota_{\ast}"] \arrow[d] & H_i(G;A)\arrow[r] \arrow[d] & H_i(G,\{H_{\lambda}\}_{\lambda\in\Lambda};A)\arrow[r] \arrow[d,"id"] &\bsl H_{i-1}(H_{\lambda};A)\arrow[r] \arrow[d] &\cdots\\
    \cdots\arrow[r] &\bsl H_i(\bhl;A)\arrow[r,"\overline{\iota}_{\ast}"] & H_i(\bg;A)\arrow[r] & H_i(G,\{H_{\lambda}\}_{\lambda\in\Lambda};A)\arrow[r] &\bsl H_{i-1}(\bhl;A)\arrow[r] &\cdots
\end{tikzcd}
\end{small}
\end{adjustwidth}
where $\iota_{\ast}$ and $\overline{\iota}_{\ast}$ are induced by the inclusions $H_{\lambda}\hookrightarrow G$ and $\bhl\hookrightarrow\bg$, respectively. Moreover, the cohomological analog of the above statement holds as well.
\end{proposition}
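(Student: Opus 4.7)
The plan is to derive both long exact sequences from the short exact sequence of double complexes $0 \to R_{p,q} \to C_{p,q} \to D_{p,q} \to 0$ described in the preamble, where the bottom row arises via the collapse of a spectral sequence powered by the Cohen-Lyndon property.

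\textbf{Top row.} Since $P_p \otimes S_q$ with diagonal $G$-action is a free $\mathbb{Z}G$-module (a free $\bg$-module tensored with a free $G$-module is free with diagonal action, after reindexing), tensoring with it over $G$ is exact. Apply this to the coefficient short exact sequence $0 \to \Delta \otimes A \to \bsl \mathbb{Z}[G/H_\lambda] \otimes A \to A \to 0$, which is exact since the augmentation sequence of $\Delta$ is $\mathbb{Z}$-split. This identifies $R_{p,q}$ with $(P_p \otimes S_q) \otimes_G (\Delta \otimes A)$. The totalisation of $P_\bullet \otimes S_\bullet$ is a free $\mathbb{Z}G$-resolution of $\mathbb{Z}$ and is also free over each $H_\lambda$, so
\[
H_n(\mathrm{Tot}\,C) = \bsl H_n(H_\lambda; A), \quad H_n(\mathrm{Tot}\,D) = H_n(G; A),
\]
and $H_n(\mathrm{Tot}\,R) = \tor^G_n(\Delta, A) = H_{n+1}(G, \{H_\lambda\}_{\lambda \in \Lambda}; A)$ by the definition of relative homology. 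The long exact sequence in total homology attached to the short exact sequence of total complexes is then the top row of the diagram.

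\textbf{Bottom row via collapse.} Consider the vertical-first (Lyndon-Hochschild-Serre type) spectral sequence of $R$, whose $E_2$-page is $H_p(\bg; H_q(\m; \Delta \otimes A))$. The key claim is that $H_q(\m; \Delta \otimes A) = 0$ for every $q \geq 1$, while $H_0(\m; \Delta \otimes A) \cong \bar\Delta \otimes A$, where $\bar\Delta = \ker(\bsl \mathbb{Z}[\bg/\bhl] \to \mathbb{Z})$. To establish this, apply $H_*(\m; -)$ to the coefficient sequence above. The Cohen-Lyndon identity $H_\lambda \cap \m = N_\lambda$ (which holds also for every conjugate of $H_\lambda$, since $\m$ is normal) combined with Mackey's formula and Shapiro's lemma identifies
\[
H_q(\m; \mathbb{Z}[G/H_\lambda] \otimes A) \cong \ihg H_q(N_\lambda; A),
\]
while Proposition \ref{prop. induce module} gives the parallel identification $H_q(\m; A) \cong \bsl \ihg H_q(N_\lambda; A)$ for $q \geq 1$ (valid since $A$ is a $\bg$-module). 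A direct inspection of the map induced by the augmentation $\bigoplus_\lambda \mathbb{Z}[G/H_\lambda] \to \mathbb{Z}$ shows that it is the identity under these identifications, so the outer terms of the long exact sequence for $H_*(\m;-)$ vanish for $q \geq 1$, and an analogous check at $q = 0$ yields the claimed value of $H_0(\m; \Delta \otimes A)$.

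\textbf{Identification and commutativity.} The spectral sequence of $R$ therefore collapses at $E_2$, yielding $H_n(\mathrm{Tot}\,R) \cong H_n(\bg; \bar\Delta \otimes A) = \tor^{\bg}_n(\bar\Delta, A) = H_{n+1}(\bg, \{\bhl\}_{\lambda \in \Lambda}; A)$. Combined with the identification of the first step, this produces the natural isomorphism $H_n(G, \{H_\lambda\}_{\lambda \in \Lambda}; A) \cong H_n(\bg, \{\bhl\}_{\lambda \in \Lambda}; A)$ induced by the quotient $G \twoheadrightarrow \bg$. The bottom row of the diagram is then the standard Tor long exact sequence of the pair $(\bg, \{\bhl\}_{\lambda \in \Lambda})$ with its relative-homology terms relabelled via this isomorphism, and commutativity of the diagram is the naturality of the two Tor long exact sequences under the quotient-induced morphism between their defining short exact coefficient sequences. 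The cohomological analog is entirely parallel, replacing $\otimes$ by $\Hom$ and induction $\ihg$ by coinduction $\chg$, using the second half of Proposition \ref{prop. induce module}. The main obstacle throughout is the vanishing $H_q(\m; \Delta \otimes A) = 0$ for $q \geq 1$; this is the step where the Cohen-Lyndon property is indispensable, as it is precisely what allows one to match the two descriptions of $H_q(\m; -)$ with the identity map.
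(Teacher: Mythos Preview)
Your proof is correct and follows essentially the same strategy as the paper. Both arguments hinge on showing that the vertical spectral sequence of the double complex $R_{p,q}$ collapses onto the $q=0$ row: the paper phrases this as $E^1_{p,q}(R)=H_q(R_{p,\ast})=0$ for $q>0$ (via the long exact sequence of $E^1$-pages together with Proposition~\ref{prop. induce module} and Shapiro), while you phrase it as $H_q(\m;\Delta\otimes A)=0$ for $q\geqslant 1$ (via Mackey--Shapiro on $\mathbb{Z}[G/H_\lambda]\otimes A$ matched against Proposition~\ref{prop. induce module}); since $E^1_{p,q}(R)\cong P_p\otimes_{\bg}H_q(\m;\Delta\otimes A)$ these are literally the same vanishing.

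The only organisational difference is in how the commutative diagram is assembled. The paper builds the two rows from two concrete short exact sequences of chain complexes (one using $S_\bullet$, one from the $q=0$ row of the $E^1$-page) and links them by an explicit $G$-chain map $S_\bullet\to P_\bullet$. You instead obtain the top row from the long exact sequence in total homology of $0\to\mathrm{Tot}\,R\to\mathrm{Tot}\,C\to\mathrm{Tot}\,D\to 0$, recognise the bottom row as the standard pair sequence for $(\bg,\{\bhl\})$ after identifying $H_\ast(\bg,\{\bhl\};A)\cong H_\ast(G,\{H_\lambda\};A)$ via the edge isomorphism, and deduce commutativity from naturality of $\tor$ under the change of rings $\mathbb{Z}G\to\mathbb{Z}\bg$. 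This is a cleaner packaging; the one point worth making explicit is that the edge map of the collapsed spectral sequence really is the change-of-rings map, so that your isomorphism on the relative terms coincides with the quotient-induced map (and hence, after relabelling, becomes the ``$\mathrm{id}$'' in the statement).
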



\begin{proof}
We will only prove the homological version since the cohomological version is similar. First, we compute the limit of the spectral sequence $E_{p,q}(R)$.
\begin{claim}
$E_{p,q}(R)\Rightarrow  H_{p+q+1}(G,\{H_{\lambda}\}_{\lambda\in\Lambda};A)$.
\end{claim}
\begin{proof}[Proof of the claim]
Let $\Delta$ be the kernel of the augmentation $\bsl\mathbb{Z}[G/H_{\lambda}]\rightarrow\mathbb{Z}$. 
Then we have a short exact sequence
\begin{equation}\label{eq. short exact star}
    0\rightarrow P_p\otimes S_q\otimes \Delta\rightarrow P_p\otimes S_q\otimes\left(\bsl\mathbb{Z}[G/H_{\lambda}]\right)\rightarrow P_p\otimes S_q\rightarrow 0.
\end{equation}

\noindent Note that
\[\left(P_p\otimes S_q\otimes\mathbb{Z}[G/H_{\lambda}]\right)\otimes_G A\cong(P_p\otimes S_q)\otimes_{H_{\lambda}}A.\]

\noindent  Thus, tensoring \eqref{eq. short exact star} by $-\otimes_G A$ yields
\begin{equation}\label{eq. short exact kernel}
    0\rightarrow (P_p\otimes S_q\otimes \Delta)\otimes_G A\rightarrow C_{p,q}\rightarrow D_{p,q}\rightarrow 0.
\end{equation}

\noindent  Let $T_n=\bigoplus_{p+q=n}P_p\otimes S_q$. Notice that $T_n\otimes \Delta\twoheadrightarrow \Delta$ is a free resolution of $\Delta$ over $\mathbb{Z}G$. Therefore, it can be used to compute the relative homology of $G$, which by definition given in (\ref {eq. relative group cohomology}), is
$$H_{n}(G,\{H_{\lambda}\}_{\lambda\in\Lambda};A)=\tor^G_{n-1}(\Delta,A)=H_{n-1}((T_*\otimes \Delta) \otimes_G A).$$
Hence, to prove the claim, it suffices to show that 
\[(P_p\otimes S_q\otimes \Delta)\otimes_G A\cong R_{p,q},\]
which will be established once we show that \eqref{eq. short exact kernel} is exact. It is easy to check that
\[0\rightarrow A\otimes\Delta\rightarrow A\otimes\left(\bsl\mathbb{Z}[G/H_{\lambda}]\right)\rightarrow A\rightarrow 0\]
is exact (by e.g.~\cite[Corollary 3.1.5]{Weibel}). Since $P_p$ is free abelian and $S_q$ is a free $G$-module, we have an exact sequence
\[0\rightarrow (P_p\otimes\Delta\otimes A)\otimes_G S_q\rightarrow \left[P_p\otimes A\otimes \left(\bsl\mathbb{Z}[G/H_{\lambda}]\right)\right]\otimes_G S_q\rightarrow (P_p\otimes A)\otimes_G S_q\rightarrow 0.\]

\noindent  which, by basic tensor identities, transforms to the desired one.
\end{proof}

Let us return to the proof of Proposition \ref{prop. long exact relative}. We have a short exact sequence 
$$0\rightarrow R_{p,q}\rightarrow C_{p,q}\rightarrow D_{p,q}\rightarrow 0,$$
 whose vertical homology gives the following long exact sequence of the $E^1$-terms of the associated spectral sequences
\[\cdots\rightarrow H_q(R_{p,\ast})\rightarrow  H_q(C_{p,\ast})\rightarrow  H_q(D_{p,\ast})\rightarrow  H_{q-1}(R_{p,\ast})\rightarrow\cdots\]

\noindent We claim that $H_q(D_{p,\ast})=H_q(C_{p,\ast})$ for all $q>0$. To see this, first note that for all $p$ and all $q>0$, applying Lemma \ref{tensor_identities}, we have
\begin{align*}
    D_{p,q}&=(P_p\otimes S_q)\otimes_G A\\
    &=(S_q\otimes P_p\otimes A)\otimes_G \Z &\text{by Lemma \ref{tensor_identities} (ii)}\\
    &=S_q\otimes_G (P_p\otimes A) &\text{by Lemma \ref{tensor_identities} (ii)}\\
    &=(S_q\otimes_{\m}(P_p\otimes A))\otimes_{\bg} \Z &\text{by Lemma \ref{tensor_identities} (ii)}\\
    &=((S_q\otimes_{\m}\Z)\otimes P_p\otimes A)\otimes_{\bg} \Z &\text{as }\m \text{ acts trivially on }P_p \text{ and } A\\
    &=P_p\otimes_{\bg}((S_q\otimes_{\m}\Z)\otimes A) &\text{by Lemma \ref{tensor_identities} (ii)}\\
    &=P_p\otimes_{\bg}(S_q\otimes_{\m}A) &\text{as }\m \text{ acts trivially on }A.
\end{align*}

\noindent As $P_p$ is a free $\Z \bg$-module, we have
\begin{align*}
H_q(D_{p,\ast})&=P_p\otimes_{\bg}H_q(S_{\ast}\otimes_{\m}A)\\
&=P_p\otimes_{\bg}H_q(\m;A)\\
&=P_p\otimes_{\bg}\left(\bsl \ihg  H_q(N_{\lambda};A)\right)\\
&=\bsl\left( P_p\otimes_{\bg}\ihg  H_q(N_{\lambda};A)\right)\\
&=\bsl\left( P_p\otimes_{\bhl}  H_q(N_{\lambda};A)\right)
\end{align*}
by Proposition \ref{prop. induce module} and Shapiro's lemma.

\noindent Similarly, we have
\[H_q(C_{p,\ast})=\bsl \left(P_p\otimes_{\bhl}H_q(N_{\lambda};A)\right),\]
as desired.

\noindent  The equality $H_q(D_{p,\ast})=H_q(C_{p,\ast})$ for $q>0$ shows that $E^1_{p,q}(R)= H_q(R_{p,\ast})=0$ for all $q>0$ and hence $E^2_{p,0}(R)= H_{p+1}(G,\{H_{\lambda}\}_{\lambda\in\Lambda};A)$ for all $p$.
Also, since $\mathbb{Z}\otimes_{\m}A=A$, we have
\begin{align*}
    &E^1_{p,0}(C)= H_0(C_{p,\ast})=\bsl P_p\otimes_{\bhl}A,\\
    &E^1_{p,0}(D)= H_0(D_{p,\ast})=P_p\otimes_{\bg}A.
\end{align*}

\noindent So, we get a short exact sequence
\begin{equation}\label{eq. short exact lower row}
    0\rightarrow E^1_{p,0}(R)\rightarrow \bsl P_p\otimes_{\bhl} A\rightarrow P_p\otimes_{\bg}A\rightarrow 0,
\end{equation}
whose long exact sequence is the bottom row of the desired diagram.

Similar to the proof of the claim, one can show that there is a short exact sequence
\begin{equation}\label{eq. short exact upper row}
  0\rightarrow (S_q\otimes \Delta) \otimes_G A \rightarrow \bsl S_q\otimes_{H_{\lambda}} A \rightarrow S_q\otimes_G A \rightarrow 0. 
\end{equation}

\noindent  As $S_q$ is a free $\Z G$-module and we can view $P_p$ as a $\Z G$-module (through the $\bg$-action), there is a $\Z G$-module homomorphism from the resolution $S_q\twoheadrightarrow\mathbb{Z}$ to $P_p\twoheadrightarrow\mathbb{Z}$. This induces a map $\bsl S_q\otimes_{H_{\lambda}} A\rightarrow \bsl P_p\otimes_{\bhl} A$ and a map $S_q\otimes_G A\rightarrow P_p\otimes_{\bg}A$, yielding a commutative diagram

\begin{equation}\label{cd}
\begin{tikzcd}
    \bsl P_p\otimes_{\bhl} A \arrow[r] & P_p\otimes_{\bg}A\\
    \bsl S_q\otimes_{H_{\lambda}} A\arrow[u] \arrow[r] & S_q\otimes_G A\arrow[u]
\end{tikzcd}
\end{equation}

\noindent By composing $\bsl S_q\otimes_{H_{\lambda}} A\rightarrow \bsl P_p\otimes_{\bhl} A$ with $(S_q\otimes \Delta) \otimes_G A \rightarrow \bsl S_q\otimes_{H_{\lambda}} A$, we get a map $(S_q\otimes \Delta) \otimes_G A \rightarrow \bsl P_p\otimes_{\bhl} A$, whose image lies in 
$$\ker(\bsl P_p\otimes_{\bhl} A\rightarrow P_p\otimes_{\bg}A)=E^1_{p,0}(R)$$
 as the diagram \eqref{cd} commutes. We therefore have a map from \eqref{eq. short exact upper row} to \eqref{eq. short exact lower row}, whose associated long exact sequences is the commutative diagram in the statement.
\end{proof}

\begin{remark}\label{rm. iso of 2.2 (ii)}
Similar to the proof of Proposition \ref{prop. long exact relative}, one can show that there is a short exact sequence
\begin{equation}\label{eq. short exact extra row}
    0\rightarrow (P_p\otimes\overline{\Delta})\otimes_{\bg}A\rightarrow\bsl P_p\otimes_{\bhl}A\rightarrow P_p\otimes_{\bg}A\rightarrow 0,
\end{equation}
where $P_p$ and $A$ are as in the proof of Proposition \ref{prop. long exact relative}, and $\overline{\Delta}$ is the kernel of the augmentation $\bsl\mathbb{Z}[\bg/\bhl]\twoheadrightarrow\mathbb{Z}$. The natural map from \eqref{eq. short exact lower row} to \eqref{eq. short exact extra row} gives rise to a commutative diagram of the corresponding long exact sequences, which together with the five lemma yields an isomorphism 
\begin{equation}\label{excision_iso}
H_{\ast}(\bg,\{\bhl\}_{\lambda\in\Lambda};A)\cong  H_{\ast}(G,\{H_{\lambda}\}_{\lambda\in\Lambda};A)
\end{equation}
 (of course, under the assumption that $(G,\{H_{\lambda}\}_{\lambda\in\Lambda},\{N_{\lambda}\}_{\lambda\in\Lambda})$ is a Cohen-Lyndon triple and $A$ is a $\bg$-module). Moreover, The analogous isomorphism for cohomology holds as well.
\end{remark}

An easy consequence of Proposition \ref{prop. long exact relative} is a direct sum decomposition of (co)homology.

\begin{corollary}\label{direct sum of cohomology}
Let $(G,\{H_{\lambda}\}_{\lambda\in\Lambda},\{N_{\lambda}\}_{\lambda\in\Lambda})$ be a Cohen-Lyndon triple and $A$ a $\bg$-module. 
\begin{enumerate}
    \item[(a)] If for some $p\in\mathbb{N}$, $\bsl H_p(H_{\lambda};A)=0$ and the natural map $\bsl H_{p-1}(H_{\lambda};A)\rightarrow H_{p-1}(G;A)$ is injective, then
    \begin{equation*}
         H_p(\bg;A)\cong  H_p(G;A)\oplus\left(\bsl H_p(\bhl;A)\right).
    \end{equation*}
    \item[(b)] If for some $p\in\mathbb{N}$, $\pl H^p(H_{\lambda};A)=0$ and the natural map $ H^{p-1}(G;A)\rightarrow\pl H^{p-1}(H_{\lambda};A)$ is surjective, then
    \begin{equation*}
     H^p(\bg;\cm)\cong  H^p(G;\cm)\oplus \left(\pl  H^p(\bhl;\cm)\right).   
\end{equation*}
\end{enumerate}
\end{corollary}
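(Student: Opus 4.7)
The approach is a diagram chase in the commutative ladder of long exact sequences provided by Proposition \ref{prop. long exact relative}. I will write out part (a) in detail; part (b) is formally dual.

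First, I would extract the relevant two portions of the top (pre-filling) long exact sequence. Using $\bsl H_p(H_{\lambda};A)=0$ together with the injectivity of $\iota_{\ast}:\bsl H_{p-1}(H_{\lambda};A)\to H_{p-1}(G;A)$, the top row collapses to an isomorphism
\[
\alpha:H_p(G;A)\xrightarrow{\cong} H_p(G,\{H_{\lambda}\}_{\lambda\in\Lambda};A),
\]
because the incoming term from $\bsl H_p(H_{\lambda};A)$ vanishes and the outgoing connecting map into $\bsl H_{p-1}(H_{\lambda};A)$ is zero by injectivity of $\iota_{\ast}$.

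Next, I would pass to the bottom (post-filling) row. The relevant commutative square is
\[
\begin{array}{ccc}
 H_{p+1}(G,\{H_{\lambda}\}_{\lambda\in\Lambda};A) & \xrightarrow{\partial} & \bsl H_p(H_{\lambda};A) \\
 \big\downarrow\mathrm{id} & & \big\downarrow \\
 H_{p+1}(G,\{H_{\lambda}\}_{\lambda\in\Lambda};A) & \xrightarrow{\overline{\partial}} & \bsl H_p(\bhl;A).
\end{array}
\]
Since $\bsl H_p(H_{\lambda};A)=0$, commutativity forces $\overline{\partial}=0$, so the bottom row yields a short exact sequence
\[
0\to \bsl H_p(\bhl;A)\xrightarrow{\overline{\iota}_{\ast}} H_p(\bg;A)\xrightarrow{\beta} H_p(G,\{H_{\lambda}\}_{\lambda\in\Lambda};A)\to 0.
\]

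Finally, the commutative square involving the quotient map $\pi_{\ast}:H_p(G;A)\to H_p(\bg;A)$ and the identity on the relative homology factor gives $\beta\circ\pi_{\ast}=\alpha$. Therefore $\pi_{\ast}\circ\alpha^{-1}$ is a section of $\beta$, so the short exact sequence splits and
\[
 H_p(\bg;A)\cong \bsl H_p(\bhl;A)\oplus H_p(G,\{H_{\lambda}\}_{\lambda\in\Lambda};A)\cong \bsl H_p(\bhl;A)\oplus H_p(G;A),
\]
which is (a). For (b) I would run exactly the same argument on the cohomological commutative ladder: the hypothesis $\pl H^p(H_{\lambda};A)=0$ together with surjectivity of $H^{p-1}(G;A)\to \pl H^{p-1}(H_{\lambda};A)$ yields $H^p(G,\{H_{\lambda}\}_{\lambda\in\Lambda};A)\cong H^p(G;A)$, and the dual commutativity argument shows that the analogous short exact sequence splits via the section induced by the quotient $G\to\bg$.

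There is no genuine obstacle here; the entire argument is a careful two-step diagram chase, and the only point requiring attention is making sure the section produced by the quotient map $\pi_{\ast}$ is compatible with the identity map on the relative homology entries of the ladder. This compatibility is immediate from the statement of Proposition \ref{prop. long exact relative}, which explicitly records that the vertical map on the relative term is the identity.
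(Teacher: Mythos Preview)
Your proof is correct and follows essentially the same approach as the paper: a diagram chase in the commutative ladder of long exact sequences from Proposition~\ref{prop. long exact relative}, yielding a split short exact sequence via a section built from the quotient map and the isomorphism $H_p(G;A)\cong H_p(G,\{H_\lambda\};A)$. The paper writes out part~(b) instead of~(a) and phrases the splitting via a retraction $(\eta^p)^{-1}\circ\psi^p$ of $\xi^p$, but this is precisely the cohomological dual of your section $\pi_\ast\circ\alpha^{-1}$ of $\beta$. One minor presentational point: you assert the short exact sequence before checking surjectivity of $\beta$, but this follows immediately from $\beta\circ\pi_\ast=\alpha$ anyway, so there is no actual gap.
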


\begin{proof}
We only prove the homological version (a) and point out that the proof of (b) is analogous. To shorten the notation, we denote $ H_{\ast}(-;A)$ by $ H_{\ast}(-)$, $\bsl H_{\ast}(H_{\lambda};A)$ by $ H_{\ast}(\mathcal{H})$, $\bsl H_{\ast}(\bhl;A)$ by $ H_{\ast}(\overline{\mathcal{H}})$, and $ H_{\ast}(G,\{H_{\lambda}\}_{\lambda\in\Lambda};A)$ by $ H_{\ast}(G,\mathcal{H})$. Use Proposition \ref{prop. long exact relative} and consider the commutative diagrams of exact sequences

\begin{center}

\begin{tikzcd}
 H_{p+1}(\overline{\mathcal{H}}) \arrow[r, "\phi_{p+1}"]  &  H_{p+1}(\overline{G})  \arrow[r, "\lambda_{p+1}"] &  H_{p+1}({G}, \mathcal{H})   \arrow[r, "\xi_{p+1}"]&  H_{p}(\overline{\mathcal{H}}) \\
{ H}_{p+1}(\mathcal{H}) \arrow[r, "r_{p+1}"] \arrow[u, "\psi_{p+1}"] & { H}_{p+1}(G) \arrow[r,"\rho_{p+1}"] \arrow[u, "\theta_{p+1}"]  &{ H}_{p+1}({G, \mathcal{H}}) \arrow[r,"\eta_{p+1}"] \arrow[u, "id_{p+1}"] & { H}_{p}({\mathcal{H}}) \arrow[u, "\psi_p"]
\end{tikzcd}

\begin{tikzcd}
 H_{p}(\overline{\mathcal{H}}) \arrow[r, "\phi_{p}"]  &  H_{p}(\overline{G})  \arrow[r, "\lambda_{p}"] &  H_{p}({G}, \mathcal{H})   \arrow[r, "\xi_{p}"]&  H_{p-1}(\overline{\mathcal{H}}) \\
{ H}_{p}(\mathcal{H}) \arrow[r, "r_{p}"] \arrow[u, "\psi_{p}"] & { H}_{p}(G) \arrow[r,"\rho_{p}"] \arrow[u, "\theta_{p}"]  &{ H}_{p}({G, \mathcal{H}}) \arrow[r,"\eta_{p}"] \arrow[u, "id_{p}"] & { H}_{p-1}({\mathcal{H}}) \arrow[u, "\psi_{p-1}"]
\end{tikzcd}

\end{center}

\noindent
where the horizontal maps come from the exact sequences for the pairs $(G,\mathcal{H})$, $(\bg,\overline{\mathcal{H}})$ and $\psi_{\ast},\theta_{\ast}$ are natural maps induced by the surjections $G\twoheadrightarrow \bg,H_{\lambda}\twoheadrightarrow\bhl$, respectively.

The hypothesis imply that $\rho_p:H_p(G) \rightarrow  H_p(G,\mathcal{H})$ is an isomorphism which in turn shows that the map $\lambda_{p}$ is surjective. Since ${ H}_{p}(\mathcal{H})=0$, $\rho_{p+1}$ is surjective. So $\lambda_{p+1}$ is also surjective. This shows $\xi_{p+1}=0$ and hence $\phi_p$ is injective. Since $\theta_p\circ ({\rho_p})^{-1}$ is a section for $\lambda_p$, the result follows.
\end{proof}

An immediate corollary to the above is an estimate of the (co)homological dimension. To shorten the notation, if $(G,\{H_{\lambda}\}_{\lambda\in\Lambda},\{N_{\lambda}\}_{\lambda\in\Lambda})$ is a Cohen-Lyndon triple, let
\begin{align*}
    &\hd(\mathcal{H})=\sup_{\lambda\in\Lambda}\{\hd(H_{\lambda})\},~\hd(\overline{\mathcal{H}})=\sup_{\lambda\in\Lambda}\{\hd(\bhl)\},\\
    &\cd(\mathcal{H})=\sup_{\lambda\in\Lambda}\{\cd(H_{\lambda})\},~~\cd(\overline{\mathcal{H}})=\sup_{\lambda\in\Lambda}\{\cd(\bhl)\}.
\end{align*}

\begin{corollary}\label{cd of Dehn fillings}
Let $(G,\{H_{\lambda}\}_{\lambda\in\Lambda},\{N_{\lambda}\}_{\lambda\in\Lambda})$ be a Cohen-Lyndon triple. Then
$$\hd(\bg)\leqslant \max\{\hd(G),\hd(\mathcal{H})+1,\hd(\overline{\mathcal{H}})\},~~\cd(\bg)\leqslant \max\{\cd(G),\cd(\mathcal{H})+1,\cd(\overline{\mathcal{H}})\}.$$
\end{corollary}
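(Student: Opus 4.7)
\smallskip
\noindent\textbf{Proof proposal.} The plan is to deduce the corollary directly from the two long exact sequences provided by Proposition \ref{prop. long exact relative} together with the algebraic excision isomorphism of Remark \ref{rm. iso of 2.2 (ii)}. Let $A$ be any $\bg$-module, which we also regard as a $G$-module via $G\twoheadrightarrow \bg$ (so $\m$ acts trivially on $A$). Applying the cohomological analog of Proposition \ref{prop. long exact relative}, together with the identification $H^{\ast}(\bg,\{\bhl\}_{\lambda\in\Lambda};A)\cong H^{\ast}(G,\{H_{\lambda}\}_{\lambda\in\Lambda};A)$ from Remark \ref{rm. iso of 2.2 (ii)}, I obtain two long exact sequences sharing the same relative cohomology term, namely the pair sequences for $(G,\mathcal{H})$ and for $(\bg,\overline{\mathcal{H}})$.

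Next, fix $n>\max\{\cd(G),\cd(\mathcal{H})+1,\cd(\overline{\mathcal{H}})\}$. From the first sequence, the segment
\[
\pl H^{n-1}(H_{\lambda};A)\longrightarrow H^n(G,\{H_{\lambda}\}_{\lambda\in\Lambda};A)\longrightarrow H^n(G;A)
\]
has both outer terms zero: $H^n(G;A)=0$ since $n>\cd(G)$, and $\pl H^{n-1}(H_{\lambda};A)=0$ since $n-1>\cd(\mathcal{H})$. Hence $H^n(G,\{H_{\lambda}\}_{\lambda\in\Lambda};A)=0$. Feeding this into the second sequence, the segment
\[
H^n(G,\{H_{\lambda}\}_{\lambda\in\Lambda};A)\longrightarrow H^n(\bg;A)\longrightarrow \pl H^n(\bhl;A)
\]
now has vanishing left term, and the right term also vanishes since $n>\cd(\overline{\mathcal{H}})$. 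Therefore $H^n(\bg;A)=0$, and since $A$ was an arbitrary $\bg$-module, this yields $\cd(\bg)\leqslant \max\{\cd(G),\cd(\mathcal{H})+1,\cd(\overline{\mathcal{H}})\}$.

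The homological bound on $\hd(\bg)$ is obtained by the same argument applied to the homological long exact sequences of Proposition \ref{prop. long exact relative} and the homological excision isomorphism of Remark \ref{rm. iso of 2.2 (ii)}, reading the diagonal vanishing in exactly the same pattern. There is no real obstacle here: the substantive work has been done in establishing Proposition \ref{prop. long exact relative} and Remark \ref{rm. iso of 2.2 (ii)}, and the corollary is a two-step diagram chase in the resulting exact sequences.
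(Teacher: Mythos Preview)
Your argument is correct and is essentially the paper's approach: the paper states this as an immediate consequence of Corollary~\ref{direct sum of cohomology}, which in turn was proved by exactly the diagram chase in the long exact sequences of Proposition~\ref{prop. long exact relative} that you carry out here. One small remark: Proposition~\ref{prop. long exact relative} already gives both rows with the common relative term $H^{\ast}(G,\{H_{\lambda}\}_{\lambda\in\Lambda};A)$, so the separate appeal to Remark~\ref{rm. iso of 2.2 (ii)} is not needed.
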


By Theorem \ref{homological criterion},  the commutativity of colimits of coefficients with the cohomology functor can be used to characterize property $FP_n$, which is our next goal.

\begin{theorem}\label{property FP}
Let $G$ be a group with a finite family of subgroups $\{H_i\}^m_{i=1}\hookrightarrow_h G$. Suppose that $G$ is of type $FP_n$ for some $n\in\mathbb{N}^+\cup\{\infty\}$. Then for sufficiently deep $\{N_i\lhd H_i\}^m_{i=1}$, $\bg$ is of type $FP_n$ if and only if all $\bh_i$ are of type $FP_n$.
\end{theorem}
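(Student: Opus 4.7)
The plan is to combine the Bieri--Brown homological criterion (Theorem \ref{homological criterion}) with the Cohen--Lyndon property (Theorem \ref{thm. CL-property}) and the comparison of long exact sequences from Proposition \ref{prop. long exact relative}. First I fix $\{N_i\lhd H_i\}_{i=1}^m$ sufficiently deep so that $(G,\{H_i\}_{i=1}^m,\{N_i\}_{i=1}^m)$ is a Cohen--Lyndon triple; Proposition \ref{prop. long exact relative} together with Remark \ref{rm. iso of 2.2 (ii)} then yields, for every $\bg$-module $A$, a commutative ladder of long exact sequences linking $H^k(G;A),\prod_i H^k(H_i;A)$ on the top row with $H^k(\bg;A),\prod_i H^k(\bh_i;A)$ on the bottom row, with the common relative term $H^k(G,\mathcal{H};A)\cong H^k(\bg,\bh;A)$. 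Since the family is finite, $\varinjlim$ commutes with these finite products, so the ladder survives intact after passing to any filtered colimit of $\bg$-modules.

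For the direction $(\Leftarrow)$ I take a direct system $\{A_\alpha\}$ of $\bg$-modules with $\varinjlim A_\alpha=0$ and aim to show $\varinjlim H^k(\bg;A_\alpha)=0$ for $k\le n$. The hypothesis that $G$ is $FP_n$ gives $\varinjlim H^k(G;A_\alpha)=0$, and the hypothesis that each $\bh_i$ is $FP_n$ gives $\varinjlim\prod_i H^j(\bh_i;A_\alpha)=0$ for $j\le n$. The key observation is that commutativity of the ladder factors the top connecting morphism $\prod_i H^{k-1}(H_i;A)\to H^k(G,\mathcal{H};A)$ through $\prod_i H^{k-1}(\bh_i;A)$ via $\theta^{k-1}$, so this composite vanishes in the direct limit; by exactness of the top row, $\varinjlim H^k(G,\mathcal{H};A_\alpha)$ then injects into $\varinjlim H^k(G;A_\alpha)=0$ and hence vanishes, and exactness of the bottom row forces $\varinjlim H^k(\bg;A_\alpha)=0$. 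Theorem \ref{homological criterion} delivers the conclusion.

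The direction $(\Rightarrow)$ is the main obstacle, because the $FP_n$-criterion for $\bh_i$ tests direct systems of $\bh_i$-modules, whereas the ladder and the algebraic excision isomorphism of Remark \ref{rm. iso of 2.2 (ii)} are naturally formulated only for $\bg$-modules. My strategy is first to deduce from $\bg$ and $G$ being $FP_n$ that $\varinjlim\prod_i H^k(\bh_i;A_\alpha)=0$ for any direct system of $\bg$-modules $\{A_\alpha\}$ with vanishing limit and $k\le n-1$, by running the ladder argument symmetrically: both $\varinjlim H^k(G;A_\alpha)$ and $\varinjlim H^k(\bg;A_\alpha)$ vanish in the relevant range, so the two five-term sequences force the products of cohomologies of $H_i$ and of $\bh_i$ to become isomorphic via $\theta^k$ after $\varinjlim$. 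To promote this to the intrinsic criterion for $\bh_i$, given $\{B_\alpha\}$ a direct system of $\bh_i$-modules with $\varinjlim B_\alpha=0$, I would form $A_\alpha:=\mathrm{CoInd}^{\bg}_{\bh_i}B_\alpha$, apply Shapiro's lemma $H^k(\bg;A_\alpha)\cong H^k(\bh_i;B_\alpha)$, and use the Cohen--Lyndon free-product description of $\ll\mathcal{N}\rr$ together with the preceding ladder output to transfer the vanishing. I expect the main technical difficulty to be controlling the failure of $\mathrm{CoInd}^{\bg}_{\bh_i}$ to commute with filtered colimits so as to match up with the Bieri--Brown hypothesis already available on the $\bg$-side.
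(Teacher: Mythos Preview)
Your overall framework---use the Bieri--Brown criterion together with the ladder of long exact sequences from Proposition~\ref{prop. long exact relative}---matches the paper. However, both directions of your argument have real gaps, and in each case the paper closes the gap using a single fact you never invoke: a hyperbolically embedded subgroup of an $FP_n$ group is itself $FP_n$ (\cite[Corollary~4.32]{dahmani2017hyperbolically}).

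In your $(\Leftarrow)$ argument, the ``key observation'' is incorrect. The vertical map $\theta^{k-1}$ in cohomology goes from $H^{k-1}(\bh_i;A)$ to $H^{k-1}(H_i;A)$, not the other way, so the commutative square gives only that the connecting map out of $\prod_i H^{k-1}(\bh_i;A)$ factors through $\prod_i H^{k-1}(H_i;A)$. There is no factorization of $\prod_i H^{k-1}(H_i;A)\to H^k(G,\mathcal{H};A)$ through $\prod_i H^{k-1}(\bh_i;A)$, and without it you cannot force $\varinjlim H^k(G,\mathcal{H};A_\alpha)=0$ from your hypotheses alone. The paper instead uses that each $H_i$ is $FP_n$ (because $H_i\hookrightarrow_h G$ and $G$ is $FP_n$), so $\varinjlim H^k(H_i;A_\alpha)=0$ directly; then the bottom row kills $\varinjlim H^k(G,\mathcal{H};A_\alpha)$, and the top row finishes as you describe.

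For $(\Rightarrow)$ you are working much too hard, and the obstacle you anticipate is genuine: $\mathrm{CoInd}^{\bg}_{\bh_i}$ is a right adjoint and does not preserve filtered colimits, so there is no reason for $\varinjlim\mathrm{CoInd}^{\bg}_{\bh_i}B_\alpha$ to vanish when $\varinjlim B_\alpha=0$. The paper avoids this entirely: for sufficiently deep fillings one also has $\{\bh_i\}\hookrightarrow_h\bg$ by Theorem~\ref{thm. group theoretic Dehn filling}, and then the same inheritance result \cite[Corollary~4.32]{dahmani2017hyperbolically} gives that each $\bh_i$ is $FP_n$ immediately.
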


\begin{proof}
Suppose that $\bg$ is of type $FP_n$. By Theorem \ref{thm. group theoretic Dehn filling}, we may assume that $\{\bh_i\}^m_{i=1}\hookrightarrow_h \bg$. Then by Lemma \ref{lem. he finiteness}, $\bh_i$ are of type $FP_n$ as $\bg$ is.

Conversely, suppose that all $\bh_i$ are of type $FP_n$. To shorten notations, we write $ H^{\ast}(G,\mathcal{H};-)$ for $ H^{\ast}(G,\{H_i\}^m_{i=1};-)$, $ H^{\ast}(\mathcal{H};-)$ for $\prod^m_{i=1} H^{\ast}(H_i;-)$, and $ H^{\ast}(\overline{\mathcal{H}};-)$ for $\prod^m_{i=1} H^{\ast}(\bh_i;-)$. Let $\{A_j\}_{j\in J}$ be a direct system of $\bg$-modules such that $\varinjlim A_j=0$. By Proposition \ref{prop. long exact relative}, we have a commutative diagram of exact sequences for each $j\in J$:
\begin{center}
\begin{tikzcd}
\cdots\arrow[r] &  H^k(G,\mathcal{H};A_j) \arrow[r] \arrow[d,"id"] &  H^k(\bg;A_j) \arrow[r] \arrow[d] &  H^k(\overline{\mathcal{H}};A_i) \arrow[r] \arrow[d] &  H^{k+1}(G,\mathcal{H};A_j) \arrow[r] \arrow[d,"id"] & \cdots \\
\cdots\arrow[r] &  H^k(G,\mathcal{H};A_j) \arrow[r] &  H^k(G;A_j) \arrow[r] &  H^k(\mathcal{H};A_j) \arrow[r] &  H^{k+1}(G,\mathcal{H};A_j) \arrow[r] & \cdots
\end{tikzcd}
\end{center}
The above remains a commutative diagram of exact sequences after taking direct limit, by \cite[Theorem 2.6.15]{Weibel}. For $k\leqslant n$, we have $\varinjlim H^k(G;A_j)=0$ by Theorem \ref{homological criterion}. By Lemma \ref{lem. he finiteness}, we also have $\varinjlim  H^k(\mathcal{H};A_j)=0$, and thus $\varinjlim  H^k(G,\mathcal{H};A_j)=0$, which implies that $\varinjlim H^k(\bg;A_j)=\varinjlim H^k(\overline{\mathcal{H}};A_j)=0$ except for $k=n$. Since $\varinjlim  H^n(\overline{\mathcal{H}};A_j)=0$, we have $\varinjlim H^n(\bg;A_j)=0$ as well, and thus $\bg$ is of type $FP_n$, again by Theorem \ref{homological criterion}.
\end{proof}

We emphasize that the finiteness of $\Lambda$ is needed in the above proof to guarantee that $\varinjlim  H^k(\mathcal{H};A_j)=\varinjlim H^k(\overline{\mathcal{H}};A_j)=0$ for $k\leqslant n$.

\begin{corollary}\label{cor. finite FP property}
Let $G$ be a group with a finite family of subgroups $\{H_i\}^m_{i=1}\hookrightarrow_h G$. Suppose that $G$ is of type $FP$. Then for sufficiently deep $\{N_i\lhd H_i\}^m_{i=1}$, $\bg$ is of type $FP$ if and only if all $\bh_i$ are of type $FP$.
\end{corollary}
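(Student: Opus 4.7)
The plan is to combine Theorem \ref{property FP} (which controls $FP_{\infty}$) with Corollary \ref{cd of Dehn fillings} (which controls $\cd$), together with the elementary fact that $\cd(K)\leqslant\cd(\Gamma)$ for any subgroup $K\leqslant\Gamma$. I will choose $\{N_i\lhd H_i\}_{i=1}^m$ to be simultaneously sufficiently deep for Theorem \ref{property FP}, for Theorem \ref{thm. CL-property} (so that $(G,\{H_i\}_{i=1}^m,\{N_i\}_{i=1}^m)$ is a Cohen-Lyndon triple), and for Theorem \ref{thm. group theoretic Dehn filling} (so that each $\bh_i$ embeds as a subgroup of $\bg$); since there are only finitely many peripheral subgroups, such deepness is achievable.

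Recall that $G$ is of type $FP$ means $G$ is of type $FP_{\infty}$ and $\cd(G)<\infty$. For the forward direction, assume $\bg$ is of type $FP$. By Theorem \ref{property FP} applied with $n=\infty$, each $\bh_i$ is of type $FP_{\infty}$. Moreover, since $\bh_i\hookrightarrow_h\bg$ is in particular a subgroup of $\bg$, the standard argument (restrict a projective $\mathbb{Z}\bg$-resolution of $\mathbb{Z}$, using that $\mathbb{Z}\bg$ is $\mathbb{Z}\bh_i$-free on any set of coset representatives) gives $\cd(\bh_i)\leqslant\cd(\bg)<\infty$. Hence each $\bh_i$ is of type $FP$.

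For the backward direction, assume each $\bh_i$ is of type $FP$. Then each $\bh_i$ is of type $FP_{\infty}$, so Theorem \ref{property FP} yields that $\bg$ is of type $FP_{\infty}$. It remains to show $\cd(\bg)<\infty$. Since $H_i\leqslant G$ and $\cd(G)<\infty$, the same subgroup inequality gives $\cd(H_i)\leqslant\cd(G)<\infty$ for every $i$, so $\cd(\mathcal{H})<\infty$. By the chosen deepness, $(G,\{H_i\}_{i=1}^m,\{N_i\}_{i=1}^m)$ is a Cohen-Lyndon triple, and Corollary \ref{cd of Dehn fillings} yields
\[
\cd(\bg)\;\leqslant\;\max\{\cd(G),\,\cd(\mathcal{H})+1,\,\cd(\overline{\mathcal{H}})\}\;<\;\infty,
\]
since $\cd(\overline{\mathcal{H}})=\max_i\cd(\bh_i)<\infty$ by hypothesis. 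Thus $\bg$ is of type $FP$.

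No step here is a serious obstacle: the statement is essentially Theorem \ref{property FP} together with a cohomological dimension bound, and the only mildly non-trivial ingredient is the observation that a hyperbolically embedded subgroup $H_i$ (being an honest subgroup of $G$) satisfies $\cd(H_i)\leqslant\cd(G)$, which is what converts the known $FP_{\infty}$ equivalence into an $FP$ equivalence.
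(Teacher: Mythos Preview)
Your proof is correct and follows exactly the approach the paper indicates: the paper simply states that the corollary is obtained by ``combining the above [Theorem \ref{property FP}] with Corollary \ref{cd of Dehn fillings},'' and you have spelled out precisely that combination. The only remark is that the injectivity of $\bh_i\to\bg$ already follows from the Cohen-Lyndon property (as noted after Proposition \ref{prop. induce module}), so invoking Theorem \ref{thm. group theoretic Dehn filling} separately is unnecessary, though harmless.
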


\begin{proof}
    Theorem \ref{property FP} implies that $\bg$ is of type $FP_{\infty}$ if and only if all $\bh_i$ are of type $FP_{\infty}$. So we only need to prove that $\cd(\bg)<\infty$ if and only if $\cd(\bh_i)<\infty$ for all $i$. Suppose that $\cd(\bg)<\infty$. Then since $\bh_i$ are subgroups of $\bg$, we have $\cd(\bh_i)\leqslant \cd(\bg)<\infty$. Conversely, suppose that $\cd(\bh_i)<\infty$ for all $i$. Since $G$ is of type $FP$, we have $\cd(G)<\infty$. As $H_i$ are subgroups of $G$, we also have $\cd(H_i)<\infty$ for all $i$. Then Corollary \ref{cd of Dehn fillings} gives
    \[\cd(\bg)\leqslant \max\{\cd(G),\max_{1\leqslant i\leqslant m}\{  \cd(H_i)+1\},\max_{1\leqslant i\leqslant m}\{\cd(\bh_i)\}\}<\infty.\]
\end{proof}

\begin{proof}[Proof of Theorem \ref{thm. main}]
By Theorem \ref{thm. CL-property}, $(G,H,N)$ is a Cohen-Lyndon triple for sufficiently deep $N\lhd H$. Items (ii)  follows directly from Propositions \ref{prop. long exact relative} and Remark \ref{rm. iso of 2.2 (ii)}.

Theorem \ref{thm. CL imply spectral sequence} provides us a spectral sequences 
\[E^{p,q}_2\Rightarrow  H^{p+q}(G;A)\]
and isomorphisms
\[E^{p,q}_2\cong  H^p(\bh; H^q(N;A))\]
for $q>0$. Theorem \ref{thm. main} (i) then follows by observing that $ H^p(\bg; H^0(\ll N \rr;A))\cong  H^p(\bg;A)$. Similarly, one can prove the homological version.
\end{proof}

Corollary \ref{intro_cor:finiteness} follows directly from  Proposition \ref{direct sum of cohomology}, Corollary \ref{cd of Dehn fillings}, Theorem \ref{property FP}, and Corollary \ref{cor. finite FP property}.

We collect the results of this section and state the full version of Theorem \ref{thm. main}.

\begin{theorem}\label{thm. real main}
    Let $G$ be a group with a hyperbolically embedded family of subgroups $\{H_{\lambda}\}_{\lambda\in\Lambda}\hookrightarrow_h G$. Then the following hold for all sufficiently deep $N_{\lambda}\lhd H_{\lambda},\lambda\in\Lambda$ and all $\bg$-modules $A$, where $\bg=G/\ll\cup_{\lambda\in\Lambda} N_{\lambda}\rr$.
\begin{enumerate}
    \item[(i)] There is a spectral sequence
    \[
    E_2^{p,q}(A)=
    \begin{cases}
     \prod_{\lambda\in \Lambda}H^p(\bhl; H^q(N_{\lambda};A))&\text{for } q>0\\
     H^p(\bg;A)&\text{for } q=0
    \end{cases}
    \Rightarrow  H^{p+q}(G;A),
    \]
    where $\bhl=H_{\lambda}/N_{\lambda}$ for all $\lambda$ and the action of $G$ on $A$ factors through $\bg$. In particular, the action of $\ll\cup_{\lambda\in\Lambda} N_{\lambda}\rr$ on $A$ fixes $A$ pointwise.\\

    \item[(ii)] (Algebraic Excision) For all $n\geqslant 0$ and $\lambda\in\Lambda$, there is a natural isomorphism induced by the quotient maps $G\rightarrow \bg$ and $H_{\lambda}\rightarrow\bhl$,
    \[ H^n(\bg,\bhl;A)\cong  H^n(G,H_{\lambda};A).\]
    
    \item[(iii)]  For all $n\geqslant \sup_{\lambda\in\Lambda}\cd(H_{\lambda})+2$,
    \[ H^n(\bg;A)\cong  H^n(G;A)\oplus \left(\prod_{\lambda\in\Lambda} H^n(\bhl;A)\right).\]
    
    \item[(iv)] $\cd(\bg)\leqslant \max\{\cd(G),\sup_{\lambda\in\Lambda}\cd(H_{\lambda})+1,\sup_{\lambda\in\Lambda}\cd(\bhl)\}.$\\
    
    \item[(v)] If $|\Lambda|<\infty$ and $G$ is of type $FP_n$ for some $n\in\mathbb{N}^+\cup\{\infty\}$, then $\bg$ is of type $FP_n$ if and only if every $\bhl$ is of type $FP_n$.

    \item[(vi)] If $|\Lambda|<\infty$ and $G$ is of type $FP$, then $\bg$ is of type $FP$ if and only if every $\bhl$ is of type $FP$.
\end{enumerate}
\end{theorem}

We end this section by showing that the assumption $n\geqslant \cd(H)+2$ in Corollary \ref{intro_cor:finiteness} (i) cannot be dropped.

\begin{example}\label{eg. counter in +1}
Let $G$ be a free group with basis $\{x,y\}$ and let $H=\langle h\rangle \leqslant G$ where $h=xyx^{-1}y^{-1}$. Then $H\hookrightarrow_h G$ by Example \ref{eg. 2} and $\cd(H)+1=2$. Let $N=\langle h^k \rangle \lhd H$. Note that we can pick $k$ large enough so that $N$ avoids any given finite subset of $H\smallsetminus\{1\}$. By \cite[Theorem 11.1]{lyndon1950cohomology}, $ H^2(\bg;\mathbb{Z})\cong\mathbb{Z}$, and it is well-known that $ H^2(G;\mathbb{Z})=0$ and $ H^2(\bh;\mathbb{Z})\cong \mathbb{Z}/k\mathbb{Z}$. Thus, $ H^2(\bg;\mathbb{Z})\not\cong  H^2(G;\mathbb{Z})\oplus  H^2(\bh;\mathbb{Z})$. Similarly, $\hd(H)+1=2$ and one can show that $ H_2(\bg;\mathbb{Z})\not\cong  H_2(G;\mathbb{Z})\oplus  H_2(\bh;\mathbb{Z})$.
\end{example}

\section{Dehn fillings and duality}\label{sec:duality}

 Let $G$ a group and $\h=\{H_{i}\}_{i=1}^m$ a finite collection of subgroups. Following Bieri-Eckmann \cite{bieri1978relative}, we say that $(G, \h)$ is a {\it duality pair} of dimension $n$, with dualizing module $C$, if for all $k\in \Z$ and all $G$-modules $A$, one has
$$H^k(G; A)\cong H_{n-k}(G, \h ; C\otimes A)$$
$$H^k(G, \h; A)\cong H_{n-k}(G ; C\otimes A)$$
given by the cap product by the fundamental class $e\in H_n(G, \h ; C)$. Here, we adapt the convention that $H^k(G;A)=H^k(G,\h;A)=H_k(G, \h ; C\otimes A)=H_k(G ; C\otimes A)=0$ for all $k<0$. In which case, it follows that $C\cong H^n(G, \h; \Z G)$. If $C=\Z$ with trivial $G$-action, the pair is called an (orientable) {\it Poincar\'{e} duality} pair, in short, a PD$(n)$-pair.

Let $(G, \h)$ be a duality pair of dimension $n$ with dualizing module $C$ and let $\Delta$ be the kernel of the augmentation $\bigoplus_{i=1}^m \mathbb{Z}[G/H_{i}]\rightarrow \mathbb{Z}$. By \cite[Theorem 4.2]{bieri1978relative}, $G$ is a duality group of dimension $n-1$ with dualizing module $\Delta \otimes C$ and each $H_i$ is a  duality group of dimension $n-1$ with dualizing module $C$ (thinking as an $H_i$-module).

The following result generalises \cite[Corollary 1.5]{wang2018spectral} which deals with the case where $(G, \h)$ is a type $F_{\infty}$ relatively hyperbolic group pair.

\begin{corollary}\label{cor:dual} Let $G$ be a group and $\h=\{H_{i}\}_{i=1}^m$ a collection of subgroups such that $\h\hookrightarrow_h G$. Suppose $(G, \h)$ is a {\it duality pair} of dimension $n$, with dualizing module $C$. Then for all sufficiently deep $\{N_i\lhd H_i\}_{i=1}^m$ and $k\in \Z$
$$H^k(\bg, \overline{\h} ; \Z\bg)\cong 
\begin{cases}
   \bigoplus_{i=1}^m \mathrm{Ind}_{\bh_i}^{\bg}H_{n-k}(N_i; C)&\text{for } k\ne n,\\
     C\otimes_{\m} \Z &\text{for } k=n.
    \end{cases}$$
\end{corollary}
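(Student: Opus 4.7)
The plan is to assemble a four-step chain of natural isomorphisms and then treat the boundary case $k=n$ separately. First, since $\Z\bg$ is a $\bg$-module (hence a $G$-module on which $\m$ acts trivially), algebraic excision (Theorem \ref{thm. main}(ii)) gives
$$H^k(\bg,\overline{\h};\Z\bg)\cong H^k(G,\h;\Z\bg).$$
Second, the duality pair structure on $(G,\h)$ applied with coefficients $A=\Z\bg$ yields
$$H^k(G,\h;\Z\bg)\cong H_{n-k}(G;C\otimes\Z\bg)$$
with $G$ acting diagonally. Third, the identification $\Z\bg=\Z[G/\m]\cong\mathrm{Ind}_{\m}^G\Z$ together with the projection formula $C\otimes\mathrm{Ind}_{\m}^G\Z\cong\mathrm{Ind}_{\m}^G(C|_{\m})$, followed by Shapiro's lemma, converts this into $H_{n-k}(\m;C|_{\m})$. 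Fourth, by Theorem \ref{thm. CL-property} the triple $(G,\h,\{N_i\})$ is Cohen-Lyndon for all sufficiently deep $\{N_i\}$, so Proposition \ref{prop. induce module} decomposes
$$H_{n-k}(\m;C|_{\m})\cong\bigoplus_{i=1}^m\mathrm{Ind}_{\bh_i}^{\bg} H_{n-k}(N_i;C)$$
whenever $n-k>1$, establishing the stated formula in the range $k<n-1$.

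The endpoint cases $k\in\{n-1,n\}$ require a bit more care. Proposition \ref{prop. induce module} allows $q=1$ provided the coefficient module is a $\bg$-module, so the case $k=n-1$ is covered as soon as $\m$ acts trivially on $C$. For $k=n$, Proposition \ref{prop. induce module} does not apply at $q=0$ and the formal chain terminates at $H_0(\m;C|_{\m})=C_{\m}$; this equals $C$ precisely when $\m$ acts trivially on $C$, which matches the stated answer. The principal obstacle in the argument is therefore to verify that, for sufficiently deep $\{N_i\}$, the dualizing module $C$ carries a trivial $\m$-action (equivalently, is a $\bg$-module). This is automatic for Poincar\'{e} duality pairs, where $C=\Z$ with trivial action, which is the setting of the paper's applications in Theorems \ref{intro:thm:pd} and \ref{intro:thm:norm}; the remaining steps are formal consequences of excision, duality, the projection formula, Shapiro's lemma, and the Cohen-Lyndon decomposition.
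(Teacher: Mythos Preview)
Your approach is essentially identical to the paper's proof: algebraic excision (Theorem \ref{thm. main}(ii)), then duality, then the identification $C\otimes\Z\bg\cong\mathrm{Ind}_{\m}^G\mathrm{Res}_{\m}^G C$ and Shapiro to reduce to $H_{n-k}(\m;C)$, then the Cohen--Lyndon property and Proposition \ref{prop. induce module}. The paper's argument is a two-line compression of exactly this chain.

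Your treatment of the boundary cases $k\in\{n-1,n\}$ is in fact more careful than the paper's. The paper writes the chain of isomorphisms ``for $k<n$'' and then asserts that the claim follows from Theorem \ref{thm. CL-property} and Proposition \ref{prop. induce module}, without separating out the case $q=1$ (where Proposition \ref{prop. induce module} requires the coefficients to be a $\bg$-module) or the case $k=n$ (where one only obtains $C_{\m}$). Your observation that the stated answer at $k=n$ and the applicability of Proposition \ref{prop. induce module} at $k=n-1$ both require $\m$ to act trivially on $C$ is correct and is a point the paper leaves implicit. Since the corollary is only invoked downstream for PD pairs (Theorems \ref{intro:thm:pd} and \ref{intro:thm:norm}), where $C=\Z$ with trivial action, this subtlety does not affect the applications; but you are right that the general duality-pair formulation, as written, tacitly assumes this.
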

\begin{proof} By Theorem \ref{thm. real main} (ii) (see also  Remark \ref{rm. iso of 2.2 (ii)}) and duality, for $k<n$ one has
$$H^k(\bg, \overline{\h} ; \Z\bg)\cong H_{n-k}(G; C\otimes \Z \bg)\cong H_{n-k}(G; \mathrm{Ind}_{\m}^{G}\mathrm{Res}_{\m}^{G} C)\cong H_{n-k}(\m; C),$$
where the second isomorphism follows from Lemma \ref{tensor_identities}. The claim now follows from Theorem \ref{thm. CL-property} and Proposition \ref{prop. induce module}.
Also, 
$$H^n(\bg, \overline{\h} ; \Z\bg)\cong H_{0}(G; C\otimes \Z \bg)\cong (C\otimes \Z\bg)\otimes_{\Z G} \Z \cong (C\otimes_{\m} \Z G)\otimes_{\Z G} \Z \cong C\otimes_{\m} \Z.$$
\end{proof}

It is worth noting that the assumption that the collection $\h$ is finite in Corollary \ref{cor:dual} cannot be dropped, since if  $(G, \h)$ is a duality pair, then $\h$ must be finite  \cite[Theorem 4.2]{bieri1978relative}. 

\begin{theorem}\label{cor:pd} Let $G$ be a group and $\h=\{H_i\}_{i=1}^m$ a collection of subgroups such that $\h \hookrightarrow_h G$. Suppose, for some integer $0<n$,  $(G, \h )$ is a PD$(n)$-pair and that there are sufficiently deep  $\{\Z\cong N_i\lhd H_i\}_{i=1}^m$  such that each member of $\overline{\mathcal{H}}=\{\bh_i\}_{i=1}^m$ is a PD$(n-2)$-group. Then $\bg$ is a PD$(n)$-group.
\end{theorem}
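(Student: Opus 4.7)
The plan is to verify the Johnson--Wall criterion: $\bg$ is an orientable PD$(n)$-group if and only if $\bg$ is of type $FP$, $H^k(\bg;\Z\bg)=0$ for all $k\neq n$, and $H^n(\bg;\Z\bg)\cong\Z$ with trivial $\bg$-action. The finiteness properties come for free: since $(G,\h)$ is a PD$(n)$-pair, $G$ is a duality group of dimension $n-1$, hence of type $FP$, and each $\bh_i$ is of type $FP$ as a PD$(n-2)$-group; Corollary \ref{cor. finite FP property} then yields that $\bg$ is of type $FP$, and Corollary \ref{cd of Dehn fillings} gives $\cd(\bg)\leq \max\{n-1,n,n-2\}=n$.

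Next I would assemble the long exact sequence of the pair $(\bg,\oh)$ with coefficients in $\Z\bg$. Since $\Z\bg$ is free as a $\bh_i$-module (via the coset space $\bh_i\backslash\bg$), Poincar\'e duality for the PD$(n-2)$-group $\bh_i$ gives $H^{n-2}(\bh_i;\Z\bg)\cong\mathrm{Ind}^{\bg}_{\bh_i}\Z$ and $H^k(\bh_i;\Z\bg)=0$ for $k\neq n-2$. Corollary \ref{cor:dual} applied with $N_i\cong\Z$ and $C=\Z$ gives $H^k(\bg,\oh;\Z\bg)\cong\bigoplus_{i=1}^m\mathrm{Ind}^{\bg}_{\bh_i}\Z$ for $k=n-1$, $\cong\Z$ (with trivial $\bg$-action, since $(G,\h)$ is orientable) for $k=n$, and $0$ otherwise. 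Plugging into the LES immediately yields $H^k(\bg;\Z\bg)=0$ for $k\notin\{n-2,n-1,n\}$ and a $\bg$-equivariant isomorphism $H^n(\bg;\Z\bg)\cong\Z$ from the slice at $k=n$, which also establishes orientability. The remainder reduces to the exact sequence
\[
0\to H^{n-2}(\bg;\Z\bg) \to \bigoplus_{i=1}^m\mathrm{Ind}^{\bg}_{\bh_i}\Z \xrightarrow{\delta} \bigoplus_{i=1}^m\mathrm{Ind}^{\bg}_{\bh_i}\Z \to H^{n-1}(\bg;\Z\bg) \to 0.
\]

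The main obstacle is to show that $\delta$ is an isomorphism. My strategy is to exploit naturality of the LES under the quotient $G\to\bg$, together with the excision isomorphism from Theorem \ref{thm. main}(ii), placing $\delta$ into a commutative square with the analogous connecting map $\delta'$ in the LES of $(G,\h)$ with coefficients $\Z\bg$. The right-hand vertical is excision (an iso); the left-hand vertical is the inflation $H^{n-2}(\bh_i;\Z\bg)\to H^{n-2}(H_i;\Z\bg)$, which is an iso because the Lyndon--Hochschild--Serre spectral sequence of $N_i\to H_i\to\bh_i$, with $N_i\cong\Z$ acting trivially on $\Z\bg$, degenerates at $E_2$ with only the columns $(n-2,0)$ and $(n-2,1)$ nonzero. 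For $\delta'$, Poincar\'e duality for $(G,\h)$ and Shapiro's lemma identify $H^{n-1}(G;\Z\bg)\cong H_1(\bg,\oh;\Z\bg)=\overline{\Delta}$, the augmentation ideal of $\bigoplus_i\Z[\bg/\bh_i]$, and they identify the restriction $H^{n-1}(G;\Z\bg)\to\bigoplus_i H^{n-1}(H_i;\Z\bg)$ with the natural inclusion $\overline{\Delta}\hookrightarrow\bigoplus_i\mathrm{Ind}^{\bg}_{\bh_i}\Z$, which is injective. This forces the adjacent connecting map in the LES of $(G,\h)$ to vanish; combined with $H^{n-2}(G;\Z\bg)=H_2(\bg,\oh;\Z\bg)=0$, which is another consequence of duality, exactness makes $\delta'$ an isomorphism, and hence so is $\delta$.

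Therefore $H^{n-2}(\bg;\Z\bg)=\ker\delta=0$ and $H^{n-1}(\bg;\Z\bg)=\mathrm{coker}\,\delta=0$. Together with $H^n(\bg;\Z\bg)\cong\Z$, the $FP$ property, and the bound $\cd(\bg)\leq n$, the Johnson--Wall criterion concludes that $\bg$ is an orientable PD$(n)$-group.
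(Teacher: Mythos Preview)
Your argument is correct and follows essentially the same strategy as the paper: verify the Johnson--Wall criterion by combining the long exact sequence of $(\bg,\oh)$, Corollary~\ref{cor:dual}, relative excision, and Poincar\'e duality for the pair to reduce to the vanishing $H_2(\bg,\oh;\Z\bg)=0$ and the injectivity of $\partial:H_1(G,\h;\Z\bg)\to\bigoplus_i H_0(H_i;\Z\bg)$. The only organizational difference is that the paper routes the computation of $H^{n-2}(\bg;\Z\bg)$ and $H^{n-1}(\bg;\Z\bg)$ through the two-row Gysin sequence coming from Theorem~\ref{thm. CL imply spectral sequence}, whereas you stay entirely within the pair long exact sequences and compare the connecting maps $\delta$ and $\delta'$ via Proposition~\ref{prop. long exact relative}; your Lyndon--Hochschild--Serre degeneration for the inflation $H^{n-2}(\bh_i;\Z\bg)\to H^{n-2}(H_i;\Z\bg)$ is exactly the content the paper extracts from the morphism of spectral sequences.
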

\begin{proof} For each $1\leq i\leq m$,  $H^k(\bh_i ; \Z\bg)=0$ if $k\ne n-2$ and by Corollary \ref{cor:dual}, $H^k(\bg, \overline{\h} ; \Z\bg)=0$ if $k\ne n-1, n$. The long exact sequence in cohomology for the pair $(\bg, \oh)$ shows that $H^k(\bg ; \Z\bg)=0$ if $k\ne n-2, n-1, n$ and for $k=n$ it gives  $H^{n}(\bg ; \Z\bg)\cong H^{n}(\bg, \oh ; \Z\bg)\cong \Z$.

Next, we consider the spectral sequence of Theorem \ref{thm. real main}

 \[
    E_2^{p,q}=
    \begin{cases}
     \prod_{i=1}^m H^p(\bh_i; H^q(N_i; \Z\bg))&\text{for } q>0\\
     H^p(\bg; \Z\bg)&\text{for } q=0
    \end{cases}
    \Rightarrow  H^{p+q}(G; \Z\bg).
    \]

\noindent Since each $N_i\cong \Z$, $E_2^{p,q}=0$ for $q>1$ and $E_2^{p,1}\cong \prod_{i=1}^m H^p(\bh_i; \Z\bg)$. Since there are only two nontrivial rows on the $E_2^{p,q}$-page of the spectral sequence, by  the cohomological analog of \cite[Ex.~5.2.2]{Weibel}, see also \cite[Theorem XV.5.11]{Cartan_Eilenberg}, we obtain a long exact sequence 

\begin{equation}\label{gysin}
\small{ \dots \to \prod_{i=1}^mH^{k-2}(\bh_i ; H^1(N_i;\Z\bg))\xrightarrow{d_2} H^{k}(\bg ; \Z\bg)\to 
  H^{k}(G ; \Z\bg)\to\prod_{i=1}^mH^{k-1}(\bh_i ; H^1(N_i;\Z\bg))\xrightarrow{d_2} \dots}
  \end{equation}

\noindent  For $k=n-2$,  (\ref{gysin})  yields 
\begin{align*}
H^{n-2}(\bg ; \Z\bg) & \cong  H^{n-2}(G ; \Z\bg)\\
& \cong H_2(G, \h;  \Z\bg)\\
& \cong H_2(\bg, \oh;  \Z\bg)=0.
\end{align*}
\noindent  For $k=n-1$,  (\ref{gysin}) and  the morphism between the Lyndon-Hochschild-Serre spectral sequences associated to the inclusions $(H_{\lambda},N_{\lambda})\hookrightarrow (G,\ll N \rr)$  give us

\begin{center}
\begin{tikzcd}
 0\arrow[r] & H^{n-1}(\bg ; \Z\bg) \arrow[r] &  H^{n-1}(G ; \Z\bg) \arrow[r] \arrow[d, "r^{n-1}"] &  \prod_{i=1}^mH^{n-2}(\bh_i ; H^1(N_i;\Z\bg))\arrow[d, "="]   \\
  & &   \prod_{i=1}^m H^{n-1}(H_i ; \Z\bg) \arrow[r, "\cong"] &  \prod_{i=1}^mH^{n-2}(\bh_i ; H^1(N_i;\Z\bg))
\end{tikzcd}
\end{center}
where the bottom isomorphism follows from the Lyndon-Hochschild-Serre spectral sequence applied to the extensions $1\rightarrow N_i\rightarrow H_i\rightarrow \overline{H}_i\rightarrow 1$. To show that $H^{n-1}(\bg ; \Z\bg)=0$ amounts to showing that $r^{n-1}$ is injective. By \cite[Theorem 2.1]{bieri1978relative}, there is a commutative diagram

\begin{center}
\begin{tikzcd}
 H^{n-1}(G ; \Z\bg) \arrow[r, "r^{n-1}"] \arrow[d, "\cong "] &  \prod_{i=1}^m H^{n-1}(H_i ; \Z\bg)\arrow[d, "\cong"]   \\
     H_{1}(G, \h ; \Z\bg) \arrow[r, "\partial"] & \bigoplus_{i=1}^m H_{0}(H_i ; \Z\bg)
\end{tikzcd}
\end{center}
where by duality the vertical maps are isomorphisms. Thus, it suffices to show that the connecting homomorphism $\partial$  is injective. This follows from the commutativity of the diagram
\begin{center}
\begin{tikzcd}
 H_{1}(G, \h ; \Z\bg) \arrow[r, "\partial"] \arrow[d, "\cong "] & \bigoplus_{i=1}^m H_{0}(H_i ; \Z\bg)\arrow[d, "\cong"]   \\
    H_{1}(\bg, \oh ; \Z\bg) \arrow[r, "\overline\partial"] &  \bigoplus_{i=1}^m  H_{0}(\bh_i ; \Z\bg)
\end{tikzcd}
\end{center}
and the fact the kernel of $\overline\partial$ is $H_{1}(\bg, \Z\bg)=0$. This finishes the proof of $H^{n-1}(\bg ; \Z\bg)=0$.

By \cite[Theorem 6.2]{bieri1978relative}, $G$ and each $\bh_i$ are of type $FP$. So, by Corollary \ref{cor. finite FP property}, $\bg$ is of type $FP$. We have also established that $H^k(\bg ; \Z\bg)=0$ if $k\ne n$ and  $H^{n}(\bg ; \Z\bg)\cong \Z$. By \cite[Theorem 6.2(i)]{bieri1978relative}, $\bg$ is a PD$(n)$-group.
\end{proof}

\section{Simplicial volume of Dehn fillings}\label{sec:sv}
 For detailed background on bounded cohomology and simplicial volume, we refer to  \cite{gromov1982}, \cite{ivanov85} and \cite{frigerio2017}. 


Let $G$ be a group. Consider the singular chain complex $C_*(BG; \R)$ endowed with the $\ell^1$-norm 
$$|c |_1=\sum_{i=1}^k |a_i|, \;\;\; \forall c=\sum_{i=1}^k a_i \sigma_i \in C_*(BG; \R).$$
The cochain complex of bounded cochains $C^*_b(BG; \R)$ coincides  with the normed dual complex of $C_*(BG; \R)$.  The norm on chains induces a $\ell^1$-semi-norm $||\cdot||_1$ on $H_*(G ; \R)= H_*(C_*(BG;\R))$ and $\ell^{\infty}$-semi-norm $||\cdot||_{\infty}$ on $H^*_b(G ; \R)= H^*(C^*_b(BG;\R))$. When $\h=\{H_{i}\}_{i=1}^m$ is a collection of subgroups of $G$,  $H_*(G, \h ; \R)$ and $H^*_b(G, \h ; \R)$ and their  semi-norms are defined analogously \cite[\S 9.2]{minyam2007}.



As before, when $N_i\lhd  H_i$, we let $\mathcal{N}=\bigcup^m_{i=1}N_i$, \; $\overline{\h}=\{\bh_{i}\}_{i=1}^m$ and $\bg=G/{\ll \mathcal N\rr}$.

\begin{lemma}\label{lem:fund} Let $G$ be a group and $\h=\{H_i\}_{i=1}^m$ a collection of subgroups such that $\h \hookrightarrow_h G$. Suppose, for some integer $n\geqslant 2$,  $(G, \h )$ is a PD$(n)$-pair and for a sufficiently deep  $\{N_i\lhd H_i\}_{i=1}^m$,  $\mathrm{cd}(\bh_i)\leq n-2$ for each $1\leq i\leq m$. Then  $H_n(\bg; \Z)=\Z$.
\end{lemma}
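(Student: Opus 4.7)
The plan is to combine the algebraic excision isomorphism for homology (the homological analog of Theorem \ref{thm. main}(ii), spelled out in Remark \ref{rm. iso of 2.2 (ii)}) with Poincar\'e duality for the pair $(G,\h)$ and the long exact sequence of the pair $(\bg,\oh)$, then use the hypothesis $\cd(\bh_i)\le n-2$ to kill the boundary terms.

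More precisely, the first step is to invoke Theorem \ref{thm. CL-property} so that, for sufficiently deep $\{N_i\lhd H_i\}_{i=1}^m$, the triple $(G,\h,\{N_i\})$ is a Cohen-Lyndon triple. Remark \ref{rm. iso of 2.2 (ii)} then gives
\[
H_n(\bg,\oh;\Z)\;\cong\;H_n(G,\h;\Z).
\]
Since $(G,\h)$ is a PD$(n)$-pair with trivial dualizing module, Poincar\'e duality identifies $H_n(G,\h;\Z)\cong H^0(G;\Z)=\Z$.

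The second step is to pass from the relative group to the absolute one. The long exact sequence in homology associated with the pair $(\bg,\oh)$ contains the segment
\[
\bigoplus_{i=1}^m H_n(\bh_i;\Z)\longrightarrow H_n(\bg;\Z)\longrightarrow H_n(\bg,\oh;\Z)\longrightarrow \bigoplus_{i=1}^m H_{n-1}(\bh_i;\Z).
\]
The hypothesis $\cd(\bh_i)\le n-2$ gives $\hd(\bh_i)\le n-2$, so both the leftmost and rightmost groups vanish. Consequently $H_n(\bg;\Z)\cong H_n(\bg,\oh;\Z)\cong\Z$, which is the desired conclusion.

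There is no substantive obstacle here: the content has already been absorbed into Theorem \ref{thm. main}(ii) (via Remark \ref{rm. iso of 2.2 (ii)}) and into the Johnson--Wall style cohomological dimension input. The only point requiring a moment of care is making sure we use the \emph{homological} version of the excision isomorphism, which is precisely what Remark \ref{rm. iso of 2.2 (ii)} supplies, and the well-known inequality $\hd(\bh_i)\le\cd(\bh_i)$ to convert the cohomological dimension hypothesis into the vanishing of $H_n(\bh_i;\Z)$ and $H_{n-1}(\bh_i;\Z)$.
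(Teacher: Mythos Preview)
Your proof is correct and follows essentially the same approach as the paper: the paper's argument is the one-line statement that the result follows from the long exact sequence of the pair $(\bg,\oh)$ together with the isomorphism $H_n(\bg,\oh;\Z)\cong H_n(G,\h;\Z)=\Z$, and you have simply unpacked each of these steps in detail.
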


\begin{proof} The result follows from the long exact sequence in homology of the pair $(\bg, \overline{\h})$ and, by Theorem \ref{thm. real main}, the isomorphism $H_n(\bg, \oh; \Z)\cong H_n(G, \h; \Z)=\Z$.
\end{proof}

\begin{definition}\label{def:sv} Under the hypothesis of Lemma \ref{lem:fund}, we define the {\it simplicial volume} of $\bg$ by
$$||\bg || = \inf \{ |c|_1 \; | \; c\in C_n(B\bg ; \mathbb R), \; [c]=[\bg]\in H_n(\bg; \mathbb R)\},$$ 
where $[G]$ is the image  of the fundamental class under the change of coefficients map $H_n(\bg; \mathbb Z)\to H_n(\bg; \mathbb R)$. Similarly, we define 
$$||G, \h || = \inf \{ |f|_1 \; | \; f\in C_n(BG,  \bigsqcup_{i=1}^m BH_{i}) ; \mathbb R), \; [f]=[G, \h]\in H_n(G, \h; \mathbb R)\}.$$ 
The simplicial volume of $(\bg, \oh)$ is defined analogously. 
\end{definition}




The following result is a generalisation of \cite[Theorem 1.5]{fujmann2011}.

\begin{theorem}\label{thm:norm}
Let $G$ be a group and $\h=\{H_i\}_{i=1}^m$ a collection of subgroups such that $\h \hookrightarrow_h G$. Suppose, for some integer $n\geq 2$,  $(G, \h )$ is a PD$(n)$-pair and for a sufficiently deep  $\{N_i\lhd H_i\}_{i=1}^m$,  $\mathrm{cd}(\bh_i)\leq n-2$ for each $1\leq i\leq m$.  Then, $\cd(\bg)=n$, $H_n(\bg; \Z)=\Z$. In addition,
\begin{itemize}
\item[(i)]  if the group $\bh_i$ is amenable for each $1\leq i\leq m$, then $||\bg ||\leq ||G, \h ||;$
\item[(ii)] if $G$ is hyperbolic relative to $\h$, then $||\bg|| >0$.
\end{itemize}
\end{theorem}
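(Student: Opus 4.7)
The plan is to establish each assertion in turn. For the cohomological dimension, the upper bound $\cd(\bg) \leq n$ is immediate from Corollary \ref{cd of Dehn fillings}: the PD$(n)$-pair hypothesis gives $\cd(G) = n-1$ and $\cd(H_i) = n-1$, while $\cd(\bh_i) \leq n-2$ by assumption, so $\max\{\cd(G),\cd(\h)+1,\cd(\oh)\} \leq n$. Equality and the homology claim $H_n(\bg; \Z) = \Z$ are both supplied by Lemma \ref{lem:fund}.

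For $||\bg|| \leq ||G, \h||$, I would use three comparison steps. First, the long exact homology sequence of $(\bg, \oh)$ combined with $\cd(\bh_i) \leq n-2$ (forcing $H_n(\bh_i; \R) = H_{n-1}(\bh_i; \R) = 0$) yields an isomorphism $j_* : H_n(\bg; \R) \xrightarrow{\cong} H_n(\bg, \oh; \R)$ sending $[\bg]$ to $[\bg, \oh]$; as $j_*$ is norm non-increasing at the chain level, $||[\bg, \oh]||_1 \leq ||\bg||$. Second, amenability of each $\bh_i$ together with the Gromov--Ivanov mapping theorem yields an isometric isomorphism $H^n_b(\bg, \oh; \R) \cong H^n_b(\bg; \R)$, and the duality principle $||x||_1 = \sup\{\langle \phi, x \rangle : ||\phi||_\infty \leq 1\}$ upgrades this to $||\bg|| = ||[\bg, \oh]||_1$. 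Third, the quotient of pairs $(G, \h) \to (\bg, \oh)$ induces a chain-level norm non-increasing map $H_n(G, \h; \R) \to H_n(\bg, \oh; \R)$ sending $[G, \h]$ to $[\bg, \oh]$ (naturality of fundamental classes under the isomorphism of Theorem \ref{thm. main} (ii)), giving $||[\bg, \oh]||_1 \leq ||G, \h||$.

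For the final assertion, when $G$ is hyperbolic relative to $\h$, Osin's \cite[Theorem 1.1]{osin2007peripheral} ensures that $\bg$ is hyperbolic relative to $\oh$ for sufficiently deep fillings. I would then invoke the Mineyev--Yaman surjectivity of the comparison map $H^n_b(\bg, \oh; \R) \twoheadrightarrow H^n(\bg, \oh; \R)$ available for relatively hyperbolic pairs in degrees $\geq 2$. Combined with the amenability isomorphism $H^n_b(\bg, \oh; \R) \cong H^n_b(\bg; \R)$ and the surjection $H^n(\bg, \oh; \R) \twoheadrightarrow H^n(\bg; \R)$ (from the long exact sequence, as $H^n(\bh_i; \R) = 0$), a diagram chase shows that the absolute comparison map $c^n : H^n_b(\bg; \R) \twoheadrightarrow H^n(\bg; \R) = \R$ is surjective. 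The orientation class thus lifts to a bounded cocycle pairing non-trivially with $[\bg]$, so $||\bg|| > 0$ by duality.

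The main obstacle I anticipate is pinning down the naturality of fundamental classes under the quotient of pairs in the third step of the second paragraph --- specifically that the dual of the pullback isomorphism of Theorem \ref{thm. main} (ii) really carries $[G, \h]$ to $[\bg, \oh]$ in top-degree homology --- and confirming that a Mineyev--Yaman-type comparison-map surjectivity is indeed available in the precise relative cohomology setting used for the positivity argument.
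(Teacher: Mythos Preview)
Your proposal is correct and follows the paper's proof essentially step for step; the only difference is that the paper cites Franceschini \cite{franc2018} rather than Mineyev--Yaman for the surjectivity of the relative comparison map $H^n_b(\bg,\oh;\R)\to H^n(\bg,\oh;\R)$. Your anticipated obstacle is not a real obstruction: since the map $H_n(G,\h;\Z)\to H_n(\bg,\oh;\Z)$ of Theorem~\ref{thm. main}~(ii) is an isomorphism between copies of $\Z$, the fundamental class $[G,\h]$ is sent to $\pm[\bg,\oh]$, and the sign does not affect the $\ell^1$-seminorm.
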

\begin{proof}  Recall that since  $(G, \h )$ is a PD$(n)$-pair, each $H_i$ is a PD$(n-1)$-group  \cite[Theorem 4.2]{bieri1978relative} and hence $\cd(H_i)=n-1$. The first two claims now follow from Theorem \ref{thm. real main} (iv) and Lemma \ref{lem:fund}. 

For part (i), by Theorem \ref{thm. real main} (ii), ${p_n} :H_n(G, \h ; \Z)\to H_n(\bg, \overline{\h} ; \Z)$  induced by $(G, \h)\twoheadrightarrow (\bg, \overline{\h})$ is an isomorphism. Since each $\bh_i$ is amenable,  $j^n: H^n_b(\bg, \overline{\h} ; \R)\to H^n_b(\bg ; \R)$ is an isometric  isomorphism \cite[Theorem 5.14]{frigerio2017}. By Duality Principle \cite[Lemma 6.1]{frigerio2017}, we obtain
\begin{align*}
||\bg|| 
&=\sup \{ \langle j^n(\psi), [\bg] \rangle \; | \; \psi\in H^n_b(\bg, \overline{\h} ; \R), \; ||\psi ||_{\infty}\leq 1\},\\
&=\sup\{ \langle \psi, j_n([\bg]) \rangle \; | \; \psi\in H^n_b(\bg, \overline{\h} ; \R), \; ||\psi ||_{\infty}\leq 1\},\\
&= ||j_n([\bg]) ||_1,\\
&= ||\bg, \overline{\h} ||,\\
&\leq ||G, \h ||,
\end{align*}
where the last inequality follows from the functoriality of the $\ell^1$-semi-norm \cite[Proposition 2.1]{loh2011}.

For part (ii), since $G$ is hyperbolic relative to $\h$, then $\bg$ is hyperbolic relative $\oh$ \cite[Theorem 1.1]{osin2007peripheral}. Consider the commutative diagram of long exact sequences 
\begin{center}
\begin{tikzcd}
  \prod_{i=1}^m H^{n-1}_b(\bh_{i} ; \R) \arrow[r] \arrow[d] &   H^n_b(\bg, \overline{\h}; \R) \arrow[r] \arrow[d] &   H^n_b(\bg ; \R) \arrow[r] \arrow[d, "c"] &  \prod_{i=1}^m H^n_b(\bh_{i} ; \R) \arrow[d]  \\
  \prod_{i=1}^m  H^{n-1}(\bh_{i}; \R) \arrow[r] & H^n(\bg, \overline{\h}; \R) \arrow[r] &   H^n(\bg ; \R)   \arrow[r] \ & \prod_{i=1}^m  H^n(\bh_{i}; \R)
\end{tikzcd}
\end{center}
 Since  $\mbox{cd}(\bh_i)\leq n-2$, the map  $H^n(\bg, \overline{\h}; \R) \to   H^n(\bg ; \R)$ is onto. Since  $\bg$ is hyperbolic relative to $\overline{\h}$, the comparison map  $H^n_b(\bg, \overline{\h}; \R)\to H^n(\bg, \overline{\h}; \R)$ is onto \cite[Theorem 1.1]{franc2018}, see also \cite{minyam2007}. It follows that the comparison map $c: H^n_b(\bg ; \R)\to H^n(\bg ; \R)$ is also onto which shows that $||\overline G||>0$  \cite[Lemma 6.1]{frigerio2017}, see also \cite[Proposition 7.10]{frigerio2017}.
\end{proof}

\subsection{Dehn fillings of pinched negatively curved manifolds} In this section, we illustrate how the results of the previous sections apply in a geometric setting. First, we need a lemma.

\begin{lemma} \label{lem:nil} Let $G$ be a group and $\h=\{H_{i}\}_{i=1}^m$ a collection of  finitely generated torsion-free nilpotent subgroups with $rk(Z(H_i))\geq 2$, for all $i$. Then there are infinitely many collections  of subgroups $\{N_{i}\}_{i=1}^m$ such that $\Z\cong N_i\lhd Z(H_i)$, $\bh_i$ is torsion-free and $j_k: H_k(\bg; \Z)\to H_k(\bg, \overline\h ; \Z)$ is an isomorphism for all $k> \max\{ \hd (H_i)\;|\; 1\leq i\leq m\}$.
\end{lemma}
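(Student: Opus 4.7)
The plan is to exploit the rich abelian structure of the centres together with Malcev's theorem on torsion-freeness of quotients in the upper central series. Since each $Z(H_i)$ is a finitely generated torsion-free abelian group of rank $r_i\geq 2$, it contains infinitely many primitive elements (i.e.\ elements generating an isolated cyclic subgroup of $Z(H_i)$). For each $i$, I would choose such a primitive $z_i\in Z(H_i)$ and set $N_i=\langle z_i\rangle\cong\Z$; since $N_i\leq Z(H_i)$, it is automatically normal in $Z(H_i)$, and varying $z_i$ through the primitive elements yields infinitely many such collections $\{N_i\}_{i=1}^m$.

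The first substantive step is to verify that $\bh_i=H_i/N_i$ is torsion-free. Here I would invoke the fact that, for a finitely generated torsion-free nilpotent group $H_i$, every quotient in the upper central series is torsion-free; in particular, $H_i/Z(H_i)$ is torsion-free. Hence if $h\in H_i$ satisfies $h^n\in N_i\subset Z(H_i)$ for some $n\geq 1$, then the image $\bar h\in H_i/Z(H_i)$ is torsion and so must equal the identity, forcing $h\in Z(H_i)$. Now $h^n=z_i^k$ inside the free abelian group $Z(H_i)$, and primitivity of $z_i$ in $Z(H_i)$ gives $h\in\langle z_i\rangle=N_i$. Thus $N_i$ is isolated in $H_i$ and $\bh_i$ is torsion-free.

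Next, since $\bh_i$ is a finitely generated torsion-free nilpotent group, it is a Poincar\'e duality group of dimension equal to its Hirsch length, which is $h(\bh_i)=h(H_i)-1$ (as $h(N_i)=1$). In particular,
\[
H_k(\bh_i;\Z)=0 \quad\text{for all}\quad k>h(H_i)-1.
\]
For any $k>\max_i h(H_i)$, both $k$ and $k-1$ exceed $h(H_i)-1$ for every $i$, hence $H_k(\bh_i;\Z)=H_{k-1}(\bh_i;\Z)=0$ for each $i$. The long exact sequence of the pair $(\bg,\overline{\h})$,
\[
\cdots\to\bigoplus_{i=1}^m H_k(\bh_i;\Z)\to H_k(\bg;\Z)\xrightarrow{j_k}H_k(\bg,\overline{\h};\Z)\to\bigoplus_{i=1}^m H_{k-1}(\bh_i;\Z)\to\cdots
\]
then sandwiches $j_k$ between two zero groups, making it an isomorphism.

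The only non-routine step is the passage from primitivity in $Z(H_i)$ to isolation in $H_i$, which relies on Malcev's theorem that $H_i/Z(H_i)$ is torsion-free; once that is in hand, the rest is bookkeeping with Hirsch lengths and the long exact sequence of a pair.
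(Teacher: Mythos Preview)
Your proof is correct and follows essentially the same route as the paper: choose $N_i$ generated by a primitive central element, observe that $\bh_i$ is torsion-free nilpotent of Hirsch length $h(H_i)-1$, and feed the vanishing of $H_{k}(\bh_i;\Z)$ and $H_{k-1}(\bh_i;\Z)$ into the long exact sequence of the pair $(\bg,\overline{\h})$. The paper parametrises its collections by coprime pairs $(s,t)$ in a fixed $\Z^2$-summand of $Z(H_i)$ and simply asserts torsion-freeness of $\bh_i$, whereas you allow arbitrary primitive elements of $Z(H_i)$ and supply the Malcev argument explicitly; these are cosmetic differences only.
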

\begin{proof}  For each $i$, the center $Z(H_i)$ contains a factor $\langle x_i, y_i\rangle\cong \Z^2$.  Let $N^{s,t}_i=\langle x^s_iy^t_i\rangle\lhd H_i$ where $s, t\in \Z$ are co-prime. Since $s, t$ are co-prime, $Z(H_i)/N^{s,t}_i$ is torsion-free. Hence the quotient group $\bh_i= H_i/N^{s,t}_i$ is torsion-free, since it has a composition series with torsion-free factor groups. So, $\bh_i$ is a torsion-free nilpotent group. The long exact sequence in homology for the pair $(\bg, \overline{\h})$ establishes the isomorphism $j_k: H_k(\bg ; \Z)\to H_k(\bg, \overline{\h} ; \Z)$ for all 
\begin{align*}
k&> \max\{\mbox{hd}(\bh_i)\;|\; 1\leq i\leq m\}+1\\
&=\max\{\hd(H_i)\;|\; 1\leq i\leq m\}.
\end{align*}
The above equality holds since homological dimension is additive for group extensions of finitely generated torsion-free nilpotent groups  \cite[Theorem 5.5]{bieri1981homological}, see also \cite[Theorem 7.10(a)]{bieri1981homological}. The collections $\{N^{s,t}_{i}\}_{i=1}^m$ enumerated by the co-prime pairs $(s,t)$ have the required properties.
\end{proof}

\begin{definition}\label{sufficient_deep_mfld} Let $M$ be a Riemannian $n$-manifold with a complete pinched negative sectional curvature  and  finite volume. All the cuspidal ends of $M$ are almost-flat manifolds and hence by the results of Gromov and Ruh are infra-nil \cite{gromov1978, ruh1982}. In particular, they are finitely covered by nilmanifolds. Let us assume here that all the cuspidal ends $\{L_i\}_{i=1}^m$ of $M$ are nilmanifolds such that  $rk(Z(\pi_1(L_i))\geq 2$. Let $\overline{M}$ be the natural compactification of $M$ with boundary components $\{L_i\}_{i=1}^m$. Let $G=\pi_1(\M)$ and $H_i= \pi_1(L_i)$. Then $G$ is hyperbolic relative to the fundamental groups of the cuspidal ends and in particular $\{H_{i}\}_{i=1}^m \hookrightarrow_h G$ \cite{bowditch2012relatively,farb1998relatively}. By Theorem \ref{thm. real main} (ii) and Lemma \ref{lem:nil}, there are infinitely many collections  of sufficiently deep normal subgroups $\{N_{i}\}_{i=1}^m$ satisfying the conclusions of both such that $j_n: H_n(\bg ; \Z)\xrightarrow{\cong}H_n(\bg, \overline{\h} ; \Z)$.  Let $\{N_{i}\}_{i=1}^m$ be one such collection. By \cite[Theorem 1.1]{osin2007peripheral}, we can also assume that $\{N_{i}\}_{i=1}^m$ are sufficiently deep so that $\bg$ is hyperbolic relative to $\overline{\h}$. Let 
$T_i\hookrightarrow L_i\xrightarrow{\pi_i} B_i$
 be the circle bundle with a nilmanifold base which arises as a quotient of the Malcev completion (see e.g.~\cite[\S 1.2, p.~9]{Dekimpe})  $\R\hookrightarrow \mathbf L_i \twoheadrightarrow \mathbf B_i$ of the short exact sequence 
$N_i\hookrightarrow H_i\twoheadrightarrow \bh_i$. Denote by $C(L_i, T_i)$ the mapping cylinder of $\pi_i$ for each $i$ (see Figure \ref{fig. c(l,t)} for an illustration). Note that $C(L_i, T_i)$  is manifold with boundary $L_i$. We define 
$$M_T=\overline{M} \bigcup_{\phi_1 \sqcup \dots \sqcup \phi_m} (C(L_1, T_1)\sqcup \dots \sqcup C(L_m, T_m)),$$
where each $\phi_i$ is the canonical identification of $\partial C(L_i, T_i)$ with $L_i$ and call it a {\it sufficiently deep Dehn filling} of $\M$. 
\end{definition}

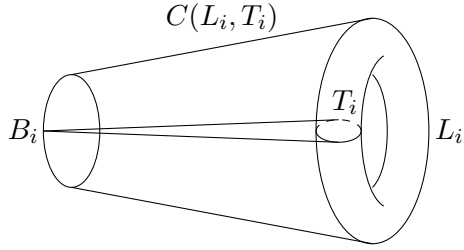
\begin{figure}[ht!]
    \centering
    \begin{tikzpicture}
    \draw (2,1.125) circle (0pt) node[anchor=south, minimum size=0.3in] {$C(L_i,T_i)$};
    \draw (0,0.75) -- (4,1.5);
    \draw (0,-0.75) -- (4,-1.5);
    \draw  (4.15,1) .. controls (3.75,0.750) and (3.75,-0.75) .. (4.15,-1);
    \draw (4,0.75) .. controls (4.2625,0.5) and (4.2625,-0.5) .. (4,-0.75) ;
    \draw (0,0) ellipse (0.375 and 0.75) node[anchor=east, minimum size=0.5in] {$B_i$};
    \draw (4,0) ellipse (0.75 and 1.5) node[anchor=west, minimum size=0.8in] {$L_i$};
    \draw (3.85,0) arc[x radius=0.3, y radius= 0.15, start angle=0, end angle=-180];
    \draw[dashed] (3.85,0) arc[x radius=0.3, y radius= 0.15, start angle=0, end angle=180] node[anchor=south west, minimum size=0.3in] {$T_i$};
    \draw (3.55,0.15) -- (-0.375,0);
    \draw (3.55,-0.15) -- (-0.375,0);
    \end{tikzpicture}
    \caption{Mapping cylinder $C(L_i,T_i)$}
    \label{fig. c(l,t)}
\end{figure}

Given a closed oriented $n$-manifold $M$ possibly with boundary $\partial M$, the {\it simplicial volume of $(M, \partial M)$} is defined as
$$||M, \partial M ||=||[M, \partial M]||_1$$ 
where $[M, \partial M]\in H_n(M, \partial M;\R)$ is the image of the fundamental class under the change of coefficients map $H_n(M, \partial M;\Z)\to H_n(M, \partial M;\R)$. The following result is again a generalisation of \cite[Theorem 1.5]{fujmann2011}.

\begin{corollary}\label{cor:pinched}
Let $\M$ be a compact oriented $n$-manifold with nilmanifold boundary components $\{L_i\}_{i=1}^m$ as above. Suppose $M_T$ is a sufficiently deep Dehn filling of $\M$. Then, $M_T$ is a closed oriented aspherical $n$-manifold with fundamental group $\bg$ and
$$0<||M_T||\leq ||\M, \partial \M||.$$ 
\end{corollary}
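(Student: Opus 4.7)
The plan is to reduce the claim to Theorem \ref{thm:norm} applied to the group pair $(G,\h)$, with the main work being to identify the topological simplicial volumes $\|M_T\|$ and $\|\M,\partial\M\|$ with the group-theoretic quantities $\|\bg\|$ and $\|G,\h\|$.

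First, I would verify the manifold claim. Each mapping cylinder $C(L_i,T_i)$ is a $D^2$-bundle over the nilmanifold $B_i$ (since $\pi_i\colon L_i\to B_i$ is a circle bundle), hence a compact oriented $n$-manifold with boundary $L_i$. Gluing along $\phi_i$ identifies $\partial C(L_i,T_i)$ with $L_i\subset\partial\M$, so $M_T$ is a closed oriented $n$-manifold. Applying Seifert--van Kampen to the decomposition $M_T=\M\cup(\sqcup_i C(L_i,T_i))$, and using that $C(L_i,T_i)$ deformation retracts to $B_i$ with $\pi_1(B_i)=\bh_i$ and that the inclusion $L_i\hookrightarrow C(L_i,T_i)$ induces the quotient $H_i\twoheadrightarrow\bh_i$, I obtain $\pi_1(M_T)=G*_{H_1}\bh_1*_{H_2}\cdots*_{H_m}\bh_m=G/\ll\mathcal{N}\rr=\bg$.

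Next, I would establish asphericity of $M_T$. The cleanest approach is to extend the Fujiwara--Manning construction \cite{fujmann2010} to the pinched negatively curved setting: equip $\M$ with its natural negatively curved metric whose cusp cross-sections are the nilmanifolds $L_i$, truncate at horospherical cross-sections where the fiber circles of $L_i\to B_i$ have length at least $2\pi$ (this is precisely where the ``sufficiently deep'' hypothesis is used), and then fill each cusp with the disk bundle $C(L_i,T_i)$ carrying a fiberwise CAT$(0)$ metric that matches the boundary. The resulting locally CAT$(0)$ metric on $M_T$ makes the universal cover contractible, so $M_T$ is aspherical. As a cross-check, the hypotheses of Theorem \ref{cor:pd} are satisfied---$(G,\h)$ is a PD$(n)$-pair, each $N_i\cong\Z$ is sufficiently deep, and each $\bh_i$ is a nilmanifold group of Hirsch length $n-2$, hence a PD$(n-2)$-group---so $\bg$ is a PD$(n)$-group, consistent with $M_T$ being a closed aspherical $n$-manifold.

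Finally, I would translate the bound of Theorem \ref{thm:norm} to simplicial volume. Since $M_T$ is a closed aspherical manifold with $\pi_1(M_T)=\bg$, Gromov's mapping theorem gives $\|M_T\|=\|\bg\|$, and similarly $\|\M,\partial\M\|=\|G,\h\|$ since $\M$ is an aspherical manifold with aspherical (nilmanifold) boundary components. The hypotheses of Theorem \ref{thm:norm} are met: $(G,\h)$ is a PD$(n)$-pair with $\h\hookrightarrow_h G$, in fact $G$ is hyperbolic relative to $\h$ by Farb--Bowditch \cite{bowditch2012relatively,farb1998relatively}; each $\bh_i$ is torsion-free nilpotent (by Lemma \ref{lem:nil}), hence amenable, with $\cd(\bh_i)=n-2$. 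Theorem \ref{thm:norm} then yields $0<\|\bg\|\leq\|G,\h\|$, which is precisely $0<\|M_T\|\leq\|\M,\partial\M\|$.

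The hard part will be the asphericity claim: the algebraic Theorem \ref{cor:pd} supplies the PD$(n)$ structure on $\bg$ but does not by itself imply $M_T$ is a $K(\bg,1)$, since the inclusions $L_i\hookrightarrow C(L_i,T_i)$ are not $\pi_1$-injective and the standard gluing-of-aspherical-spaces lemma fails. The geometric construction of the locally CAT$(0)$ metric---requiring careful control of the horospherical truncation of $\M$ and the fiberwise disk metrics on $C(L_i,T_i)$ in the pinched negative curvature setting---is what supplies the missing ingredient.
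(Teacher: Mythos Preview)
Your reduction to Theorem \ref{thm:norm} via the identifications $\|M_T\|=\|\bg\|$ and $\|\M,\partial\M\|=\|G,\h\|$ is exactly what the paper does, and your verification of $\pi_1(M_T)=\bg$ via Seifert--van Kampen is fine.

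The difference is in the asphericity argument, and here your route is both different and riskier. The paper does not build a CAT$(0)$ metric. Instead it constructs the universal cover of $M_T$ directly: take the cover $K=Y/\ll\mathcal{N}\rr$ of $\M$ (where $Y$ is the universal cover of $\M$), whose boundary components are circle bundles $L'_i=\mathbf{L}_i/N_i$ over the contractible simply connected nilpotent Lie group $\mathbf{B}_i$; glue to each such boundary component the contractible mapping cylinder $C(L'_i,T_i)$ of $L'_i\to\mathbf{B}_i$. The resulting space $M'$ is simply connected (the Cohen--Lyndon structure of $\ll\mathcal{N}\rr$ makes this transparent) and contractible, carries a free $\bg$-action, and has $M'/\bg\cong M_T$. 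This is a purely topological argument with no metric input beyond what is already present in the cusp structure.

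Your proposed route---extending the Fujiwara--Manning $2\pi$-filling to pinched negative curvature with nilmanifold cusps---is not established in the literature and is not a small step. Fujiwara--Manning's CAT$(0)$ metric relies on the cusp cross-sections being \emph{flat} tori, so that the warped-product cusp metric and the singular disk-bundle filling metric can be matched explicitly and the Alexandrov angle comparison checked. With genuinely nil (non-flat) cross-sections the horospherical geometry is more complicated and no such construction is available off the shelf; producing one would be a result in its own right, likely harder than the corollary. You correctly flag this as the hard part, but the paper's explicit-cover argument shows it can be bypassed entirely.
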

\begin{proof} We first show that $M_T$ is aspherical with fundamental group $\bg$.  

Let $\pi:\widetilde{M}\to M$ be the universal covering map and for each $1\leq i\leq m$, denote by $\mathbf L_i$ the universal cover  of $L_i$ which is  the Malcev completion \cite[\S 1.2]{Dekimpe}  of $H_i$. It is a simply connected nilpotent Lie group containing $H_i$ as a uniform lattice. Observe that $\M$ is homeomorphic to a submanifold $R\subset M$ where $R$ is obtained by removing the interiors of the cusps of $M$  \cite{farb1998relatively}.
 The universal cover $Y$ of $R$ is a subspace of $\widetilde{M}$ with boundary a $G$-orbit of almost-flat totally geodesic submanifolds homeomorphic to $\mathbf L_i$  for each $1\leq i \leq m$.  Let $K=Y/{\ll \mathcal N\rr}$, which is a cover of $R$ with fundamental group  ${\ll \mathcal N\rr}$.  The boundary of $K$ is a disjoint union of  $\bg$-orbits of  submanifolds homeomorphic to $L'_i=\mathbf L_i/N_i$   for each $1\leq i\leq m$. The circle bundle $T_i\hookrightarrow L_i\xrightarrow{\pi_i} B_i$ is the quotient of the short exact sequence $\R\hookrightarrow \mathbf L_i \twoheadrightarrow \mathbf B_i$ of  simply connected nilpotent Lie groups by the action of $H_i$. Hence, it lifts to the circle bundle $T_i\hookrightarrow L'_i\xrightarrow{{\pi}'_i} \mathbf B_i$. Denote by $C(L'_i, T_i)$ the mapping cylinder of ${\pi}'_i$. Define 
$$M'=K \bigcup_{\psi_1 \sqcup \dots \sqcup \psi_m} (\bg\times _{\bh_1}C(L'_1, T_1)\sqcup \dots \sqcup  \bg\times _{\bh_m}C(L'_m, T_m)),$$
where each $\psi_i$ is the canonical identification of $\bg\times _{\bh_i}\partial C(L'_i, T_i)$  with $\bg L'_i$. The manifold $M'$ is simply connected by constuction. By Mayer-Vietoris homology sequence and the Cohen-Lyndon property of Theorem \ref{thm. CL-property}, it follows $M'$ has also trivial homology and is therefore contractible. The group $\bg$ acts freely on $M'$ and the quotient  $M'/\bg$ is  homeomorphic to $M_T$.

Since $(\M, \partial \M)\simeq (BG,  \bigsqcup_{i=1}^m BH_{i})$, we have $[\M, \partial \M] \in H_n(G, \h; \R)$. Similarly, since $M_T\simeq B\bg$, we have $[M_T]\in H_n(\bg ; \R)$. Applying Theorem \ref{thm:norm}, we obtain 
$$0<||M_T||\leq ||\M, \partial \M||.\qedhere$$
\end{proof}

\begin{remark}\label{rem:simplicial} In the geometric setting of Corollary \ref{cor:pinched}, the isomorphism  
$${p_n} :H_n(G, \h ; \Z)\xrightarrow{\cong} H_n(\bg, \overline{\h} ; \Z)$$
 used in the proof of Theorem \ref{thm:norm} also follows from excising the interiors of the attached submanifolds $\{C(L_{i}, T_i)\}_{i=1}^m$ from $M_T$ which gives an isomorphism between the (co)homologies of   $(\M, \partial M)$ and $(M_T,  \{C(L_{i}, T_i)\}_{i=1}^m)$. So,  one can think of the isomorphism in Theorem \ref{thm. real main} (ii) as a group theoretic analog of topological excision.  

We should also remark that the right-hand-side inequality of Corollary \ref{cor:pinched} can be deduced from Gromov's Additivity Theorem \cite[Theorem 7.6]{frigerio2017}. \end{remark}

\section{Quotients of acylindrically hyperbolic groups}\label{quotients}

\subsection{Cohomology and embedding theorems}\label{sec. Cohomology and embedding theorems}
We prove Theorem \ref{cd SQ-universality} in this subsection. The reader is referred to Section 2.4 for an outline of the proof. In the sequel, we employ the convention that if $X$ is a set of alphabets and $w$ is a word over $X$, then $\|w\|$ denotes the length of $w$. In certain cases, it might be possible to view $w$ as a word over another alphabet $Y$. In such a case, we will use $\|w\|_X$ (resp. $\|w\|_Y$) to denote the number of letters of $X$ (resp. $Y$) in $w$. For two words $w$ and $v$, we write $w\equiv v$ to indicate that there is a letter-by-letter equality between $w$ and $v$.

\begin{lemma}\label{free group situation}
Let $F_4$ be a free group of rank $4$, $\mathcal{F}\subset F_4$ a finite set, and $C$ a countable group with $\cd(C)\geqslant 2$. Then there exists a quotient $R$ of $F_4$ such that the following hold.
\begin{enumerate}
\item[(1)] $R$ can be decomposed as a free product $R=\mathbb{Z}\ast R_0$ with $\card(R_0)=\infty$. Moreover, $C$ embeds into $R_0$.
\item[(2)] The quotient map $F_4\rightarrow R$ is injective on $\mathcal{F}$.
\item[(3)] If $C$ is torsion-free, then so is $R$.
\item[(4)] $\hd(R)\leqslant \max\{\hd(C),2\},~\cd(R)\leqslant \cd(C)$.
\item[(5)] For every $n\geqslant 3$ and every $R$-module $A$, we have 
\[ H_n(R;A)\cong H_n(C;A),~ H^n(R;A)\cong  H^n(C;A),\]
where the action of $C$ on $A$ is induced by the embedding $C\hookrightarrow R$.
\item[(6)] If $C$ is finitely generated, then $R_0$ is hyperbolic relative to $C$.
\item[(7)] If $C$ is of type $FP_n$ for some $n\in\mathbb{N}^+\cup\{\infty\}$, then so is $R$.
\item[(8)] If $C$ is of type $FP$, then so is $R$.
\end{enumerate}
\end{lemma}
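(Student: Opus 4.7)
Plan: Write $F_4 = \langle t\rangle \ast F_3$ with $F_3 = F(x_1, x_2, x_3)$, so that $t$ generates the $\mathbb{Z}$ factor of $R = \mathbb{Z} \ast R_0$ and $R_0$ is constructed as a $3$-generated quotient of $F_3$ into which $C$ embeds. The construction passes through the ambient free product $G := F_3 \ast C$, which by Example \ref{eg. product hyperbolically embedded}(d) satisfies $\{F_3, C\} \hookrightarrow_h (G, \emptyset)$.

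First I would construct $R_0$ as a Dehn filling of $G$. Enumerating a set of generators $\{c_i\}_{i \in I}$ of $C$ (with $I$ finite when $C$ is finitely generated), one uses the hyperbolicity criterion of Lemma \ref{lem. criterion for he} combined with the isolated-components Lemma \ref{lem. consecutive components} to select long generic words $w_i \in F_3$ and auxiliary long generic words $r_j \in G$ such that the subgroup
\[
H := \langle \{c_i w_i^{-1}\}_i \cup \{r_j\}_j \rangle \leq G
\]
is free on these generators and satisfies $H \hookrightarrow_h G$. The choices are made so that the normal subgroup $N_0 := \langle\!\langle c_i w_i^{-1}\rangle\!\rangle_H \lhd H$ is sufficiently deep in $H$ in the sense of Definition \ref{def. sufficiently deep for wh}, disjoint from $\mathcal{F}$ in $G$, and $H/N_0$ is free on $\{\bar r_j\}$. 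Setting $R_0 := G/\ll N_0 \rr$, each $c_i$ becomes equal to $w_i \in F_3$, so $R_0$ is a quotient of $F_3$. Define $R := \langle t\rangle \ast R_0$. Theorem \ref{thm. group theoretic Dehn filling} then gives $C \hookrightarrow_h R_0$ when $C$ is finitely generated, establishing (1), (2), and (6).

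For the cohomological properties, $(G, H, N_0)$ is a Cohen--Lyndon triple by Theorem \ref{thm. CL-property}, and since $\cd(H) = 1$, Theorem \ref{thm. main}(iii) gives, for $n \geq 3$,
\[
H^n(R_0; A) \cong H^n(G; A) \oplus H^n(H/N_0; A) = H^n(G; A),
\]
the second equality because $H/N_0$ is free, so $H^n(H/N_0; A) = 0$ for $n \geq 2$. A Mayer--Vietoris calculation for the free product $G = F_3 \ast C$, using $\cd(F_3) = 1$, gives $H^n(G; A) \cong H^n(C; A)$ for $n \geq 2$, and the analogous calculation for $R = \mathbb{Z} \ast R_0$ delivers (5). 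The homological version is parallel. Property (3) follows from the Cohen--Lyndon decomposition $\ll N_0 \rr = \Asterisk_{t \in T} t N_0 t^{-1}$: any torsion in $R_0$ descends from torsion in conjugates of $H$ or $C$, both of which are torsion-free. Property (4) follows from Corollary \ref{cd of Dehn fillings} together with the free-product dimension formula, and (7) from Theorem \ref{property FP} (noting $G$ is of type $FP_n$ whenever $C$ is).

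The main obstacle is the small-cancellation construction of $H$ and $N_0$ in the first paragraph. One must simultaneously arrange: (a) $\{c_i w_i^{-1}\} \cup \{r_j\}$ freely generates $H$, (b) $H \hookrightarrow_h G$ with a locally finite relative metric (even when $|I| = \aleph_0$), (c) $N_0$ is sufficiently deep in $H$ and disjoint from $\mathcal{F}$, and (d) $H/N_0$ is free. This is achieved by selecting the $w_i$'s and $r_j$'s iteratively, each long and generic relative to its predecessors, using the isolated-components machinery of Section \ref{sec.ic}; when $C$ is countably infinite, a diagonal argument is required to maintain compatibility at every finite stage.
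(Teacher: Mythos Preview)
Your approach differs substantially from the paper's and has several genuine gaps. The paper does not use the Dehn filling machinery at all for this lemma; instead it chooses relators of the form $t c_i w_i t^{-1} v_i$ (with $w_i, v_i$ words in two of the free generators, satisfying suitable $C'(\lambda)$ conditions) so that $R_0$ is visibly an HNN extension of $L=\langle x\rangle\ast\langle y\rangle\ast C$ with stable letter $t$ and associated subgroups $W=\langle c_iw_i\rangle$, $V=\langle v_i\rangle$, both free. Every item then drops out of standard HNN facts: $C$ embeds because the base embeds; torsion-freeness because HNN extensions of torsion-free groups are torsion-free; (5) from Bieri's Mayer--Vietoris sequence for HNN extensions; (6) from Greendlinger's lemma giving a linear relative isoperimetric function; (7) from the five-lemma applied to the HNN Mayer--Vietoris sequence. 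This works uniformly whether $I$ is finite or infinite.

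Your route runs into real trouble precisely when $C$ is not finitely generated. You need $H=\langle \{c_iw_i^{-1}\}_i\cup\{r_j\}_j\rangle$ to be hyperbolically embedded in $G$ with a locally finite relative metric, but with infinitely many generators the tools you cite (Lemma \ref{lem. criterion for he}, Lemma \ref{lem. consecutive components}, Proposition \ref{prop. showing h.e. general}) do not apply: they produce finitely generated subgroups, and the ``diagonal argument'' you allude to would have to build a single infinitely generated $H\hookrightarrow_h G$ and a sufficiently deep $N_0\lhd H$ all at once, which is not what those lemmas give. Even granting this, several deductions are incorrect as written: the Cohen--Lyndon decomposition of $\ll N_0\rr$ does not by itself imply torsion-freeness of $R_0$ (you would need something like \cite[Theorem 7.15(d),(f)]{dahmani2017hyperbolically}, as in the paper's proof of Theorem \ref{cd SQ-universality}(iii)); Theorem \ref{thm. group theoretic Dehn filling} yields $H/N_0\hookrightarrow_h R_0$, not $C\hookrightarrow_h R_0$, and in any case (6) asks for relative hyperbolicity (finite relative generating set), not just hyperbolic embedding; and the embedding $C\hookrightarrow R_0$ in (1) also requires justification beyond Cohen--Lyndon.

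In short, the paper's HNN construction is both more elementary and more robust, handling the countably-infinite case with no extra work, whereas your Dehn-filling bootstrap would need substantial additional argument to cover that case and to correctly derive (1), (3), and (6).
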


\begin{remark}
Lemma \ref{free group situation} (1) (2) (6) are proved in \cite[Lemma 8.4]{dahmani2017hyperbolically}. We refine the method therein so as to impose (co)homological conditions.

The proof of Lemma \ref{free group situation} relies on small cancellation theory, the reader is referred to \cite[Chapter V]{lyndon2015combinatorial} for a treatment.
\end{remark}

\begin{proof}
Let $\{x,y,z,t\}$ be a free basis of $F_4$, let $F_3<F_4$ be the subgroup generated by $x,y,t$, and let $\{c_i\}_{i\in I}$ be a generating set of $C$. There exist freely reduced words $\{w_i\}_{i\in I},\{v_i\}_{i\in I}$ over the alphabet $\{x,y\}$ such that 
\begin{enumerate}
\item[(a)] the words $\{c_iw_i\}_{i\in I}$ satisfy the $C'(1/2)$ small cancellation condition over the free product $\langle x \rangle \ast \langle y \rangle \ast C$;
\item[(b)] the words $\{v_i\}_{i\in I}$ satisfy the $C'(1/2)$ small cancellation condition over the alphabet $\{x,y\}$;
\item[(c)] the words $\{tc_iw_it^{-1}v_i\}_{i\in I}$ satisfy the $C'(1/6)$ small cancellation condition over the free product $\langle x \rangle \ast \langle y \rangle \ast \langle t \rangle \ast C$.
\end{enumerate}
Indeed, we can first construct words $w_i$ satisfying condition (a), and then pick sufficiently long words $v_i$ to ensure conditions (b) and (c).

Let $N$ (resp. $N_0$) be the normal subgroup of $F_4\ast C$ (resp. $F_3\ast C$) generated by $\{tc_iw_it^{-1}v_i\}_{i\in I}$, and let 
$$R_0=(F_3\ast C)/N_0,~~~~R=(F_4\ast C)/N.$$
For $i\in I$, let $\overline{t}$ (resp. $\overline{c}_i,\overline{w}_i,\overline{v}_i,\overline{z}$) be the image of $t$ (resp. $c_i,w_i,v_i,z$) under the quotient map $F_4\ast C\rightarrow R$. Then we have 
$$R=\langle \overline{z} \rangle\ast R_0=\mathbb{Z}\ast R_0.$$

Note that $\overline{t}\overline{c}_i\overline{w}_i\overline{t}^{-1}\overline{v}_i=1$ and we can rewrite this equation as $\overline{c}_i=\overline{t}^{-1}\overline{v}^{-1}_i\overline{t}\overline{w}^{-1}_i$. Thus, $R$ is generated by $\overline{t},\overline{z},\overline{w}_i,\overline{v}_i,i\in I,$ and hence is a quotient of $F_4$.

Let 
$$\alpha:F_4\rightarrow R$$ 
be the corresponding quotient map, which is the restriction of the quotient map $F_4\ast C\rightarrow R$ to $F_4$. It follows from the Greendlinger's lemma for free products \cite[Chapter V Theorem 9.3]{lyndon2015combinatorial} that if $\|w_i\|,\|v_i\|,i\in I,$ are sufficiently large, then $\alpha$ is injective on $\mathcal{F}$ and thus statement (2) is guaranteed.

Let $L=\langle x \rangle \ast \langle y \rangle \ast C$, let $U\leqslant L$ be the subgroup generated by $\{c_iw_i\}_{i\in I}$, and let $V\leqslant L$ be the subgroup generated by $\{v_i\}_{i\in I}$. 

\begin{claim}\label{claim. 3}
$U$ (resp. $V$) is a free group with basis $\{c_iw_i\}_{i\in I},$ (resp. $\{v_i\}_{i\in I}$). In particular, $U$ and $V$ are both of rank $\card(I)$.
\end{claim}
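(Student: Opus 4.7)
The plan is to invoke the standard fact that a symmetrized set of (cyclically) reduced words satisfying a small cancellation condition $C'(\lambda)$ with $\lambda\leqslant 1/2$ freely generates the subgroup it spans. Since this fact holds both in a free group and in a free product of groups (see, e.g., Lyndon--Schupp, Chapter V, \S9), both halves of the claim reduce to verifying this one input.

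First I would treat $V$. Given a nontrivial reduced sequence $v_{i_1}^{\epsilon_1}\cdots v_{i_k}^{\epsilon_k}$ in the abstract free group on the symbols $\{v_i\}_{i\in I}$, I would argue that when this is computed as a product in $F_2=\langle x,y\rangle$, the free cancellation between consecutive factors $v_{i_j}^{\epsilon_j}$ and $v_{i_{j+1}}^{\epsilon_{j+1}}$ is a piece in the sense of small cancellation theory and hence has length strictly less than half of each of the two factors by hypothesis (b). Consequently, strictly positive portions of each $v_{i_j}^{\epsilon_j}$ survive in the freely reduced product, which therefore cannot be trivial in $F_2$. This yields injectivity of the natural map from the abstract free group on $\{v_i\}_{i\in I}$ to $V$, so $V$ is freely generated by $\{v_i\}_{i\in I}$.

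Next, I would run the same argument for $W$, replacing the free group $F_2$ by the free product $L=\langle x\rangle\ast\langle y\rangle\ast C$ and replacing ordinary word length by syllable length in $L$. The $C'(1/2)$ hypothesis (a) over $L$ is exactly the statement that any piece between two (cyclic conjugates of) elements of the symmetrized set $\{c_iw_i\}_{i\in I}^{\pm 1}$ has syllable length strictly less than half the syllable length of $c_iw_i$. Thus, in a reduced expression $(c_{i_1}w_{i_1})^{\epsilon_1}\cdots(c_{i_k}w_{i_k})^{\epsilon_k}$, the cancellation between consecutive factors, when reduced in $L$, removes a piece whose syllable length is less than half of each adjacent factor, and the final normal form in $L$ remains nontrivial.

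Finally, the rank count follows from the $C'(1/2)$ condition itself, because any coincidence $v_i=v_j$ (resp.\ $c_iw_i=c_jw_j$) with $i\ne j$ would exhibit a piece equal to the entire word. The only delicate point is the syllable-length bookkeeping in the free-product case, but this is entirely routine small cancellation combinatorics, so beyond citing the established small cancellation theorems I do not foresee any additional obstacle.
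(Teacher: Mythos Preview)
Your proposal is correct and follows essentially the same approach as the paper: both argue that in a nonempty freely reduced product of the generators, the cancellation between consecutive factors is bounded by the $C'(1/2)$ condition so that a nonempty subword of each factor survives in the normal form, whence the product is nontrivial. The only minor difference is that the paper additionally invokes the (implicit) positivity of the words $w_i$ over $\{x,y\}$ to streamline the case analysis, whereas you work purely from $C'(1/2)$; both routes are valid.
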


\begin{proof}[Proof of Claim \ref{claim. 3}]
We prove the claim for $U$. The proof for $V$ is similar. Let 
$$u\equiv\prod^{\ell}_{k=1}(c_{i_k}w_{i_k})^{\epsilon_k}$$ 
be a nonempty freely reduced word over the alphabet $\{c_iw_i\}_{i\in I}$, where $i_k\in I$ and $\epsilon_k=\pm 1$ for $k=1,...,\ell$. Think of $u$ as a word over the alphabet $\langle x\rangle \cup \langle y \rangle \cup C$ and then reduce $u$ to its normal form $\overline{u}$ corresponding to the free product $\langle x\rangle \ast \langle y \rangle \ast C$ (see \cite[Chapter IV]{lyndon2015combinatorial} for the definition of normal forms). By condition (a) and that the words $w_i$ do not involve inverses of the generators $x,y$, for each factor $(c_{i_k}w_{i_k})^{\epsilon_k}$ of $u$, a non-empty subword of $(c_{i_k}w_{i_k})^{\epsilon_k}$ survives in $\overline{u}$. In particular, $\overline{u}$ is nonempty and thus $u$ does not represent $1$ in $L$.
\end{proof}

Note that the relations $\overline{t}\overline{c}_i\overline{w}_i\overline{t}^{-1}\overline{v}_i=1$ can be rewritten as $\overline{t}\overline{c}_i\overline{w}_i\overline{t}^{-1}=\overline{v}^{-1}_i$. Thus, $R_0$ is the HNN-extension of $L$ with associated subgroups $U$ and $V$. In particular, $L$ embeds into $R_0$. As $\card(L)=\infty$, we have $\card(R_0)=\infty$. Since $C$ embeds into $L$, $C$ embeds into $R_0$. Thus, statement (1) holds. 

If $C$ is torsion-free, then so is $L$. Being an HNN-extension of $L$, $R_0$ is also torsion-free, and thus so is $R=\mathbb{Z}\ast R_0$, which is statement (3).

By \cite[Theorem 3.1]{bieri1975mayer}, there is a long exact sequence for any $R_0$-module $A$,
\begin{equation}\label{long exact sequence for HNN}
\cdots \rightarrow  H^{n-1}(U;A)\rightarrow  H^n(R_0;A)\rightarrow  H^n(L;A)\rightarrow  H^n(U;A)\rightarrow \cdots
\end{equation}

As $U$ is free, exact sequence \eqref{long exact sequence for HNN} implies for $n\geqslant 3$,
$$ H^n(R_0;A)\cong  H^n(L;A)\cong  H^n(C;A).$$
As $R=\mathbb{Z}\ast R_0$, the cohomology part of statement (5) holds. Similarly, one can prove the homology part of statement (5). Statement (4) follows from statement (5) and $\cd(C)\geqslant 2$.

If $C$ is finitely generated, then we can construct $R$ using a finite generating set of $C$. Then $R_0$ is the quotient of $F_3\ast C$ by adding finitely many relations $tc_iw_it^{-1}v_i,$ and thus has a finite relative presentation over $C$. The Greendlinger's lemma for free products implies that the relative isoperimetric function of $R_0$ with respect to $C$ is linear. Thus, $R_0$ is hyperbolic relative to $C$ (see Remark \ref{rem. relative hyperbolic}), which is statement (6).

If $C$ is of type $FP_n$ for some $n\in\mathbb{N}^+\cup\{\infty\}$, then since $W$ is of type $FP$, we have $R_0$ is of type $FP_n$ by \cite[Proposition I.2.13(b)]{bieri1981homological}. As $R=\mathbb{Z}\ast R_0$, $R$ is of type $FP_n$, which is statement (7). Finally, statement (8) follows from (4) and (7).
\end{proof}

\begin{proof}[Proof of Theorem \ref{cd SQ-universality}]
By Theorem \ref{thm. hyperbolically embedded virtually free subgroup}, $G$ has a unique maximal finite normal subgroup $K(G)$. By \cite[Lemma 5.10]{hull2013small}, $G_0=G/K(G)$ is acylindrically hyperbolic.

If $\cd(C)=0$, then $C=\{1\}$. Let $\bg=G_0$. By Theorem \ref{thm. hyperbolically embedded virtually free subgroup}, $C\hookrightarrow_h \bg$. First consider statement (vi). As $\bg$ and $G$ are quasi-isometric (note that the assumption of (vi) implies that $G$ and $\bg$ are finitely generated), \cite[Corollary 9]{alonso1994finiteness} implies (vi). Other conclusions of Theorem \ref{cd SQ-universality} hold trivially.

If $\cd(C)=1$, then by the Stallings-Swan theorem \cite[corollary to Theorem 1]{swan1969groups}, $C$ is free. By Theorem \ref{thm. hyperbolically embedded virtually free subgroup}, there exists a finitely generated non-cyclic free group $F\hookrightarrow_h G_0$. Let $\bg=G_0$. It is well-known that the free group $C$ embeds into $F$. Thus, $C$ also embeds into $\bg$. All conclusions except for (ii) hold trivially. If in addition, $C$ is finitely generated, then $C$ is a finite rank free group and we can let $F=C$. Thus, (ii) also holds.

Let us assume $\cd(C)\geqslant 2$. By Theorem \ref{thm. hyperbolically embedded virtually free subgroup}, there exists $X\subset G_0$ and a free subgroup $F_4\leqslant G_0$ of rank $4$ such that $F_4\hookrightarrow_h (G_0,X)$. There exists a finite set $\mathcal{F}\subset F_4\smallsetminus\{1\}$ such that if $N\lhd F_4$ satisfies $N\cap \mathcal{F}=\emptyset$, then the conclusions of Theorems \ref{thm. main} and \ref{thm. group theoretic Dehn filling} and \cite[Theorem 7.15]{dahmani2017hyperbolically} hold.

By Lemma \ref{free group situation}, $C$ embeds into an infinite quotient $R=\mathbb{Z}\ast R_0$ of $F_4$ such that the conclusions of Lemma \ref{free group situation} hold and the quotient map $F_4\rightarrow R$ is injective on $\mathcal{F}$. Let $N$ be the kernel of $F_4\rightarrow R$. Then $N\cap\mathcal{F}=\emptyset$. Let $\bg=G/\ll N \rr$.

As $R=\mathbb{Z}\ast R_0$, $R_0$ is a proper subgroup of $R$ and in particular, $R_0$ is a proper subgroup of $G$. By Example \ref{eg. product hyperbolically embedded} (d), $R_0\hookrightarrow_h R$. Proposition \ref{prop. iterative hyperbolically embedded} and $R\hookrightarrow_h G$ then imply that $R_0\hookrightarrow_h G$. As $\card(R_0)=\infty$, Theorem \ref{thm. group theoretic Dehn filling} implies that $\bg$ is acylindrically hyperbolic, that is, statement (i) holds. As $C$ embeds into $R_0$, $C$ also embeds into $\bg$.

If $C$ is finitely generated, then Lemma \ref{free group situation} implies that $R_0$ is hyperbolic relative to $C$, in particular, $C\hookrightarrow_h R_0$. As $R_0\hookrightarrow_h \bg$, we have $C\hookrightarrow_h \bg$ by Proposition \ref{prop. iterative hyperbolically embedded}. Thus, statement (ii) holds. 

Suppose that $G$ and $C$ are torsion-free and there is a non-trivial finite-order element $\overline{g}\in\bg$. Then $G_0=G$. Denote the image of $X$ under the quotient map $G\twoheadrightarrow\bg$ by $\overline{X}$. Then $R\hookrightarrow_h(\bg,\overline{X})$ \cite[Theorem 7.15 (b)]{dahmani2017hyperbolically}. As $\overline{g}$ has finite order, it acts elliptically on the Cayley graph $\Gamma(\bg,\overline{X}\sqcup R)$. By \cite[Theorem 7.15 (f)]{dahmani2017hyperbolically}, there is an element $g\in G$ such that $g$ is mapped to $\overline{g}$ under the quotient map $G\twoheadrightarrow\bg$ and $g$ acts elliptically on $\Gamma(G,X\sqcup F_4)$. As $\overline{g}$ has finite order, $g^n\in\ll N \rr$ for some $n>0$. Since $g^n$ is elliptic as $g$ is, there is some $h\in G$ such that $hg^nh^{-1}\in N\leqslant F_4$ \cite[Theorem 7.15 (d)]{dahmani2017hyperbolically}. By Lemma \ref{free group situation} (3), $R$ is torsion free and thus $hgh^{-1}\not\in F_4$. But
\[|(hgh^{-1})F_4(hgh^{-1})^{-1}\cap F_4|\geqslant |\langle hg^nh^{-1} \rangle|=\infty,\]
which is in contradiction with the almost malnormality of $F_4$ in $G$ \cite[Proposition 2.10]{dahmani2017hyperbolically}. We have proved statement (iii).

Statement (iv) follows from Corollary \ref{intro_cor:finiteness} (i).

Consider statement (v). Corollary \ref{intro_cor:finiteness} (ii) implies that
\[\cd(\bg)\leqslant \max\{\cd(G_0),\cd(F_4)+1,\cd(R)\}.\]
If $K(G)\neq \{1\}$, then $G$ has torsion and thus $\cd(G)=\infty$ \cite[Chapter VIII Corollary 2.5]{brown1982cohomology}, in which case (v) is a void statement. Thus, let us assume $K(G)=\{1\}$ and thus $G_0=G$. As $\cd(R)\leqslant \cd(C)$ and $\cd(C)\geqslant 2$, we have
$$\cd(\bg)\leqslant \max\{\cd(G),\cd(F_4)+1,\cd(R)\}\leqslant \max\{\cd(G),2,\cd(C)\}=\max\{\cd(G),\cd(C)\}.$$

If $G$ and $C$ are of type $FP_n$ for some $n\in\mathbb{N}^+\cup\{\infty\}$, then Lemma \ref{free group situation} implies that so is $R$. As $G$ and $G_0$ are quasi-isometric, $G_0$ is of type $FP_n$ \cite[Corollary 9]{alonso1994finiteness}. As $F_4$ is of type $FP_{\infty}$, Corollary \ref{intro_cor:finiteness} (iii) implies that $\bg$ is of type $FP_n$. Conversely, if $C$ is finitely generated and $G$ and $\bg$ are of type $FP_n$ for some $n\in\mathbb{N}^+\cup\{\infty\}$, then $C\hookrightarrow_h \bg$ by the previously proved statement (ii). Thus, $C$ is of type $FP_n$ \cite[Theorem 2.11]{dahmani2017hyperbolically}. The previously proved statement (v) implies that $\cd(\bg)<\infty$ if and only if $\max\{\cd(G),\cd(C)\}<\infty$, which further implies the statement about type $FP$. This finishes the proof of statement (vi).
\end{proof}

\begin{remark}\label{rm. homological analog of D (v)}
Analogous to the above proof, in the setting of Theorem \ref{cd SQ-universality}, we have $\hd(\bg)\leqslant \max\{\hd(G),\hd(C),2\}$.
\end{remark}

\subsection{Constructing hyperbolically embedded subgroups}\label{sec. construct h.e. subgroups}
We will prove Theorem \ref{thm. common quotient of acylindrically hyperbolic groups} in the next subsection, where the following technical result will be needed. The reader is referred to Section 2.4 for an outline of the proof.

\begin{proposition}\label{prop. showing h.e. general}
Suppose that $G$ is a group, $\Lambda$ is a finite set and $\{k_{\lambda}\}_{\lambda\in \Lambda}$ is a set of positive integers. Let $a_{\lambda,i}\in G$ for $\lambda\in \Lambda$ and $i=1,2,\cdots,k_{\lambda}$. Also suppose that there is a family of subgroups $\{F_{\lambda}\}_{\lambda\in\Lambda}\hookrightarrow_h G$ such that
\begin{enumerate}
    \item[(a)] each $F_{\lambda}$ is free of rank $2k_{\lambda}$ with basis $\{f_{\lambda,i},g_{\lambda,i}\}^{k_{\lambda}}_{i=1}$; and
    \item[(b)] $a_{\lambda,i}\not\in F_{\lambda}\smallsetminus\{1\}$ for all $\lambda\in \Lambda$ and $i=1,2,\cdots,k_{\lambda}$.
\end{enumerate}
Then for sufficiently large $n\in\mathbb{N}^+$, the set
$$\{f^n_{\lambda,i}a_{\lambda,i}g^n_{\lambda,i}\}^{k_{\lambda}}_{i=1}\subset G$$
freely generates a subgroup $H_{\lambda}\leqslant G$ and
\begin{equation}\label{eq. H is h.e. in G}
    \{H_{\lambda}\}_{\lambda\in\Lambda}\hookrightarrow_h G.
\end{equation}
\end{proposition}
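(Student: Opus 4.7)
The plan is to apply the hyperbolic embedding criterion of Lemma~\ref{lem. criterion for he} to the action of $G$ by left multiplication on the Gromov hyperbolic Cayley graph $S = \Gamma(G, X' \sqcup \mathcal{F})$. Here $X \subset G$ is a relative generating set realising $\{F_\lambda\}_{\lambda \in \Lambda} \hookrightarrow_h (G, X)$, and $X' = X \cup \{a_{\lambda,i} : \lambda \in \Lambda,\, 1 \le i \le k_\lambda\}$. A short quasi-isometry argument shows $\{F_\lambda\}_{\lambda \in \Lambda} \hookrightarrow_h (G,X')$ as well, with a proper relative metric $\widehat{d}'_\lambda$ on each $F_\lambda$; the point of enlarging $X$ to $X'$ is that each $a_{\lambda,i}$ now appears as a single letter in the alphabet $X' \sqcup \mathcal{F}$.

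To each freely reduced product $h = w_{\lambda, j_1}^{\epsilon_1}\cdots w_{\lambda, j_\ell}^{\epsilon_\ell}$ in the generators $w_{\lambda,i} = f^n_{\lambda,i} a_{\lambda,i} g^n_{\lambda,i}$, I associate a word $\Omega$ over $X' \sqcup \mathcal{F}$ by spelling out each factor using the alphabet letters $f^{\pm n}_{\lambda,j_k}$, $a^{\pm 1}_{\lambda,j_k}$, $g^{\pm n}_{\lambda,j_k}$ and then merging, at each seam between consecutive factors, the trailing and leading $F_\lambda$-letters into a single $F_\lambda$-letter; the no-cancellation of the original word makes every such merged letter a non-trivial element of $F_\lambda$. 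The resulting $\Omega$ has alternating pattern $F_\lambda,\, X',\, F_\lambda,\, \ldots,\, X',\, F_\lambda$, where the $X'$-slots contain the $a_{\lambda,j_k}^{\pm 1}$. Because $\widehat{d}'_\lambda$ is proper on $F_\lambda$ and every non-trivial element of the free group $F_\lambda$ has infinite order, for each fixed choice of indices and signs the $\widehat{d}'_\lambda$-length of elements of the form $f^{n\epsilon}_{\lambda,i}$, $g^{n\epsilon}_{\lambda,i}$, $g^{n\epsilon}_{\lambda,i} f^{n\epsilon'}_{\lambda,j}$, $g^{n\epsilon}_{\lambda,i} g^{n\epsilon'}_{\lambda,j}$, $f^{n\epsilon}_{\lambda,i} f^{n\epsilon'}_{\lambda,j}$, $f^{n\epsilon}_{\lambda,i} g^{n\epsilon'}_{\lambda,j}$ tends to infinity with $n$. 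Since there are only finitely many such forms and the $X'$-letters $a_{\lambda,j_k}^{\pm 1}$ lie outside $F_\lambda$ by hypothesis~(b), for $n$ large enough $\Omega$ satisfies (W1)--(W3) of Lemma~\ref{lem. consecutive components} uniformly in $\ell$ and the indices.

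For such $n$, Lemma~\ref{lem. consecutive components}(a) says the path in $S$ labelled by $\Omega$ starting at $1$ is a $(4,1)$-quasi-geodesic; hence the $d_{X'\sqcup \mathcal{F}}$-distance from $1$ to $h$ grows linearly with $\ell_{X' \sqcup \mathcal{F}}(\Omega)$, which grows linearly with $\ell$. This simultaneously shows that $\{w_{\lambda,i}\}_{i=1}^{k_\lambda}$ freely generates $H_\lambda$ (non-trivial reduced words yield non-trivial elements of $G$) and verifies condition $(C_1)$ of Lemma~\ref{lem. criterion for he}, properness of the $H_\lambda$-action on $S$. For $(C_2)$ with $s = 1$, the intermediate vertices of the $\Omega$-path representing any $h \in H_\lambda$ lie within uniformly bounded $d_{X' \sqcup \mathcal{F}}$-distance of $H_\lambda$, so by Gromov hyperbolicity of $S$ every geodesic between points of the orbit $H_\lambda \cdot s$ stays close to $H_\lambda$.

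The main obstacle is $(C_3)$. Suppose $\mathrm{diam}(H_\mu \cdot s \cap (gH_\lambda \cdot s)^{+\epsilon}) \ge R$ for a large $R$; extract long sub-orbits of $H_\mu$ and of $gH_\lambda g^{-1}$ that run $\epsilon$-close, represent them by two $\Omega$-type quasi-geodesic paths in $S$, and apply Lemma~\ref{lem. consecutive components}(c) to obtain five consecutive components of one path connected to five consecutive components of the other. Each such component on the first path lies in $F_\mu$ and each on the second in $F_\lambda$, so being connected forces $\mu = \lambda$. Chasing the relations produced by these component connections, together with the explicit seam structure of the generators $f^n a g^n$ and the almost-malnormality of $F_\lambda$ in $G$ (cf.\ \cite[Proposition 2.10]{dahmani2017hyperbolically}), pins down $g$ inside $H_\lambda$. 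With $(C_1)$--$(C_3)$ verified, Lemma~\ref{lem. criterion for he} delivers $\{H_\lambda\}_{\lambda \in \Lambda} \hookrightarrow_h G$.
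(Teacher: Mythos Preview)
Your overall plan is the same as the paper's: verify conditions $(C_1)$--$(C_3)$ of Lemma~\ref{lem. criterion for he} for the action on a suitable relative Cayley graph, using Lemma~\ref{lem. consecutive components} to control the $\Omega$-words. The free generation and $(C_1)$, $(C_2)$ parts are fine. There is, however, a substantive difference in setup that creates a genuine gap in your $(C_3)$ argument.

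The paper does \emph{not} work with $\{F_\lambda\}_{\lambda\in\Lambda}$ as the peripheral family. Instead, it first applies Proposition~\ref{prop. iterative hyperbolically embedded} (transitivity) together with the free-product decomposition $F_\lambda = \Asterisk_i (\langle f_{\lambda,i}\rangle * \langle g_{\lambda,i}\rangle)$ to pass to the finer family $\{\langle f_{\lambda,i}\rangle,\langle g_{\lambda,i}\rangle\}_{\lambda,i}\hookrightarrow_h(G,X)$. In the resulting Cayley graph, every $H$-component of an $\Omega$-path is a single $\langle f_{\lambda,i}\rangle$- or $\langle g_{\lambda,j}\rangle$-edge. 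When Lemma~\ref{lem. consecutive components}(c) produces connected components, ``connected'' now forces the \emph{same cyclic subgroup}, hence the same index $i$ and the same letter type ($f$ versus $g$). Combined with the trivial intersections $\langle f_{\lambda,i}\rangle\cap\langle g_{\lambda,j}\rangle=\{1\}$ (equations \eqref{eq. subgroups disjoint 1}--\eqref{eq. subgroups disjoint 3}), the connecting edges are forced to be labelled by $1$, and one reads off $g\in H_\lambda$ directly (or derives a contradiction, as in the paper's Case~2).

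In your setup the components are merged $F_\lambda$-letters, and ``connected'' only says that the endpoints lie in the same $F_\lambda$-coset. You obtain connecting elements $c_k\in F_\lambda$ and relations $x_k\,c_{k+1}\,(x'_k)^{-1}=\omega_k^{-1}c_k\,\omega'_k$, but this does not by itself force $c_k=1$. Almost malnormality of $F_\lambda$ only gives that $xF_\lambda x^{-1}\cap F_\lambda$ is \emph{finite} for $x\notin F_\lambda$; it says nothing about $xF_\lambda(x')^{-1}\cap F_\lambda$ when $x\neq x'$, and even in the case $x=x'$ it yields finiteness, not triviality. Moreover, the vertices of your merged $\Omega$-path at component boundaries are \emph{not} elements of $H_\lambda$ (those lie in the middle of a merged seam component), so even showing some $c_k=1$ does not immediately place a vertex in $H_\mu\cap gH_\lambda$. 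Completing your route would require a separate free-group argument showing that, for large $n$, the only solutions to $\omega_k^{-1}c_k\omega'_k\in S$ with $c_k$ in the finite set $S=\{\phi\in F_\lambda:\widehat d'_\lambda(1,\phi)\le 3\}$ and $\omega_k,\omega'_k$ of seam form are the ``aligned'' ones. This is plausible but is real work you have not supplied; the appeal to almost malnormality does not do it. The paper sidesteps the whole issue by refining to cyclic peripherals, which is the idea you are missing.
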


The rest of this subsection is devoted to the proof of the above proposition. We first find a ``sufficiently large'' $n\in\mathbb{N}^+$. As $F_{\lambda}=\Asterisk^{k_{\lambda}}_{i=1}(\langle f_{\lambda,i}\rangle\ast\langle g_{\lambda,i}\rangle)$, we have $\{\langle f_{\lambda,i}\rangle,\langle g_{\lambda,i}\rangle\}^{k_{\lambda}}_{i=1}\hookrightarrow_h F_{\lambda}$ by Example \ref{eg. product hyperbolically embedded}. It follows from Proposition \ref{prop. iterative hyperbolically embedded} that there exists a set $X\subset G$ such that
\begin{equation}\label{eq. he embedded free factors}
    \{\langle f_{\lambda,i}\rangle,\langle g_{\lambda,i}\rangle\}_{\lambda\in\Lambda,i\in\{1,...,k_{\lambda}\}}\hookrightarrow_h (G,X).
\end{equation}
By \cite[Corollary 4.27]{dahmani2017hyperbolically}, we may assume that $a_{\lambda,i} \in X$ for all $\lambda\in \Lambda$ and $i=1,2,\cdots,k_{\lambda}$, as $\Lambda$ and $k_{\lambda}$ are finite.

For $\lambda\in \Lambda$ and $i\in\{1,...,k_{\lambda}\}$, let
$$\widehat{d}_{\lambda,i,f}: \langle f_{\lambda,i} \rangle\times \langle f_{\lambda,i} \rangle\rightarrow[0,+\infty],~~~~\widehat{d}_{\lambda,i,g}: \langle g_{\lambda,i} \rangle\times \langle g_{\lambda,i} \rangle\rightarrow[0,+\infty]$$ 
be the relative metrics corresponding to \eqref{eq. he embedded free factors}. The metrics $\widehat{d}_{\lambda,i,f}$ and $\widehat{d}_{\lambda,i,g}$ are locally finite. As $\card(\Lambda),k_{\lambda}<\infty$, for sufficiently large $n$, we will have 
$$\widehat{d}_{\lambda,i,f}(1,f^n_{\lambda,i}),\widehat{d}_{\lambda,i,g}(1,g^n_{\lambda,i})>50D$$
for all $\lambda$ and $i$, where $D>0$ is given by Lemma \ref{lem. total length of i.c. in g.p.}. We fix one such $n$ and let $H_{\lambda}\leqslant G$ be as in Proposition \ref{prop. showing h.e. general}. For simplicity, denote the set $\{f^n_{\lambda,i}a_{\lambda,i}g^n_{\lambda,i}\}^{k_{\lambda}}_{i=1}$ by $U_{\lambda}$.

Notice that $F_{\lambda}\cap F_{\mu}=\{1\}$ whenever $\lambda\neq\mu$ \cite[Proposition 4.33]{dahmani2017hyperbolically}. The following are easy consequences of this fact.
\begin{align}
      \langle f_{\lambda,i}\rangle \cap \langle g_{\mu,j} \rangle &= \{1\}&&\text{for all } \lambda,\mu\in\Lambda,&\label{eq. subgroups disjoint 1}\\
      \langle f_{\lambda,i}\rangle \cap \langle f_{\mu,j}\rangle &= \langle g_{\lambda,i}\rangle \cap \langle g_{\mu,j}\rangle = \{1\} &&\text{for all }\lambda,\mu\in\Lambda\text{ with }\lambda\neq \mu,&\label{eq. subgroups disjoint 2}\\
      \langle f_{\lambda,i} \rangle \cap \langle f_{\lambda,j} \rangle &=
      \langle g_{\lambda,i} \rangle \cap \langle g_{\lambda,j} \rangle =\{1\}&&\text{for all }\lambda\in\Lambda\text{ and } i,j\in\{1,...,k_{\lambda}\} \text{ with } i\neq j.&\label{eq. subgroups disjoint 3}
\end{align}

In particular, $\{\langle f_{\lambda,i}\rangle,\langle g_{\lambda,i}\rangle\}_{\lambda\in\Lambda,i\in\{1,...,k_{\lambda}\}}$ is a distinct family of hyperbolically embedded subgroups of $G$. 

For simplicity, let 
$$\mathcal{K}_{\lambda}=\left(\bigsqcup^{k_{\lambda}}_{i=1}\langle f_{\lambda,i} \rangle\right)\sqcup \left(\bigsqcup^{k_{\lambda}}_{i=1}\langle g_{\lambda,i} \rangle\right),~~~~\mathcal{K}=\bigsqcup_{\lambda\in\Lambda}\mathcal{K}_{\lambda}.$$

\begin{remark}\label{rem. think of w as a word over}
We will apply Lemma \ref{lem. consecutive components} for the group $G$, the hyperbolically embedded family of subgroups $\{\langle f_{\lambda,i}\rangle,\langle g_{\lambda,i}\rangle\}_{\lambda\in\Lambda,i\in\{1,...,k_{\lambda}\}}\hookrightarrow_h (G,X)$, and reduced words $w$ over $U_{\lambda}$ for every $\lambda\in\Lambda$ and every freely reduced word $w$ over $U_{\lambda}$. We can think of $w$ as a word over the alphabet $X\sqcup \mathcal{K}$, i.e., regard every $f^n_{\lambda,i}$ (resp. $g^n_{\lambda,i}$) as a letter from $\langle f_{\lambda,i}\rangle$ (resp. $\langle g_{\lambda,i}\rangle$) and regard every $a_{\lambda,i}$ as a letter from $X$. In this sense, $w$ satisfies the conditions (W1), (W2), and (W3) of Lemma \ref{lem. consecutive components}.
\end{remark}

\begin{lemma}\label{lem. H free groups}
For all $\lambda\in \Lambda$, $H_{\lambda}$ is free with basis $U_{\lambda}$.
\end{lemma}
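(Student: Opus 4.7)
The plan is to show that any nontrivial freely reduced word $w = u_{i_1}^{\epsilon_1}\cdots u_{i_m}^{\epsilon_m}$ over the alphabet $U_\lambda$ (where $u_i = f^n_{\lambda,i}a_{\lambda,i}g^n_{\lambda,i}$) represents a non-identity element of $G$; this makes the natural map from the free group on $U_\lambda$ to $G$ injective and proves the lemma. Following Remark~\ref{rem. think of w as a word over}, I would first rewrite each factor $u_i^{\pm 1}$ as the length-three pattern $f^{\pm n}_{\lambda,i}\, a_{\lambda,i}^{\pm 1}\, g^{\pm n}_{\lambda,i}$ (with signs and order reversed in the inverse), producing a word $\widetilde{w}$ of length exactly $3m$ over $X \sqcup \mathcal{K}$.

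The heart of the argument is to verify that $\widetilde w$ lies in the set $W$ of Lemma~\ref{lem. consecutive components}, taken relative to the hyperbolically embedded family~\eqref{eq. he embedded free factors}. Condition (W2) is immediate from the choice of $n$, since every $\mathcal{K}$-letter appearing in $\widetilde w$ has relative length exceeding $50D$. Condition (W1) is immediate because every expanded block begins and ends with a $\mathcal{K}$-letter, so two $X$-letters can never end up adjacent. The delicate condition is (W3). Within a single block, the $\mathcal{K}$-letters flanking the central $X$-letter are always of opposite type ($f$ versus $g$), hence lie in distinct members of the family by~\eqref{eq. subgroups disjoint 1}. At a block boundary I plan to case-split on the sign pair $(\epsilon_j, \epsilon_{j+1})$: the patterns $(+,+)$ and $(-,-)$ juxtapose $\mathcal{K}$-letters of opposite type and are harmless; the potentially dangerous patterns $(+,-)$ and $(-,+)$ juxtapose two $g$-type or two $f$-type letters, but belonging to the same member of the family then forces $i_j = i_{j+1}$ by~\eqref{eq. subgroups disjoint 3}, which produces a cancellation $u_i u_i^{-1}$ or $u_i^{-1} u_i$ in $w$, contradicting free-reducedness.

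Once $\widetilde w \in W$ is in place, Lemma~\ref{lem. consecutive components}(a) says that $\widetilde w$ labels a $(4,1)$-quasi-geodesic of length $3m$ in $\Gamma(G, X \sqcup \mathcal{K})$. For $m \geq 2$ this gives $d(\widetilde w^-, \widetilde w^+) \geq 3m/4 - 1 \geq 1/2 > 0$, and hence $w \neq 1$ in $G$. For $m = 1$ the quasi-geodesic bound is too weak, so I would finish that single case by hand using hypothesis~(b): if $u_i = 1$ in $G$, then $a_{\lambda,i} = f^{-n}_{\lambda,i}g^{-n}_{\lambda,i} \in F_\lambda$, which by~(b) forces $a_{\lambda,i} = 1$, after which $f^n_{\lambda,i}g^n_{\lambda,i} = 1$ is impossible in the free group $F_\lambda$. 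The main obstacle I anticipate is the bookkeeping for (W3); it is the only place where the free-reducedness of $w$ over $U_\lambda$ must interact with the hyperbolically embedded structure, and the four sign patterns need to be handled uniformly so that none slips through.
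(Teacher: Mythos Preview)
Your argument is correct, but it takes a different route from the paper's. You invoke part~(a) of Lemma~\ref{lem. consecutive components} (the $(4,1)$-quasi-geodesic estimate), which for a word of length $3m$ forces the endpoints apart only when $m\ge 2$; this is why you are obliged to handle $m=1$ separately via hypothesis~(b). The paper instead argues by contradiction through part~(b) of the same lemma together with Proposition~\ref{lem. total length of i.c. in g.p.}: if a nonempty freely reduced $w$ represented $1$, the resulting closed path in $\Gamma(G,X\sqcup\mathcal{K})$ is a $3\ell$-gon whose $2\ell$ components are all isolated, each of $\widehat\ell$-length exceeding $50D$, and the proposition then yields $2\ell\cdot 50D\le 3\ell D$, absurd for every $\ell\ge 1$. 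So the paper's route is more uniform and never splits off the one-letter case, while yours is slightly more elementary in that it never invokes Proposition~\ref{lem. total length of i.c. in g.p.} directly. Either way the verification that $\widetilde w\in W$ (which you spell out and the paper absorbs into Remark~\ref{rem. think of w as a word over}) is the real content, and your case analysis for~(W3) is exactly right.
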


\begin{proof}
We need to show that for every $\lambda\in\Lambda$, every nonempty freely reduced word over $U_{\lambda}$ does not represent $1$ in $G$. Suppose that 
$$w\equiv\prod^{\ell}_{k=1} (f^n_{\lambda,i_k}a_{\lambda,i_k}g^n_{\lambda,i_k})^{\epsilon_k}$$
is a freely reduced word over $U_{\lambda}$ for some $\lambda\in\Lambda$ such that $w$ represents $1$ in $G$, where $\epsilon_k=\pm 1$ for $k=1,...,\ell$. As Remark \ref{rem. think of w as a word over}, we think of $w$ as a word over $X\sqcup \mathcal{K}$. Then $w$ labels a $3\ell$-gon $p$ in $\Gamma(G,X\sqcup\mathcal{K})$ with geodesic sides. Notice that $p$ has $2\ell$ components. By Lemma \ref{lem. consecutive components} (b), each of these components is isolated. Proposition \ref{lem. total length of i.c. in g.p.} then implies
$$2n\cdot 50D\leqslant 3nD,$$
which is absurd. Therefore, such a word $w$ does not exist.
\end{proof}

Consider the action $G\curvearrowright\Gamma(G,X\sqcup \mathcal{K})$. Note that $\Gamma(G,X\sqcup \mathcal{K})$ is hyperbolic, by \eqref{eq. he embedded free factors}. Let $d_{X\sqcup \mathcal{K}}$ be the combinatorial metric of $\Gamma(G,X\sqcup \mathcal{K})$. We verify that, with respect to this action, the family $\{H_{\lambda}\}_{\lambda\in\Lambda}$ satisfies the conditions (C$_1$), (C$_2$), and (C$_3$) of Lemma \ref{lem. criterion for he}, which then implies $\{H_{\lambda}\}_{\lambda\in\Lambda}\hookrightarrow_h G$. The verification is divided into the following Lemmas \ref{lem. H action proper}, \ref{lem. H orbit quasi convex}, and \ref{lem. geometrically separated}.

\begin{lemma}\label{lem. H action proper}
For every $\lambda\in\Lambda$, the action $H_{\lambda}\curvearrowright\Gamma(G,X\sqcup \mathcal{K})$ is proper.
\end{lemma}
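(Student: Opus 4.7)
The plan is to reduce the properness claim to a purely combinatorial statement about word length in the free group $H_\lambda$, and then invoke the quasi-geodesic conclusion of Lemma \ref{lem. consecutive components}(a). Since $G$ acts freely by isometries on $\Gamma(G,X\sqcup\mathcal{K})$ via left multiplication, and $H_\lambda$ is torsion-free (being free by Lemma \ref{lem. H free groups}), properness of the restricted action $H_\lambda\curvearrowright\Gamma(G,X\sqcup\mathcal{K})$ is equivalent to the statement that for every $R>0$ the set
\[
B_R=\{h\in H_\lambda : d_{X\sqcup\mathcal{K}}(1,h)\leq R\}
\]
is finite. So I would fix $\lambda\in\Lambda$ and $R>0$ and aim to bound $|B_R|$.

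Next, I would take an arbitrary $h\in B_R$ and write it, via Lemma \ref{lem. H free groups}, uniquely as a freely reduced word $w\equiv\prod_{k=1}^{\ell}(f^n_{\lambda,i_k}a_{\lambda,i_k}g^n_{\lambda,i_k})^{\epsilon_k}$ over the basis $U_\lambda$. Viewing $w$ as a word over the alphabet $X\sqcup\mathcal{K}$ in the sense of Remark \ref{rem. think of w as a word over}, it has length $\ell_{X\sqcup\mathcal{K}}(w)=3\ell$ and satisfies the conditions (W1), (W2), (W3) of Lemma \ref{lem. consecutive components}. Hence by Lemma \ref{lem. consecutive components}(a), the path $p$ in $\Gamma(G,X\sqcup\mathcal{K})$ starting at $1$ and labeled by $w$ is a $(4,1)$-quasi-geodesic from $1$ to $h$.

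The quasi-geodesic inequality then gives $3\ell=\ell_{X\sqcup\mathcal{K}}(p)\leq 4\,d_{X\sqcup\mathcal{K}}(1,h)+1\leq 4R+1$, so the number of basis factors in $w$ is bounded by $(4R+1)/3$. Since $U_\lambda$ is a finite set and $\epsilon_k\in\{\pm 1\}$, there are only finitely many freely reduced words over $U_\lambda$ of length at most $(4R+1)/3$, whence $B_R$ is finite, as required. I do not expect a genuine obstacle here: the work has already been done in setting up conditions (W1)-(W3) in Remark \ref{rem. think of w as a word over} (via the choice of $n$ large enough so that $\widehat{d}_{\lambda,i,f}(1,f^n_{\lambda,i}),\widehat{d}_{\lambda,i,g}(1,g^n_{\lambda,i})>50D$), and the only substantive input is the quasi-geodesicity supplied by Lemma \ref{lem. consecutive components}(a).
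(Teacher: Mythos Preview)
Your proposal is correct and follows essentially the same approach as the paper: reduce properness to finiteness of the ball $B_R$ in $H_\lambda$, then use Remark \ref{rem. think of w as a word over} and the $(4,1)$-quasi-geodesic conclusion of Lemma \ref{lem. consecutive components}(a) to bound $\|w\|_{U_\lambda}\leq(4R+1)/3$. The paper's proof is nearly identical, only omitting your preliminary justification that properness reduces to finiteness of $B_R$.
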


\begin{proof}
Fix $\lambda\in\Lambda$. It suffices to prove that for every $R>0$, there are only finitely many $h\in H_{\lambda}$ such that $d_{X\sqcup \mathcal{K}}(1,h)\leqslant R$. Let $h\in H_{\lambda}$ such that $d_{X\sqcup \mathcal{K}}(1,h)\leqslant R$ and let $w$ be a freely reduced word over $U_{\lambda}$ representing $h$ in $G$. As in Remark \ref{rem. think of w as a word over}, think of $w$ as a word over $X\sqcup\mathcal{K}$. By Lemma \ref{lem. consecutive components} (a), $w$ labels a $(4,1)$-quasi-geodesic in $\Gamma(G,X\sqcup \mathcal{K})$. Thus, 
$$\|w\|_{U_{\lambda}}=\dfrac{\|w\|_{X\sqcup\mathcal{K}}}{3}\leqslant \dfrac{4R+1}{3}.$$
There are only finitely many words $w$ satisfying the above inequality. It follows that the number of $h\in H_{\lambda}$ such that $d_{X\sqcup \mathcal{K}}(1,h)\leqslant R$ is finite.
\end{proof}

For every $\lambda\in\Lambda$, we identify $H_{\lambda}$ with the subset of $\Gamma(G,X\sqcup \mathcal{K})$ labeled by elements of $H_{\lambda}$. Equivalently, we identify $H_{\lambda}$ with the $H_{\lambda}$-orbit of the identity vertex of $\Gamma(G,X\sqcup \mathcal{K})$. 

\begin{lemma}\label{lem. H orbit quasi convex}
For every $\lambda\in\Lambda$, the orbit $H_{\lambda}$ is quasi-convex in $\Gamma(G,X\sqcup \mathcal{K})$.
\end{lemma}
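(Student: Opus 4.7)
The plan is to build, for every pair $h_1, h_2 \in H_{\lambda}$, a canonical $(4,1)$-quasi-geodesic from $h_1$ to $h_2$ in $\Gamma(G, X\sqcup \mathcal{K})$ all of whose vertices lie uniformly close to $H_{\lambda}$, and then invoke Morse stability of quasi-geodesics in the Gromov hyperbolic space $\Gamma(G, X\sqcup \mathcal{K})$.

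First, let $h_1,h_2 \in H_{\lambda}$ and let $w$ be the (unique) freely reduced word over $U_{\lambda}^{\pm 1}$ representing $h_1^{-1}h_2$ (which is well-defined by Lemma \ref{lem. H free groups}). Regarding $w$ as a word over $X\sqcup \mathcal{K}$ in the way described in Remark \ref{rem. think of w as a word over}, $w$ satisfies the hypotheses (W1)--(W3) of Lemma \ref{lem. consecutive components}. Part (a) of that lemma then gives that the path $p_0$ in $\Gamma(G, X\sqcup \mathcal{K})$ from $1$ to $h_1^{-1}h_2$ labeled by $w$ is a $(4,1)$-quasi-geodesic. Translating by $h_1$ yields a $(4,1)$-quasi-geodesic $p = h_1 p_0$ from $h_1$ to $h_2$.

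Next, I would observe that the vertices on $p$ are close to $H_{\lambda}$. Indeed, if we write $h_1^{-1}h_2 = u_1 u_2 \cdots u_{\ell}$ as a product in $U_{\lambda}^{\pm 1}$, then the partial products $h_1 u_1 \cdots u_j$ belong to $h_1 H_{\lambda} = H_{\lambda}$, and these are precisely the vertices of $p$ at positions $0, 3, 6, \ldots, 3\ell$ (measured in number of $(X\sqcup \mathcal{K})$-edges traversed along $p$). Since consecutive such "marked" vertices are separated by only three edges of $p$, every vertex of $p$ lies within distance at most $2$ (in $d_{X\sqcup \mathcal{K}}$) of a vertex in $H_{\lambda}$.

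Finally, since $\{\langle f_{\lambda,i}\rangle, \langle g_{\lambda,i}\rangle\}_{\lambda, i} \hookrightarrow_h (G, X)$, the Cayley graph $\Gamma(G, X \sqcup \mathcal{K})$ is Gromov hyperbolic, so the Morse lemma for hyperbolic spaces supplies a constant $M = M(\delta, 4, 1) \geq 0$ such that every geodesic in $\Gamma(G, X\sqcup \mathcal{K})$ joining $h_1$ to $h_2$ lies in the $M$-neighborhood of $p$. Combined with the previous paragraph, any such geodesic lies in the $(M+2)$-neighborhood of $H_{\lambda}$, proving quasi-convexity of $H_{\lambda}$ with constant $\kappa = M+2$ independent of $\lambda$, $h_1$, and $h_2$. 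The only genuine subtlety is verifying the word-rewriting step cleanly enough to apply Lemma \ref{lem. consecutive components}(a); the rest is a direct application of standard hyperbolic geometry.
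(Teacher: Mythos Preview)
Your proof is correct and follows essentially the same approach as the paper: build a $(4,1)$-quasi-geodesic from the freely reduced $U_{\lambda}$-word via Remark~\ref{rem. think of w as a word over} and Lemma~\ref{lem. consecutive components}(a), note that it lies in the $2$-neighborhood of $H_{\lambda}$, and then apply the Morse lemma in the hyperbolic space $\Gamma(G,X\sqcup\mathcal{K})$. The only cosmetic difference is that the paper works with endpoints $1$ and $h$ rather than general $h_1,h_2$, which is equivalent by left-invariance.
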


\begin{proof}
Fix $\lambda\in\Lambda$. Let $h\in H_{\lambda}$ and let $\gamma$ be a geodesic in $\Gamma(G,X\sqcup \mathcal{K})$ from the vertex $1$ to the vertex $h$. As $\Gamma(G,X\sqcup \mathcal{K})$ is a Gromov hyperbolic space, there exists $R>0$ such that if $\alpha$ and $\beta$ are $(4,1)$-quasi-geodesics with the same endpoint, then $d_{Hau}(\alpha,\beta)\leqslant R$, where $d_{Hau}$ is the Hausdorff metric with respect to $d_{X\sqcup \mathcal{K}}$.

Let $w$ be a freely reduced word over $U_{\lambda}$ representing $h$ in $G$. As in Remark \ref{rem. think of w as a word over}, think of $w$ as a word over $X\sqcup\mathcal{K}$. By Lemma \ref{lem. consecutive components} (a), $w$ labels a $(4,1)$-quasi-geodesic $\alpha$ in $\Gamma(G,X\sqcup \mathcal{K})$. Note that $\alpha$ lies in the $2$-neighborhood of the orbit $H_{\lambda}$, and $\gamma$ lies in the $R$-neighborhood of $\alpha$. Thus, $\gamma$ lies in the $(R+2)$-neighborhood of $H_{\lambda}$.
\end{proof}

For $\lambda,\mu\in\Lambda$, the orbits $H_{\lambda}$ and $H_{\mu}$ are subsets of $\Gamma(G,X\sqcup \mathcal{K})$. Thus, it makes sense to talk about the diameter of $H_{\mu}\cap (gH_{\lambda})^{+\epsilon}$ in $\Gamma(X\sqcup \mathcal{K})$, which is denoted by $\diam\left(H_{\mu}\cap \left(gH_{\lambda}\right)^{+\epsilon}\right)$.

\begin{lemma}\label{lem. geometrically separated}
For every $\epsilon>0$, there exists $R>0$ such that the following holds. Suppose that for some $g\in G$ and $\lambda,\mu\in\Lambda$, we have
$$\diam\left(H_{\mu}\cap \left(gH_{\lambda}\right)^{+\epsilon}\right)\geqslant R.$$
Then $\lambda=\mu$ and $g\in H_{\lambda}$.
\end{lemma}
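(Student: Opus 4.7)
Fix $\epsilon > 0$ and let $R_0 > 0$ be the constant supplied by Lemma \ref{lem. consecutive components}(c). The plan is to take $R$ large enough that $(R - 2\epsilon - 1)/4 \geq R_0$ and derive the conclusion by applying Lemma \ref{lem. consecutive components}(c) to two carefully chosen paths. Assuming $\mathrm{diam}(H_\mu \cap (gH_\lambda)^{+\epsilon}) \geq R$ in $\Gamma(G, X\sqcup \mathcal{K})$, I would pick $h_1, h_2 \in H_\mu$ at $X\sqcup\mathcal{K}$-distance at least $R$, each within $\epsilon$ of points $gh'_1, gh'_2 \in gH_\lambda$. Let $q$ be the path from $h_1$ to $h_2$ labeled by the freely reduced word over $U_\mu$ representing $h_1^{-1}h_2$, and let $p$ be the analogous path from $gh'_1$ to $gh'_2$ labeled over $U_\lambda$. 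By Remark \ref{rem. think of w as a word over} their labels satisfy (W1)--(W3), so Lemma \ref{lem. consecutive components}(a) guarantees that $p$ and $q$ are $(4,1)$-quasi-geodesics of $X\sqcup\mathcal{K}$-length at least $R_0$. Since their endpoints are $\epsilon$-close, Lemma \ref{lem. consecutive components}(c) yields five consecutive components $a_1, \ldots, a_5$ of $p$ respectively connected to five consecutive components $b_1, \ldots, b_5$ of $q$.

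Each $a_j$ is a component of $p$, whose label is a word over $U_\lambda = \{f^n_{\lambda, i} a_{\lambda, i} g^n_{\lambda, i}\}$; viewed in $X \sqcup \mathcal{K}$, each $\mathcal{K}$-component of $p$ is a component of some $\langle f_{\lambda, i}\rangle$ or $\langle g_{\lambda, i}\rangle$, and similarly each $\mathcal{K}$-component of $q$ lies in some $\langle f_{\mu, i'}\rangle$ or $\langle g_{\mu, i'}\rangle$. The definition of connectedness requires $a_j$ and $b_j$ to belong to the same subgroup of the family, so the disjointness relations \eqref{eq. subgroups disjoint 1}--\eqref{eq. subgroups disjoint 3} force $\lambda = \mu$ together with a matching index $i_j$ and a common type ($f$ or $g$) for each pair.

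To extract $g \in H_\lambda$, I would observe that every $\mathcal{K}$-component of $p$ has at least one endpoint at a $U_\lambda$-letter boundary (the first letter $f^{\pm n}$ of a generator has its start there, while the last letter $g^{\pm n}$ has its end there); pick such an endpoint $u_j$ of $a_j$ and analogously $v_j$ of $b_j$. Then $u_j \in gH_\lambda$ and $v_j \in H_\lambda$, and writing $u_j = g\tilde{h}_j$ and $v_j = \tilde{h}'_j$ with $\tilde{h}_j, \tilde{h}'_j \in H_\lambda$, the connecting edge (absorbed with $a_j, b_j$ via the abelian structure of the ambient $\langle f_{\lambda, i_j}\rangle$ or $\langle g_{\lambda, i_j}\rangle$) reduces to $v_j = u_j \tilde{\gamma}_j$ for some $\tilde{\gamma}_j \in \langle f_{\lambda, i_j}\rangle \cup \langle g_{\lambda, i_j}\rangle$, giving
\[g \;=\; \tilde{h}'_j\, \tilde{\gamma}_j^{-1}\, \tilde{h}_j^{-1}.\]
Because every $U_\lambda$-generator contributes both an $f$-component and a $g$-component to $p$, among any five consecutive components there is at least one $f$-type pair and one $g$-type pair. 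Equating the two expressions for $g$ arising from a mixed pair and then invoking the free-group structure of $H_\lambda$ (Lemma \ref{lem. H free groups}) together with the triviality of $\langle f_{\lambda, i}\rangle \cap \langle g_{\lambda, i'}\rangle$ from \eqref{eq. subgroups disjoint 1} forces both $\tilde{\gamma}$'s to be trivial, whence $g = \tilde{h}'_j\, \tilde{h}_j^{-1} \in H_\lambda$.

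The main technical hurdle is precisely this last step: showing that an equation of the shape $A\, f^m = g^{m'}\, B$ with $A, B \in H_\lambda$, $f^m \in \langle f_{\lambda, i}\rangle$ and $g^{m'} \in \langle g_{\lambda, i'}\rangle$ forces $m = m' = 0$. This rests on the fact that the ``syllables'' $f^n_{\lambda, i} a_{\lambda, i} g^n_{\lambda, i}$ generating $H_\lambda$ each contain an $X$-letter $a_{\lambda, i}$ that bare powers of $f$ or $g$ cannot account for; more precisely, any nontrivial such identity would produce, via Lemma \ref{lem. consecutive components}(a), a $(4,1)$-quasi-geodesic in $\Gamma(G, X\sqcup \mathcal{K})$ whose endpoints are at $X\sqcup\mathcal{K}$-distance $1$, violating assumption (b) of Proposition \ref{prop. showing h.e. general}.
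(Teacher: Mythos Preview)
Your overall setup is sound and matches the paper's: apply Lemma~\ref{lem. consecutive components}(c) to paths $p,q$ labeled over $U_\lambda, U_\mu$, obtain five connected component pairs, and use the disjointness relations \eqref{eq. subgroups disjoint 1}--\eqref{eq. subgroups disjoint 3} to conclude $\lambda=\mu$. That part is fine.

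The gap is in your final step. You reduce $g\in H_\lambda$ to the claim that an identity $A f^m B = g^{m'}$ with $A,B\in H_\lambda$, $f^m\in\langle f_{\lambda,i}\rangle$, $g^{m'}\in\langle g_{\lambda,i'}\rangle$ forces $m=m'=0$, and you propose to verify this by observing that the word $A f^m B g^{-m'}$ labels a $(4,1)$-quasi-geodesic loop. But Lemma~\ref{lem. consecutive components}(a) requires conditions (W1)--(W3), and (W2) demands $\widehat{d}(1,f^m)>50D$ and $\widehat{d}(1,g^{m'})>50D$. Since $m,m'$ are the unknown quantities here, you have no control over these lengths; the word need not satisfy (W2), and the quasi-geodesic conclusion is unavailable. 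Your appeal to assumption~(b) of Proposition~\ref{prop. showing h.e. general} is likewise not justified: from a general relation of this shape you cannot extract a statement of the form $a_{\lambda,i}\in F_\lambda\smallsetminus\{1\}$.

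The paper avoids this by exploiting \emph{adjacency} rather than comparing two arbitrary $f$- and $g$-type pairs. Among the five consecutive connected pairs one can find two components of $p$ that are adjacent (no $X$-letter between them, i.e.\ they meet at a $U_\mu$-generator boundary). One then asks whether the corresponding components of $q$ are also adjacent. If yes (Case~1), the two connecting edges form a bigon with labels in $\langle f_{\mu,i}\rangle$ and $\langle g_{\mu,j}\rangle$; disjointness \eqref{eq. subgroups disjoint 1} forces both to be trivial, so the shared vertex lies in $H_\mu\cap gH_\lambda$ and $g\in H_\lambda$ follows directly. If not (Case~2), the components of $q$ are separated by a single $a_{\mu,j}$-edge, and chasing the picture yields the specific relation $g_{\mu,i}^{2n} f_{\mu,i}^{2n} a_{\mu,i}=1$, i.e.\ $a_{\mu,i}\in F_\mu\smallsetminus\{1\}$, contradicting assumption~(b). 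The point is that adjacency pins down the connecting elements precisely enough to make either the disjointness or assumption~(b) bite; your equation-matching approach loses exactly this precision.
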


\begin{proof}
Fix $\epsilon>0$ and let $R$ be the constant given by Lemma \ref{lem. consecutive components} (c). Suppose that $\diam\left(H_{\mu}\cap \left(gH_{\lambda}\right)^{+\epsilon}\right)\geqslant R$ for some $g\in G$ and $\lambda,\mu\in\Lambda$. Then there exist vertices $v_1,v_2\in H_{\mu}$ and $v_3,v_4\in gH_{\lambda}$ such that $$d(v_1,v_2)\geqslant R,~~~~ d(v_1,v_3),d(v_2,v_4)\leqslant \epsilon.$$
Let $p$ (resp. $q$) be a path from $v_1$ (resp. $v_3$) to $v_2$ (resp. $v_4$) such that $\lab(p)$ (resp. $\lab(q)$) is a freely reduced word over $U_{\mu}$ (resp. $U_{\lambda}$). Then $\ell(p)\geqslant R$ and $p,q$ are oriented $\epsilon$-close. By Lemma \ref{lem. consecutive components} (b), there exist five consecutive components of $p$ which are connected to five consecutive components of $q$. In particular, there exist two pairs of adjacent components of $p$ which are connected to four consecutive components of $q$. Some of the possible configurations of these two pairs of adjacent components are shown by Figure \ref{fig. cases}, where each horizontal line represents one possible configuration, the red and blue segments represent the two pairs of adjacent components of $p$, and the corresponding labels are written on top of the subpaths.


\begin{figure}
    \centering
    {\small{
    \begin{tikzpicture}
\draw[thick] (0,0) -- (2,0);
\draw[red, thick] (2,0) -- (4,0);
\draw[blue, thick] (4,0) -- (6,0);
\draw[yellow, thick] (6,0) -- (8,0);
\draw[red, thick] (8,0) -- (10,0);
\draw[blue, thick] (10,0) -- (12,0);
\draw[thick] (12,0) -- (14,0);

\draw (11,0) circle (0pt) node[anchor=south] {$g^{-n}_{\mu,r}$};
\draw (9,0) circle (0pt) node[anchor=south] {$f^{-n}_{\mu,j}$};
\draw (7,0) circle (0pt) node[anchor=south] {$a^{-1}_{\mu,j}$};
\draw (5,0) circle (0pt) node[anchor=south] {$g^{-n}_{\mu,j}$};
\draw (3,0) circle (0pt) node[anchor=south] {$g^{n}_{\mu,i}$};

\draw[thick] (0,1) -- (2,1);
\draw[red, thick] (2,1) -- (4,1);
\draw[blue, thick] (4,1) -- (6,1);
\draw[yellow, thick] (6,1) -- (8,1);
\draw[red, thick] (8,1) -- (10,1);
\draw[blue, thick] (10,1) -- (12,1);
\draw[thick] (12,1) -- (14,1);

\draw (11,1) circle (0pt) node[anchor=south] {$f^{n}_{\mu,r}$};
\draw (9,1) circle (0pt) node[anchor=south] {$f^{-n}_{\mu,j}$};
\draw (7,1) circle (0pt) node[anchor=south] {$a^{-1}_{\mu,j}$};
\draw (5,1) circle (0pt) node[anchor=south] {$g^{-n}_{\mu,j}$};
\draw (3,1) circle (0pt) node[anchor=south] {$g^{n}_{\mu,i}$};

\draw[thick] (0,2) -- (2,2);
\draw[red, thick] (2,2) -- (4,2);
\draw[blue, thick] (4,2) -- (6,2);
\draw[yellow, thick] (6,2) -- (8,2);
\draw[red, thick] (8,2) -- (10,2);
\draw[blue, thick] (10,2) -- (12,2);
\draw[thick] (12,2) -- (14,2);

\draw (11,2) circle (0pt) node[anchor=south] {$f^{n}_{\mu,r}$};
\draw (9,2) circle (0pt) node[anchor=south] {$g^{n}_{\mu,j}$};
\draw (7,2) circle (0pt) node[anchor=south] {$a_{\mu,j}$};
\draw (5,2) circle (0pt) node[anchor=south] {$f^{n}_{\mu,j}$};
\draw (3,2) circle (0pt) node[anchor=south] {$g^{n}_{\mu,i}$};

\draw[thick] (0,3) -- (2,3);
\draw[red, thick] (2,3) -- (4,3);
\draw[blue, thick] (4,3) -- (6,3);
\draw[yellow, thick] (6,3) -- (8,3);
\draw[red, thick] (8,3) -- (10,3);
\draw[blue, thick] (10,3) -- (12,3);
\draw[thick] (12,3) -- (14,3);

\draw (11,3) circle (0pt) node[anchor=south] {$f^{n}_{\mu,r}$};
\draw (9,3) circle (0pt) node[anchor=south] {$f^{-n}_{\mu,j}$};
\draw (7,3) circle (0pt) node[anchor=south] {$a^{-1}_{\mu,j}$};
\draw (5,3) circle (0pt) node[anchor=south] {$g^{-n}_{\mu,j}$};
\draw (3,3) circle (0pt) node[anchor=south] {$f^{-n}_{\mu,i}$};

\draw[thick] (0,4) -- (2,4);
\draw[red, thick] (2,4) -- (4,4);
\draw[blue, thick] (4,4) -- (6,4);
\draw[yellow, thick] (6,4) -- (8,4);
\draw[red, thick] (8,4) -- (10,4);
\draw[blue, thick] (10,4) -- (12,4);
\draw[thick] (12,4) -- (14,4);

\draw (11,4) circle (0pt) node[anchor=south] {$g^{-n}_{\mu,r}$};
\draw (9,4) circle (0pt) node[anchor=south] {$g^{n}_{\mu,j}$};
\draw (7,4) circle (0pt) node[anchor=south] {$a_{\mu,j}$};
\draw (5,4) circle (0pt) node[anchor=south] {$f^{n}_{\mu,j}$};
\draw (3,4) circle (0pt) node[anchor=south] {$f^{-n}_{\mu,i}$};
\end{tikzpicture}
}}
    \caption{Some of the possible configurations of the two pairs of adjacent components of $p$}
    \label{fig. cases}
\end{figure}
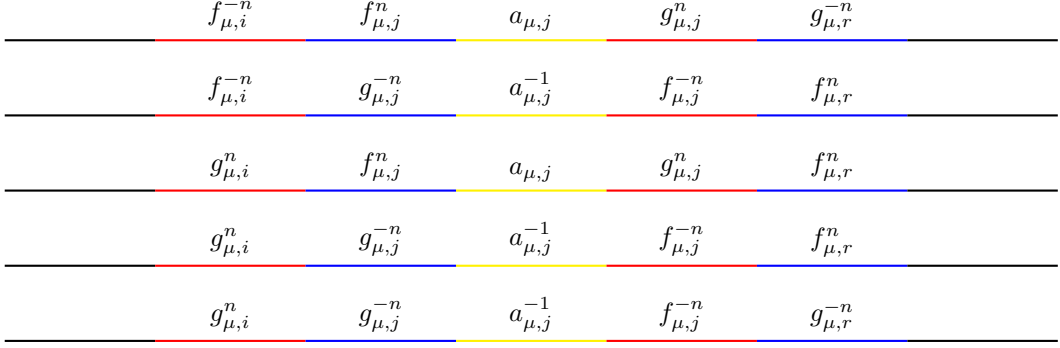

Below, we assume, without loss of generality, that these two pairs of adjacent components are of the form $$f^{-n}_{\mu,i}g^{-n}_{\mu,j},f^{-n}_{\mu,j}g^{-n}_{\mu,r}.$$ 
Other possible configurations can be analyzed similarly. We distinguish two cases.

\textit{Case 1.} The first pair of adjacent components of $p$ are respectively connected to a pair of adjacent components of $q$.


\begin{figure}[!ht]
    \centering
    {\small{
    \begin{tikzpicture}
\draw[thick] (0,0) -- (2,0);
\draw[red, thick] (2,0) -- (4,0);
\draw[blue, thick] (4,0) -- (6,0);
\draw[yellow, thick] (6,0) -- (8,0);
\draw[red, thick] (8,0) -- (10,0);
\draw[blue, thick] (10,0) -- (12,0);
\draw[thick] (12,0) -- (14,0);

\draw (11,2) circle (0pt) node[anchor=south] {$g^{-n}_{\mu,r}$};
\draw (9,2) circle (0pt) node[anchor=south] {$f^{-n}_{\mu,j}$};
\draw (7,2) circle (0pt) node[anchor=south] {$a^{-1}_{\mu,j}$};
\draw (5,2) circle (0pt) node[anchor=south] {$g^{-n}_{\mu,j}$};
\draw (3,2) circle (0pt) node[anchor=south] {$f^{-n}_{\mu,i}$};
\draw (1,2) circle (0pt) node[anchor=south] {$u$};
\draw (1,0) circle (0pt) node[anchor=north] {$v$};

\draw (5,0) circle (0pt) node[anchor=north] {$g^{-n}_{\mu,j}$};
\draw (3,0) circle (0pt) node[anchor=north] {$f^{-n}_{\mu,i}$};

\draw (3.5,1) circle (0pt) node[anchor=east] {$1$};
\draw (4.5,1) circle (0pt) node[anchor=west] {$1$};

\draw[thick] (0,2) -- (2,2);
\draw[red, thick] (2,2) -- (4,2);
\draw[blue, thick] (4,2) -- (6,2);
\draw[yellow, thick] (6,2) -- (8,2);
\draw[red, thick] (8,2) -- (10,2);
\draw[blue, thick] (10,2) -- (12,2);
\draw[thick] (12,2) -- (14,2);

\draw[red, thick, dash pattern={on 7pt off 2pt}] (4,2) .. controls (3.5,1.8) and (3.5,0.2) .. (4,0);
\draw[blue, thick,dash pattern={on 7pt off 2pt}] (4,2) .. controls (4.5,1.8) and (4.5,0.2) .. (4,0);

\filldraw[black] (2,0) circle (1pt);
\filldraw[black] (4,0) circle (1pt);
\filldraw[black] (6,0) circle (1pt);
\filldraw[black] (8,0) circle (1pt);
\filldraw[black] (10,0) circle (1pt);
\filldraw[black] (12,0) circle (1pt);

\filldraw[black] (2,2) circle (1pt);
\filldraw[black] (4,2) circle (1pt);
\filldraw[black] (6,2) circle (1pt);
\filldraw[black] (8,2) circle (1pt);
\filldraw[black] (10,2) circle (1pt);
\filldraw[black] (12,2) circle (1pt);
\end{tikzpicture}}}
    \caption{Case 1}
    \label{fig. adjacent also in q}
\end{figure}

Case 1 is displayed by Figure \ref{fig. adjacent also in q}, where the red (resp. blue) dashed line represents a path with label in $\langle f^n_{\mu,i}\rangle$ (resp. $\langle g^n_{\mu,j}\rangle$) connecting the corresponding red (resp. blue) components. Equations \eqref{eq. subgroups disjoint 1}, \eqref{eq. subgroups disjoint 2}, and \eqref{eq. subgroups disjoint 3} imply that $\lambda=\mu$ and the red (resp. blue) component of $q$ is labeled by $f^{-n}_{\mu,i}$ (resp. $g^{-n}_{\mu,j}$). As the red and blue dashed lines form a loop, another consequence of \eqref{eq. subgroups disjoint 1} is that both of these dashed lines are labeled by $1$. 

Let $p_1$ (resp. $q_1$) be the subpath of $p$ (resp. $q$) labeled by $uf^{-n}_{\mu,i}$ (resp. $vf^{-n}_{\mu,i}$). Then $p^+_1=q^+_1$. By the structure of $U_{\mu}$, $\lab(p_1)$ and $\lab(q_1)$ are words over $U_{\mu}$ and thus represent elements in $H_{\mu}$. Therefore, $p^+_1\in H_{\mu}\cap gH_{\mu}$. It follows that $g\in H_{\mu}$.

\textit{Case 2.} The first pair of adjacent components of $p$ are respectively connected to two consecutive, but not adjacent, components of $q$.


\begin{figure}[!ht]
    \centering
    {\small{
    \begin{tikzpicture}
\draw[thick] (0,0) -- (14/8,0);
\draw[red, thick] (14/8,0) -- (28/8,0);
\draw[yellow, thick] (28/8,0) -- (42/8,0);
\draw[blue, thick] (42/8,0) -- (56/8,0);
\draw[red, thick] (56/8,0) -- (70/8,0);
\draw[yellow, thick] (70/8,0) -- (84/8,0);
\draw[blue, thick] (84/8,0) -- (98/8,0);
\draw[thick] (98/8,0) -- (14,0);

\draw[thick] (0,2) -- (2,2);
\draw[red, thick] (2,2) -- (4,2);
\draw[blue, thick] (4,2) -- (6,2);
\draw[yellow, thick] (6,2) -- (8,2);
\draw[red, thick] (8,2) -- (10,2);
\draw[blue, thick] (10,2) -- (12,2);
\draw[thick] (12,2) -- (14,2);

\draw[red, thick, dash pattern={on 7pt off 2pt}] (4,2) -- (28/8,0);
\draw[blue, thick, dash pattern={on 7pt off 2pt}] (4,2) -- (42/8,0);
\draw[red, thick, dash pattern={on 7pt off 2pt}] (10,2) -- (70/8,0);
\draw[blue, thick, dash pattern={on 7pt off 2pt}] (10,2) -- (84/8,0);

\draw (11,2) circle (0pt) node[anchor=south] {$g^{-n}_{\mu,r}$};
\draw (9,2) circle (0pt) node[anchor=south] {$f^{-n}_{\mu,j}$};
\draw (7,2) circle (0pt) node[anchor=south] {$a^{-1}_{\mu,j}$};
\draw (5,2) circle (0pt) node[anchor=south] {$g^{-n}_{\mu,j}$};
\draw (3,2) circle (0pt) node[anchor=south] {$f^{-n}_{\mu,i}$};
\draw (1,2) circle (0pt) node[anchor=south] {$u$};

\draw (182/16,0) circle (0pt) node[anchor=north] {$g^n_{\mu,i}$};
\draw (154/16,0) circle (0pt) node[anchor=north] {$a_{\mu,i}$};
\draw (126/16,0) circle (0pt) node[anchor=north] {$f^n_{\mu,i}$};
\draw (98/16,0) circle (0pt) node[anchor=north] {$g^n_{\mu,i}$};
\draw (70/16,0) circle (0pt) node[anchor=north] {$a_{\mu,i}$};
\draw (42/16,0) circle (0pt) node[anchor=north] {$f^n_{\mu,i}$};
\draw (14/16,0) circle (0pt) node[anchor=north] {$v$};

\filldraw[black] (2,2) circle (1pt);
\filldraw[black] (4,2) circle (1pt);
\filldraw[black] (6,2) circle (1pt);
\filldraw[black] (8,2) circle (1pt);
\filldraw[black] (10,2) circle (1pt);
\filldraw[black] (12,2) circle (1pt);

\filldraw[black] (14/8,0) circle (1pt);
\filldraw[black] (28/8,0) circle (1pt);
\filldraw[black] (42/8,0) circle (1pt);
\filldraw[black] (56/8,0) circle (1pt);
\filldraw[black] (70/8,0) circle (1pt);
\filldraw[black] (84/8,0) circle (1pt);
\filldraw[black] (98/8,0) circle (1pt);
\end{tikzpicture}}}
    \caption{Case 2}
    \label{fig. not adjacent in q}
\end{figure}

Case 2 is displayed by Figure \ref{fig. not adjacent in q}. Once again, Equations \eqref{eq. subgroups disjoint 1}, \eqref{eq. subgroups disjoint 2}, and \eqref{eq. subgroups disjoint 3} imply $\lambda=\mu$. The structures of $U_{\mu}$ imply $i=j=r$. The red (resp. blue) dashed line on the left is labeled by an element in $\langle f^n_{\mu,i} \rangle$ (resp. $\langle g^n_{\mu,i} \rangle$). As these dashed lines and the yellow segment labeled by $a_{\mu,i}$ form a loop, assumption (b) of Proposition \ref{prop. showing h.e. general} implies that both of these dashed lines are labeled by $1$. Similarly, the red and blue dashed lines on the right are both labeled by $1$.

Therefore, the word $g^{2n}_{\mu,i}f^{2n}_{\mu,i}a_{\mu,i}$ labels a loop in $\Gamma(G,X\sqcup\mathcal{K})$ and thus represents $1$ in $G$, which is in contradiction with assumption (b) of Proposition \ref{prop. showing h.e. general}. Hence, Case 2 is in fact impossible.
\end{proof}

\begin{proof}[Proof of Proposition \ref{prop. showing h.e. general}]
The first assertion follows from Lemma \ref{lem. H free groups} and formula \eqref{eq. H is h.e. in G} follows Lemmas \ref{lem. criterion for he}, \ref{lem. H action proper}, \ref{lem. H orbit quasi convex}, and \ref{lem. geometrically separated}.
\end{proof}

\subsection{Common quotients of acylindrically hyperbolic groups}\label{sec. common quotient}

In this subsection, we prove Theorem \ref{thm. common quotient of acylindrically hyperbolic groups}. Given finitely generated acylindrically hyperbolic groups $G_1$ and $G_2$, we construct a common quotient $G$ of $G'_1=G_1/K(G_1)$ and $G'_2=G_2/K(G_2)$ satisfying the conclusions of that theorem, where $K(G_1)$ (resp. $K(G_2)$) is the maximal finite normal subgroup of $G_1$ (resp. $G_2$).

The idea is to consider $\widetilde{G}=G'_1\ast G'_2$ and pick a finite generating set $A$ (resp. $B$) of $G'_1$ (resp. $G'_2$). The quotient $G$ is constructed by adding particular relations (which will be done by Dehn filling) to $\widetilde{G}$ which identify elements of $A$ (resp. $B$) with certain elements of $G'_2$ (resp. $G'_1$). 

There exists a finite generating set $A=\{a_1,...,a_k\}$ (resp. $B=\{b_1,...,b_k\}$) of $G'_1$ (resp. $G'_2$) for some $k\in\mathbb{N}^+$. To simplify the argument, let 

\begin{equation}\label{eq. a,b}
a_{k+1}=a_{k+2}=b_{k+1}=b_{k+2}=1.
\end{equation}

To perform Dehn filling on $\widetilde{G}$, the first step is to find hyperbolically embedded subgroups. By \cite[Lemma 5.10]{hull2013small}, $G'_1$ and $G'_2$ are acylindrically hyperbolic with $K(G'_1)=K(G'_2)=\{1\}$. Thus, Theorem \ref{thm. hyperbolically embedded virtually free subgroup} implies that there exist free groups 
$$F_1\hookrightarrow_h G'_1,~~~~F_2\hookrightarrow_h G'_2,$$
each of which has rank $2k+4$. By Example \ref{eg. product hyperbolically embedded}, we have $\{G'_1,G'_2\}\hookrightarrow_h \widetilde{G}$. Thus, Proposition \ref{prop. iterative hyperbolically embedded} implies
$$\{F_1,F_2\}\hookrightarrow_h \widetilde{G}.$$

In fact, $F_1,F_2$ are not quite the subgroups that we want, and we will apply Proposition \ref{prop. showing h.e. general} to construct other hyperbolically embedded subgroups from $A,B,F_1,F_2$. Note that for $i=1,2,\cdots,k+2$,
$$a_i\not\in F_2\smallsetminus\{1\},~~ b_i\not\in F_1\smallsetminus\{1\}.$$
Let $\{f_{1,i},g_{1,i}\}^{k+2}_{i=1}$ (resp. $\{f_{2,i},g_{2,i}\}^{k+2}_{i=1}$) be a basis of the free group $F_1$ (resp. $F_2$). Then Proposition \ref{prop. showing h.e. general} implies the following.

\begin{lemma}\label{lem. definition and properties of H1 H2}
For sufficiently large $\ell\in\mathbb{N}^+$, $\{f^{\ell}_{1,i}b_ig^{\ell}_{1,i}\}^{k+2}_{i=1}$ (resp.$\{f^{\ell}_{2,i}a_ig^{\ell}_{2,i}\}^{k+2}_{i=1}$) freely generates a subgroup $H_1\leqslant \widetilde{G}$ (resp. $H_2\leqslant \widetilde{G}$) and
$$\{H_1,H_2\}\hookrightarrow_h\widetilde{G}.$$
\end{lemma}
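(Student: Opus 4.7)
The plan is to apply Proposition \ref{prop. showing h.e. general} directly, with ambient group $\widetilde{G}$, indexing set $\Lambda=\{1,2\}$, and $k_1=k_2=k+2$. For $\lambda=1$, I take the ``middle letters'' of the proposition to be $(b_1,\ldots,b_{k+2})$ (so the role of $a_{\lambda,i}$ in the proposition is played by $b_i$), paired with the hyperbolically embedded factor $F_1$ of rank $2(k+2)=2k+4$ and basis $\{f_{1,i},g_{1,i}\}_{i=1}^{k+2}$. For $\lambda=2$, symmetrically, I take $(a_1,\ldots,a_{k+2})$ paired with $F_2$ and its basis $\{f_{2,i},g_{2,i}\}_{i=1}^{k+2}$.

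The hyperbolic embedding hypothesis $\{F_1,F_2\}\hookrightarrow_h\widetilde{G}$ is supplied by the paragraph immediately preceding the lemma, obtained from $\{G'_1,G'_2\}\hookrightarrow_h\widetilde{G}$ via Proposition \ref{prop. iterative hyperbolically embedded}. Condition (a) of Proposition \ref{prop. showing h.e. general} is verified by construction, since each $F_\lambda$ is free of the correct rank with the displayed basis. Condition (b) asks that $(F_1\smallsetminus\{1\})\cap\{b_1,\ldots,b_{k+2}\}=\emptyset$ and, symmetrically, $(F_2\smallsetminus\{1\})\cap\{a_1,\ldots,a_{k+2}\}=\emptyset$; these are precisely the emptiness statements recorded in the text just above the lemma, which follow from the normal form theorem for the free product $\widetilde{G}=G'_1\ast G'_2$ together with $F_1\leq G'_1$ and $F_2\leq G'_2$. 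The artificial extension $a_{k+1}=a_{k+2}=b_{k+1}=b_{k+2}=1$ causes no issue, since the ``$\smallsetminus\{1\}$'' in hypothesis (b) discards the identity anyway.

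Proposition \ref{prop. showing h.e. general} then yields a single threshold $\ell_0\in\mathbb{N}^+$ such that for every $\ell\geq \ell_0$ the two tuples $\{f_{1,i}^{\ell}b_ig_{1,i}^{\ell}\}_{i=1}^{k+2}$ and $\{f_{2,i}^{\ell}a_ig_{2,i}^{\ell}\}_{i=1}^{k+2}$ freely generate subgroups $H_1,H_2\leq\widetilde{G}$ with $\{H_1,H_2\}\hookrightarrow_h\widetilde{G}$. There is no genuine obstacle: the lemma is a direct instantiation of Proposition \ref{prop. showing h.e. general}, and the only care needed is the subscript bookkeeping (the ``middle letter'' for $H_1$ is a $b_i$, not an $a_i$, and vice versa) together with the observation that Proposition \ref{prop. showing h.e. general} provides one uniform $n=\ell$ that works over all $\lambda\in\Lambda$, so no separate choice for $H_1$ and $H_2$ is required.
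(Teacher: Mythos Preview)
Your proposal is correct and matches the paper's own argument exactly: the paper simply states the two emptiness conditions $(F_2\smallsetminus\{1\})\cap\{a_1,\ldots,a_{k+2}\}=(F_1\smallsetminus\{1\})\cap\{b_1,\ldots,b_{k+2}\}=\emptyset$ and then invokes Proposition~\ref{prop. showing h.e. general}. Your write-up is in fact more explicit than the paper about the subscript bookkeeping and why the free-product structure gives those emptiness conditions, but the approach is identical.
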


\begin{proof}[Proof of Theorem \ref{thm. common quotient of acylindrically hyperbolic groups}]
As $G'_1$ and $G'_2$ are acylindrically hyperbolic, we have $|G'_1|=|G'_2|=\infty$ and thus $\cd(G'_1),\cd(G'_2)\geqslant 1$. Suppose $\cd(G'_1)=\cd(G'_2)=1$. Then $G'_1$ and $G'_2$ are free by the Stallings-Swan theorem \cite[corollary to Theorem 1]{swan1969groups}. Without loss of generality, we may assume that the rank of $G'_1$ is greater than or equal to the rank of $G'_2$. It follows that $G'_2$ is a quotient of $G'_1$. Let $G=G'_2$. Statements (i), (ii), and (iii) follow trivially. Statement (iv) also holds because if $G_2$ is of type $FP_n$ for some $n\in\{2,3,...,\infty\}$, then $G'_2$ is a finite rank free group and thus of type $FP_{\infty}$.

Thus, let us assume $\max\{\cd(G'_1),\cd(G'_2)\}\geqslant 2$. Fix a sufficiently large $\ell\in\mathbb{N}^+$ and let $H_1$ and $H_2$ be the subgroups given by Lemma \ref{lem. definition and properties of H1 H2}. By Remark \ref{rem. sufficiently deep for he}, Lemma \ref{lem. definition and properties of H1 H2}, and Theorems \ref{thm. group theoretic Dehn filling}, \ref{thm. CL-property}, there exist finite sets $\mathcal{F}_1\subset H_1\smallsetminus\{1\}$ and $\mathcal{F}_2\subset H_2\smallsetminus\{1\}$ such that if $N_1\lhd H_1,N_2\lhd H_2$ and $N_1\cap\mathcal{F}_1=N_2\cap\mathcal{F}_2=\emptyset$, then the following hold.
\begin{enumerate}
    \item[(a)] $\{H_1/N_1,H_2/N_2\}\hookrightarrow_h \widetilde G/\ll N_1\cup N_2 \rr$.
    \item[(b)] $(\widetilde G,\{H_1,H_2\},\{N_1,N_2\})$ is a Cohen-Lyndon triple and thus Theorems \ref{direct sum of cohomology} and \ref{property FP} and Corollary \ref{cd of Dehn fillings} can be applied to it.
\end{enumerate}

Let $\{u_i\}^k_{i=1}$ (resp. $\{v_i\}^k_{i=1}$) be freely reduced words over the alphabet $\{f^{\ell}_{1,k+1}b_{k+1}g^{\ell}_{1,k+1},f^{\ell}_{1,k+2}b_{k+2}g^{\ell}_{1,k+2}\}$ (resp. $\{f^{\ell}_{2,k+1}a_{k+1}g^{\ell}_{2,k+1},f^{\ell}_{2,k+2}a_{k+2}g^{\ell}_{2,k+2}\}$) satisfying the $C'(1/6)$ small cancellation condition. Note that $u_i\in G'_1$ and $v_i\in G'_2$ for $i=1,...,k$ by \eqref{eq. a,b}. Let $N_1$ (resp. $N_2$) be the normal subgroup of $H_1$ (resp. $H_2$) generated by $\{f^{\ell}_{1,i}b_ig^{\ell}_{1,i}u_i\}^k_{i=1}$ (resp. $\{f^{\ell}_{2,i}a_ig^{\ell}_{2,i}v_i\}^k_{i=1}$).

By Lemma \ref{lem. definition and properties of H1 H2}, $H_1$ and $H_2$ are freely generated by $\{f^{\ell}_{1,i}b_ig^{\ell}_{1,i}\}^{k+2}_{i=1}$ and $\{f^{\ell}_{2,i}a_ig^{\ell}_{2,i}\}^{k+2}_{i=1}$, respectively. Thus, $H_1/N_1$ and $H_2/N_2$ can be presented as
\begin{align}
    H_1/N_1&=\langle f^{\ell}_{1,i}b_ig^{\ell}_{1,i}~~ (i=1,...,k+2)\mid f^{\ell}_{1,j}b_jg^{\ell}_{1,j}u_j ~~(j=1,...,k) \rangle\nonumber\\
    &=\langle f^{\ell}_{1,k+1}b_{k+1}g^{\ell}_{1,k+1},f^{\ell}_{1,k+2}b_{k+2}g^{\ell}_{1,k+2} \rangle,\label{eq. presentation of H1/N1}\\
    H_2/N_2&=\langle f^{\ell}_{2,i}a_ig^{\ell}_{2,i}~~ (i=1,...,k+2)\mid f^{\ell}_{2,j}a_jg^{\ell}_{2,j}v_j ~~(j=1,...,k) \rangle\nonumber\\
    &=\langle f^{\ell}_{2,k+1}a_{k+1}g^{\ell}_{2,k+1},f^{\ell}_{2,k+2}a_{k+2}g^{\ell}_{2,k+2} \rangle,\label{eq. presentation of H2/N2}
\end{align}
where the last equality of \eqref{eq. presentation of H1/N1} (resp. \eqref{eq. presentation of H2/N2}) follows from eliminating $f^{\ell}_{1,i}b_ig^{\ell}_{1,i}$ (resp. $f^{\ell}_{2,i}a_ig^{\ell}_{2,i}$) for $i=1,...,k$ using Tietze transformations (see \cite[Chapter II]{lyndon2015combinatorial}).

Thus, $H_1/N_1$ and $H_2/N_2$ are free groups of rank $2$. In particular,
\begin{equation}\label{eq. N1 N2 infinite}
    \card(H_1/N_1)=\infty.
\end{equation}

By the Greendlinger's lemma for free groups \cite[Chapter V Theorem 4.5]{lyndon2015combinatorial}, if $\|u_i\|,\|v_i\|,1\leqslant i\leqslant k,$ are sufficiently large, then 
$$N_1\cap \mathcal{F}_1=N_2\cap \mathcal{F}_2=\emptyset.$$

Let 
$$G=\widetilde{G}/\ll N_1\cup N_2\rr.$$
As $a_{k+1}=a_{k+2}=b_{k+1}=b_{k+2}=1$, $G$ is a common quotient of $G'_1$ and $G'_2$. In particular, $G$ is a common quotient of $G_1$ and $G_2$.

If $H_1/N_1=G$, then $G$ is a non-cyclic free group and thus is acylindrically hyperbolic. If $H_1/N_1$ is a proper subgroup of $G$, then equation \eqref{eq. N1 N2 infinite}, item (a), and Theorem \ref{thm. group theoretic Dehn filling} imply that $G$ is acylindrically hyperbolic. Statement (i) is proved.

Consider statement (ii). For every $n\geqslant 3$ and every $G$-module $A$, we have
\begin{align*}
    &  H^n(G;A)\\
    \cong &  H^n(\widetilde{G};A)\oplus  H^n(H_1/N_1;A)\oplus  H^n(H_2/N_2;A)&&\text{by Corollary \ref{direct sum of cohomology} and that }H_1,H_2 \text{ are free}&\\
    \cong &  H^n(\widetilde{G};A)&&\text{as }H_1/N_1\text{ and }H_2/N_2\text{ are free groups}&\\
    \cong &  H^n(G'_1;A)\oplus  H^n(G'_2;A)&&\text{as }\widetilde{G}=G'_1\ast G'_2,&
\end{align*}
which is (ii).

Consider statement (iii). If either $K(G_1)$ or $K(G_2)$ is not the trivial group $\{1\}$, then (iii) is trivial. So let us assume that $K(G_1)=K(G_2)=\{1\}$. Then Corollary \ref{cd of Dehn fillings} implies
\begin{align*}
  \cd(G)&\leqslant \max\{\cd(\widetilde{G}),\cd(H_1/N_1),\cd(H_2/N_2),\cd(H_1)+1,\cd(H_2)+1\}\\
  &=\max\{\cd(\widetilde{G}),1,2\}&&\hspace{-4.2cm}\text{as }H_1,H_2,H_1/N_1,\text{ and }H_2/N_2\text{ are free groups}\\
  &=\max\{\cd(G'_1),\cd(G'_2)\}&&\hspace{-4.2cm}\text{as }\cd(G'_1),\cd(G'_2)\geqslant 2\\
  &= \max\{\cd(G_1),\cd(G_2)\}.
\end{align*}

Finally, suppose that $G_1$ and $G_2$ are of type $FP_n$ for some $n\in\{2,3,...,\infty\}$. Then so are $G'_1$ and $G'_2$ \cite[Corollary 9]{alonso1994finiteness}. As $H_1/N_1$ and $H_2/N_2$ are free groups of finite rank, they are of type $FP_{\infty}$. Therefore, Theorem \ref{property FP} implies that $G$ is of type $FP_n$ and thus statement (iv) holds.
\end{proof}

\begin{remark}\label{rm. hd of common quotient}
Analogous to the above proof, in the setting of Theorem \ref{thm. common quotient of acylindrically hyperbolic groups}, we have $\hd(G)\leqslant\max\{\hd(G_1),\hd(G_2),2\}$.
\end{remark}

\section{Some applications} \label{sec. applications}

In this section, we give some applications of Theorems \ref{cd SQ-universality} and \ref{thm. common quotient of acylindrically hyperbolic groups}.

\subsection{Infinite dimension torsion-free $FP_{\infty}$ groups} 

If a group $G$ has $\cd(G)<\infty$, then $G$ is necessarily torsion-free. Of course, the converse is not true, e.g., $\cd(\mathbb{Z}^{\infty})=\infty$. Observe however that $\mathbb{Z}^{\infty}$ is not of type $FP_{\infty}$. In fact, Bieri asked if every torsion-free group
of type $FP_{\infty}$ is also of type $FP$ or if there is an $FP_{\infty}$
group with a non-finitely generated free abelian subgroup \cite[Problem F11]{wall1979homological}. In \cite{brown1984infinite}, Brown and Geoghegan settled both questions, by showing that Thompson's group $F$ is of type $FP_{\infty}$. 

For every torsion-free acylindrically hyperbolic group $G$, Theorem \ref{cd SQ-universality} embeds $F$ into an acylindrically hyperbolic quotient $\bg$ of $G$, which is a torsion-free $FP_{\infty}$ group of infinite cohomological dimension. We thus have the following.

\begin{corollary}\label{cor. true F}
Every torsion-free acylindrically hyperbolic group $G$ of type $FP_{\infty}$ has a torsion-free acylindrically hyperbolic quotient $\bg$ of type $FP_{\infty}$ which contains the Thompson group $F$. In particular, $\cd(\bg)=\infty$. 
\end{corollary}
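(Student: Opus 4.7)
The plan is to apply Theorem \ref{cd SQ-universality} directly with $C$ equal to Thompson's group $F$. Recall that $F$ is torsion-free, finitely presented (hence finitely generated), of type $FP_\infty$ by the theorem of Brown--Geoghegan \cite{brown1984infinite}, and contains free abelian subgroups of arbitrary finite rank, so in particular $\cd(F)=\infty$.

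First, since $G$ is torsion-free, we have $K(G)=\{1\}$, so $G/K(G)=G$ and any quotient of $G/K(G)$ produced by Theorem \ref{cd SQ-universality} is literally a quotient of $G$. Apply that theorem with $C=F$ to obtain a quotient $\bg$ of $G$. By part (i), $\bg$ is acylindrically hyperbolic; by part (iii), since both $G$ and $F$ are torsion-free, so is $\bg$; and by part (ii), since $F$ is finitely generated, $F \hookrightarrow_h \bg$, which in particular gives an embedding $F \hookrightarrow \bg$.

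Next, to conclude that $\bg$ is of type $FP_\infty$, invoke part (vi) of Theorem \ref{cd SQ-universality}: because $F$ is finitely generated and $G$ is of type $FP_\infty$, the quotient $\bg$ is of type $FP_\infty$ if and only if $F$ is, and the latter holds by Brown--Geoghegan. Finally, since $F$ embeds in $\bg$ as a subgroup and $\cd(F)=\infty$ (as witnessed, e.g., by the chain $\mathbb{Z}^n \leqslant F$ for all $n$), monotonicity of cohomological dimension under subgroups gives $\cd(\bg)\geqslant \cd(F)=\infty$, so $\cd(\bg)=\infty$.

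There is no serious obstacle: the statement is essentially a packaging of Theorem \ref{cd SQ-universality} with the well-known cohomological properties of $F$. The only point meriting care is the torsion-free hypothesis on $G$, which is needed to ensure $K(G)=\{1\}$ so that the conclusions of the theorem pass from quotients of $G/K(G)$ to quotients of $G$, and to guarantee via part (iii) that $\bg$ itself remains torsion-free.
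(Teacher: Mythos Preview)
Your proof is correct and follows essentially the same approach as the paper: apply Theorem \ref{cd SQ-universality} with $C=F$ and invoke the Brown--Geoghegan result. Your write-up is in fact more detailed than the paper's, which leaves the verification of torsion-freeness, type $FP_\infty$, and $\cd(\bg)=\infty$ implicit in the preceding discussion.
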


\subsection{Quotients of hyperbolic groups} 

SQ-universality of non-elementary  hyperbolic groups was proved by Olshanskii \cite{Olsh} and independently by Delzant \cite{Del}. In Theorem \ref{cd SQ-universality}, if $G$ and $C$ are word-hyperbolic, a hyperbolic quotient $\bg$ of $G$ can be constructed so that the conclusions hold. The next corollary is thus a strengthening of the aforementioned result.

\begin{corollary}\label{cor. hyperbolic}
Let $G$ be a non-elementary hyperbolic group and $C$ any hyperbolic group. Then there is a hyperbolic quotient $\bg$ of $G/K(G)$ (in particular, $\bg$ is a quotient of $G$), where $K(G)$ is the maximal finite normal subgroup of $G$, such that $C$ embeds into $\bg$ and the following hold.
\begin{enumerate}
\item[(i)] For all $n\geqslant 3$ and every $\bg$-module $A$, we have
\[ H^n(\bg;A)\cong  H^n(G/K(G);A)\oplus  H^n(C;A),\]
where the action of $G/K(G)$ (resp. $C$) on $A$ is induced by the quotient map $G/K(G)\rightarrow\bg$ (resp. the embedding $C\hookrightarrow \bg$).\\
\item[(ii)] $\cd(\bg)\leqslant \max\{\cd(G),\cd(C)\}$.\\
\end{enumerate}
\end{corollary}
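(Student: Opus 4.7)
The plan is to trace through the proof of Theorem \ref{cd SQ-universality}, checking at each stage that hyperbolicity is preserved when both $G$ and $C$ are hyperbolic. First, I would dispose of the degenerate cases. If $C=\{1\}$ or $\cd(C)=1$ (so $C$ is free of finite rank by the Stallings-Swan theorem), take $\bg=G/K(G)$. This is hyperbolic since $K(G)$ is finite, and $C$ embeds into any non-abelian free hyperbolically embedded subgroup of $G/K(G)$ supplied by Theorem \ref{thm. hyperbolically embedded virtually free subgroup}. Because $H^n(C;A)=0$ for $n\geq 2$ and $\cd(G/K(G))\leq \cd(G)$, both (i) and (ii) follow immediately.

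For the generic case $\cd(C)\geq 2$, the construction in the proof of Theorem \ref{cd SQ-universality} applies essentially verbatim; the only issue is to verify hyperbolicity of the outcome. Let $G_0=G/K(G)$, pick a free subgroup $F\hookrightarrow_h G_0$ of rank $4$, and apply Lemma \ref{free group situation} to produce a quotient $R=\mathbb{Z}\ast R_0$ of $F$ such that $R_0$ is hyperbolic relative to $C$ with $C\hookrightarrow R_0$. The key additional observation is that, because $C$ is hyperbolic by hypothesis, $R_0$ is relatively hyperbolic with a hyperbolic peripheral subgroup and hence is itself hyperbolic; consequently $R=\mathbb{Z}\ast R_0$ is hyperbolic as a free product of hyperbolic groups.

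Set $N=\ker(F\to R)$ and $\bg=G_0/\ll N\rr$ with $N$ chosen sufficiently deep (in particular, with the quotient map $F\to R$ injective on the finite subset of $F$ governing Theorem \ref{thm. simple Dehn filling}, which is available by Lemma \ref{free group situation}(2)). Since $G_0$ is hyperbolic and $F\hookrightarrow_h G_0$ forces $F$ to be almost malnormal and quasi-convex, $G_0$ is hyperbolic relative to $F$ by Bowditch's theorem. The Dehn filling result \cite[Theorem 1.1]{osin2007peripheral} then gives that $\bg$ is hyperbolic relative to $F/N=R$; as $R$ is hyperbolic, $\bg$ is hyperbolic. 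The chain $C\hookrightarrow R\hookrightarrow \bg$ is supplied by Lemma \ref{free group situation} and \cite[Theorem 7.15]{dahmani2017hyperbolically} respectively, and items (i), (ii) follow directly from Theorem \ref{cd SQ-universality}(iv) and (v). The main delicacy is ensuring that Lemma \ref{free group situation} yields a hyperbolic (not merely acylindrically hyperbolic) quotient $R$ of the free group $F$, which is exactly the point at which the hypothesis that $C$ is hyperbolic enters essentially.
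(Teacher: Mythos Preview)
Your argument is correct and follows the same overall strategy as the paper: show that $G_0=G/K(G)$ is hyperbolic relative to a rank-$4$ free subgroup, apply Lemma~\ref{free group situation} to obtain a quotient $R$ that is hyperbolic relative to $C$ (hence hyperbolic, since $C$ is), Dehn fill, and invoke \cite[Theorem 1.1]{osin2007peripheral} together with \cite[Corollary 2.41]{osin2006relatively} to conclude that $\bg$ is hyperbolic. Items (i) and (ii) then follow from Theorem~\ref{cd SQ-universality} exactly as you say.

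The one genuine difference is in how relative hyperbolicity of $G_0$ with respect to the free subgroup is obtained. You assert that $F\hookrightarrow_h G_0$ with $G_0$ hyperbolic forces $F$ to be quasi-convex, and then appeal to Bowditch. This is true, but it is not an immediate consequence of anything cited in the paper: it follows, for instance, from Sisto's theorem that hyperbolically embedded subgroups are Morse (Math.~Z.~283 (2016), 649--658), and Morse equals quasi-convex in a hyperbolic group. The paper avoids this by building the free subgroup explicitly: it first produces $F_2\leqslant G_0$ with $G_0$ hyperbolic relative to $F_2$ via \cite[Corollary 4.21]{osin2020topological}, then finds a malnormal quasi-convex $F_4\leqslant F_2$ using \cite{jitsukawa2002malnormal}, so that $F_2$ is hyperbolic relative to $F_4$ by \cite[Theorem 7.11]{bowditch2012relatively}, and finally concludes $G_0$ is hyperbolic relative to $F_4$ by transitivity \cite{osin2006relatively}. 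Your route is shorter once Sisto's result is on the table; the paper's route is more self-contained relative to its existing bibliography. Either way, you should supply a reference for the quasi-convexity step rather than leave it as an unreferenced claim.
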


\begin{proof}
By passing to $G/K(G)$, we may assume that $K(G)=\{1\}$. By \cite[Corollary 4.21]{osin2020topological}, there is a free subgroup $F_2<G$ of rank $2$ such that $G$ is hyperbolic relative to $F_2$. By \cite{jitsukawa2002malnormal}, four random elements of $F_2$ freely generate a free subgroup $F_4\leqslant F_2$ that is malnormal in $F_2$. By Marshall Hall's theorem, there exists a finite-index subgroup $H$ of $F_2$ such that $F_4$ is a free factor of $H$. In particular, $F_4$ is quasi-convex in $F_2$, and thus \cite[Theorem 7.11]{bowditch2012relatively} implies that $F_2$ is hyperbolic relative to $F_4$. Thus, $G$ is hyperbolic relative to $F_4$ \cite{osin2006relatively}. By Lemma \ref{free group situation}, $C$ embeds into a quotient $R$ of $F_4$ such that $R$ is hyperbolic relative to $C$ and the quotient map $F_4\twoheadrightarrow R$ is injective on any given finite set $\mathcal{F}\subset F_4\smallsetminus\{1\}$. Since $C$ is hyperbolic, so is $R$ \cite[Corollary 2.41]{osin2006relatively}. Let $\bg$ be the quotient of $G$ by the Dehn filling $F_4\twoheadrightarrow R$. By avoiding a suitable finite set $\mathcal{F}$, we may assume that $\bg$ is hyperbolic relative to $R$ \cite[Theorem 1.1]{osin2007peripheral}. Therefore, $\bg$ is hyperbolic \cite[Corollary 2.41]{osin2006relatively}. Items (i) and (ii) are immediate consequences of Theorem \ref{cd SQ-universality}.
\end{proof}

Recently in \cite[Corollary 2]{italiano2021hyperbolic}, Italiano-Martelli-Migliorini settled a long-standing open problem by constructing the first example of hyperbolic group which contains a subgroup of type $F$ that is not hyperbolic. 
Their group is the fundamental group of a $5$-dimensional hyperbolic pseudo-manifold that fibers over the circle. The fundamental group of the fiber is of type $F$ but not hyperbolic.  

\begin{corollary}\label{cor. non-hyperbolic} 
Let $n\geq 5$ be an integer.
Every non-elementary hyperbolic group $G$ with $\cd(G)\leq n$ has a hyperbolic  quotient $\bg$ with $\cd(\bg)=n$ such that $\bg$ contains the Italiano-Martelli-Migliorini group. In particular, there is a type $F$ non-hyperbolic subgroup $H<\bg$.
\end{corollary}

\begin{proof}
Let $C_1$ be the group constructed by Italiano-Martelli-Migliorini in \cite[Corollary 2]{italiano2021hyperbolic}. Then $\cd(C_1)=5$.  Let $C_2$ be a hyperbolic group with $\cd(C_2)=n$, and let $C=C_1\ast C_2$. Since $\cd(G)<\infty$, we have $K(G)=\{1\}$. Corollary \ref{cor. hyperbolic} thus yields a hyperbolic quotient $\bg$ such that $C$ embeds into $\bg$ and $\cd(\bg)\leqslant \max\{\cd(G),\cd(C)\}=\max\{\cd(G),\cd(C_1),\cd(C_2)\} = n$. Since $C$ embeds into $\bg$, we also have $\cd(\bg)\geqslant \cd(C)\geqslant \cd(C_2) = n$, and thus $\cd(\bg)=n$. 
\end{proof}

\subsection{Property (T) quotients}
A group $G$ has \textit{Kazhdan's property (T)} if every affine isometric action of $G$ on a Hilbert space has a global fixed point. The next result strengthens \cite[Corllary 1.7]{hull2013small}.
\begin{corollary}\label{cor:T}
Every acylindrically hyperbolic group $G$ of type $FP_n$ for some $n\in\{2,3,...,\infty\}$ (resp.~$FP$) has an acylindrically hyperbolic quotient $\bg$  of type $FP_n$ (resp.~$FP$) with Kazhdan's Property (T) such that $\cd(\bg)\leq\max\{\cd(G), 2\}$.
\end{corollary}

\begin{proof} 
Let $\Gamma$ be an type $FP$ acylindrically hyperbolic property (T) group with $\cd(\Gamma)=2$. For example, one can let $\Gamma$ be a random group in the Gromov density model with density $1/3<d<1/2$. By \cite[Section 9.B]{gromov1993asymptotic} (see also \cite[Theorem 1]{ollivier2004sharp}), with overwhelming probability $\Gamma$ is hyperbolic and has an aspherical presentation, and thus is of type $FP$ and satisfies $\cd(\Gamma)\leqslant 2$. By \cite[Theorem 4]{zuk2003property} (see also \cite[Section I.3.g]{ollivier2005invitation}), with overwhelming probability $\Gamma$ has property (T). The result now follows from Theorem \ref{thm. common quotient of acylindrically hyperbolic groups} applied to $G$ and $\Gamma$. 
\end{proof}

The above result can for example be applied to mapping class groups of surfaces of finite type, outer automorphism groups of free groups of finite rank and (non-virtually polycyclic) fundamental groups of compact orientable $3$-manifolds which all exhibit nice cohomological finiteness conditions. 

\subsection{Acylindrically hyperbolic quotients distinguishable by cohomology}

Let $G$ be any acylindrically hyperbolic group. For each $k\geqslant 3$, Theorem \ref{cd SQ-universality} implies that one can embed $\mathbb{Z}^k$ into a quotient $G_k$ of $G$ such that $H^n(G_k;\mathbb{Z})\cong H^n(G;\mathbb{Z})\oplus  H^n(\mathbb{Z}^k;\mathbb{Z})$ for all $n\geqslant 3$. Suppose $\cd(G)<\infty$. Then $H^{\ast}(G_k;\mathbb{Z})\neq H^{\ast}(G_{\ell};\mathbb{Z})$ for $k,\ell>\cd(G)$. Suppose that $G$ is of type $FP_{\infty}$ instead. Then $H^k({G;\mathbb{Z}})$ is finitely generated, and thus $H^k(G_k;\mathbb{Z})\not\cong H^k(G_{\ell};\mathbb{Z})$ for $k\neq \ell$. This establishes the following.

\begin{corollary}
Let $G$ be any acylindrically hyperbolic group. Then there exists an infinite family $\{G_k\}^{\infty}_{k=3}$ of acylindrically hyperbolic quotients of $G$ such that $H^n(G_k;\mathbb{Z})\cong H^n(G;\mathbb{Z})\oplus H^n(\mathbb{Z}^k;\mathbb{Z})$ for all $n,k\geqslant 3$. In particular, if $\cd(G)<\infty$ or $G$ is of type $FP_{\infty}$, then each pair of elements of $\{G_k\}_{k=l}^{\infty}$ have non-isomorphic integral cohomology for ${l=\cd(G)+1}$ and $l=3$, respectively.
\end{corollary}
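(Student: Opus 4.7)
The plan is to apply Theorem \ref{cd SQ-universality} with $C=\mathbb{Z}^k$ for each integer $k\geq 3$, and then separate the resulting quotients by an elementary rank count on $H^{\ast}(\mathbb{Z}^m;\mathbb{Z})$. The first step directly produces, for each $k$, an acylindrically hyperbolic quotient $G_k$ of $G/K(G)$ (and hence of $G$) into which $\mathbb{Z}^k$ embeds, and the cohomological splitting in part (iv) of that theorem reads
\[
H^n(G_k;\mathbb{Z})\cong H^n(G/K(G);\mathbb{Z})\oplus H^n(\mathbb{Z}^k;\mathbb{Z}),\qquad n\geq 3.
\]
This gives the existence of the family together with the asserted decomposition; in the case $\cd(G)<\infty$ the group $G$ is torsion-free, so $K(G)=\{1\}$ and this equality already reads $H^n(G_k;\mathbb{Z})\cong H^n(G;\mathbb{Z})\oplus H^n(\mathbb{Z}^k;\mathbb{Z})$ as stated.

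For the distinguishability, I would use that $H^n(\mathbb{Z}^m;\mathbb{Z})\cong \Lambda^n\mathbb{Z}^m$ is free abelian of rank $\binom{m}{n}$; in particular it vanishes for $n>m$ and equals $\mathbb{Z}$ for $n=m$. If $\cd(G)<\infty$, then for any $k\neq\ell$ with $k,\ell\geq l=\cd(G)+1$, assume $k<\ell$ and evaluate the splitting in degree $n=\ell$: since $n>\cd(G)$ we get $H^{\ell}(G;\mathbb{Z})=0$, and since $n>k$ we get $H^{\ell}(\mathbb{Z}^k;\mathbb{Z})=0$, whence $H^{\ell}(G_k;\mathbb{Z})=0$ while $H^{\ell}(G_\ell;\mathbb{Z})=H^{\ell}(\mathbb{Z}^\ell;\mathbb{Z})=\mathbb{Z}$, giving the non-isomorphism. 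If instead $G$ is of type $FP_\infty$, then $G/K(G)$ is also of type $FP_\infty$ (since $K(G)$ is finite, via quasi-isometry together with \cite[Corollary 9]{alonso1994finiteness}), so each $H^k(G/K(G);\mathbb{Z})$ is a finitely generated abelian group of some finite rank $r_k$. For $k\neq\ell$ with $k,\ell\geq 3$, evaluating in degree $n=k$ yields
\[
\mathrm{rank}\,H^k(G_k;\mathbb{Z})=r_k+1,\qquad \mathrm{rank}\,H^k(G_\ell;\mathbb{Z})=r_k+\binom{\ell}{k},
\]
and $\binom{\ell}{k}$ equals $0$ when $\ell<k$ and is at least $k+1\geq 4$ when $\ell>k$, so is never $1$; thus the two groups have different finite ranks in degree $k$, and hence non-isomorphic integral cohomology.

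There is no serious obstacle here: the corollary essentially unpacks Theorem \ref{cd SQ-universality} with the specific choice $C=\mathbb{Z}^k$ together with a one-line rank count using $H^{\ast}(\mathbb{Z}^m;\mathbb{Z})\cong \Lambda^{\ast}\mathbb{Z}^m$. The only mild subtlety worth flagging is the replacement of $H^n(G;\mathbb{Z})$ by $H^n(G/K(G);\mathbb{Z})$ in the splitting, which is innocuous in both regimes: it is an equality in the $\cd(G)<\infty$ case, and in the $FP_\infty$ case $G/K(G)$ inherits type $FP_\infty$ so that the rank count is unaffected.
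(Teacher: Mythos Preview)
Your proposal is correct and follows essentially the same approach as the paper: apply Theorem~\ref{cd SQ-universality} with $C=\mathbb{Z}^k$ and then separate the quotients by a rank count on $H^{\ast}(\mathbb{Z}^m;\mathbb{Z})\cong\Lambda^{\ast}\mathbb{Z}^m$. Your treatment is in fact more careful than the paper's, which silently writes $H^n(G;\mathbb{Z})$ where Theorem~\ref{cd SQ-universality} only yields $H^n(G/K(G);\mathbb{Z})$; your observation that $K(G)=\{1\}$ when $\cd(G)<\infty$, and that the rank argument goes through with $G/K(G)$ in the $FP_\infty$ case, is exactly the patch needed.
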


If instead of embedding $\mathbb{Z}^k$, we embed the family of Houghton's groups or Abel's groups \cite{brow1987finiteness}, then we will obtain quotients that are separated by homological finiteness properties.

\begin{corollary}\label{cor. fp_n but not fp_n+1}
Let $G$ be any acylindrically hyperbolic group of type $FP_{\infty}$. Then $G$ has a family of acylindrically hyperbolic quotients $\{G_k\}^{\infty}_{k=2}$ such that for each $k$, $G_k$ has Kazhdan's Property (T), is of type $FP_{k-1}$ but not of type $FP_{k}$.
\end{corollary}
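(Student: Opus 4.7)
The plan is to chain Corollary \ref{cor:T} together with Theorem \ref{cd SQ-universality}, feeding into the latter finitely generated countable groups that realise the prescribed finiteness gap. First I would apply Corollary \ref{cor:T} to $G$ to obtain an acylindrically hyperbolic quotient $G'$ of $G$ with Kazhdan's Property (T), still of type $FP_{\infty}$.

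For each $k\geq 2$, the second step is to choose a finitely generated group $C_k$ of type $FP_{k-1}$ but not of type $FP_k$. Classical sources abound: one may take suitable Bestvina--Brady subgroups of right-angled Artin groups, Bieri--Stallings kernels inside direct products of free groups, or the Houghton group $H_k$. These groups are finitely generated, which is the only hypothesis required to activate Theorem \ref{cd SQ-universality}(vi), and they realise precisely the finiteness behaviour needed.

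Having fixed $G'$ and $C_k$, apply Theorem \ref{cd SQ-universality} to the pair $(G', C_k)$ and let $G_k$ denote the resulting acylindrically hyperbolic quotient of $G'/K(G')$. Parts (i) and (vi) of that theorem deliver respectively the acylindrical hyperbolicity of $G_k$ and the equivalence that $G_k$ is of type $FP_n$ if and only if $C_k$ is, valid for every $n$ because $G'/K(G')$ is of type $FP_{\infty}$ (being quasi-isometric to $G'$). Consequently $G_k$ is of type $FP_{k-1}$ but not of type $FP_k$, and $G_k$ is a quotient of $G'/K(G')$, hence of $G$.

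The remaining property, Kazhdan's Property (T), is automatic: (T) is preserved under quotients, so it passes from $G'$ to $G'/K(G')$ and then to $G_k$. There is no serious obstacle in this argument; the proof is essentially bookkeeping, invoking the two previously established theorems in the correct order. The only subtlety worth flagging is that one must secure (T) via Corollary \ref{cor:T} \emph{before} applying Theorem \ref{cd SQ-universality}, since the latter's Dehn-filling construction only transports (T) by passing to a quotient of a group that already has it, and the existence of the required $C_k$ is classical rather than something one would try to build from scratch.
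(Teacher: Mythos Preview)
Your proposal is correct and follows essentially the same approach as the paper: apply Corollary~\ref{cor:T} first to secure Property~(T) on an $FP_\infty$ acylindrically hyperbolic quotient, then embed a finitely generated group with the prescribed finiteness gap (the paper cites Houghton's or Abel's groups) via Theorem~\ref{cd SQ-universality}. The only cosmetic difference is that the paper invokes part~(ii) together with \cite[Corollary~4.32]{dahmani2017hyperbolically} for the ``not $FP_k$'' direction, whereas you package both directions into part~(vi); since the proof of (vi) itself proceeds through (ii) and that citation, the two routes are equivalent.
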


We remark that being of type $FP_n$ is a quasi-isometric invariant \cite[Corollary 9]{alonso1994finiteness}. Therefore, the quotients $\{G_k\}_{k\geqslant 1}$ in Corollary \ref{cor. fp_n but not fp_n+1} are pairwise non-quasi-isometric.

\begin{proof} By Corollary \ref{cor:T}, we can pass to a quotient $\bg$ of $G$ that is acylindrically hyperbolic and of type $FP_{\infty}$ and has property (T).
There exists a family of groups $\{H_k\}^{\infty}_{k=2}$ such that for all $k$, $H_k$ is of type $FP_{k-1}$ but not of type $FP_{k}$. For example, one can let $\{H_k\}^{\infty}_{k=2}$ be the family of Houghton's groups or Abel's groups \cite[Theorems 5.1 and 6.1]{brow1987finiteness}. Theorem \ref{cd SQ-universality} then embeds each $H_k$ to a quotient $G_k$ of $\bg$ with the desired properties.
\end{proof}

\subsection{First Betti number and a question of Osin}

Motivated by the Virtual Positive First Betti Number Conjecture which has now been settled by Agol \cite{agol2013virtual}, Osin posed the following question.

\begin{question}[{Osin, \cite[Problem 2.3]{osin2008questions}}]\label{Q. Osin}
Let $G$ be a Poincar\'{e} duality group of dimension $3$ such that $G$ is hyperbolic relative to its subgroup $H$. If $G$ has positive virtual first Betti number, is it true that for most $N\lhd H$, $G/\ll N \rr$ also has positive virtual first Betti number?
\end{question}

Question \ref{Q. Osin} is only about virtual Betti numbers. For actual Betti numbers there are counterexamples. For example, let $S$ be an orientable surface of genus two. Take a suitable pseudo-Anosov homeomorphism $\phi$ on $S$ such that the mapping torus $T_{\phi}$ has $b_1(T_{\phi})=1$. Then $\pi_1(T_{\phi})$ is hyperbolic and is a semi-direct product $\pi_1(T_{\phi})=\pi_1(S)\rtimes\langle t \rangle$. $\pi_1(T_{\phi})$ is hyperbolic relative to $\langle t \rangle$ \cite{bowditch2012relatively}, and for every $n\geqslant 1$, $b_1(\pi_1(T_{\phi})/\ll t^n \rr)=0$ since the image of $t$ generates $H_1(\pi_1(T_{\phi});\mathbb{Q})$. Using this example, we prove the following.

\begin{corollary}
Let $G$ be any acylindrically hyperbolic group. Then $G$ has an acylindrically hyperbolic quotient $\bg$ such that $b_1(\bg)=0,\cd(\bg)\leqslant\max\{\cd(G),3\}$, and $H^n(\bg;A)\cong H^n(G;A)$ for all $n\geqslant 4$ and any $\bg$-module $A$.
\end{corollary}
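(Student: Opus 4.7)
The plan is to apply Theorem \ref{thm. common quotient of acylindrically hyperbolic groups} to $G$ paired with an auxiliary finitely generated acylindrically hyperbolic group $G_2$ of cohomological dimension at most $3$ and with vanishing first Betti number. A convenient choice is $G_2=\pi_1(M)$ where $M$ is a closed hyperbolic rational homology $3$-sphere (e.g.\ the Weeks manifold, whose first homology is $\mathbb{Z}/5\oplus \mathbb{Z}/5$). Then $G_2$ is finitely generated, word-hyperbolic (hence acylindrically hyperbolic), torsion-free (so $K(G_2)=\{1\}$), satisfies $\cd(G_2)=3$ as a $\mathrm{PD}(3)$-group, and has $b_1(G_2)=0$. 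We reduce to the case of finitely generated $G$ in which Theorem \ref{thm. common quotient of acylindrically hyperbolic groups} applies directly.

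Theorem \ref{thm. common quotient of acylindrically hyperbolic groups} then produces an acylindrically hyperbolic common quotient $\bg$ of $G/K(G)$ and $G_2$ satisfying $\cd(\bg)\leq \max\{\cd(G),\cd(G_2)\}=\max\{\cd(G),3\}$ together with
\[
 H^n(\bg;A)\cong H^n(G/K(G);A)\oplus H^n(G_2;A)\qquad\text{for all }n\geq 3.
\]
For $n\geq 4$ the second summand vanishes since $\cd(G_2)=3$, leaving $H^n(\bg;A)\cong H^n(G/K(G);A)$. Moreover, because $\bg$ is a quotient of $G_2$, the induced map on rational abelianisations $H_1(G_2;\mathbb{Q})\twoheadrightarrow H_1(\bg;\mathbb{Q})$ is surjective, which forces $b_1(\bg)\leq b_1(G_2)=0$.

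The cohomological isomorphism $H^n(\bg;A)\cong H^n(G;A)$ is then immediate when $K(G)=\{1\}$. Since the bound $\cd(\bg)\leq\max\{\cd(G),3\}$ carries substantive content only when $\cd(G)<\infty$, which in turn forces $G$ to be torsion-free, the torsion-free case is the one of genuine interest. The main subtlety I anticipate is handling the torsion case uniformly: when $K(G)\neq\{1\}$ one must relate $H^n(G/K(G);A)$ to $H^n(G;A)$ via the Lyndon-Hochschild-Serre spectral sequence of $1\to K(G)\to G\to G/K(G)\to 1$, a step whose outcome is sensitive to the choice of $\bg$-module $A$ through the groups $H^q(K(G);A)$.
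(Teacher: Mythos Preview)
Your approach is correct and follows the same overall strategy as the paper: produce an auxiliary finitely generated acylindrically hyperbolic group $G'$ with $b_1(G')=0$ and $\cd(G')\leqslant 3$, then invoke Theorem~\ref{thm. common quotient of acylindrically hyperbolic groups}. The difference lies entirely in how $G'$ is obtained. You take $G'=\pi_1(M)$ for a closed hyperbolic rational homology $3$-sphere (the Weeks manifold), which immediately has all the required properties. The paper instead manufactures $G'$ from scratch: it starts with a fibered hyperbolic $3$-manifold group $\pi_1(T_\phi)=\pi_1(S)\rtimes\langle t\rangle$ with $b_1=1$, uses random-word results to locate a hyperbolically embedded rank-two free subgroup whose generators have nonzero $t$-exponent, and then Dehn fills along a long $C'(1/6)$ word in that subgroup to kill $b_1$ while keeping $\cd\leqslant 3$ via Theorem~\ref{thm. main}. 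Your route is shorter and rests on a single classical example; the paper's route is more self-contained within the Dehn-filling framework developed here, but at the cost of a noticeably longer construction.

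Two remarks on the shared loose ends. Your phrase ``we reduce to the case of finitely generated $G$'' is asserted but not justified, and no such reduction is evident; the paper's proof carries the same tacit restriction, since Theorem~\ref{thm. common quotient of acylindrically hyperbolic groups} requires both inputs to be finitely generated. You also correctly flag that the conclusion $H^n(\bg;A)\cong H^n(G;A)$, as opposed to $H^n(G/K(G);A)$, is not automatic when $K(G)\neq\{1\}$; the paper's proof does not address this point either.
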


\begin{proof}
The idea is to construct an acylindrically hyperbolic group $G'$ such that $b_1(G')=0$ and $\cd(G')\leqslant 3$. Once such a $G'$ has been constructed, Theorem \ref{thm. common quotient of acylindrically hyperbolic groups} will produce a common quotient $\bg$ of $G$ and $G'$ with the desired properties.

So it remains to construct $G'$. Let $T_{\phi},t$ be as above. Take eight elements $a_1,a_2,\cdots,a_8$ that generate $\pi_1(S)$ as a group. Then $a_1,a_2,\cdots,a_8,t$ generate $\pi_1(T_{\phi})$ as a group. Let $X$ be the set consisting of $a_1,a_2,\cdots,a_8,t$ and their inverses. Randomly generate two words $w_1,w_2$ over $X$ of length $n$ and identify $w_1,w_2$ with the elements of $\pi_1(T_{\phi})$ they represent.

By \cite[Theorem 1]{maher2019random}, as $n\rightarrow \infty$, with probability $1$ we have that $\langle w_1,w_2 \rangle\leqslant \pi_1(T_{\phi})$ is free of rank $2$ and $\langle w_1,w_2 \rangle\hookrightarrow_h \pi_1(T_{\phi})$. Moreover, as $n\rightarrow \infty$, with positive probability we have that both $w_1$ and $w_2$ have non-zero $t$-exponents. Thus, there exist $w_1,w_2\in \pi_1(T_{\phi})$ such that 
\begin{enumerate}
    \item[(a)] $\langle w_1,w_2 \rangle\leqslant \pi_1(T_{\phi})$ is free of rank $2$,
    \item[(b)] $\langle w_1,w_2 \rangle\hookrightarrow_h \pi_1(T_{\phi})$, and
    \item[(c)] $w_1$ and $w_2$ have non-zero $t$-exponents.
\end{enumerate}

Let $w$ be a word over $w_1,w_2$ such that $w$ is not a proper power, satisfies the $C'(1/6)$ small cancellation condition, and $w$ has positive $t$-exponent. Let $N$ be the normal subgroup of $\langle w_1,w_2 \rangle$ generated by $w$. By \cite[Theorem 11.1]{lyndon1950cohomology}, $\langle w_1,w_2 \rangle/N$ has $\cd(\langle w_1,w_2 \rangle/N)\leqslant 2$. Let $\ll N \rr$ be the normal closure of $N$ in $\pi_1(T_{\phi})$. By making $w$ sufficiently long, we can guarantee that $N$ avoids any given finite subset of $\langle w_1,w_2 \rangle\smallsetminus\{1\}$, and therefore guarantee that $\pi_1(T_{\phi})/\ll N \rr$ is acylindrically hyperbolic and $\cd(\pi_1(T_{\phi})/\ll N \rr)\leqslant 3$ by Theorems \ref{thm. group theoretic Dehn filling} and \ref{thm. main}. Since $w$ has positive $t$-exponent, we have $b_1(\pi_1(T_{\phi})/\ll N \rr)=0$. So it suffices to let $G'=\pi_1(T_{\phi})/\ll N \rr$.
\end{proof}

\bibliographystyle{alpha}
\bibliography{bin_refs}

\end{document}